\newtheorem{theorem}{Theorem}[section]
\newtheorem{corollary}[theorem]{Corollary}
\newtheorem{lemma}[theorem]{Lemma}
\newtheorem{prop}[theorem]{Proposition}
\newtheorem{defi}[theorem]{Definition}
\theoremstyle{definition}
\newcommand{\sst}{$\mbox{Spin}^{\mbox{\scriptsize{c}}}$}
\newcommand{\spc}[1]{\ensuremath{ \mbox{Spin}^{\mbox{\scriptsize{c}}}({#1})}}
\newcommand{\rspc}[1]{\ensuremath{ \underline{\mbox{Spin}}^{\mbox{\scriptsize{c}}}({#1})}}
\newcommand{\halbox}{$\mbox{\begin{picture}(0,0)\put( 2,0){\framebox(5,5)}\end{picture} }$}
\newcommand{\al}{\ensuremath{\alpha}}
\newcommand{\be}{\ensuremath{\beta}}
\newcommand{\ga}{\ensuremath{\gamma}}
\newcommand{\de}{\ensuremath{\delta}}
\newcommand{\la}{\ensuremath{\lambda}}
\newcommand{\bsa}{\ensuremath{\boldsymbol{\alpha}}}
\newcommand{\bsb}{\ensuremath{\boldsymbol{\beta}}}
\newcommand{\bsg}{\ensuremath{\boldsymbol{\gamma}}}
\newcommand{\bsd}{\ensuremath{\boldsymbol{\delta}}}
\newcommand{\bse}[1]{\ensuremath{\boldsymbol{\eta}^{#1}}}
\newcommand{\cfh}[1]{\ensuremath{\widehat{CF}({#1})}}
\newcommand{\m}[1]{\ensuremath{\mathfrak{#1} }}
\newcommand{\um}[1]{\ensuremath{\underline{\mathfrak{#1}} }}
\newcommand{\vs}{\vspace{10pt}}
\newcommand{\bline}[1]{\noindent \textbf{#1}}
\newcommand{\myfig}[1]{{#1}}
\newcommand{\1}{{\rm 1\hspace*{-0.4ex}%
\rule{0.1ex}{1.52ex}\hspace*{0.2ex}}}
\newcommand{\1}{{\rm 1\hspace*{-0.45ex}
\rule[0.08ex]{0.04ex}{1.48ex}\hspace*{0.2ex}}}
\newcommand{\1}{\mathbb{I}}
\begin{document}

\title[Twisted Floer homology of mapping tori]{On the twisted Floer homology of mapping tori of periodic diffeomorphisms}

\author[Evan Fink]{$\mbox{Evan Fink}^1$}

\address{Department of Mathematics,
Columbia University, 
New York, NY 10027}
\email{fink@math.columbia.edu}

\begin{abstract}

Let $K \subset Y$ be a knot in a three manifold which admits a longitude-framed surgery such that the surgered manifold has first Betti number greater than that of $Y$.  We find a formula which computes the twisted Floer homology of the surgered manifold, in terms of twisted knot Floer homology.  Using this, we compute the twisted Heegaard Floer homology $\underline{HF}^+$ of the mapping torus of a diffeomorphism of a closed Riemann surface whose mapping class is periodic,  giving an almost complete description of the structure of these groups.   When the mapping class is nontrivial, we find in particular that in the ``second-to-highest'' \sst\ structure, this is isomorphic to a free module (over a certain ring) whose rank is equal to the Lefschetz number of the diffeomorphism.  After taking a tensor product with $\mathbb{Z}/2\mathbb{Z}$, this agrees precisely with the symplectic Floer homology of the diffeomorphism, as calculated by Gautschi.

\end{abstract}

\maketitle

\footnotetext[1]{The author was supported by NSF grant DMS-0739392.}

\section{Introduction}
\label{sect:intro}
Since its introduction in the 1980's, Floer homology has become a vast subject, its many variants proving useful to problems all over geometry and topology.  In three manifold topology and knot theory,  Heegaard Floer homology has been particularly successful. It has, for example, been applied to questions of unknotting numbers, lens space surgeries, and fiberedness of knots and manifolds; and it has strong connections with Seiberg-Witten theory and Khovanov homology.  There are also connections with the symplectic Floer homology that it initially grew out of, but these are perhaps slightly less developed.  

The main motivating purpose of this paper is to furnish one connection: for a periodic orientation-preserving diffeomorphism $\phi$ of a closed Riemann surface $\Sigma$ of genus $g_{\Sigma}$, we compute the twisted coefficient Heegaard Floer homology of its mapping torus, and compare this with the symplectic Floer homology of $\phi$.  We find that the ``second-to-highest level'' of the twisted Floer homology is $\mathbb{Z}^{\Lambda(\phi)}$ tensored with a certain module depending only on the genera of $\Sigma$ and of the underlying surface of the quotient orbifold of $\Sigma$ by the action of $\phi$, where $\Lambda$ denotes Lefschetz number.  (Here, ``$n$th-to-highest level'' means the direct sum of the twisted Floer homologies for those \sst\ structures $\m{t}$ for which $\langle c_1(\m{t}), \Sigma \rangle = 2n-2g_{\Sigma}.$)  This matches with the computation of the symplectic Floer homology of $\phi$ over $\mathbb{Z}_2$, which is computed in \cite{G} to be $\mathbb{Z}_2^{\Lambda(\phi)}$.  
In fact, this computation is part of a wider pattern encompassing all the levels of the homology, which is described in Theorem \ref{thm:1.1}.

Before detailing our results, we review some facts about Seifert fibered spaces and the mapping tori in question.  The following is described in \cite{Scott}. The mapping torus of any periodic, orientation-preserving diffeomorphism of a closed Riemann surface is an orientable Seifert fibered space with orientable base orbifold, of degree 0.  The latter specification is equivalent to saying that the space itself has odd first Betti number.  In fact, any Seifert fibered space of this type can be realized as a mapping torus for such a diffeomorphism. 

Any oriented Seifert fibered space orientable base can be realized also as a surgery on a connect sum of knots of the following two types.  First, on the Borromean rings, perform $0$-surgery on two of the components; then the third component is the Borromean knot $B_1 \subset \#^2S^1 \times S^2$, and we write $B_g$ for the $g$-fold connect sum of copies of $B_1$.  Second, on the Hopf link, perform surgery with coefficient $-p/q$ on one component; then the other component is the $O$-knot $O_{p,q} \subset L(p,q)$.  We always assume that $0 < q < p$ and that $p$ and $q$ are relatively prime.  
Let $K = B_g \#_{\ell =1}^n O_{p_{\ell}, q_{\ell}} \subset Y = \#_{i=1}^g \#^2 S^1 \times S^2 \#_{\ell =1}^n L(p_{\ell}, q_{\ell})$.  Then if 
$$\sum_{\ell=1}^n \frac{q_{\ell}}{p_{\ell}} \in \mathbb{Z},$$
$K$ admits a longitude $\la$ (unique up to isotopy) such that $\la$-framed surgery on $K$ yields a manifold $Y_{\la}(K)$ with odd Betti number, which is therefore a mapping torus of the type we are interested in.  The base orbifold will have genus $g$, and the genus $g_{\Sigma}$ of the Riemann surface being mapped will be given by 
$$g_{\Sigma} = 1 + d\left(g - 1 + \frac{1}{2}\sum_{\ell=1}^n \left(1 - \frac{1}{p_{\ell}} \right)\right),$$
where $d$ is the order of $K$ in $H_1(Y)$.  It is not hard to see that $d$ is the least common multiple of the $p_{\ell}$ values.  

We can take a Seifert surface for $dK$ in $Y$, and then cap this off in the obvious manner in $Y_{\la}(K)$ (hereafter denoted as $Y_0$) to get an element $[\widehat{dS}] \in H_2(Y_0)$.  Such an element will not be unique, but all results that make reference to this class are true for all such choices.

We now explain our main results more precisely.  For $Y_0$ as above, we compute $\underline{HF}^+(Y_0)$, where the underscore denotes totally twisted coefficients (as described in Section 8 of \cite{OSPA}).  
First, let us describe $\underline{HF}^+\left(Y_0\right)$ crudely, neglecting some of the finer structure (e.g. $U$-actions, relative gradings).

Let $\mu$ be a meridian of $K$, thought of as an element of $H_1(Y_0)$.
If $\m{t}_0$ is a \sst\ structure on $Y_0$ for which $c_1(\m{t}_0)$ goes to an element of $\mathbb{Q}\cdot\mbox{PD}[\mu]$ in $H^2(Y_0; \mathbb{Q})$, we say that $\m{t}_0$ is \emph{$\mu$-torsion}.  Let $\mu\mathcal{T}_K$ denote the set of $\mu$-torsion elements of $\spc{Y_0}$.

For $D,E \in \mathbb{Z}$, let $\mathcal{N}(D,E)$ denote the number of solutions $(i_1, \ldots, i_n)$ to the equation
$$\sum_{\ell=1}^n \frac{i_{\ell}}{p_{\ell}} = \frac{E}{d} + (D-g+1)$$
for which $0\leq i_{\ell} < p_{\ell}$ for all $\ell$.  

The wider pattern alluded to above is given by the following.

\begin{theorem}
\label{thm:1.1} 
There are groups $\Omega^g(k)$, which depend only on $k$ and $g$ (and not on $Y_0$), and which are trivial for $k < 1$ and $k > 2g - 1$, such that the following holds.
For $0 \leq i \leq g_{\Sigma}-2$, let 
$$\underline{HF}^+\left(Y_0, [-i]\right) = \bigoplus_{\left\{\m{t} \in \mu\mathcal{T}_K | \langle c_1\left(\m{t}\right), [\widehat{dS}] \rangle = 2(g_{\Sigma}-i-1) \right\}} \underline{HF}^+\left(Y_0, \m{t}\right),$$
where the summands are thought of as ungraded $\mathbb{Z}[H^1(Y_0)]$-modules (and we forget about the $U$-action).
Then we have a short exact sequence  
$$ 0\rightarrow \left(\bigoplus_{k}\left(\Omega^g(k) \right)^{b_{i,k}} \right) \otimes \mathbb{Z}[T, T^{-1}] \rightarrow \underline{HF}^+\left(Y_0, [-i]\right) \rightarrow  \mathbb{Z}^{a_i} \otimes \mathbb{Z}[T, T^{-1}] \rightarrow 0 $$ where 
$$a_i = \sum_{D \in \mathbb{Z}} \mathrm{max}\{0,-D, \lfloor \frac{g-D+1}{2} \rfloor\}\cdot \mathcal{N}(D,i)$$
and
$$b_{i,k} = \mathcal{N}(k-g,i),$$
where $T \in \mathbb{Z}[H^1(Y_0)]$ represents the Poincar{\'e} dual of a fiber of $Y_0$ thought of as a mapping torus.  If we consider the terms of the above sequence only as abelian groups rather than as $\mathbb{Z}[H^1(Y_0)]$-modules, then the sequence splits.
\end{theorem}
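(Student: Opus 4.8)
The plan is to combine two ingredients: the twisted surgery formula of the paper, which expresses $\underline{HF}^+(Y_0,\m{t})$ for a $\mu$‑torsion \sst\ structure $\m{t}$ as the homology of a mapping cone built over $\mathbb{Z}[T,T^{-1}]$ from the totally twisted knot Floer complex $\underline{CFK}^{\infty}(Y,K)$; and an explicit determination of that complex for $K=B_g\#_{\ell}O_{p_{\ell},q_{\ell}}$. For the latter I would invoke the K\"unneth formula for connected sums together with the known descriptions of the two building blocks: $\underline{CFK}^{\infty}(\#^{2g}S^1\times S^2,B_g)\simeq\Lambda^{\ast}(\mathbb{Z}^{2g})\otimes\mathbb{Z}[U,U^{-1}]$ with vanishing differential, the summand $\Lambda^{g+j}(\mathbb{Z}^{2g})$ sitting in Alexander grading $j$; and, for each lens‑space factor, a complex supported in the $p_{\ell}$ torsion \sst\ structures of $L(p_{\ell},q_{\ell})$ which in the structure labelled $i_{\ell}$ is a single rank‑one $U$‑tower placed in an Alexander grading governed by $p_{\ell},q_{\ell},i_{\ell}$ via the usual recursion for lens spaces. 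Assembling, in the \sst\ structure of $Y$ indexed by $(i_1,\dots,i_n)$ the complex is $\Lambda^{\ast}(\mathbb{Z}^{2g})\otimes\mathbb{Z}[U,U^{-1}]$ with the Alexander grading translated by $\sum_{\ell}i_{\ell}/p_{\ell}$, exactly the quantity occurring in the defining equation of $\mathcal{N}(D,E)$.

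I would then feed this complex into the surgery formula. For a $\mu$‑torsion $\m{t}$ below the top levels --- precisely the range $0\le i\le g_{\Sigma}-2$ of the theorem --- the formula presents $\underline{HF}^+(Y_0,\m{t})$ as the homology of the cone of a map $v+T\,h$ between two complexes built from truncations of the above $CFK^{\infty}$; since that complex has trivial differential, $H_{\ast}$ of the cone fits into a short exact sequence $0\to\operatorname{coker}(v+T h)\to\underline{HF}^+(Y_0,\m{t})\to\ker(v+T h)\to 0$. Because $v$ and $h$ are the (twisted) standard projection maps, a direct computation of this kernel and cokernel for $\Lambda^{\ast}(\mathbb{Z}^{2g})\otimes\mathbb{Z}[U,U^{-1}]$ produces a free $\mathbb{Z}[T,T^{-1}]$‑module of rank $\max\{0,-D,\lfloor(g-D+1)/2\rfloor\}$ for the kernel, where $D$ is the Alexander‑grading parameter attached to $\m{t}$, and a group depending only on $g$ and $D$ for the cokernel. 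One then \emph{defines} $\Omega^g(k)$ to be this cokernel group when $D=k-g$; it depends only on $g$ and $k$, vanishes outside $1\le k\le 2g-1$, and recovers the known answer for $\Sigma_g\times S^1$ when $\phi$ is trivial, so that $b_{i,k}=\mathcal{N}(k-g,i)$. Summing the short exact sequences over the $\mu$‑torsion $\m{t}$ at level $[-i]$, the number of which with parameter $D$ is by construction $\mathcal{N}(D,i)$, yields the stated sequence with $a_i=\sum_{D}\max\{0,-D,\lfloor(g-D+1)/2\rfloor\}\,\mathcal{N}(D,i)$.

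The bulk of the work, and the main obstacle, is the bookkeeping that forces the two preceding steps to mesh: matching each $\mu$‑torsion \sst\ structure on $Y_0$ with the pair consisting of a \sst\ structure on $Y$ and an Alexander level, and tracking every grading shift --- especially the lens‑space correction terms --- so that the level condition $\langle c_1(\m{t}),[\widehat{dS}]\rangle=2(g_{\Sigma}-i-1)$ is converted \emph{exactly} into the equation $\sum_{\ell}i_{\ell}/p_{\ell}=E/d+(D-g+1)$ defining $\mathcal{N}$, together with checking that no hidden connecting homomorphism in the mapping cone contributes more. Finally the splitting assertion is immediate: $\mathbb{Z}^{a_i}\otimes\mathbb{Z}[T,T^{-1}]$ is a free abelian group, so $\operatorname{Ext}^1_{\mathbb{Z}}$ out of it vanishes and the sequence splits over $\mathbb{Z}$ --- even though, when $b_1(Y_0)>1$, this module is not projective over $\mathbb{Z}[H^1(Y_0)]$ and the sequence need not split there.
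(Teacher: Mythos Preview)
Your proposal has a genuine gap at its central computational step: you assert that the twisted complex $\underline{CFK}^{\infty}(\#^{2g}S^1\times S^2,B_g)$ has vanishing differential. This holds for the \emph{untwisted} complex, but is false for the totally twisted one that the surgery formula of Theorem~\ref{thm:1.6} uses. Propositions~\ref{thm:7.2} and~\ref{thm:7.4} compute the twisted complex explicitly: as a bigraded $R$-module it is $\bigoplus_i \Lambda^*\{i, i-\eta\}$, but the differential is $V+H$, where $V$ and $H$ fit into the nontrivial exact resolutions $0 \to \Lambda^0 \to \cdots \to \Lambda^{2g} \to \mathbb{Z} \to 0$ of the trivial $R$-module. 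This nonvanishing differential is precisely what produces the $\Omega^g(k)$: they are \emph{defined} (Definition~\ref{def:8.5}) as $\mathrm{Im}(VH) \cap \Lambda^{2g-k}$, and would be identically zero under your hypothesis. The whole of Section~8 is devoted to computing the homology of the hook-shaped region $\underline{C}^+_{\xi}$ with respect to $V+H$, and the $\Omega^g(k)$ emerge exactly at the ``$k$-corners'' where the hook turns (Proposition~\ref{thm:8.6}).

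There is a related confusion about which side of the kernel/cokernel split each piece lands in. The target of $v+h$ is $\bigoplus_i \underline{CF}^+(Y,\m{t}_i)$, whose twisted homology in each torsion summand is a single copy of $\mathcal{T}^+$; the cokernel of the induced map on homology contributes only the $\mathcal{T}^+$ tower when $\m{t}_0$ is torsion (Proposition~\ref{thm:8.9}), and nothing in the range $0\le i\le g_\Sigma-2$. Both the free part $\mathbb{W}_*$ and the $\Omega^g(k)$ summands live inside the \emph{kernel} term, via a further short exact sequence $0\to\Omega_s\to H^{(l)}_s\to\mathbb{W}_s\to 0$ (Proposition~\ref{thm:8.8}). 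So the sequence of Theorem~\ref{thm:1.1} is not the mapping-cone kernel/cokernel sequence itself but a refinement of its kernel term, and your proposed identification of $\Omega^g(k)$ as a cokernel cannot be made to work. Your splitting argument at the end is fine and matches Corollary~\ref{thm:1.5}.
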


Together with conjugation invariance, this describes $\underline{HF}^+$ for all those \sst\ structures where it is non-trivial, aside from the torsion ones. In fact, the same description works for the torsion structures, with $i = g_{\Sigma} - 1$, except that there is an extra summand, $\mathcal{T}^+$, which is described below.  

The following Corollary gives the mentioned connection with symplectic Floer homology.

\begin{corollary}
\label{thm:1.2} 
Let $\phi:\Sigma \rightarrow \Sigma$ be a periodic diffeomorphism of a closed Riemann surface, whose mapping class is not trivial.  Let its mapping torus be $Y_0$, and set $R = \mathbb{Z}[H^1(Y_0)]$.
Then, as $R$-modules, 
$$\underline{HF}^+\left(Y_0, [-i]\right) \cong 
\left\{ \begin{array}{ll}
R, & i=0\\
R^{\Lambda(\phi)}, & i=1\\
\end{array} \right.$$
where $\Lambda$ denotes Lefschetz number.  Furthermore, we have:
the $U$-action is trivial in each; for $i=1$, each copy of $R$ lies in a different \sst\ structure, and each copy of $R$ is supported in a single relative $\mathbb{Z}$-grading; and if $T$ represents the Poincar{\'e} dual of a fiber in $R$, then $T$ lowers this relative grading by $2d(g_{\Sigma} - 1 - i)$.  
\end{corollary}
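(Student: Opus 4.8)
The plan is to deduce Corollary~\ref{thm:1.2} from Theorem~\ref{thm:1.1} by evaluating the integers $a_i$ and $b_{i,k}$ at $i=0$ and $i=1$, supplemented by a Lefschetz-fixed-point count. First I would translate the Seifert presentation into the orbifold data of $\phi$: writing $Y_0=Y_\la(K)$ with $K=B_g\#_\ell O_{p_\ell,q_\ell}$ as in the introduction, $g$ is the genus of the underlying surface of the quotient orbifold $\Sigma/\langle\phi\rangle$, the $p_\ell$ are the orders of its cone points, and $d=\mathrm{lcm}_\ell\,p_\ell$ is the order of $\phi$; since the mapping class of $\phi$ is nontrivial, $d\geq 2$. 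Since $\langle c_1(\m{t}),[\widehat{dS}]\rangle=2(g_\Sigma-i-1)$, the level $[0]$ is the top one and $[-1]$ the second-to-top.

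Next I would simplify the formula for $a_i$. The weight $\max\{0,-D,\lfloor(g-D+1)/2\rfloor\}$ is positive only for $D\leq g-1$ and equals $1$ at $D=g-1$; and $\mathcal{N}(D,i)=0$ for $D<g-1$, since for $i\in\{0,1\}$ and $d\geq 2$ the defining equation $\sum_\ell i_\ell/p_\ell=i/d+(D-g+1)$ then has non-negative left side and strictly negative right side. Hence $a_i=\mathcal{N}(g-1,i)$. The equation $\sum i_\ell/p_\ell=0$ has only the zero solution, so $a_0=1$; and $\sum i_\ell/p_\ell=1/d$, i.e.\ $\sum i_\ell(d/p_\ell)=1$ with each $d/p_\ell$ a positive integer, forces a single $i_\ell=1$ with $p_\ell=d$ and the rest zero, so $a_1=\#\{\ell:p_\ell=d\}$. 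For the left-hand term of the exact sequence, $b_{i,k}=\mathcal{N}(k-g,i)$ vanishes unless $k\geq 2g-1$, while $\Omega^g(k)=0$ for $k>2g-1$; so only the index $k=2g-1$ can survive, and here one invokes $\Omega^g(2g-1)=0$ — equivalently, that the kernel term of the exact sequence of Theorem~\ref{thm:1.1} is absent in these cases — which is the sole input beyond the literal statement of that theorem and reflects that the $\Omega^g(k)$ are reduced-type invariants that vanish at the extremal index. Thus the exact sequence identifies $\underline{HF}^+(Y_0,[-i])$, as a $\mathbb{Z}[H^1(Y_0)]$-module, with the module on its right.

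Then I would match $a_1$ with $\Lambda(\phi)$: $\phi$ is an orientation-preserving diffeomorphism of finite order $d\geq 2$, so at any fixed point its derivative is a nontrivial finite-order rotation, of local Lefschetz index $+1$, whence $\Lambda(\phi)=\#\mathrm{Fix}(\phi)$; and $x\in\Sigma$ is fixed by $\phi$ exactly when its isotropy in $\langle\phi\rangle\cong\mathbb{Z}/d$ is the whole group, i.e.\ when $x$ is the (unique) preimage of an order-$d$ cone point of $\Sigma/\langle\phi\rangle$, so $\#\mathrm{Fix}(\phi)=\#\{\ell:p_\ell=d\}=a_1$. The remaining assertions are then read off from the mapping cone underlying Theorem~\ref{thm:1.1}: that $\underline{HF}^+(Y_0,[-i])$ is free of rank $a_i$ over $R$ (one copy of $R$ per \sst\ structure when $i=1$; for $i=0$, $a_0=1$, which also recovers the twisted fiberedness statement for $Y_0$), that each copy of $R$ is relatively $\mathbb{Z}$-graded with $T$ lowering this grading by $2d(g_\Sigma-1-i)$, and that $U$ acts trivially (forced once the grading shift is known, since $2d(g_\Sigma-1-i)\geq 4>2$ for $d\geq 2$ and $i\leq g_\Sigma-2$, leaving no room for a nonzero degree $-2$ map; for $g_\Sigma=2$, where $i=1$ is the torsion level, one uses the torsion variant of Theorem~\ref{thm:1.1} instead). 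I expect the main obstacle to be exactly this last step together with the $\Omega^g(2g-1)$ point: carefully transporting the $H^1(Y_0)$-module structure, the splitting into individual \sst\ summands, and the relative gradings through the proof of Theorem~\ref{thm:1.1}, and verifying that the kernel term of its exact sequence genuinely vanishes here so that $\underline{HF}^+(Y_0,[-i])$ is free of rank $a_i$.
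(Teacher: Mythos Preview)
Your computation of $a_i$ and $b_{i,k}$ is correct and matches the paper's approach: for $i\in\{0,1\}$ and $d\geq 2$ one indeed gets $a_0=1$, $a_1=\#\{\ell:p_\ell=d\}$, and the only surviving index on the left of the exact sequence is $k=2g-1$, with $b_{i,2g-1}=a_i$. The Lefschetz identification $\Lambda(\phi)=\#\{\ell:p_\ell=d\}$ is also exactly what the paper does. So for $g=0$ your argument is essentially the paper's.

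The gap is the claim that $\Omega^g(2g-1)=0$. This is false for $g\geq 1$: from Definition~\ref{def:8.5} and the exact rows of Proposition~\ref{thm:7.4}, $\Omega^g(2g-1)=\mathrm{Im}\,VH\cap\Lambda^1$ is nontrivial (for instance, if $\Omega^1(1)$ vanished then the exact sequence in the paper's proof would force $\mathbb{Z}[H^1(\#^2 S^1\times S^2)]\cong\mathbb{Z}$). The paper does \emph{not} make the kernel term disappear. Instead, it observes---by going back into the proof of Theorem~\ref{thm:1.4} rather than using only the statement of Theorem~\ref{thm:1.1}---that each $\mathbb{Z}$ summand on the right and its companion $\Omega^g(2g-1)$ subgroup on the left sit in the same relative grading and arise from a single natural short exact sequence
\[
0\longrightarrow \Omega^g(2g-1)\longrightarrow \mathbb{Z}\bigl[H^1(\#^{2g}S^1\times S^2)\bigr]\longrightarrow \mathbb{Z}\longrightarrow 0.
\]
Tensoring with $\mathbb{Z}[T,T^{-1}]$, this identifies each pair with one copy of $R=\mathbb{Z}[H^1(Y_0)]$, and taking $a_i$ copies gives $R^{a_i}$. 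Thus the ``extremal vanishing'' heuristic you invoke is precisely wrong here: the $\Omega^g(2g-1)$ term contributes the augmentation ideal needed to promote $\mathbb{Z}\otimes\mathbb{Z}[T,T^{-1}]$ to the full $R$. This also means that the remaining assertions (single grading support, $U$-action, $T$-shift) cannot be read off from Theorem~\ref{thm:1.1} alone; the paper extracts them from the finer description of Theorem~\ref{thm:1.4} and the well-group picture, not from a grading-gap argument as you propose.
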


We extend this to result to the third-to-highest level in Section 9.
In particular, it is expected that this level is related to the periodic Floer homology developed by Hutchings \cite{H}.  

On a different front, it is also interesting to compare the wider structure of Theorem \ref{thm:1.1} with the computations of \cite{MOY}, which tackles the analogous computation for Seiberg-Witten-Floer homology and finds results that have at least some similarity to ours.  On yet another front, in \cite{Wu1} and \cite{Wu2}, computations of perturbed Floer homology -- that is, Heegaard Floer homology with a special type of twisted coefficients -- are carried out for some mapping tori, including $S^1 \times \Sigma_g$. 

The groups $\Omega^g(k)$ are given, in Definition \ref{def:8.5}, in terms of the twisted filtered chain complex of $B_g$.
We do not describe the structure of these groups explicitly, leaving them as mystery subgroups.   We would like a better description of them, but their presence ends up as only a minor distraction here.  In many instances, they don't show up at all: when we look at a mapping torus with first Betti number 1; in Corollary \ref{thm:1.2}; and generally, in many relative grading levels of any space of the type we examine. 

Now, we describe the full structure of $\underline{HF}^+\left(Y_0\right)$ (to the extent that we can), from which we extract Theorem \ref{thm:1.1} and Corollary \ref{thm:1.2}.  To do this, we have to introduce some machinery.

First, we introduce some notation to keep track of $\mu$-torsion \sst\ structures.
Define
$$\widetilde{\mathcal{MT}_K} = \mathbb{Z} \times \bigoplus_{\ell=1}^n \mathbb{Z}/p_{\ell}\mathbb{Z}.$$
We write elements of $\widetilde{\mathcal{MT}_K}$ as pairs $(Q;r_1, \ldots, r_n)$, where $r_{\ell}$ is an integer satisfying $0 \leq r_{\ell} \leq p_{\ell}-1$; we usually shorten this and just write elements as $(Q;r)$ (with $r$ understood as denoting an $n$-tuple), or simply as $A$.  Let
\begin{equation}
\label{eq:1}
S\ell (Q;r) = \sum_{\ell=1}^n \left(1 - \frac{1}{p_{\ell}}\right) + 2 \left(Q + \sum_{\ell=1}^n \frac{r_{\ell}}{p_{\ell}}\right).
\end{equation}

Define an equivalence relation $\sim$ on $\widetilde{\mathcal{MT}_K}$, by setting $(Q;r) \sim (Q';r')$ if and only if $S\ell (Q;r) = S\ell (Q';r')$ and $r$ and $r'$ descend to the same element of the quotient group $\left(\bigoplus_{\ell=1}^n \mathbb{Z}/p_{\ell}\mathbb{Z}\right) / \mathbb{Z}(q_1 \oplus \ldots \oplus q_n)$.  Then, let 
$$\mathcal{MT}_K = \widetilde{\mathcal{MT}_K}/\sim ;$$
we write equivalence classes as $[A]$.
The function $S\ell$ obviously extends to $\mathcal{MT}_K$, as does the function $\epsilon: \widetilde{\mathcal{MT}_K} \rightarrow \mathbb{Q}$ given by
$$\epsilon(A) = g_{\Sigma}-1 - \frac{d}{2}\cdot S\ell(A).$$

\begin{lemma}
\label{thm:1.3} 
There is a bijective map 
$$\theta_K: \mathcal{MT}_K \rightarrow \mu\mathcal{T}_K,$$
which satisfies 
\begin{equation}
\label{eq:2}
S\ell([A]) = -\frac{\langle c_1\left(\theta_K([A])\right), [\widehat{dS}] \rangle}{d}
\end{equation}
for $[A] \in \mathcal{MT}_K$.
\end{lemma}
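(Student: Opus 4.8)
First I would set up explicit coordinates on both sides of the desired bijection. On the source side, $\widetilde{\mathcal{MT}_K} = \mathbb{Z} \times \bigoplus_\ell \mathbb{Z}/p_\ell\mathbb{Z}$ has a clear interpretation: the $\mathbb{Z}$-factor should correspond to "how many times one wraps around" the $S^1 \times S^2$ directions coming from $B_g$ (or equivalently, to $\langle c_1, [\widehat{dS}]\rangle$ suitably normalized), and each $\mathbb{Z}/p_\ell\mathbb{Z}$ factor should index the $p_\ell$ many \sst\ structures on the lens-space summand $L(p_\ell, q_\ell)$ that are relevant after the surgery. On the target side, $\mu\mathcal{T}_K \subset \spc{Y_0}$ consists of those \sst\ structures whose $c_1$ is (rationally) a multiple of $\mathrm{PD}[\mu]$. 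The standard description of $H^2(Y_0)$ from the surgery exact sequence — $Y_0$ is obtained by $\lambda$-framed surgery on $K \subset Y$, and $H_1(Y) = \mathbb{Z}^{2g} \oplus \bigoplus_\ell \mathbb{Z}/p_\ell\mathbb{Z}$ — lets me identify the $\mu$-torsion locus concretely. So step one is: write down $H_1(Y_0; \mathbb{Z})$ and its torsion subgroup via Mayer–Vietoris / the surgery formula, and pin down which classes are $\mu$-torsion. The point is that $\mu$ has infinite order in $H_1(Y_0)$ precisely because the surgered manifold has larger first Betti number, so "$c_1 \in \mathbb{Q}\cdot\mathrm{PD}[\mu]$" is a codimension condition that cuts the \sst\ structures down to something parametrized, essentially, by $\mathbb{Z}$ (the $\mu$-direction) times a finite group (the torsion transverse to $\mu$).

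Second, I would construct the raw map $\widetilde{\theta}_K : \widetilde{\mathcal{MT}_K} \to \mu\mathcal{T}_K$ before passing to the quotient. The natural thing is to take $(Q; r_1,\dots,r_n)$, form the corresponding \sst\ structure on the connect sum $Y$ (using the $O_{p_\ell,q_\ell}$-knot \sst\ structure indexed by $r_\ell$ on each lens summand, and the "$Q$-th" one along $B_g$), and push it through the surgery. One then computes $\langle c_1, [\widehat{dS}]\rangle$ of the result: capping off the Seifert surface for $dK$ means evaluating $c_1$ on a class built from a $d$-fold "sum" of Seifert surfaces, and the contribution of each $L(p_\ell,q_\ell)$ summand is controlled by $r_\ell/p_\ell$ — this is exactly where the terms $2\left(Q + \sum_\ell r_\ell/p_\ell\right)$ and $\sum_\ell(1 - 1/p_\ell)$ in formula \eqref{eq:1} come from (the $\sum(1-1/p_\ell)$ piece being the orbifold Euler-characteristic correction, consistent with the stated formula for $g_\Sigma$). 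So I would verify \eqref{eq:2} for $\widetilde{\theta}_K$ by a direct $c_1$-computation, using additivity of $c_1$ under connect sum and the known evaluation of $c_1$ on the capped surface in each building-block surgery (the Borromean and $O$-knot pieces, whose twisted knot Floer homology and \sst-structure bookkeeping are presumably recalled earlier or in the cited Ozsváth–Szabó papers).

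Third, I would show $\widetilde{\theta}_K$ descends to $\sim$ and that the descended map is a bijection. That $\widetilde\theta_K$ is constant on $\sim$-classes is forced: the two conditions defining $\sim$ are exactly "same value of $S\ell$" (hence, by the already-verified \eqref{eq:2}, same $\langle c_1, [\widehat{dS}]\rangle$) and "same image of $r$ in $\left(\bigoplus \mathbb{Z}/p_\ell\right)/\mathbb{Z}(q_1\oplus\cdots\oplus q_n)$" — and the latter quotient is precisely the recipe for how the individual lens-space \sst\ structures get identified once we do the surgery tying the $O$-knots together (the vector $(q_1,\dots,q_n)$ being the "direction" of the surgery relation, since $\sum q_\ell/p_\ell \in \mathbb{Z}$ is the hypothesis making the longitude exist). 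Conversely, injectivity amounts to: a $\mu$-torsion \sst\ structure on $Y_0$ is determined by $\langle c_1,[\widehat{dS}]\rangle$ together with its class modulo the image of $\mu$ — i.e. the $\mu$-torsion locus is a torsor over $\mathbb{Z}\oplus\left(\text{that finite quotient}\right)$ — and surjectivity amounts to showing every $\mu$-torsion class arises from some $(Q;r)$, which follows from the surgery description of $H^2(Y_0)$ once the parametrization in step one is in hand.

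**Main obstacle.** The bijection and the grading formula are each "just" unwinding definitions, but the real work — and the step I expect to be fussiest — is the precise bookkeeping of \sst\ structures through the connect-sum-plus-surgery construction: correctly identifying $H_1(Y_0)$, its torsion, the $\mu$-torsion sublocus, and then matching the combinatorial quotient $\left(\bigoplus \mathbb{Z}/p_\ell\right)/\mathbb{Z}(q_1\oplus\cdots\oplus q_n)$ to the actual identifications of \sst\ structures induced by the $\lambda$-surgery, with all the $q_\ell$'s and $p_\ell$'s (and the order $d$) tracked consistently. Getting the normalization in \eqref{eq:2} exactly right — including the additive constant $\sum_\ell(1-1/p_\ell)$ and the sign — will require care with orientation conventions and with how $[\widehat{dS}]$ is built; everything else should then drop out.
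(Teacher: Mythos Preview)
Your plan is essentially the paper's approach. The paper also routes the correspondence through the connect-sum description of $K$: it uses the relative \sst\ structures $\xi^j_{B_g}$ and $\xi^{r_\ell}_{p_\ell,q_\ell}$ on the summands (computed in Section~7), the K\"unneth-type additivity of $q_K$ under connect sum, and Proposition~\ref{thm:2.9} (proved via Kirby calculus in the Appendix) to identify $q_K$ with $-\langle c_1(\m{t}_0),[\widehat{dS}]\rangle/d$. The only cosmetic difference is direction: the paper builds $\theta_K^{-1}$ first, starting from a $\mu$-torsion $\m{t}_0$, extending over the cobordism $W_0$, restricting to $Y$ to read off $(r_1,\dots,r_n)$, and then solving for $Q$ so that \eqref{eq:1}--\eqref{eq:2} hold; you propose to go the other way. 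Either way the substantive content is exactly what you flagged as the main obstacle --- the \sst\ bookkeeping through the surgery, including why the vector $(q_1,\dots,q_n)$ is precisely the ambiguity to quotient by --- and the paper handles it with the same ingredients you list.
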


Next, to describe our answers neatly, we use (a slightly altered version of) the concept of wells introduced in \cite{OSRS}, based on ideas in \cite{N} and \cite{OSP3}.   In our version, for a function $H:\frac{1}{2}\mathbb{Z} \rightarrow \mathbb{Z}$ and an \textbf{odd} integer $n$, we define a \emph{well at height $n$ for $H$} to be a pair of integers $(i, j)$, $i<j$, which satisfy $n < H(i)$, $n < H(j)$, and $n \geq H(s)$ for $i + \frac{1}{2} \leq s \leq j - \frac{1}{2}$.  We also write such a pair as $(i, j)_n$ to denote the height of the well.  Let $W_n(H)$ be the set of wells at height $n$ for $H$;  and let $\mathbb{W}_n(H)$ be the free abelian group generated by $W_n(H)$.  Then we define 
$$ \mathbb{W}_*(H) = \underset{l \in \mathbb{Z}}{\oplus} \mathbb{W}_{2l+1}(H). $$
We write $(i', j')_{n-2} < (i,j)_n$ if $(i', j')_{n-2} \in W_{n-2}(H)$ and 
$i \leq i' < j' \leq j$.
Then we can define an action of $U$ on $\mathbb{W}_*(H)$ by
$$ U\cdot (i,j)_n = \sum_{\{w \in W_{n-2}(H)| w < (i,j)_n\}} w,$$
and extending linearly.  The movitation for the name ``wells'' can be seen by plotting the graph of $H$.  We endow $\mathbb{W}_*(H)$ with the $\mathbb{Z}$-grading given by height.

The well functions we need are given as follows.
For $A \in \widetilde{\mathcal{MT}_K}$, we define a function
$$\eta_A(x) = \sum_{\ell=1}^n \left\{ \frac{q_{\ell}x - r_{\ell}}{p_{\ell}} \right\} - \frac{1}{d}\epsilon(A) + g -1, $$
where the curly braces denote fractional part, $\{x\} = x -\lfloor x \rfloor$.
We then define a function $G_A:\frac{1}{2}\mathbb{Z} \rightarrow \mathbb{Z}$ by
$$G_A(0) = 1,$$
$$G_A(x) = G_A(x-1) + 2\eta_A(x) \mbox{ for } x \in \mathbb{Z},$$
$$G_A\left(x+ \frac{1}{2}\right) = \frac{1}{2}\left(G_A(x) + G_A(x+1) \right) \mbox{ for } x \in \mathbb{Z}.$$
Finally, define $F_A:\frac{1}{2}\mathbb{Z} \rightarrow \mathbb{Z}$ by
$$F_A(x) = G_A(x) + \left\{ \begin{array}{ll} g, & x \in \frac{1}{2} + \mathbb{Z} \\ 0, & x \in \mathbb{Z}. \end{array} \right. $$
The function $F_A$, very roughly, describes relative gradings of certain elements in a certain set of chain complexes, this set being parametrized by $x$.

Recall that $\mathcal{T}^+_{(s)}$ denotes the $\mathbb{Z}[U]$-module $\mathbb{Z}[U, U^{-1}]/U\cdot\mathbb{Z}[U]$, equipped with a $\mathbb{Z}$-grading so that $U^{-i}$ lies in level $s + 2i$ for $i \geq 0$.

\begin{theorem}
\label{thm:1.4}
For $A \in \widetilde{\mathcal{MT}}_K$, let
$$ B_{s}(F_A) \cong \bigoplus_{ \{p\in \mathbb{Z} | F_A(p+\frac{1}{2}) = s + 1\} } \Omega^g\left(F_A(p+\frac{1}{2}) - F_A(p)\right),$$
and let $B_*(F_A) = \bigoplus_{s \in \mathbb{Z}} B_s(F_A)$ (where $\Omega^g(k)$ is the group from the statement of Theorem \ref{thm:1.1}).  Equip this group with trivial $U$-action.
Let $b_A \in 2\mathbb{Z} \cup \{\infty\}$ be the least even upper bound of the function $F_A$. 

Then, if $\m{t}_0 = \theta_K([A])$ is $\mu$-torsion, the relative $\mathbb{Z}$-grading on 
$\underline{HF}^+(Y_0, \m{t}_0)$ lifts to an absolute $\mathbb{Z}$-grading, such that there are short exact seqeunces of graded $\mathbb{Z}[H^1(Y_0)]\otimes \mathbb{Z}[U]$-modules
$$ 0 \rightarrow B_*(F_A) \oplus \mathcal{T}^+_{(b_A)} \rightarrow \underline{HF}^+(Y_0, \m{t}_0) \rightarrow \mathbb{W}_*(F_A) \rightarrow 0 $$
if $b_A \ne \infty$ and
$$ 0 \rightarrow B_*(F_A) \rightarrow \underline{HF}^+(Y_0, \m{t}_0) \rightarrow \mathbb{W}_*(F_A) \rightarrow 0 $$
if $b_A = \infty$; and otherwise $\underline{HF}^+(Y_0, \m{t}_0)$ is trivial.  Furthermore, $b_A \ne \infty$ precisely when $\theta_K([A])$ is torsion. 

The function $F_A$ is the sum of a periodic function with a linear one.  
If we think of $Y_0$ as a mapping torus, and $T \in \mathbb{Z}[H^1(Y_0)]$ represents the Poincar{\'e} dual of a fiber, then $T$ acts on $\mathbb{W}_*(F_A)$ by moving a well to the corresponding one a period to the right. 
\end{theorem}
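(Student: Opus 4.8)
The plan is to build the short exact sequences by the standard surgery-exact-triangle / mapping-cone technology of Ozsv\'ath--Szab\'o, adapted to totally twisted coefficients, applied to the knot $K \subset Y$ together with its longitude framing. The first step is to set up the twisted analogue of the large-surgery formula: since $\la$-surgery increases $b_1$, the relevant surgery coefficient is ``$\infty$-like'' in the sense that $\underline{HF}^+(Y_0, \m{t}_0)$ is computed from the twisted knot filtration on $\underline{CF}^+(Y)$ by a (twisted) mapping cone $X^+(\la)$, whose pieces are the subquotient complexes $\underline{A}_s$ and $\underline{B}_s$ indexed by $s \in \underline{\mathbb{Z}}$-worth of relative \sst\ structures; passing to $K = B_g \#_\ell O_{p_\ell,q_\ell}$ means these complexes decompose according to the connect-sum / K\"unneth formula for twisted knot Floer homology, which is where the parameters $(Q;r)$, the functions $\eta_A$, $G_A$, $F_A$ and the count $\mathcal{N}$ will come from. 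Lemma \ref{thm:1.3} then identifies which $\m{t}_0$ one lands in, via $S\ell$ and $[\widehat{dS}]$.

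Next I would compute the homology of the mapping cone. The key structural input is that, because of the totally twisted coefficients, the ``$B$-type'' complexes contribute a $\mathbb{Z}[T,T^{-1}]$-module that is free (or at least that its periodicity in the $\underline{\mathbb{Z}}$-index is implemented exactly by multiplication by $T$, the PD of the fiber), while the horizontal/vertical maps in the cone are the twisted versions of the maps $\hat v_s, \hat h_s$. For a longitude framing on a knot whose surgery has larger $b_1$, these maps have a very controlled behavior: on the ``tower'' part $\mathcal{T}^+$ they are eventually isomorphisms from one side and zero from the other, so the cone homology splits into (i) a $\mathcal{T}^+_{(b_A)}$ coming from the stable tower, present exactly when $F_A$ is bounded above (equivalently when $\m{t}_0$ is torsion, equivalently $b_A \ne \infty$), (ii) a free part $B_*(F_A)$ built from the reduced twisted knot homology of $B_g$ in the ``grading jumps'' of $F_A$ — this is exactly the role of $\Omega^g(k)$ via Definition \ref{def:8.5} — and (iii) a quotient $\mathbb{W}_*(F_A)$ measuring where the function $F_A$ pokes above successive odd heights; the ``wells'' bookkeeping is precisely the combinatorics of the kernel/cokernel of the cone differential, and the $U$-action on wells matches the connecting maps between adjacent relative gradings. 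Assembling (i)–(iii) into a short exact sequence, and checking that it is one of graded $\mathbb{Z}[H^1(Y_0)]\otimes\mathbb{Z}[U]$-modules, gives the two displayed sequences. The final sentence — that $F_A$ is periodic-plus-linear and that $T$ translates a well by one period — follows by tracking how the fiber class acts on the $\underline{\mathbb{Z}}$-grading of the cone and comparing with the explicit formula for $\eta_A$, which is manifestly periodic in $x$ with period $d$ (the $\lcm$ of the $p_\ell$) up to the linear term $-\frac1d\epsilon(A) + g - 1$.

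The main obstacle, I expect, is not any single homological-algebra step but rather bookkeeping the interaction between three gradings at once in the twisted setting: the Alexander-type filtration index $s$ of the mapping cone, the internal $\mathbb{Z}$-grading of $\underline{HF}^+$, and the $H^1(Y_0)$-grading recorded by powers of $T$. In particular one must verify that the naive relative grading on the cone descends to a well-defined \emph{absolute} $\mathbb{Z}$-grading on each $\underline{HF}^+(Y_0,\m{t}_0)$, which requires pinning down the grading shifts in the twisted surgery formula (an honest computation, but one that has to be done carefully because the untwisted normalization does not directly apply when $b_1$ jumps), and that under this normalization $T$ lowers the grading by exactly $2d(g_\Sigma - 1 - i)$ as claimed in Corollary \ref{thm:1.2}. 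A secondary subtlety is ensuring the sequence really splits as abelian groups (last clause of Theorem \ref{thm:1.1}): this is automatic once one knows $\mathbb{W}_*(F_A)$ is free abelian, which it is by construction, but it must be stated at the chain level before taking homology so that no extension problem over $\mathbb{Z}$ survives.
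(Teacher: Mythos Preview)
Your high-level plan matches the paper's approach: establish a twisted surgery/mapping-cone formula for the special longitude (Theorem~\ref{thm:6.1}), compute the twisted knot filtration of $B_g \#_\ell O_{p_\ell,q_\ell}$ via K\"unneth (Section~7), and then extract the homology of the cone. So the architecture is right.

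The gap is that you misidentify where the real work lies. You call the final step ``bookkeeping'' and say the obstacle is juggling three gradings, but in the paper the grading shifts are essentially routine once the cone is set up; the substantive content is the homological algebra of the cone itself, which occupies all of Section~8. The cone is not a string of towers with maps that are ``eventually isomorphisms from one side and zero from the other'': because the twisted complex of $B_g$ is $\Lambda^*_R(H_1 \otimes R)$ with differential $V+H$ satisfying the two exact sequences of Proposition~\ref{thm:7.4}, each page of the cone has genuine internal structure. One must compute $H_s$ of the ``left'' and ``right'' pieces $L^p, R^p$ separately (Propositions~\ref{thm:8.3}, \ref{thm:8.4}), and the $\Omega^g(k)$ subgroups appear precisely at the \emph{$k$-corner} levels (Proposition~\ref{thm:8.6}) via the identification $\Omega^g(k) = \mathrm{Im}\,VH \cap M_k$ and a nontrivial application of Lemma~\ref{thm:8.2}. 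Only after this does one assemble the pieces into the short exact sequence $0 \to H^{(r)}_* \to H_*(\mathcal{B}) \to H^{(l)}_* \to 0$ (Proposition~\ref{thm:8.7}) and read off $\mathcal{T}^+_{(b_A)}$, $B_*(F_A)$, and $\mathbb{W}_*(F_A)$ from the combinatorics of $f$ and $f_q$ (Propositions~\ref{thm:8.8}, \ref{thm:8.9}). Your sketch does not indicate awareness of the corner phenomenon or of why $\Omega^g(k) = \mathrm{Im}\,VH \cap \Lambda^{2g-k}$ is the right object, and without that the emergence of $B_*(F_A)$ in the stated grading is unexplained.

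A smaller point: the vanishing for non-$\mu$-torsion $\m{t}_0$ is handled in the paper not by the cone but by the twisted adjunction inequality applied to $Y$ and $Y_N$ inside the long exact sequence; your outline does not mention this case.
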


The short exact sequences above are not necessarily split over $\mathbb{Z}[H^1(Y_0)] \otimes \mathbb{Z}[U]$, but we have the following.

\begin{corollary}
\label{thm:1.5}
If $Y_0$ is the mapping torus of a periodic diffeomorphism, then $\underline{HF}^+(Y_0)$ contains no torsion as an abelian group; and so the short exact sequences of Theorem \ref{thm:1.4} are split over $\mathbb{Z}[U]$.  In particular, $\underline{HF}^+(S^1 \times \Sigma_g)$ contains no torsion as a group.  
\end{corollary}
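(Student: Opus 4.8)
The plan is to reduce the whole statement to the torsion-freeness of the mystery groups $\Omega^g(k)$, which we in turn read off from their construction in terms of the twisted filtered complex of $B_g$. Torsion in $\underline{HF}^+(Y_0)$ can be detected one \sst\ structure at a time, since $\underline{HF}^+(Y_0) = \bigoplus_{\m{t}_0}\underline{HF}^+(Y_0,\m{t}_0)$; and by Theorem \ref{thm:1.4} (combined with Lemma \ref{thm:1.3} and conjugation symmetry, the only nontrivial \sst\ structures are the $\m{t}_0 = \theta_K([A])$) it suffices to treat a $\mu$-torsion $\m{t}_0$ and to use the short exact sequence
$$0 \rightarrow B_*(F_A) \oplus \mathcal{T}^+_{(b_A)} \rightarrow \underline{HF}^+(Y_0, \m{t}_0) \rightarrow \mathbb{W}_*(F_A) \rightarrow 0$$
(or its $b_A = \infty$ analogue without the $\mathcal{T}^+$ summand). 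The quotient $\mathbb{W}_*(F_A)$ is by definition the free abelian group on the set $W_*(F_A)$ of wells, hence a free $\mathbb{Z}$-module; so the sequence splits as a sequence of abelian groups, and $\underline{HF}^+(Y_0,\m{t}_0) \cong B_*(F_A)\oplus\mathcal{T}^+_{(b_A)}\oplus\mathbb{W}_*(F_A)$ as abelian groups. Since $\mathcal{T}^+_{(b_A)}$ and $\mathbb{W}_*(F_A)$ are visibly torsion-free, everything comes down to showing that $B_*(F_A)$ — equivalently, that each $\Omega^g(k)$ occurring in its definition — is torsion-free.

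To see this I would unwind Definition \ref{def:8.5}: each $\Omega^g(k)$ is a subquotient, determined by the filtration, of the totally twisted filtered chain complex of the Borromean knot $B_g \subset \#^{2g}(S^1\times S^2)$. That complex is a finitely generated free module over $\mathbb{Z}[H^1(\#^{2g}(S^1\times S^2))]\cong \mathbb{Z}[\mathbb{Z}^{2g}]$, hence a free abelian group; its Alexander filtration is by subcomplexes which are direct summands as abelian groups; and its twisted knot Floer homology is free over $\mathbb{Z}[\mathbb{Z}^{2g}]$ — this follows from the Künneth formula for connected sums, starting from the genus-one computation — hence torsion-free. Consequently the relevant subquotients, and therefore the groups $\Omega^g(k)$, are extensions of free abelian groups by free abelian groups, so in particular torsion-free. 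It follows that $B_*(F_A)$, hence each $\underline{HF}^+(Y_0,\m{t}_0)$, hence $\underline{HF}^+(Y_0)$ itself, contains no torsion as an abelian group. The asserted special case is an instance: $S^1\times\Sigma_g = Y_{\la}(B_g)$, i.e.\ the case $n = 0$ (no lens space summands) with $K = B_g$.

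For the splitting over $\mathbb{Z}[U]$: choose a grading-preserving $\mathbb{Z}$-linear section $s_0\colon \mathbb{W}_*(F_A)\to \underline{HF}^+(Y_0,\m{t}_0)$ as above; I would check that $s_0$ is automatically $\mathbb{Z}[U]$-linear. Indeed, for homogeneous $w$ the difference $U\cdot s_0(w) - s_0(U\cdot w)$ lies in the submodule $B_*(F_A)\oplus\mathcal{T}^+_{(b_A)}$ and in the degree of $U\cdot w$; by the grading conventions of Theorem \ref{thm:1.4} this submodule is concentrated in degrees of the opposite parity to that of $\mathbb{W}_*(F_A)$ (wells sit in odd degrees, $\mathcal{T}^+_{(b_A)}$ in even degrees as $b_A$ is even, and $B_*(F_A)$ in even degrees), so the difference vanishes and $s_0$ is a $\mathbb{Z}[U]$-module section. (Should this parity separation require care, one argues instead via the structure theory of $\mathbb{Z}$-torsion-free, bounded-below, graded $\mathbb{Z}[U]$-modules: $\mathcal{T}^+_{(b_A)}$ is the $U$-divisible part and splits off $\mathbb{Z}[U]$-equivariantly because $\mathbb{W}_*(F_A)$ is $U$-power-torsion and $\mathcal{T}^+_{(b_A)}$ is $U$-divisible, so $\mathrm{Ext}^1_{\mathbb{Z}[U]}(\mathbb{W}_*(F_A),\mathcal{T}^+_{(b_A)})=0$; and the residual extension of $\mathbb{W}_*(F_A)$ by the trivial-$U$ module $B_*(F_A)$ is then seen to be trivial.) I expect the genuine obstacle in all of this to be the torsion-freeness of the $\Omega^g(k)$ in the second paragraph — extracting just enough structure from Definition \ref{def:8.5} — since once that is in hand the remaining steps are formal.
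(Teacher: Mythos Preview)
Your overall strategy matches the paper's: reduce to torsion-freeness of $\Omega^g(k)$, then use freeness of the well group to split. Two execution issues, though.

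\textbf{Torsion-freeness of $\Omega^g(k)$.} You are working too hard, and your sketch (``subquotient'', ``extension of free by free'') is both vague and logically shaky (a subquotient of a free abelian group can have torsion). Definition~\ref{def:8.5} gives $\Omega^g(k) = \operatorname{Im}(VH)\cap \Lambda^{2g-k}$ literally as a subgroup of the exterior power $\Lambda^{2g-k}_R(H_1(\#^{2g}S^1\times S^2)\otimes R)$, which is a free $R$-module and hence a free abelian group. A subgroup of a torsion-free group is torsion-free; that is the entire argument, and the paper records it in one line right after the definition. No K\"unneth formula or homology computation is needed.

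\textbf{The $\mathbb{Z}[U]$-splitting.} Your parity argument is wrong: $B_*(F_A)$ is \emph{not} concentrated in even degrees. From the formula in Theorem~\ref{thm:1.4}, the summand $\Omega^g(k)$ sits in grading $s = F_A(p+\tfrac12)-1 = G_A(p) + k - 1$, and since $G_A(p)$ is always odd, this $s$ has the same parity as $k$. For $g\ge 2$ both parities of $k$ occur in $\{1,\ldots,2g-1\}$; already for $g=1$ the only contribution is $\Omega^1(1)$ in \emph{odd} degree, the same parity as the wells. So the obstruction $Us_0(w)-s_0(Uw)$ can land in a nonzero piece of $B_*$, and your primary argument collapses. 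Your fallback (``the residual extension\ldots is then seen to be trivial'') is asserted without justification; note that $\operatorname{Ext}^1_{\mathbb{Z}[U]}(\mathbb{Z}[U]/U^n, B)\cong B$ when $U$ acts trivially on $B$, so there is something genuine to check. The paper's proof here is admittedly terse (``it is easy to see that such a splitting will respect the $U$-action''), but the intended argument goes through the explicit cycle-level description of the well classes in Propositions~\ref{thm:8.4}--\ref{thm:8.8}, where one can choose lifts compatibly with the $U$-action on the book complex. You would need to engage with that construction rather than a parity shortcut.
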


This corollary is also to be compared with \cite{JMX}, where it is found that $HF^+(S^1 \times \Sigma_g)$ contains torsion for large enough $g$.

Together with the results of \cite{OSRS}, Theorem \ref{thm:1.4} in a sense completes the calculation of the Heegaard Floer homology of Seifert-fibered spaces.  The qualification comes from two sources: the fact that we don't describe the subgroups $\Omega^g(k)$ explicitly, and the fact that by ``the calculation of Heegaard Floer homology'', we mean the calculation of one of $HF^+$ or $\underline{HF}^+$.

Theorem \ref{thm:1.4} is shown using a twisted surgery formula, akin to the formulas of \cite{OSIS} and \cite{OSRS}.  Before outlining the formula, we say a brief word about our use of twisted coefficients.  Both our formula and those of \cite{OSIS} and \cite{OSRS} come about by relating knot Floer homology of a knot with the cobordism maps induced by attaching a two-handle along the knot, eventually arriving at the Floer homology of a three-manifold obtained by surgery along the knot. The twisted coefficient setting is useful for sorting out how the cobordism-induced maps break down into summands for each \sst\ structure on the cobordism, especially when we have surgeries that raise the first Betti number, as we encounter here.  
Indeed, an untwisted version of the formula we use, if it exists, would likely be much less user-friendly. 

We now give a quick description of the formula, which computes $\underline{HF}^+(Y_{\la}(K), \m{t}_0)$ when $\la$ is a longitude of $K$ such that $b_1(Y_{\la}(K)) = b_1(Y) + 1$, and $\m{t}_0$ is $\mu$-torsion with respect to the meridian $\mu$ of $K$ (i.e., $c_1(\m{t}_0)$ goes to an element of $\mathbb{Q}\cdot\mbox{PD}[\mu]$ in $H^2(Y_{\la}(K); \mathbb{Q})$). 
Consider the two handle cobordism $W_0$ obtained by attaching a $\la$-framed 2-handle to $K$.  Let $\m{t}^0_{\infty} \in \spc{Y}$ be some \sst\ structure cobordant to $\m{t}_0$, and let $\m{t}^i_{\infty}$ be $\m{t}^0_{\infty} - i\mbox{PD}[K]$ for $i\in \mathbb{Z}$.  If $d$ is the order of $K$ in $H_1(Y)$, then $\m{t}^{i+d}_{\infty}$ will be the same as $\m{t}^i_{\infty}$, but we nonetheless treat them as distinct in what follows.

We recall that there is a notion of relative \sst\ structures on $(Y,K)$, which we denote by $\rspc{Y,K}$; there is a natural map $G_K: \rspc{Y,K} \rightarrow \spc{Y}$, whose fibers are the orbits of a $\mathbb{Z}\cdot\mbox{PD}[\mu]$ action on $\rspc{Y,K}$.  To each relative \sst\ structure $\xi$ we may associate the group $CFK^{\infty}(Y,K, \xi)$, which is generated over $\mathbb{Z}$ by elements of the form $[\mathbf{x}, i, j]$, where $\mathbf{x}$ is a generator of $\widehat{CF}\left(Y, G_K(\xi)\right)$ and $i, j$ are integers, required to satisfy a certain condition which depends on $\mathbf{x}$.  Then, as a group, $\underline{CFK}\{i \geq 0 \mbox{ or }j \geq 0\}(Y,K, \xi)$ is generated by elements of the form $[\mathbf{x}, i, j] \otimes r$, where $[\mathbf{x}, i, j]$ is one of the generators of $CFK^{\infty}(Y,K, \xi)$ for which $i \geq 0$ or $j \geq 0$, and $r$ is an element of the group ring $\mathbb{Z}[H^1(Y)]$.  The differential is analogous to that used in ordinary twisted Floer homology.  We give more detail in Section 5.

There is a chain map
$$ v_{\xi}: \underline{CFK}\{i \geq 0 \mbox{ or }j \geq 0\}(Y,K, \xi) \rightarrow \underline{CF}^+\big(Y, G_K(\xi) \big), $$
which simply takes the generator $[\mathbf{x}, i, j] \otimes r$ to $[\mathbf{x}, i] \otimes r$.  
There is also a map 
$$ h_{\xi}: \underline{CFK}\{i \geq 0 \mbox{ or }j \geq 0\}(Y,K, \xi) \rightarrow \underline{CF}^+\big(Y, G_K(\xi) - \mbox{PD}[K] \big), $$
which takes the generator $[\mathbf{x}, i, j] \otimes r$ to $[\mathbf{x}, j] \otimes r$, the latter now belonging to the same Heegaard diagram but with different basepoint (and hence representing a different \sst\ structure).

\begin{theorem}
\label{thm:1.6}
Let $K, \la,$ and $\m{t}_0$ be as above.  There are elements $\xi_i \in \rspc{Y,K}$ for each $i \in \mathbb{Z}$ such that $G_K(\xi_i) = \m{t}^i_{\infty}$, $\xi_{i+d} = \xi_i$, and so that the following holds. Let 
$$\underline{f}^+_{K, \m{t}_0}: \bigoplus_{i \in \mathbb{Z}} \underline{CFK}\{i \geq 0 \mbox{ or }j \geq 0\}(Y,K, \xi_i) \rightarrow \bigoplus_{i \in \mathbb{Z}} \underline{CF}^+\big(Y, \m{t}_i)  $$
be the map given by the direct sum of the maps $v_{\xi_i}$ and $h_{\xi_i}$ over all $i$, where each $\xi_i$ and $\m{t}^i_{\infty}$ is treated as distinct, and $v_{\xi_i}$ and $h_{\xi_i}$ are considered to take summand $i$ on the left to summands $i$ and $i+1$ on the right, respectively.  Then, there is a quasi-isomorphism from $M(\underline{f}^+_{K, \m{t}_0})$ to $\underline{CF}^+(Y_0, \m{t}_0)$, where $M$ denotes the mapping cone. Furthermore, $M(\underline{f}^+_{K, \m{t}_0})$ admits a relative $\mathbb{Z}$-grading and a $U$-action which the quasi-isomorphism respects.
\end{theorem}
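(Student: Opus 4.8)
The plan is to adapt the holomorphic-triangle proof of the integer and rational surgery formulas of \cite{OSIS} and \cite{OSRS} to the totally twisted setting, where the extra coefficient bookkeeping forced by the $\mu$-torsion hypothesis and the order-$d$ periodicity is both the new difficulty and the reason the statement comes out clean. First I would fix a Heegaard diagram adapted to the surgery: a doubly-pointed diagram $(\Sigma, \bsa, \bsb, w, z)$ for $(Y, K)$, together with a $\la$-framed tuple $\bsg$ so that $(\Sigma, \bsa, \bsg, w)$ presents $Y_0$, arranged as in \cite{OSIS} so that $\bsb$ and $\bsg$ differ by an isotopy-and-handleslide picture in which one distinguished pair of curves meets in two points. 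On this diagram one must set up the twisted complexes compatibly: a choice of paths/additive assignments (in the sense of Section 8 of \cite{OSPA}) making $\underline{CF}^+(Y, \m{t}_i)$, $\underline{CFK}\{i \geq 0 \text{ or } j \geq 0\}(Y, K, \xi_i)$ and $\underline{CF}^+(Y_0, \m{t}_0)$ simultaneously modules over their respective group rings, compatibly with the triangle maps below. The relevant points are that $H^1(Y_0)$ contains $H^1(Y)$ with rank-one cokernel Poincar\'e dual to the fiber, that the $\mu$-torsion condition on $\m{t}_0$ is exactly what forces the triangle-counting maps to respect the splitting of the two-handle map into the pieces indexed by $i \in \mathbb{Z}$, and that the periodicity $\xi_{i+d} = \xi_i$, $\m{t}^{i+d}_\infty = \m{t}^i_\infty$ (coming from $d\,\mathrm{PD}[K] = 0$) must be installed by hand in the choice of the $\xi_i$.

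Next I would construct a chain map $\Phi : M(\underline{f}^+_{K,\m{t}_0}) \to \underline{CF}^+(Y_0, \m{t}_0)$. On the summand $\bigoplus_i \underline{CFK}\{i \geq 0 \text{ or } j \geq 0\}(Y, K, \xi_i)$ it is the sum of maps counting holomorphic triangles with one input a top-graded generator of the $\bsb$--$\bsg$ Floer complex (or of auxiliary curves introduced for the handleslide), landing in $\underline{CF}^+(Y_0, \m{t}_0)$; on the summand $\bigoplus_i \underline{CF}^+(Y, \m{t}_i)$ it is the analogous count after the handleslide. That $\Phi$ is a chain map---that the triangle counts intertwine the mapping cone differential, whose off-diagonal part is $\underline{f}^+_{K,\m{t}_0}$ assembled from the $v_{\xi_i}$ and $h_{\xi_i}$, with $\partial$ on $\underline{CF}^+(Y_0, \m{t}_0)$---is the usual associativity/pentagon argument, run now over twisted coefficients so that the holomorphic polygon counts are weighted by the group-ring elements recording their underlying periodic domains. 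The relative $\mathbb{Z}$-grading on $M(\underline{f}^+_{K,\m{t}_0})$ is the one in which $\underline{f}^+_{K,\m{t}_0}$ drops grading by one, together with the internal knot-Floer gradings and the gradings on $\underline{CF}^+(Y,\m{t}_i)$ normalized via the grading-shift formula for the two-handle cobordism $W_0$; the $U$-action is the evident diagonal one. Compatibility of $\Phi$ with both then follows from the grading-shift and $U$-equivariance of the triangle maps.

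To prove $\Phi$ is a quasi-isomorphism I would truncate, following \cite{OSIS}. For $|i|$ large the complex $\underline{CFK}\{i \geq 0 \text{ or } j \geq 0\}(Y, K, \xi_i)$ equals $\underline{CFK}\{i \geq 0\} \cong \underline{CF}^+(Y, \m{t}_i)$ (resp.\ $\underline{CFK}\{j \geq 0\}$), so $v_{\xi_i}$ (resp.\ $h_{\xi_i}$) is an isomorphism onto the corresponding $\underline{CF}^+$ summand; cancelling these acyclic portions replaces the infinite mapping cone by a finite one with the same homology. The finite truncated cone is then resolved by an iterated application of the twisted surgery exact triangle relating $\underline{HF}^+$ of consecutive integral fillings of $K$, together with the identification of its connecting map with the triangle-counting maps above---both twisted transcriptions of the corresponding statements in \cite{OSIS}. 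Comparison with the (known) untwisted formula of \cite{OSRS}, keeping track of which \sst\ structure on each cobordism contributes, then identifies the homology.

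The main obstacle, I expect, is exactly this \sst\ and coefficient bookkeeping. Because the $\la$-framed surgery raises $b_1$, the untwisted two-handle map is a sum over infinitely many \sst\ structures on $W_0$ whose contributions cannot be separated; the entire point of passing to totally twisted coefficients is to make these contributions linearly independent over the group ring, and getting the conventions for the path systems, the induced $\mathbb{Z}[H^1]$-module structures, and the periodicity of the $\xi_i$ to be mutually consistent---so that $\underline{f}^+_{K,\m{t}_0}$ is genuinely well-defined and the polygon counts split as claimed---is the delicate work. Everything else is a twisted transcription of \cite{OSIS} and \cite{OSRS}.
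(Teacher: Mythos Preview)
Your truncation step fails, and this is not a matter of bookkeeping. You assert that for $|i|$ large the complex $\underline{CFK}\{i\ge 0\text{ or }j\ge 0\}(Y,K,\xi_i)$ coincides with $\underline{CFK}\{i\ge 0\}$, so that $v_{\xi_i}$ becomes an isomorphism and the infinite cone can be truncated to a finite one. But the statement itself tells you $\xi_{i+d}=\xi_i$: the relative $\mathrm{Spin}^c$ structures, and hence the complexes $\underline{C}^+_{\xi_i}$ and the maps $v_{\xi_i},h_{\xi_i}$, are literally $d$-periodic in $i$. There is no large-$|i|$ regime in which anything stabilizes, and no acyclic tail to cancel. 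This is exactly where the $\lambda$-surgery-raises-$b_1$ hypothesis bites: in the integer-surgery formula of \cite{OSIS} the Alexander shift in the relative structures is what eventually pushes the $\{j\ge 0\}$ region out of sight, but here the $\xi_i$ all have the \emph{same} value of $q_K$ (namely $-\langle c_1(\m{t}_0),[\widehat{dS}]\rangle/d$, by Proposition~\ref{thm:2.9}), so no such shift occurs.

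The paper's route is designed to sidestep this and is genuinely different from what you propose. Rather than triangulate directly between $Y$ and $Y_0$, it introduces an auxiliary large-$N$ surgery $Y_N$ and works with a Heegaard quadruple for $(Y_0,Y_N,Y)$. First a twisted exact triangle (Theorem~\ref{thm:4.5}, Corollary~\ref{thm:4.6}) exhibits $\underline{CF}^\de(Y_0,\m{t}_0)$ as the mapping cone of the cobordism map $\underline{f}^+_{W'_N}$ from $\underline{CF}^\de(Y_N)\otimes\mathbb{Z}[\mathbb{Z}]$ to $\underline{CF}^\de(Y)\otimes\mathbb{Z}[\mathbb{Z}]$; the extra $\mathbb{Z}[\mathbb{Z}]$ factor, not a truncation, is what organizes the infinitely many \sst\ structures on $W'_N$. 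Second, a twisted large-$N$ surgery formula (Theorem~\ref{thm:5.2}) identifies each $\underline{CF}^+(Y_N,\m{t})$ with a knot-Floer piece $\underline{C}^+_{\xi(\m{t})}$ and the maps $\underline{f}^+_{W'_N,\m{s}_{K\pm}(\m{t})}$ with $v$ and $h$. Third, a grading estimate (Proposition~\ref{thm:6.2}) shows that on $\ker U^\de$ only the $\m{s}_{K\pm}$ contributions survive for $N$ large; this is the replacement for your truncation, and it lives on the $Y_N$ side, not in the index $i$. Passing to the limit in $\de$ then gives the $+$ version. Your direct construction, and in particular the appeal to ``iterated'' exact triangles between consecutive fillings, does not supply a substitute for this step.
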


We give a more precise version of the formula in Theorem \ref{thm:6.1}.  In particular, we will describe the structures $\xi_i$ precisely.
We believe that the details of \cite{OSRS} can be mimiced along the lines of this paper, to get a twisted surgery formula that applies when $\la$ is an arbitrary longitude, but we don't undertake this herein.

At least at the level of concept, twisted knot Floer homology is a more or less straightforward combination of twisted Floer homology and untwisted knot Floer homology.  For this reason, twisted knot Floer homology has already been used in, for example, \cite{JM} and \cite{Ni}, despite having had only the barest of definitions written down (as far as the author can tell).  As we rely on this more extensively, and for fairly delicate computations leading to Theorem \ref{thm:1.4}, we give a slightly fuller treatment here.

\subsection{Organization}
\label{intro:org}
In Section 2, we introduce special Heegaard diagrams that we use throughout, and treat relative \sst\ structures.  We put some of the more tedious results of this section in Appendix A.  In Section 3, we make some observations about certain triangles in our diagrams, and the \sst\ structures they represent.  In Section 4, we prove a twisted coefficient long exact sequence.  In  Section 5, we introduce twisted knot Floer homology, and give analogues of the large $N$ surgeries formula and the K\"unneth formula.  In Section 6, we state and prove the twisted surgery formula.  In Section 7, we make basic computations for the Borromean knots and $O$-knots, reducing the work for Theorem \ref{thm:1.4} to algebra, which is then carried out in Section 8.   We prove Theorem \ref{thm:1.1} and Corollary \ref{thm:1.2} in Section 9.  Examples are presented in Section 10.

\subsection{Acknowledgement}
I would like to thank my advisor, Peter Ozsv{\'a}th, for suggesting this problem, for his guidance and support during the preparation of this paper, and for his generosity of time and patience with me as a student.

\section{Standard Heegaard Diagrams and Relative \sst\ Structures}
We introduce a special type of Heegaard diagram associated to a knot in a three manifold.  We make reference to these diagrams when we develop twisted knot Floer homology, and when we prove the long exact sequence of Section 4.  In the present section, we also use these to solidify a connection
between relative \sst\ structures on a knot complement and \sst\ structures on a cobordism gotten by attaching a two-handle to the knot.

Throughout this paper, all knots we consider are oriented rationally nullhomologous knots $K \subset Y$, always implicitly equipped with a distinguished longitude $\la$.  When $K$ and $\lambda$ are understood, we henceforth write $Y_0$ for $Y_{\la}(K)$, and more generally $Y_N$ for $Y_{N\mu + \la}(K)$.   

Much of what we say will be applicable to all such knots, but the main results concern knots of the following type.

\begin{defi}
\label{thm:2.1}
If an oriented rationally nullhomologous knot $K \subset Y$ admits a longitude \la\ satisfying $b_1\big(Y_{\la}(K)\big) = b_1(Y) + 1$, then we call the knot (and the distinguished longitude) \emph{special}.
\end{defi}

The reason that we tend to not explicitly mention longitudes lies in the fact that any knot admits at most one special longitude, so that for special knots the choice of longitude is canonical.

\subsection{Standard Heegaard diagrams and cobordisms} 
For an (oriented rationally nullhomologous) knot $K \subset Y$ and a positive integer $N$, we can form a weakly admissable, doubly pointed Heegaard quadruple $(\Sigma, \bsa, \bsb, \bsg, \bsd, w, z)$ on a genus $g$ Riemann surface $\Sigma$ such that
$Y_{\al\be} = Y_0$, $Y_{\al\ga} = Y_N$, and $Y_{\al\de} = Y$, and such that $(\Sigma, \bsa, \bsd, w, z)$ is a Heegaard diagram for $(Y,K)$.  We
restrict attention to certain diagrams.

\begin{defi}
\label{thm:2.2}
Suppose that we have a weakly admissable, doubly pointed Heegaard quadruple as above, for which the following holds.  A portion of the diagram can be drawn as in Figure \myfig{1}, which takes place in a punctured torus.  Also, the vertically-drawn curve \de$_g$ is a meridian for $K$; the curve \be$_g$, oriented as shown, is a special longitude for $K$; the points $w$ and $z$ flank \de$_g$ as shown, with $w$ on the left.  Finally, the \be, \ga\ and \de\ curves not shown are all small isotopic translates of each other. Then, we call such a diagram \emph{standard}.  We will often forget parts of the data (e.g., we consider a Heegaard triple $(\Sigma, \bsa, \bsg, \bsd, w)$, ignoring $\bsb$ and $z$); we will also refer to such diagrams as standard.
\end{defi}

It is not hard to see that for any oriented knot in a three manifold and any $N$, there exist standard diagrams.   

We have usual ``highest'' intersection points $\Theta_{\be\de} \in \mathbb{T}_{\be} \cap \mathbb{T}_{\de}$ and $\Theta_{\ga\de} \in \mathbb{T}_{\ga} \cap \mathbb{T}_{\de}$, which have components as shown in Figure \myfig{1}.  We also choose a point $w_{\be\ga}$ on $\be_g \cap \ga_g$, which determines a ``highest'' intersection point $\Theta_{\be\ga}$ in $\cfh{Y_{\be\ga}} = \cfh{L(N,1)\#^{g-1}S^1 \times S^2}$, as follows.    $L(N,1)$ is realized as the boundary of a disk bundle $V$ over a sphere with Euler number $N$, and there is a unique \sst\ structure on $L(N,1)$ that extends to some $\m{s}$ on $V$ such that $\langle c_1(\m{s}), V \rangle = N$.  We choose the point $w_{\be\ga}$ so that $\m{s}_w(w_{{\be}{\ga}})$ is the unique \sst\ structure that is torsion and restricts to this \sst\ structure on $L(N,1)$.  

\begin{figure}[t!]
\label{fig:1}
\centering \includegraphics[scale=.50]{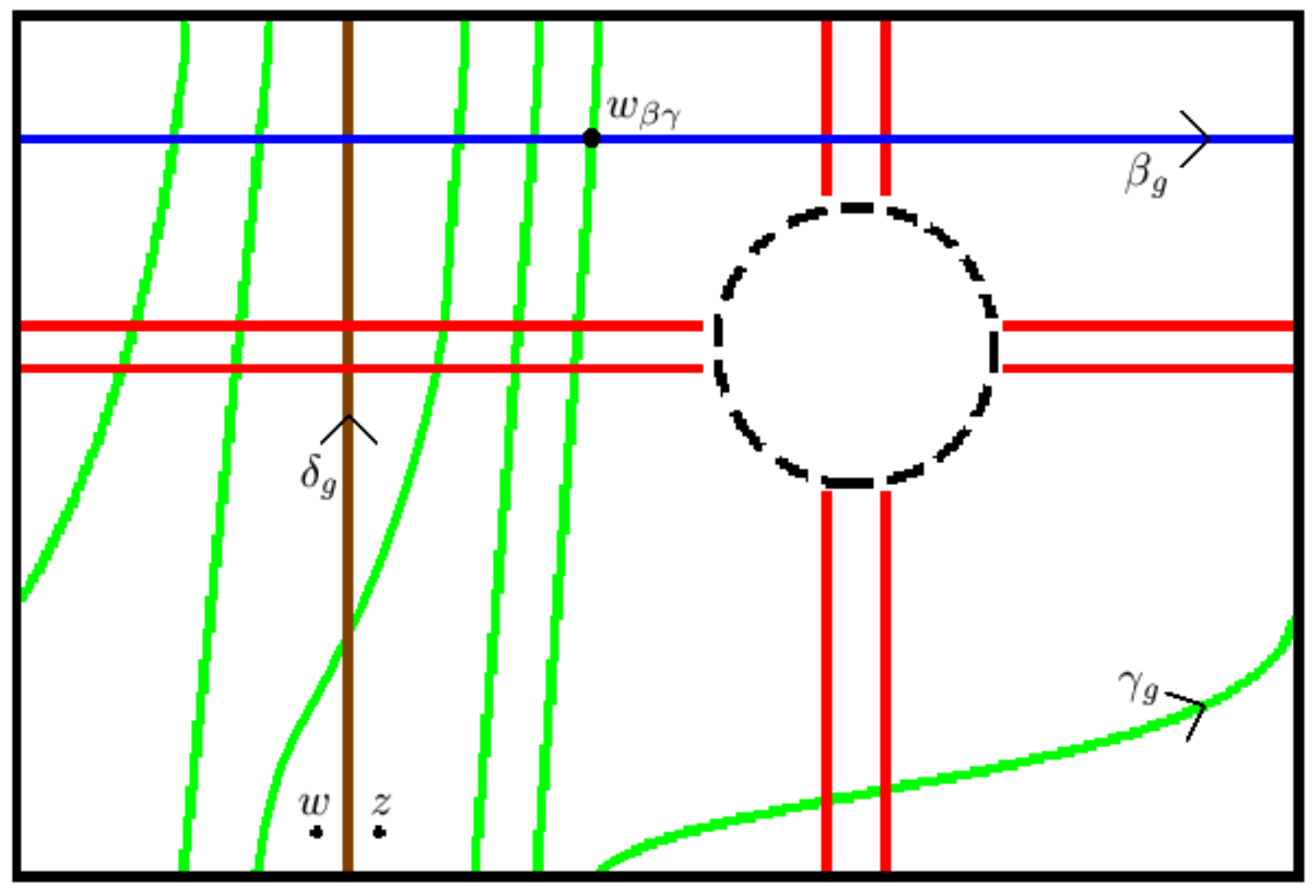}
\caption {Local picture of a standard diagram inside a punctured torus region, where the sides of the rectangle are identified in the usual way, and the dashed circle represents the puncture.  The red curves are strands of $\alpha$ circles.  We supply $\be_g$, $\ga_g$ and $\de_g$ with orientations for later reference.  The winding region is the portion of the above picture that lies to the left of the puncture.}
\end{figure}

For a standard diagram, we will also fix iterated small isotopies \bsb$_{(i)}$, \bsg$_{(i)}$, and \bsd$_{(i)}$ for positive integers $i$, which will play a role in the proof of the long exact sequence.  Thinking of \bsb, \bsg\ and \bsd\ as \bsb$_{(0)}$, \bsg$_{(0)}$, and \bsd$_{(0)}$, we will sometimes write $\bse{0} = \bsa$, $\bse{3i+1} = \bsb_{(i)}$, $\bse{3i + 2} = \bsg_{(i)}$, and $\bse{3i + 3} = \bsd_{(i)}$.  Likewise, we will have points $\Theta_{\eta^i\eta^j}$ for all $j > i > 0$, defined in the obvious manner.

The Figure also shows a natural partition of $\mathbb{T}_{\al} \cap \mathbb{T}_{\ga}$ into points that are \emph{supported in the winding region} -- those for which the point on $\ga_g$ is on one of the horizontal $\al$ strands -- and those that are not.  The \emph{winding region} on $\ga_g$ itself is the portion of that circle that is visible in the Figure, running between the leftmost and rightmost intersections with the horizontal $\al$ strands. For $\mathbf{x} \in \mathbb{T}_{\al} \cap \mathbb{T}_{\ga}$ and $\mathbf{y} \in \mathbb{T}_{\al} \cap \mathbb{T}_{\de}$, we say a triangle in $\pi_2(\mathbf{x}, \Theta_{\ga\de}, \mathbf{y})$ is \emph{small} if its boundary has component on $\ga_g$ contained entirely in the winding region.

Each point $\mathbf{x} \in \mathbb{T}_{\al} \cap \mathbb{T}_{\ga}$ that is supported in the winding region has a unique \emph{nearest} point $\mathbf{y}$ in $\mathbb{T}_{\al} \cap \mathbb{T}_{\de}$, for which there exists a small triangle in $\pi_2(\mathbf{x}, \Theta_{\ga\de}, \mathbf{y})$.  We also say that $\mathbf{x}$ is a nearest point of $\mathbf{y}$.

Let $W_N$ be the cobordism gotten by attaching a 2-handle to $Y$ along the longitude $N\mu + \la$, with boundary $-Y \coprod Y_N$.  Note that $H_2(Y_N)$ and $H_2(Y)$ inject into $H_2(W_N)$;  furthermore, if $N\mu + \la$ is a longitude that is \emph{not} special, they in fact have the same image in $H_2(W_N)$, since the inclusion of $Y \setminus K$ into $Y$ and $Y_N$ induces isomorphisms on second homology. 

We write $W'_N$ for $-W_N$, thought of as a cobordism from $Y_N$ to $Y$.  Then a standard diagram for $Y,K$ yields a Heegaard triple $(\Sigma, \bsa, \bsg, \bsd, w)$ for $W'_N$.  
The above observation then has consequences for the periodic domains appearing in a standard diagram, which are identified with elements of second homology (with respect to the basepoint $w$).  Indeed, each $\bsa\bsg-$periodic domains (that is, domains whose boundary is composed of an integral linear combination of entire $\al_i$ and $\ga_i$ circles) will correspond to a $\bsa\bsd-$periodic domain; the latter will have the same boundary as the former, except with $\ga$ circles replaced by the corresponding $\de$ translates. In particular, none of the $\bsa\bsg-$ or $\bsa\bsd-$periodic domains will have boundary including a nonzero multiple of $\ga_g$ or $\de_g$.  

Likewise, there will also be $\bsa\bsb-$periodic domains corresponding to the $\bsa\bsg-$periodic domains (again gotten by comparing boundaries).  However, for a special longitude, we will need to add one extra $\bsa\bsb-$periodic domain to generate the set of all $\bsa\bsb-$periodic domains.    

We can of course say corresponding things about the periodic domains between the $\al$ and $\eta^i$ circles.

\subsection{Relative \sst\ structures}  
There are a couple of slightly different (but equivalent) descriptions of relative \sst\ structures on three-manifolds with torus boundary.
We review the description we use.

There is a unique homotopy class of nonwhere vanishing vector fields on the torus such that if we take a covering map of $\mathbb{R}^2$ to the torus, any representative vector field lifts to one that is homotopic through nowhere vanishing vector fields to a constant vector field (identifying the tangent space of a point on $\mathbb{R}^2$ with $\mathbb{R}^2$ itself).  We refer to this as the canonical vector field on the torus.  Given any Dehn filling of the torus, the canonical vector field extends to a nonwhere vanishing vector field on the filling.

Given an oriented rationally nullhomologous knot $K$ in a three-manifold $Y$, let $\rspc{Y,K}$ be the set of homology classes of nowhere-vanishing vector fields on $Y \setminus N(K)$ that agree with the canonical vector field on the boundary (and in particular point along the boundary), where $N(K)$ is a regular neighborhood of $K$; we refer to these as \emph{relative} \sst\ \emph{structures}.  Here, two vector fields are homologous if they are homotopic through non-vanishing vector fields in the complement of a ball in the interior of $Y \setminus N(K)$.  
The set $\rspc{Y,K}$ is an affine space for $H^2(Y,K)$, in the same way that absolute \sst\ structures form an affine space for $H^2(Y)$.  

If we take the two-plane field $\vec{v}^{\perp}$ of vectors orthogonal to $\vec{v}$ representing $\xi \in \rspc{Y,K}$, we have a well-defined global section of $\vec{v}^{\perp}$ along the boundary given by unit normal vectors pointing outwards, and hence a trivialization $\tau$ of $\vec{v}^{\perp}$ along the boundary; this then gives a well-defined relative first Chern class $c_1(\xi) = c_1(\vec{v}^{\perp}, \tau) \in H^2(Y, K)$.  

There is a natural projection map $G_K: \rspc{Y,K} \rightarrow \spc{Y}$, given as follows.  
Think of $D^2$ as the unit circle in the complex plane, and view $N(K)$ as $S^1 \times D^2$, where $K$ is $S^1 \times \{0\}$, with direction of increasing angle agreeing with the orientation of $K$.  Also, for $S \subset [0,1]$, let $D_S$ be the set $\{z \in D^2| |z| \in S \}$.  Extend the vector field over $S^1 \times D_{[1/2, 1]}$ so that the vector field points inward nowhere and so that it points in the positive direction on the $S^1$ factor on $S^1 \times D_{\{1/2\}}$.  Then, extend over the rest of $N(K)$ so that the vector field is transverse to the $D^2$ factor on $S^1 \times D_{[0, 1/2]}$, and so that on $K \approx S^1 \times \{0\}$ the vector field traces out a closed orbit whose orientation agrees with that of $K$.    

The projection is $H^2(Y,K)$-equivariant with respect to the natural map $j^*:H^2(Y,K)$ to $H^2(Y)$.  In \cite{OSLI}, it is shown that for $\um{s} \in \rspc{Y,K}$, we have 
\begin{equation}
\label{eq:3}
c_1\big(G_K(\um{s})\big) = j^*\big(c_1(\um{s})\big) + \mbox{PD}[K].
\end{equation}
The fibers of $G_K$ are the orbits of the $\mathbb{Z}\cdot \mbox{PD}[\mu]$ action on $\rspc{Y,K}$, where $\mbox{PD}[\mu]$ is the class in $H^2(Y,K)$ corresponding to the oriented meridian.  Also, just like in the absolute case, we have that for $x \in H^2(Y,K)$
$$ c_1(\xi + x) = c_1(\xi) + 2x .$$

\subsection{Intersection points and relative \sst\ structures}
Consider a doubly-pointed Heegard diagram $(\Sigma, \bsa, \bsd, w, z)$ associated to an oriented knot $K$, which is part of a standard diagram.  Recall the conventions for this.  In $\Sigma$, we draw two arcs connecting $w$ and $z$: a small one $\eta_{\al}$ crossing $\de_g$ once and none of the other $\al$ or $\de$ circles, and another one $\eta_{\de}$ which does not cross any of the $\de$ circles.  The former can be pushed into the $\al$-handlebody, and the latter can be pushed into the $\de$-handlebody.  The union of these two arcs (which intersect at the common boundary of the handlebodies) should then give $K$, and the orientation on $K$ should be such that $\eta_{\al}$ goes from $w$ to $z$.
In particular, if we push a small segment of $\de_g$ near the basepoints into the $\al$-handlebody, we should have a meridian for $K$.  

\vs

\bline{Remark}  We can draw an honest oriented meridian in $\Sigma$ by taking a small counterclockwise circle around $z$.  Confusingly, this appears to be at odds with the fact that, say, $\lambda + 2\mu$ surgery is given by drawing $\ga_g$ with slope $+2$ in a standard diagram, rather than slope $-2$.  This is because when we draw our Heegaard diagram for the surgered manifold, we pretend that the knot passes through the handle attached by $\de_g$, even though according to the above discussion it does not.  If one is bothered by this discrepancy, then he or she can think of the results contained herein as being the result of comparing a doubly-pointed Heegaard 2-tuple with a singly-pointed Heegaard triple with an extra ``reference point" that just happens to look very similar.  

\vs

Analogously to the constructions used for the three-manifold invariants, we can assign a relative \sst\ structure $\um{s}_{w,z}(\mathbf{x})$ to a point $\mathbf{x} \in \mathbb{T}_{\al} \cap \mathbb{T}_{\de}$. Specifically, we take a Morse function compatible with the pointed Heegaard diagram $(\Sigma, \bsa, \bsd, w)$, and take its gradient vector field; we then remove the portions over neighborhoods of the flowlines corresponding to $w$ and to the components of $\mathbf{x}$, replacing them with non-vanishing ones.  We  replace the field over the neighborhood of the flowline for $w$ as depicted in Figure \ref{fig:2}; together with the unaltered portion of the flowline for $z$, this will give a periodic orbit of the vector field which should coincide with the oriented knot $K$.  After a homotopy, we can assume that there is a tubular neighborhood of $K$ over which the vector field is of the form described on $S^1 \times D^2$ in the construction of the projection map $G_K$.  Then $\um{s}_{w,z}(\mathbf{x})$ is the homology class of this vector field restricted to the complement of $S^1 \times D_{[0, 1/2]}$ as in that construction.

\begin{figure}[t!]
\label{fig:2}
\centering \includegraphics[scale=.60]{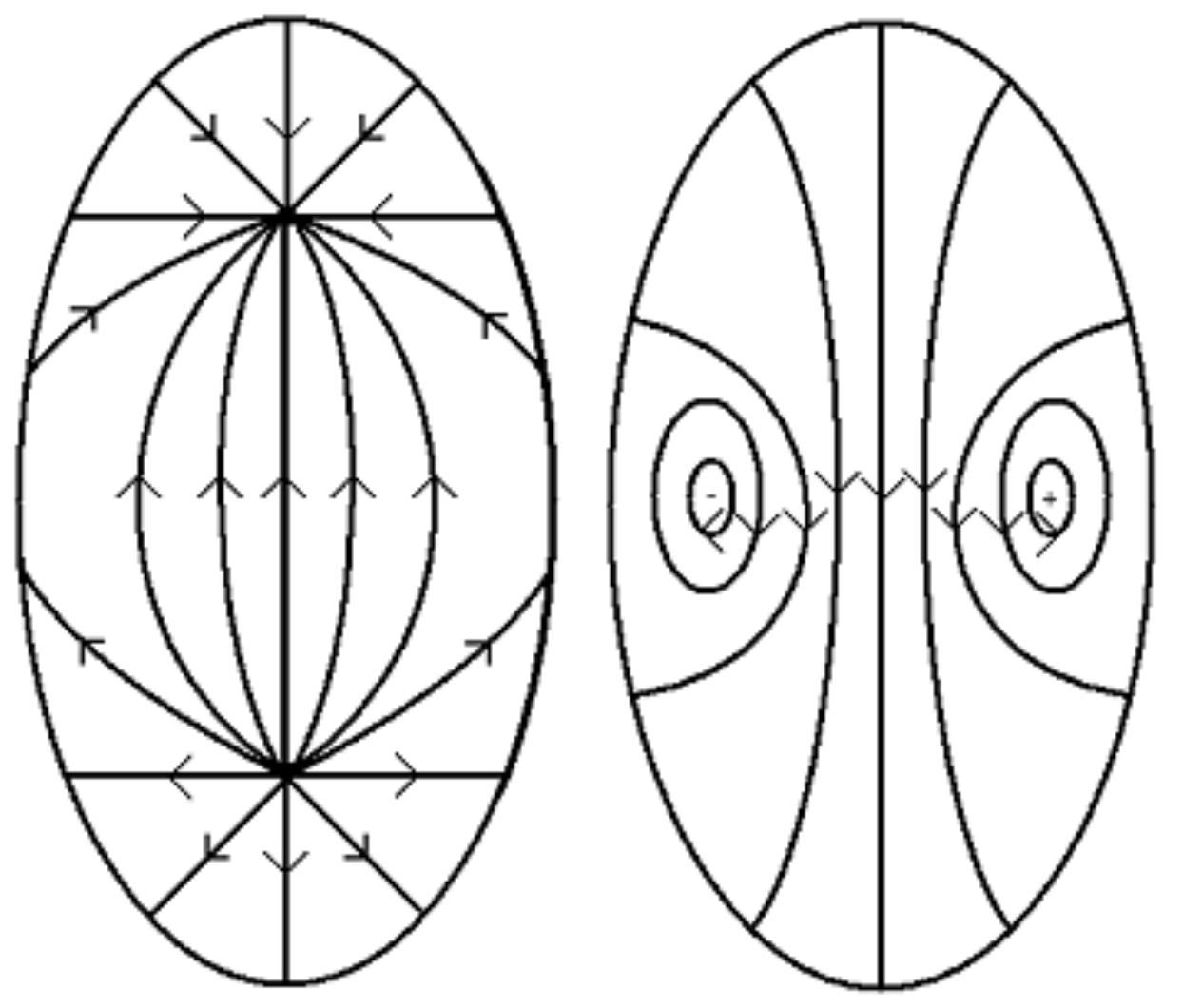}
\caption{The left side (when rotated about a vertical axis) shows the original vector field in a neighborhood of the flowline from the index 0 critical point to the index 3 critical point; the right shows what we replace it with.  At the points marked $+$ and $-$ on the right, the vector field points out of and into the page, respectively.}
\end{figure}

As in the absolute case, we have a straightforward way to calculate $\um{s}_{w,z}(\mathbf{x}_2) - \um{s}_{w,z}(\mathbf{x}_1)$ for two intersection points $\mathbf{x}_1, \mathbf{x}_2$.  We take arcs from $\mathbf{x}_1$ to $\mathbf{x}_2$ along the $\bsa$ circles, and from $\mathbf{x}_2$ to $\mathbf{x}_1$ along the $\bsd$ circles, yielding a closed path in $\Sigma$ and hence a homology class in $H_1(\Sigma \setminus \{w, z\})$.  The difference will then be the image of this homology class in $H_1(Y \setminus K)$, whose Poincar{\'e} dual lives in $H^2\big(Y \setminus K, \partial(Y\setminus K)\big)$.  Then $\um{s}_{w,z}(\mathbf{x}_2) - \um{s}_{w,z}(\mathbf{x}_1)$ is this Poincar{\'e} dual, thought of as an element of $H^2(Y,K)$.  From this, it is shown in \cite{OSRS} that if there is a disk $\phi \in \pi_2(\mathbf{x}, \mathbf{y})$, then
\begin{equation}
\label{eq:4}
\um{s}_{w,z}(\mathbf{y}) - \um{s}_{w,z}(\mathbf{x}) = \big(n_z(\phi) - n_w(\phi)\big)\cdot\mbox{PD}[\mu].
\end{equation}

We also want to be able to evaluate the Chern class of the relative \sst\ structure of an intersection point on an arbitrary class in $H_2\big(Y \setminus N(K), \partial(Y \setminus N(K))\big)$, and so we
will construct geometric representatives of elements of this group.  

To start, consider a standard Heegaard diagram $(\Sigma, \bsa, \bsg, \bsd, w, z)$ for $W'_N$ for any value of $N$. 
To a $\bsa\bsg\bsd$-periodic domain $\mathcal{P}$ such that $\partial \mathcal{P}$ has no components in $\bsg \setminus \{\ga_g\}$, the usual construction of representives of $H_2(W'_N)$ yields an orientation-preserving map $\Phi: S \rightarrow W'_N$ (for some oriented surface $S$ with boundary) of the following form.  There is a disjoint union of disks $D \subset S$, such that $\Phi$ maps each component of $D$ diffeomorphically to the core of the 2-handle attached to $Y$ to form $W'_N$, each component of $\partial D$ into $\ga_g$, and $S \setminus D$ into $Y$.

Choose open balls $B_w$ and $B_z$ around $w$ and $z$ in $\Sigma$, such that $\ga_g$ is tangent to $\partial B_w$ and $\partial B_z$ at precisely one point each.  Then, we may choose our regular neighborhood $N(K)$ carefully so that $B_w$ and $B_z$ are the intersections of $N(K)$ with $\Sigma$, and so that $\ga_g \subset \Sigma$ lies in $\partial N(K)$.  (Roughly, the portion of $\ga_g$ lying between the two tangencies is thought of as being on the top of a tunnel in the $\al$-handlebody, and the rest is thought of as being on the bottom of a tunnel in the $\be$-handlebody.)  So, let $D' \subset S$ be $D$ union with $\Phi^{-1}\big(\{B_w \cup B_z\}\big)$; then, we think of $\mathcal{P}$ as corresponding to the map $\Phi_0:(S\setminus D', \partial(S\setminus D')) \rightarrow \big(Y\setminus N(K), \partial(Y\setminus N(K))\big)$ defined by restricting $\Phi$ to $S \setminus D'$, which gives the desired homology class.

It is easy to see that all classes in $H_2\big(Y \setminus N(K), \partial(Y \setminus N(K))\big)$ can be represented by triply-periodic domains via this construction; and triply periodic domains in turn are in correspondence with classes of $H_2(W'_N)$.  
We can make this identification more explicit as follows.
Write the handle attached to form $W_N$ as $D^2 \times D^2$, where the first factor is the core of the handle.  Let $U$ be the $B \times D^2$, where $B$ is a small neighborhood of the center of the disk, and think of $U$ as sitting in $W'_N$.  Then the pair $(W'_N \setminus U, \partial U)$ is homotopy equivalent to the pair $(Y,K)$; and by excision, the (co)homology of $(W'_N \setminus U, \partial U)$ is the same as that of $(W'_N, U)$.  Thus, for $i \ne 0$, there are canonical isomorphisms $\phi_*: H_i(Y, K) \rightarrow H_i(W'_N)$ and $\phi^*: H^i(W'_N) \rightarrow H^i(Y,K)$.  
Then, it is clear that in fact $\phi_*\big({\Phi_0}_*([S \setminus D'])\big) = \Phi_*([S])$.   

The following largely follows as in Proposition 7.5 of \cite{OSPA}.
Recall that in that paper quantities $\widehat{\chi}(\mathcal{D})$ and $\mu_{\mathbf{y}}(\mathcal{D})$ are defined for two-chains $\mathcal{D}$. 

\begin{prop}
\label{thm:2.3}
Fix some value of $N$, and a standard (doubly-pointed) Heegaard diagram $(\Sigma, \bsa, \bsg, \bsd, w, z)$ for $W'_N$.  Let $\mathbf{y} \in \mathbb{T}_{\al} \cap \mathbb{T}_{\de}$ and $h \in H_2(Y,K)$.  Set $\mathcal{P}$ to be the unique $\bsa\bsg\bsd$-periodic domain with no boundary components in $\bsg \setminus \{\ga_g\}$ (and no local multiplicity at $w$) that represents $\phi_*(h)$.  Then
$$ \langle c_1\big(\um{s}_{w,z}(\mathbf{y})\big), h \rangle = \widehat{\chi}(\mathcal{P}) + 2\mu_{\mathbf{y}}(\mathcal{P}) - n_w(\mathcal{P}) - n_z(\mathcal{P}).$$
\end{prop}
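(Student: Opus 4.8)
The plan is to adapt the proof of Proposition 7.5 of \cite{OSPA} to the relative setting, keeping track of the extra basepoint $z$ throughout. The key point is that $\um{s}_{w,z}(\mathbf{y})$ is, by construction, obtained from a gradient vector field of a Morse function subordinate to $(\Sigma, \bsa, \bsd, w)$ by surgering out the flowlines through $w$ and through the components of $\mathbf{y}$, with the surgery near $w$ performed as in Figure \ref{fig:2} so that, together with the unaltered flowline through $z$, the resulting closed orbit is $K$. Since $\mathcal{P}$ is an $\bsa\bsg\bsd$-periodic domain with no boundary on $\bsg \setminus \{\ga_g\}$, the discussion preceding the statement produces a surface $\Phi: S \to W'_N$ whose restriction $\Phi_0: (S \setminus D', \partial(S\setminus D')) \to (Y \setminus N(K), \partial(Y\setminus N(K)))$ represents $h$, where $D'$ collects the disk pieces mapping to the cocore of the two-handle together with the preimages of the balls $B_w, B_z$ around the basepoints. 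So the geometry needed to compute $\langle c_1(\um{s}_{w,z}(\mathbf{y})), h\rangle$ is already in place.

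First I would compute the relative Euler number $\langle c_1(\vec{v}^{\perp}, \tau), h\rangle$ as a count of zeros of a generic section of $\vec{v}^{\perp}$ over $S \setminus D'$, extending the boundary section by the outward normal trivialization $\tau$ that defines $c_1$ of a relative \sst\ structure. The standard strategy is to take the section coming from (a projection of) the gradient of a \emph{second} Morse function, or equivalently to push off using the $\bsa$-handlebody Morse flow, so that the zeros localize over the two-chain $\mathcal{P}$ in $\Sigma$. This is exactly the computation carried out in \cite{OSPA}: the interior zeros contribute $\widehat{\chi}(\mathcal{P})$ (the "rotation number" term measuring how the section winds over the domain, with corners weighted appropriately) plus $2\mu_{\mathbf{y}}(\mathcal{P})$ (the contribution of the surgered flowlines through the components of $\mathbf{y}$, each counted with multiplicity $n_{x_i}(\mathcal{P})$, doubled because the vector field rotates twice around such a flowline). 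The novelty is the correction near the basepoints. In the absolute case one subtracts $2n_w(\mathcal{P})$; here the surgery near $w$ is the \emph{half}-surgery of Figure \ref{fig:2} (the vector field is altered only enough to create one strand of the orbit $K$, not a full trivial loop), and the flowline through $z$ is left untouched but is now part of $\partial N(K)$, so it contributes as well. Carefully accounting for the local multiplicities of $\mathcal{P}$ at $w$ and at $z$ and for how the trivialization $\tau$ (outward normals on $\partial N(K)$) sees each of the two strands, the combined basepoint correction becomes $-n_w(\mathcal{P}) - n_z(\mathcal{P})$ rather than $-2n_w(\mathcal{P})$. I would verify this by a direct local model: in the punctured-torus picture of Figure \ref{fig:1}, with $w$ and $z$ flanking $\de_g$, compute the winding of the pushed-off section along small loops encircling $w$ and $z$ against the normal trivialization on the two tunnel strands, and read off the coefficients.

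One bookkeeping check I would include: the right-hand side must be independent of the choices (the auxiliary Morse function, the representative surface, the regular neighborhood $N(K)$), and must transform correctly under $h \mapsto h + x$ and $\mathbf{y} \mapsto \mathbf{y}'$. Compatibility with \myeq{3} and \myeq{4} gives a good consistency test: evaluating both sides on a class $h$ and comparing with the absolute formula $\langle c_1(G_K(\um{s})), j^* \text{-image of } h\rangle$ via $G_K$, together with $j^*\mbox{PD}[\mu]$-equivariance, should pin down that the two basepoint terms enter symmetrically, since $\um{s}_{w,z}(\mathbf{y}) - \um{s}_{w,z}(\mathbf{x})$ depends on $n_z - n_w$ by \myeq{4} while $c_1$ of the absolute projection depends only on $n_z + n_w$ (equivalently, on $2n_w$ plus the meridional shift).

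The main obstacle I expect is getting the local contribution at the basepoints exactly right — in particular justifying the asymmetry-breaking between $w$ and $z$: the surgery template of Figure \ref{fig:2} is only applied at $w$, yet the final formula is symmetric in $n_w$ and $n_z$. Reconciling this requires a careful local analysis of how the modified field near $w$, the unmodified field near $z$, and the chosen neighborhood $N(K)$ (with $\ga_g$ — or rather $\de_g$ — running along $\partial N(K)$ between the two tangency points) together determine the tunnel, and then computing the obstruction to extending the boundary-normal trivialization across that tunnel. Everything else is a faithful transcription of the argument in \cite{OSPA} with two basepoints carried along.
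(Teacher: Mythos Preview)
Your proposal is correct and follows essentially the same approach as the paper: adapt Proposition~7.5 of \cite{OSPA} to compute the relative Euler number of $v^{\perp}$ over $S\setminus D'$ with respect to the outward-normal trivialization $\tau$, obtaining $\widehat{\chi}(\mathcal{P}) + 2\mu_{\mathbf{y}}(\mathcal{P})$ from the interior and a modified basepoint correction $-n_w(\mathcal{P}) - n_z(\mathcal{P})$ in place of $-2n_w(\mathcal{P})$. The paper's justification for the basepoint correction is precisely the point you flag as the main obstacle: since $v$ is trivialized in a neighborhood of $K$ (the closed orbit built from both flowlines), one subtracts $1$ for each preimage of $z$ and subtracts $-1$ for each preimage of $w$ relative to the absolute formula, which immediately gives the symmetric expression and resolves your worry about the apparent $w$/$z$ asymmetry in the construction.
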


\begin{proof}
Let $v$ be a vector field representing $\um{s}_{w,z}(\mathbf{y})$.  By construction, this vector field should point along $\ga_g$, and $v^{\perp}$ is trivialized along $\ga_g$ by vectors pointing out of $\Sigma$ (after a homotopy). Calling this trivialization $\tau$, we want to calculate $\langle e(v^{\perp}|_{S \setminus D'}, \tau), [S \setminus D', \partial(S \setminus D')] \rangle. $
This proceeds almost identically to the proof of Proposition 7.5 of \cite{OSPA}.  There it is found that 
$$  \langle c_1\big(\m{s}_w(\mathbf{y})\big), h \rangle = \widehat{\chi}(\mathcal{P}) + 2\mu_{\mathbf{y}}(\mathcal{P}) - 2n_w(\mathcal{P})$$
for a $\bsa\bsd$-periodic domain $\mathcal{P}$ representing $h \in H_2(Y)$.  For us, the boundary components of $\mathcal{P}$ along $\ga_g$ don't affect the calculation.  However, we must subtract 1 for each point in $\Phi^{-1}(\{z\})$ and subtract -1 for each point in $\Phi^{-1}(\{w\})$ due to the fact that $v$ is trivialized in a neighborhood of $K$.  Hence, we get 
$\widehat{\chi}(\mathcal{P}) + 2\mu_{\mathbf{y}}(\mathcal{P}) - n_w(\mathcal{P}) - n_z(\mathcal{P}). $
\end{proof}

\subsection{\sst\ structures on cobordisms and relative \sst\ structures}
Given $K$, $N>0$, and $\m{t}_0 \in \spc{Y_0}$, let $\m{S}^N_0(\m{t}_0)$ denote the set $\m{t}_0 + \mathbb{Z}\cdot\mbox{PD}[N\mu]$, thinking of $\mu$ as an element of $H_1(Y_0)$.  In a standard diagram for $K$, consider the set of \sst\ structures on $X_{\al\be\ga}$ that restrict to an element of $\m{S}^N_0(\m{t}_0)$ on $Y_{\al\be}=Y_0$ and to the canonical \sst\ structure on $Y_{\be\ga} = L(N,1)$.  Define $\m{S}_N(\m{t}_0)$ to be the set of restrictions of these structures to $Y_{\al\ga} = Y_N$.  

Also, let $\m{S}_{N\infty}(\m{t}_0) \subset \spc{W_N}$ be the set of structures that restrict to an element of $\m{S}_N(\m{t}_0)$, and let $\m{S}_{\infty}(\m{t}_0) \subset \spc{Y}$ be the restrictions of $\m{S}_{N\infty}(\m{t}_0)$ to $Y$.  

We banish the proof of the following to the Appendix.

\begin{prop}
\label{thm:2.4}
Assume that $N\mu +\lambda$ is not special.  Then the sets $\m{S}_{\infty}(\m{t}_0)$ and $\m{S}_N(\m{t}_0)$ are finite, and independent of the family of diagrams we use.  The former is even independent of $N$: precisely, it is the set of restrictions to $Y$ of structures on $W'_0$ that also restrict to $\m{t}_0$.   Furthermore, if there exist $i, j \in \mathbb{Z}$ such that $ic_1(\m{t}_0) + j\mathrm{PD}[\mu]$ is torsion, then both $\m{S}_{\infty}(\m{t}_0)$ and $\m{S}_N(\m{t}_0)$ consist entirely of torsion \sst\ structures; otherwise, they both consist entirely of non-torsion ones. 
\end{prop}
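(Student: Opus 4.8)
The plan is to analyze the three sets in turn, tracking how homology classes move across the cobordism $W_N$. First I would establish finiteness: the set $\m{S}_{N\infty}(\m{t}_0) \subset \spc{W_N}$ consists of those \sst\ structures whose restriction to $Y_0$ lies in the coset $\m{t}_0 + \mathbb{Z}\cdot\mathrm{PD}[N\mu]$ and which restrict to the canonical structure on $L(N,1)$; since a \sst\ structure on $W_N$ is determined by its restrictions to the two boundary components together with a finite ambiguity coming from $\mathrm{ker}$ of the restriction maps, and since the relevant indeterminacy subgroup is detected by $H^2(W_N)$, I would show that only finitely many structures on $W_N$ can restrict into the prescribed affine line on $Y_0$ — essentially because the restriction map $H^2(W_N) \to H^2(Y_0)$ has image containing $\mathrm{PD}[N\mu]$ only up to finite index, so the preimage of a single coset is finite. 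Then $\m{S}_N(\m{t}_0)$ and $\m{S}_{\infty}(\m{t}_0)$, being images of a finite set, are finite.

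Next I would prove independence of the diagram. The key point is that for a non-special longitude $N\mu+\la$, the inclusions of $Y\setminus K$ into $Y$ and into $Y_N$ induce isomorphisms on $H_2$ (stated in the excerpt), so $H_2(Y)$ and $H_2(Y_N)$ have the same image in $H_2(W_N)$; consequently the restriction maps $\spc{W_N}\to\spc{Y}$ and $\spc{W_N}\to\spc{Y_N}$ differ only by the canonical correspondence through $H_2(Y\setminus K)$, which is intrinsic. This lets me reformulate $\m{S}_\infty(\m{t}_0)$ and $\m{S}_N(\m{t}_0)$ purely in terms of the cobordism $W_0'$ (resp.\ $W_N'$) and $\m{t}_0$, with no reference to $\bsb$, $\bsg$, or any choices in the Heegaard diagram — hence diagram-independence. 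For the claim that $\m{S}_\infty(\m{t}_0)$ is independent of $N$ and equals the set of restrictions to $Y$ of structures on $W_0'$ restricting to $\m{t}_0$: I would observe that changing $N$ changes the 2-handle framing by a multiple of $\mu$, which acts on relative \sst\ structures by the $\mathbb{Z}\cdot\mathrm{PD}[\mu]$-action whose $G_K$-image is unchanged; combined with equation \eqref{eq:3} and the fact that $j^*\mathrm{PD}[\mu]$ behaves predictably, the set of $Y$-restrictions is seen to be the $G_K$-image of a set of relative \sst\ structures determined by $\m{t}_0$ alone.

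Finally, for the torsion dichotomy: a \sst\ structure $\m{s}$ on a closed 3-manifold is torsion iff $c_1(\m{s})$ is torsion in $H^2$. I would compute, using Proposition \ref{thm:2.3} together with \eqref{eq:3} and \eqref{eq:4}, that the first Chern classes of all elements of $\m{S}_\infty(\m{t}_0)$ (resp.\ $\m{S}_N(\m{t}_0)$) differ from one another by torsion classes and are each expressible as a fixed rational combination of $c_1(\m{t}_0)$ and $\mathrm{PD}[\mu]$; concretely, via the cobordism $W_0'$, $c_1$ of a restriction to $Y$ is pinned down (modulo torsion) by $\langle c_1(\m{t}_0), h\rangle$ for $h$ ranging over $H_2(Y_0)$ pulled back to $W_0'$. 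Thus the restriction is torsion precisely when some integer combination $i\,c_1(\m{t}_0)+j\,\mathrm{PD}[\mu]$ vanishes rationally, i.e.\ is torsion — which is exactly the stated condition — and this property is shared by every element of the set since they all have cohomologous-mod-torsion first Chern classes.

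The main obstacle I anticipate is the bookkeeping in the independence-of-$N$ part: carefully matching the $\mathbb{Z}\cdot\mathrm{PD}[\mu]$-action on $\rspc{Y,K}$ with the reframing of the 2-handle, and verifying that the canonical excision isomorphisms $\phi_*\colon H_*(Y,K)\to H_*(W_N')$ intertwine all the restriction maps compatibly as $N$ varies, so that the description "restrictions to $Y$ of structures on $W_0'$ restricting to $\m{t}_0$" is literally correct and not just correct up to a finite ambiguity. Everything else reduces to the $H_2$-isomorphism input (which fails exactly for special longitudes, hence the hypothesis) and routine Chern-class computations with Proposition \ref{thm:2.3}.
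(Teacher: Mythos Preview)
Your finiteness argument contains a genuine error. You claim that ``only finitely many structures on $W_N$ can restrict into the prescribed affine line on $Y_0$,'' but $Y_0$ is not a boundary component of $W_N$ (whose boundary is $-Y \coprod Y_N$); the cobordism bounding $Y_0$ and $Y_N$ is the filled-in $X_{\al\be\ga}$, which the paper calls $W_{0N}$. More seriously, even with the correct cobordism in place, the intermediate set is \emph{not} finite: the coset $\m{t}_0 + \mathbb{Z}\cdot\mathrm{PD}[N\mu] \subset \spc{Y_0}$ is itself infinite (since $\mu$ has infinite order in $H_1(Y_0)$ when $\la$ is special), and its preimage under restriction is parametrized by two integers. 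Finiteness of $\m{S}_N(\m{t}_0)$ and $\m{S}_\infty(\m{t}_0)$ arises only \emph{after} restricting further to $Y_N$ and $Y$, and comes from the fact that $\mu$ is torsion in $H_1(Y_N)$ (because $N\mu+\la$ is not special) and $K$ is torsion in $H_1(Y)$ (order $d$). Your argument has the logic inverted.

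The paper's approach is quite different from your abstract one and sidesteps these issues entirely. It presents $Y$ as surgery on a framed link $L \subset S^3$, so that the handlebodies $U_\infty$, $U_N$, $U_{0N}$ have second cohomology given concretely by linking matrices. Proposition~A.1 in the Appendix then writes down \emph{explicit} formulas for each of $\m{S}^N_0(\m{t}_0)$, $\m{S}_{0N}(\m{t}_0)$, $\m{S}_N(\m{t}_0)$, $\m{S}_{N\infty}(\m{t}_0)$, $\m{S}_\infty(\m{t}_0)$ as cosets of characteristic covectors, obtained by a short sequence of handleslides (Figure~12) converting $U_{0N}$ into $U_N \# S$. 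From the formula $\m{S}_\infty(\m{t}_0) = \{\langle \vec{b}^{\,T} + 2i\vec{k}^{\,T}\rangle_{U_\infty}|_Y : i \in \mathbb{Z}\}$ one reads off finiteness, diagram-independence, and $N$-independence immediately; the torsion dichotomy is then checked via Lemma~A.2 (a linking-matrix criterion for when a covector restricts to a torsion class). Your conceptual approach via relative \sst\ structures and $G_K$ might be made to work for the $N$-independence and torsion parts, but the paper's explicit computation is both shorter and provides formulas that are reused later (e.g., in the proof of Proposition~\ref{thm:2.9}).
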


If $ic_1(\m{t}_0) + j\mathrm{PD}[\mu]$ is torsion for some $i, j \in \mathbb{Z}$, let us call $\m{t}_0$ \emph{$\mu$-torsion}.  Obviously, a torsion structure is $\mu$-torsion.

We wish to speak of the sets $\m{S}_N(\m{t}_0)$ on a common ground for all $N$  -- specifically, we want to identify them all with subsets of $\rspc{Y,K}$.  As a first step toward this end, we make the following definition.  For $\mathbf{x} \in \mathbb{T}_{\al} \cap \mathbb{T}_{\ga}$, $\mathbf{y} \in \mathbb{T}_{\al} \cap \mathbb{T}_{\de}$, and $\psi \in \pi_2(\mathbf{x}, \Theta_{\ga\de}, \mathbf{y})$, define 
\begin{equation}
\label{eq:5}
E_{K,N}(\psi) = \um{s}_{w,z}(\mathbf{y}) + \big(n_w(\psi) - n_z(\psi)\big)\mathrm{PD}[\mu].
\end{equation}
We will show that this in fact gives a well-defined, diagram-independent map 
$$E_{K,N}: \spc{W'_N} \rightarrow \rspc{Y,K}.$$

Recall the definition of the spider number $\sigma(\psi, \mathcal{P})$.  Orient the $\bsa, \bsg$ and $\bsd$ circles so that every circle appears in $\partial\mathcal{P}$ with nonnegative multiplicity. 
Let $\partial'_{\al}\mathcal{P}$ be gotten by taking an inward translate of each $\bsa$ circle with respected to the endowed orientation, and then taking a linear combination of these circles with multiplicities given by the corresponding multiplicities in $\partial\mathcal{P}$.

Think of $\psi \in \pi_2(\mathbf{x}, \mathbf{y}, \mathbf{w})$ as a map from $\Delta$ to $\Sigma$, where $\Delta$ is a triangle depicted as in Figure \myfig{3}.  A spider is a point $u$ in $\Delta$ together with three segments $a, b$ and $c$ from $u$ to the $\al$, $\ga$ and $\de$ portions of the boundary of $\Delta$, respectively, each oriented outward from $u$.  Then set
$$\sigma(\psi, \mathcal{P}) = n_{\psi(u)}(\mathcal{P}) + \#\big(\partial'_{\al}\mathcal{P} \cap \psi(a)\big) + \#\big(\partial'_{\ga}\mathcal{P} \cap \psi(b)\big) + \#\big(\partial'_{\de}\mathcal{P} \cap \psi(c)\big),$$
where all the intersection numbers are oriented (with $x$-axis $\cap$ $y$-axis = 1).

\begin{figure}[t!]
\label{fig:3}
\centering \includegraphics[scale=.65]{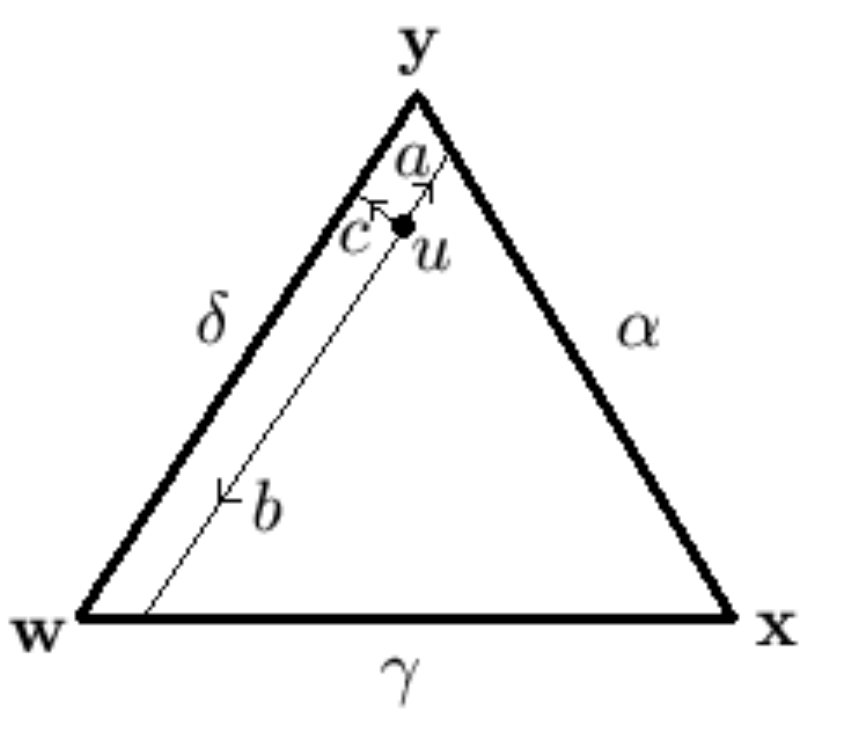}
\caption{A triangle $\Delta$, with sides and vertices marked by where their images lie under a map $\psi$, and a spider.  We will think of our spiders as having the point near $\mathbf{x}$, with two short legs $a$ and $c$ and one long one $b$ running parallel to $\delta$.}
\end{figure}

\begin{prop}
\label{thm:2.5}
Given a knot $K$, choose $N$ such that $N\mu + \la$ is not a special longitude, and form a standard diagram.  Let $h \in H_2(Y,K)$, and let $\mathcal{P}$ be the unique $\bsa\bsg\bsd$-periodic domain with no boundary components in $\bsg \setminus \{\ga_g\}$ (and no local multiplicity at $w$) that represents $\phi_*(h)$.  Then, writing $\la_N$ for $N\mu + \la \in H_1(Y \setminus K)$, we have
$$\langle c_1\big(\m{s}_w(\psi)\big), \phi_*(h) \rangle = \langle c_1\big(E_{K,N}(\psi)\big) + \mathrm{PD}[\la_N - \mu], h \rangle.$$
\end{prop}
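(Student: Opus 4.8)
The plan is to relate both sides of the identity to computations that have already been set up: the absolute Chern-class formula (Proposition 7.5 of \cite{OSPA}, restated in the proof of Proposition \ref{thm:2.3}) for the left-hand side, and Proposition \ref{thm:2.3} itself together with the definition \eqref{eq:5} of $E_{K,N}$ for the right-hand side. The strategy is to compute both quantities as combinatorial expressions in the periodic domain $\mathcal{P}$ and the triangle $\psi$, and check that they agree.

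First I would expand the left-hand side. The \sst\ structure $\m{s}_w(\psi)$ on $W'_N$ is the one associated to the triangle $\psi \in \pi_2(\mathbf{x}, \Theta_{\ga\de}, \mathbf{y})$, and $\phi_*(h) \in H_2(W'_N)$ is represented (via the construction of the surface $\Phi:S \to W'_N$) by the $\bsa\bsg\bsd$-periodic domain $\mathcal{P}$. The standard evaluation of a cobordism-map Chern class on a triangle-represented \sst\ structure gives $\langle c_1(\m{s}_w(\psi)), \phi_*(h)\rangle$ as $\widehat{\chi}(\mathcal{P}) + 2\sigma(\psi,\mathcal{P}) - \text{(correction terms at } w,z)$, where the spider number $\sigma(\psi,\mathcal{P})$ enters exactly because $\psi$ has a vertex at the ``highest'' generator $\Theta_{\ga\de}$ on $\bsg$ and a genuine generator $\mathbf{x}$ on $\bsa\cap\bsg$, which is where the two-handle core is glued in. I would track precisely which local multiplicities of $\mathcal{P}$ at $w$ and $z$ appear, using that $\mathcal{P}$ has no local multiplicity at $w$ by hypothesis.

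Next I would expand the right-hand side. By \eqref{eq:5}, $E_{K,N}(\psi) = \um{s}_{w,z}(\mathbf{y}) + (n_w(\psi) - n_z(\psi))\,\mathrm{PD}[\mu]$, so using $c_1(\xi + x) = c_1(\xi) + 2x$ on $\rspc{Y,K}$ we get $c_1(E_{K,N}(\psi)) = c_1(\um{s}_{w,z}(\mathbf{y})) + 2(n_w(\psi)-n_z(\psi))\,\mathrm{PD}[\mu]$. Pairing with $h$ and invoking Proposition \ref{thm:2.3}, the term $\langle c_1(\um{s}_{w,z}(\mathbf{y})), h\rangle$ becomes $\widehat{\chi}(\mathcal{P}) + 2\mu_{\mathbf{y}}(\mathcal{P}) - n_w(\mathcal{P}) - n_z(\mathcal{P})$. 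The term $\langle \mathrm{PD}[\la_N - \mu], h\rangle$ and the term $2(n_w(\psi)-n_z(\psi))\langle\mathrm{PD}[\mu],h\rangle$ are both intersection pairings in $(Y,K)$ which I would express in terms of how the boundary of $\mathcal{P}$ on $\ga_g$ (which descends to the curve $\la_N = N\mu+\la$ in $Y\setminus K$) meets $\mu$, i.e. in terms of local multiplicities of $\mathcal{P}$ near $w$ and $z$ and the winding data of $\psi$.

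The main obstacle — and the bulk of the work — will be the bookkeeping that matches the spider number $\sigma(\psi,\mathcal{P})$ appearing on the left with the combination $\mu_{\mathbf{y}}(\mathcal{P})$, the boundary-multiplicity/intersection terms, and the $(n_w(\psi)-n_z(\psi))$ correction appearing on the right. Concretely, I expect to need a lemma relating $\sigma(\psi,\mathcal{P})$ to $\mu_{\mathbf{y}}(\mathcal{P})$ plus a contribution measuring how the long leg $b$ of the spider (running parallel to $\de$) crosses the $\al$-translates of $\partial\mathcal{P}$, and to identify that crossing contribution with $\langle \mathrm{PD}[\la_N], h\rangle$ up to the $w,z$ corrections; the $-\mathrm{PD}[\mu]$ shift on the right should then soak up the difference between the ``$-2n_w$'' in the absolute formula and the ``$-n_w - n_z$'' in the relative one. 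This is exactly the kind of comparison of the absolute and relative Chern-class conventions that the surrounding subsection is built to handle, so the ingredients are all in place; it is just a careful, diagram-local computation. I would organize it by first doing the case $h=0$ to pin down normalizations, then reducing the general case to additivity of all the quantities involved under gluing periodic domains.
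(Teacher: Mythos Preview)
Your approach is essentially the same as the paper's: both expand the left-hand side via the cobordism Chern-class formula $\widehat{\chi}(\mathcal{P}) + \#\partial\mathcal{P} - 2n_w(\mathcal{P}) + 2\sigma(\psi,\mathcal{P})$, expand the right-hand side via Proposition~\ref{thm:2.3} and the definition of $E_{K,N}$, and then reduce the comparison to relating $\sigma(\psi,\mathcal{P})$ to $\mu_{\mathbf{y}}(\mathcal{P})$ plus boundary and crossing terms using a spider positioned near $\mathbf{y}$ with leg $b$ parallel to $\de$.

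Two small bookkeeping points to correct before you execute: first, the cobordism formula has a $\#\partial\mathcal{P}$ term you omitted, and this is what eventually cancels against part of the spider/point-measure identity $2\mu_{\mathbf{y}}(\mathcal{P}) = \#\partial\mathcal{P} - \#\partial_{\ga}\mathcal{P} + 2\sigma(\psi,\mathcal{P}) - 2(\#\partial'_{\ga}\mathcal{P}\cap b)$; second, the crossings of leg $b$ are with the $\ga$-translates $\partial'_{\ga}\mathcal{P}$ (not the $\al$-translates), and this crossing count computes $-2k(n_z(\psi)-n_w(\psi)+1)$ where $k=\langle\mathrm{PD}[\mu],h\rangle$ is the $\ga_g$-multiplicity in $\partial\mathcal{P}$, while it is $n_z(\mathcal{P})-n_w(\mathcal{P})$ that gives $\langle\mathrm{PD}[\la_N],h\rangle$---you have these two identifications swapped in your sketch. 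Your suggested warm-up at $h=0$ is harmless but unnecessary: everything is already linear in $\mathcal{P}$, so the direct computation goes through without it.
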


\begin{proof}
The Chern class formula from Section 6 of \cite{OS4M} gives
\begin{equation}
\label{eq:6}
\langle c_1\big(\m{s}_w(\psi)\big), \phi_*(h) \rangle = \widehat{\chi}(\mathcal{P}) + \#\partial\mathcal{P} - 2n_w(\mathcal{P}) + 2\sigma(\psi, \mathcal{P}).
\end{equation}
Let us calculate 
$$\langle c_1\big(\m{s}_w(\psi)\big), \phi_*(h) \rangle - \langle c_1\big(\um{s}_{w,z}(\mathbf{y})\big), h \rangle = \#\partial\mathcal{P} - n_w(\mathcal{P}) + n_z(\mathcal{P})
 + 2\sigma(\psi, \mathcal{P}) - 2\mu_{\mathbf{y}}(\mathcal{P}),$$
with $\mathbf{y}$ a corner of $\psi$ in $\mathbb{T}_{\al} \cap \mathbb{T}_{\de}$ as above.

For the calculation of $\sigma$, choose the spider in our triangle $\psi$ as depicted in Figure \myfig{3}, with the spider point near the intersection of the $\alpha$ and $\delta$ edges, and the leg $b$ running parallel and close to the $\delta$ edge.  Then it is easy to check that 
$$2\mu_{\mathbf{y}}(\mathcal{P}) = \#\partial\mathcal{P} - \#\partial_{\ga}\mathcal{P} + 
2\sigma(\psi, \mathcal{P}) - 2(\#\partial'_{\ga}\mathcal{P} \cap b)$$
where $\#\partial_{\ga}\mathcal{P}$ is the number of $\ga$ circles in $\partial\mathcal{P}$.  So, we need only calculate $$\#\partial_{\ga}\mathcal{P} + 2(\#\partial'_{\ga}\mathcal{P} \cap b) - n_w(\mathcal{P}) + n_z(\mathcal{P}).$$   

Suppose that $\partial\mathcal{P} = k\ga_g + L$, where $L$ is a linear combination of circles in $\bsa$ and $\bsd$, and we break our convention and orient $\ga_g$ to agree with the orientation of $K$.  Assume for now that $k\geq 0$.  Then examination of a standard diagram shows that $\#\partial_{\ga}\mathcal{P} = k$ and $2(\#\partial'_{\ga}\mathcal{P} \cap b) = -2k(n_z(\psi)-n_w(\psi) + 1)$.  To see the latter, note that we assume that the only $\bsg$ boundary component of $\mathcal{P}$ is $\ga_g$, and it is easy to see that if a component of the image of $b$ intersects $\ga_g$, it must run parallel to $\de_g$, and we just count the number of times it circles the meridian.  

It is not hard to see from our construction of a surface representing the homology class $h$ that $k = \langle \mbox{PD}[\mu], h\rangle$ and that $n_z(\mathcal{P}) - n_w(\mathcal{P}) = \langle \mbox{PD}[\la_N], h\rangle$.  So, we conclude that 
$$\langle c_1\big(\m{s}_w(\psi)\big), \phi_*(h) \rangle = \langle c_1\Big(\um{s}_{w,z}(\mathbf{y}) + \big(n_w(\psi) - n_z(\psi)\big)\mbox{PD}[\mu]\Big) + \mbox{PD}\big[\la_N - \mu], h \rangle. $$
The case where $k<0$ is similar, and we get the same class. 
\end{proof}

As a map from $\spc{W'_N}$ to $\rspc{Y,K}$, let $E_{K,N}$ be defined as follows.  We claim that given $\m{s} \in \spc{W'_N}$, there is a unique element $\xi \in \rspc{Y,K}$ such that $G_K(\xi) = 
\m{s}|_Y$ and such that $c_1(\xi) = \phi^*\big(c_1(\m{s})\big) - \mbox{PD}[\la_N - \mu]$; then let $E_{K,N}(\m{s}) = \xi$.

\begin{prop}
\label{thm:2.6}
The map described above is indeed well-defined, and $E_{K,N}\big(\m{s}_w(\psi)\big) = E_{K,N}(\psi)$ for any triangle $\psi$ in a standard diagram, where the right hand side uses the definition of $E_{K,N}$ as given in Equation \ref{eq:5}.  This map is $H^2(W'_N)$-equivariant, where the action on the right is via the map $\phi^*$.
\end{prop}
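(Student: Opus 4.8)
The plan is to establish the three assertions — well-definedness, agreement with \eqref{eq:5}, and $H^2(W'_N)$-equivariance — in that order, with Proposition \ref{thm:2.5} supplying the only real computation. Uniqueness in the definition of $E_{K,N}$ is immediate: if $\xi,\xi'\in\rspc{Y,K}$ both satisfy $G_K(-)=\m{s}|_Y$ and $c_1(-)=\phi^*(c_1(\m{s}))-\mbox{PD}[\la_N-\mu]$, then since the fibers of $G_K$ are the orbits of the $\mathbb{Z}\cdot\mbox{PD}[\mu]$-action we may write $\xi'=\xi+k\,\mbox{PD}[\mu]$, so $c_1(\xi')=c_1(\xi)+2k\,\mbox{PD}[\mu]$; equating Chern classes forces $2k\,\mbox{PD}[\mu]=0$, and as $\mbox{PD}[\mu]$ has infinite order in $H^2(Y,K)$ we get $k=0$.

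For existence, and simultaneously for the identity $E_{K,N}(\m{s}_w(\psi))=E_{K,N}(\psi)$, I would work through triangles. Given $\m{s}\in\spc{W'_N}$, choose a standard diagram (winding as needed so that the restrictions of $\m{s}$ to $Y_N$ and to $Y$ are carried by intersection points) in which $\m{s}=\m{s}_w(\psi)$ for some $\psi\in\pi_2(\mathbf{x},\Theta_{\ga\de},\mathbf{y})$; every \sst\ structure on $W'_N$ arises this way, being exactly a structure on the triple-cobordism $X_{\al\ga\de}$ that restricts to the torsion one — that of $\Theta_{\ga\de}$ — on $Y_{\ga\de}$. Put $\xi:=E_{K,N}(\psi)=\um{s}_{w,z}(\mathbf{y})+\big(n_w(\psi)-n_z(\psi)\big)\mbox{PD}[\mu]$ as in \eqref{eq:5} and verify the two defining conditions. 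That $G_K(\xi)=\m{s}|_Y$ follows because $G_K$ is blind to the $\mbox{PD}[\mu]$-translate, because $G_K(\um{s}_{w,z}(\mathbf{y}))=\m{s}_w(\mathbf{y})$ by the very construction of $\um{s}_{w,z}$ and $G_K$, and because $\m{s}_w(\psi)|_{Y_{\al\de}}=\m{s}_w(\mathbf{y})$ is the usual restriction property of the triangle map, with $Y_{\al\de}=Y$. For the Chern class condition, Proposition \ref{thm:2.5}, evaluated on an arbitrary $h\in H_2(Y,K)$ and rewritten using that $\phi^*$ and $\phi_*$ are adjoint, reads
$$\big\langle\,\phi^*(c_1(\m{s}))-c_1(\xi)-\mbox{PD}[\la_N-\mu],\ h\,\big\rangle=0,$$
so the discrepancy $\Delta:=\phi^*(c_1(\m{s}))-c_1(\xi)-\mbox{PD}[\la_N-\mu]$ is a torsion class in $H^2(Y,K)$. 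To upgrade this to $\Delta=0$ I apply $j^*:H^2(Y,K)\to H^2(Y)$. By \eqref{eq:3}, $j^*(c_1(\xi))=c_1(G_K(\xi))-\mbox{PD}[K]=c_1(\m{s}|_Y)-\mbox{PD}[K]$; by naturality of the excision identification that defines $\phi^*$, $j^*(\phi^*(c_1(\m{s})))=c_1(\m{s}|_Y)$; and $j^*(\mbox{PD}[\la_N-\mu])=\mbox{PD}[K]$ because $\mu$ bounds in $Y$ while $\la_N$ is homologous to $K$ there. Hence $j^*(\Delta)=0$, i.e.\ $\Delta\in\ker j^*=\mathbb{Z}\cdot\mbox{PD}[\mu]$ — equality here because it is precisely $\mathbb{Z}\cdot\mbox{PD}[\mu]$ that acts trivially through $G_K$. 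A torsion element of the infinite cyclic group $\mathbb{Z}\cdot\mbox{PD}[\mu]$ is $0$, so $\Delta=0$ and $c_1(\xi)=\phi^*(c_1(\m{s}))-\mbox{PD}[\la_N-\mu]$. This proves existence; and since $\xi$ was taken to be $E_{K,N}(\psi)$ from \eqref{eq:5}, it proves the stated agreement — in particular that $E_{K,N}(\psi)$ depends only on $\m{s}_w(\psi)$.

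For $H^2(W'_N)$-equivariance, let $a\in H^2(W'_N)$. I would check that $E_{K,N}(\m{s})+\phi^*(a)$ meets the two conditions characterizing $E_{K,N}(\m{s}+a)$: $G_K(E_{K,N}(\m{s})+\phi^*(a))=\m{s}|_Y+j^*\phi^*(a)=(\m{s}+a)|_Y$, using that $j^*\circ\phi^*$ is restriction to $Y$; and $c_1(E_{K,N}(\m{s})+\phi^*(a))=\phi^*(c_1(\m{s}))-\mbox{PD}[\la_N-\mu]+2\phi^*(a)=\phi^*(c_1(\m{s}+a))-\mbox{PD}[\la_N-\mu]$. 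By the uniqueness already established, $E_{K,N}(\m{s}+a)=E_{K,N}(\m{s})+\phi^*(a)$.

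The step I expect to be the main obstacle is the passage from ``$\Delta$ is torsion'' to ``$\Delta=0$'': Proposition \ref{thm:2.5} controls $c_1(\xi)$ only modulo the torsion of $H^2(Y,K)$, since the pairing with $H_2(Y,K)$ sees only the free part, and it is the $j^*$-computation that closes the gap. The work there is bookkeeping, but delicate bookkeeping — one must be sure that $\phi^*$, the various Poincar\'e duals, and the correction term $\mbox{PD}[\la_N-\mu]$ are all normalized compatibly with the conventions used to define the relative \sst\ structures and with \eqref{eq:3}, so that the three terms of $j^*(\Delta)$ cancel exactly rather than up to a sign or a further multiple of $\mbox{PD}[\mu]$.
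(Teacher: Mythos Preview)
Your proof is correct and follows essentially the same route as the paper's: uniqueness via the fibers of $G_K$ and the infinite order of $\mbox{PD}[\mu]$ in $H^2(Y,K)$; existence via Proposition~\ref{thm:2.5} together with the $j^*$-computation (using \eqref{eq:3} and $j^*(\mbox{PD}[\la_N-\mu])=\mbox{PD}[K]$) to upgrade ``torsion'' to ``zero''; and equivariance by direct verification of the two defining conditions. The only organizational difference is that you assert every $\m{s}\in\spc{W'_N}$ is $\m{s}_w(\psi)$ for some triangle in some standard diagram, whereas the paper establishes existence for a single triangle-represented $\m{s}$ and then invokes the equivariance (which you also prove) and transitivity of the $H^2(W'_N)$-action to propagate existence to all of $\spc{W'_N}$ --- a slightly cleaner way to avoid justifying that every $\m{s}$ is carried by a triangle.
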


\begin{proof}
First note that the condition that $G_K(\xi) = \m{s}|_Y$ determines $\xi$ up to adding some multiple of $\mbox{PD}[\mu]$.  Examining the long exact sequence
$$ H^1(Y) \rightarrow H^1(K) \rightarrow H^2(Y,K) \overset{j^*}{\rightarrow} H^2(Y),$$
we see that the first arrow is the zero map, since $K$ is rationally nullhomologous; hence, the second map is an injection of $H^1(K) \cong \mathbb{Z}$ onto $\mbox{Ker}(j^*) \subset H^2(Y,K)$.  Thus, there is at most one $\xi$ that satisfies the first conditions and the demand that $c_1(\xi) - \phi^*\big(c_1(\m{s}) \big) + \mbox{PD}[\la_N - \mu]$ is torsion.  
Assuming existance of such a $\xi$, then Equation \ref{eq:3} and the fact that $j^*(\mbox{PD}[\la_N- \mu]) = \mbox{PD}[K]$ then imply that this class is actually 0. 

Proposition \ref{thm:2.5} ensures that such a $\xi$ does exist if there is a triangle $\psi$ representing $\m{s}$ in some standard diagram.  It is easy to see that if $\xi$ works for $\m{s}$, then $\xi + \phi^*(x)$ works for $\m{s} + x$, which establishes the map as well-defined on all of $\spc{W'_N}$ as well as the $H^2(W'_N)$-equivariance. 
\end{proof}

\subsection{Squares of Chern classes}
The intersection form on the cobordism $W_N$ is defined via the cup product $H^2(W_N, \partial W_N) \otimes H^2(W_N, \partial W_N) \rightarrow H^4(W_N, \partial W_N) \cong \mathbb{Z}$, where the latter isomorphism is evaluation on a fundamental class.  If $j$ denotes the composition of maps $H^2(W_N; \mathbb{Z}) \rightarrow H^2(\partial W_N; \mathbb{Z}) \rightarrow H^2(\partial W_N;\mathbb{Q})$, then the intersection form can be extended to a pairing $\mbox{Ker }j \otimes \mbox{Ker }j \rightarrow \mathbb{Q}$ as follows.  If $x \in \mbox{Ker }j$, then $x \otimes 1 \in H^2(W_N; \mathbb{Z}) \otimes \mathbb{Q}$ goes to an element in $H^2(W_N; \mathbb{Q})$ which clearly lifts to some $\widetilde{x} \in H^2(W_N, \partial W_N; \mathbb{Q})$.  The square of $\widetilde{x}$ is easily seen to be independent of the choice of lift $\widetilde{x}$, and so we define $x^2 = \widetilde{x}^2$.  

We can also square elements of $H_2(W_N)$ in the usual manner.  The group $H_2(W_N, Y)$ is isomorphic to $\mathbb{Z}$, generated by the oriented core $F$ of the attached handle, oriented to agree with $-K$ on the boundary; $H_2(W_N)$ splits (non-canonically) as the direct sum of $H_2(Y)$ and $\mathbb{Z}\cdot [\widetilde{dF}]$, where $[\widetilde{dF}]$ is gotten by taking $d$ copies of $F$ (recalling that $d$ is the order of $K$ in $H_1(Y)$) and then capping off the boundaries with a Seifert surface for $dK$.  

We write $F'$ or $[\widetilde{dF'}]$ when thinking of the above as elements of $H_2(W'_N, Y)$ or $H_2(W'_N)$; this distinction only matters when we consider intersection forms on cobordisms.
Note that for any rationally nullhomologous knot $K$ equipped with longitude $\la$, there are unique relatively prime integers $p$ and $q$ with $p > 0$ such that surgery on the framing $p\mu + q\la$ increase the first Betti number; let $\kappa = -\frac{p}{q}$.  In particular, if $K$ is special, then $\kappa = 0$.
The proof of the following is also given in the Appendix.

\begin{prop}
\label{thm:2.7}
The order of $\mu$ in $H_1(Y_0)$ is $|d\kappa |$, and for any lift $[\widetilde{dF'}]$ of $F'$, we have that 
$$\frac{[\widetilde{dF'}]^2}{d^2} = -\kappa - N.$$
In particular, for special $K$, $\mu$ is not torsion in $H_1(Y_0)$ and the square of 
$[\widetilde{dF'}]$ is $-d^2N$.
\end{prop}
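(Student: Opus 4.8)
The statement has two parts, and I would prove them in the order given: first the order of $\mu$ in $H_1(Y_0)$ (a pure homology computation that also supplies bookkeeping for the second part), then the self-intersection of $[\widetilde{dF'}]$.

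\emph{Order of $\mu$.} Write $E = Y \setminus N(K)$. Since $K$ is rationally nullhomologous, $\mathrm{PD}[K] \in H^2(Y)$ is torsion and pairs trivially with the free group $H_2(Y)$, so the map $H_2(Y) \to H_2(Y,E) \cong \mathbb{Z}$ (the latter generated by a meridian disk) is zero. Hence, from the long exact sequence of $(Y,E)$, the kernel of $H_1(E) \to H_1(Y)$ is exactly $\mathbb{Z}\cdot[\mu]$ with $[\mu]$ of infinite order. In particular $d[\la] = k[\mu]$ in $H_1(E)$ for a well-defined integer $k$, where $d$ is the order of $K$; comparing with the definition of $\kappa$ (the primitive rationally-trivial slope $p\mu + q\la$ is the primitivization of $d\la - k\mu$, the kernel of $H_1(\partial N(K);\mathbb{Q}) \to H_1(E;\mathbb{Q})$) one gets $k = d\kappa$. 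Now $H_1(Y_0) = H_1(E)/\langle[\la]\rangle$; chasing a relation $m[\mu] = j[\la]$ through $H_1(E) \to H_1(Y)$ forces $d \mid j$, and then $m[\mu] = (j/d)\,d[\la] = (j/d)k[\mu]$ gives $m = (j/d)k$. The least positive solution is $|k| = |d\kappa|$ when $\kappa \ne 0$, and there is none when $\kappa = 0$; this is the asserted order, and in particular $\mu$ is non-torsion exactly for special $K$.

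\emph{The self-intersection.} It is enough to compute $[\widetilde{dF}]^2$ in $W_N$, since $[\widetilde{dF'}]^2 = -[\widetilde{dF}]^2$. First, the choice of lift does not matter: two lifts differ by $z$ in the image of $H_2(Y) \to H_2(W_N)$, and $z^2 = 0$ while $[\widetilde{dF}]\cdot z$ is computed in $Y$ as $[dJ]\cdot z$ (with $J = N\mu + \la$ the attaching curve), which vanishes because $[dJ] = d[\la]$ is torsion in $H_1(Y)$. To get $[\widetilde{dF}]^2$, represent $\widetilde{dF}$ by $d$ parallel copies of the $2$-handle core, whose common boundary $dJ \subset \partial N(K) \subset Y$ is nullhomologous in $Y$, capped by a Seifert surface $\Sigma \subset Y$ for $dJ$. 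The essential point, which I would justify from the standard-diagram conventions of Section 2, is that the framing with which the handle is attached is the surface framing of $J$ induced by the torus $\partial N(K)$, so pushing the core off carries $dJ$ to a parallel copy $dJ^+$ on $\partial N(K)$. Making the core push-off disjoint from the core inside the handle, and pushing $\Sigma$ into a nearby $Y$-slice, confines all intersections to the algebraic intersection of $\Sigma$ with $J^+$, so that $[\widetilde{dF}]^2 = \pm d^2\,\mathrm{lk}_Y(J,J^+)$, where $\mathrm{lk}_Y(J,J^+) := \tfrac1d\#(\Sigma \cap J^+)$ is well-defined because $[J^+]$ is torsion in $H_1(Y)$.

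\emph{Computing $\mathrm{lk}_Y(J,J^+)$.} I would build $\Sigma$ explicitly: take $\Sigma_0 \subset E$ with $\partial\Sigma_0 = d\la - d\kappa\mu$ (it exists by the first part) and glue on $d\kappa + dN$ meridian disks, so that $\partial\Sigma = d\la + dN\mu = dJ$. Pushing $J^+$ slightly out of $N(K)$, it misses every meridian disk, so $\#(\Sigma \cap J^+) = \#(\Sigma_0 \cap J^+)$; taking $J^+$ to lie in a collar $\partial N(K)\times[0,\epsilon]$ of $\partial N(K)$ in $E$ on which $\Sigma_0$ is the product $\partial\Sigma_0 \times [0,\epsilon]$, this reduces to the torus intersection number $(d\la - d\kappa\mu)\cdot(N\mu + \la) = \pm d(N+\kappa)$. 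Hence $\mathrm{lk}_Y(J,J^+) = \pm(N+\kappa)$, so $[\widetilde{dF}]^2 = d^2(N+\kappa)$ and $[\widetilde{dF'}]^2/d^2 = -(N+\kappa) = -\kappa - N$; the special case is the instance $\kappa = 0$.

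The conceptual steps above are straightforward, and I expect the real work to be entirely in the signs: one must fix orientations of $\partial N(K)$, of $\mu$ and $\la$, of the cores, of the collar, and of $W_N$ versus $W'_N$, so that every ``$\pm$'' is pinned down and the two displayed calculations combine correctly. The cleanest way to calibrate all of these at once is to test against the classical case $K \subset S^3$ with its Seifert longitude, where $d = 1$, $\kappa = 0$, and $W_N$ is the standard $N$-surgery cobordism with $[\widehat{F}]^2 = N$. The one other point needing care, and the only place where more than homology enters, is verifying that the surface framing of $J$ on $\partial N(K)$ really is the handle-attaching framing that produces $Y_N$ — i.e.\ that the Dehn-surgery and Heegaard-triple conventions agree — which I would read off directly from the standard diagrams.
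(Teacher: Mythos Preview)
Your proof is correct, but it takes a genuinely different route from the paper's.

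The paper works entirely through Kirby calculus. It presents $Y$ as surgery on a framed link $L \subset S^3$ and adds $K$ as an extra component, obtaining a simply-connected 2-handlebody $U_N$ containing $W_N$, with intersection form the linking matrix $G_N$. In that basis $[\widetilde{dF}]$ is the explicit vector $(-d\vec{p}_0, d)^T$ where $G_\infty \vec{p}_0 = \vec{k}$, and $\kappa$ is identified algebraically as $I_0 - \vec{p}_0^{\,T}G_\infty\vec{p}_0$; the self-intersection is then a one-line matrix product, and the order of $\mu$ drops out of the cokernel of $G_0$ (Lemma~A.2). Your argument is instead intrinsic: you extract the order of $\mu$ directly from the long exact sequence of $(Y, Y\setminus N(K))$, and you compute $[\widetilde{dF}]^2$ geometrically by building $\Sigma$ from a surface $\Sigma_0$ in the exterior plus meridian disks and reducing everything to the torus intersection number $(d\lambda - d\kappa\mu)\cdot(N\mu+\lambda)$.

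The trade-off is this: the paper's linking-matrix setup is not just for this proposition --- the same handlebodies $U_*$ and matrices $G_*$ are used to label $\mathrm{Spin}^c$ structures by characteristic covectors and to carry out the computations in Proposition~A.1, Lemma~A.3, and the proof of Proposition~2.9, so the overhead is amortized. Your approach is cleaner and more self-contained for this single statement (and avoids choosing a surgery presentation of $Y$), but the sign-tracking you flag is exactly where the linking-matrix approach is effortless, and your setup does not immediately hand you the covector labels needed later in the Appendix. The calibration against $K\subset S^3$ with its Seifert longitude is the right way to pin the signs down, and your identification of the handle framing with the torus framing of $J$ is correct.
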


We restrict attention now to \sst\ structures on $W'_N$ that restrict to torsion or (if $N\mu + \la$ is special) $\mu$-torsion structures on the boundary; we refer to these \sst\ structures, as well as the relative structures on $(Y, K)$ over them, as \emph{boundary-torsion}.  Our interest in these stems from the fact that the generators of $\underline{HF}^+(Y)$ and $\underline{HF}^+(Y_N)$ (for non-special $N\mu + \la$) that represent them will inherit absolute $\mathbb{Q}$-gradings from their untwisted counterparts.  

For torsion $\m{t}_N \in \mbox{Spin}^{\mbox{\scriptsize{c}}}(Y_N)$, choose some fixed $\m{s}_0 \in \mbox{Spin}^{\mbox{\scriptsize{c}}}(W'_N)$ that restricts to $\m{t}_N$.  Then the subset of elements in $\mbox{Spin}^{\mbox{\scriptsize{c}}}(W'_N)$ that restrict to $\m{t}_N$ will be $\m{s}_0 + \mathbb{Z}\cdot\big(\mbox{PD}[F']|_{W'_N}\big)$. (Here, $\mbox{PD}[F'] \in H^2(W'_N, Y_N)$ is the Poincar{\'e} dual of $F'$ thought of as an element of $H_2(W'_N, Y)$.)  For boundary-torsion $\m{s}$, the evaluation of $c_1(\m{s})$ on any element of the kernel of the map from $H_2(W'_N)$ to $H_2(W'_N, Y)$ vanishes; thus, the quantity $\frac{\langle c_1(\m{s}), [\widetilde{dF'}] \rangle}{d}$ is the same for any choice of $[\widetilde{dF'}]$ in the preimage of $d[F']$.
Define  
$$ Q_K(x; \m{s}_0) = x^2\frac{[\widetilde{dF'}]^2}{d^2} + x\left(\frac{\langle c_1(\m{s}_0), [\widetilde{dF'}] \rangle}{d} \right) = \frac{c_1^2(\m{s}_0 + x\mbox{PD}[F']|_{W'_N}) - c_1^2(\m{s}_0)}{4}.$$
For a fixed $\m{s}_0$ which restricts to $\m{t}_N$, we can interpret $Q_K(x; \m{s}_0)$ as a quadratic function of $x \in \mathbb{Q}$; let $x_* \in \mathbb{Q}$ be the value at which this function achieves its maximum, and set $x_0 = \lfloor x_* \rfloor$.
We write $\m{s}_{K+}(\m{t}_N) = \m{s}_0 + x_0\mbox{PD}[F']|_{W'_N}$ and $\m{s}_{K-}(\m{t}_N) = \m{s}_0 + (x_0 + 1)\mbox{PD}[F']|_{W'_N}$.  These structures depend on $K$, $N$ and $\m{t}_N$, but clearly not on $\m{s}_0$, since they are simply the maximizers of the square of the Chern class among those \sst\ structures which restrict to $\m{t}_N$.

We are interested also in the quantity 
\begin{equation}
\label{eq:7}
q_K(\xi) = Q_K\big(1; E^{-1}_{K, N}(\xi)\big) = \frac{[\widetilde{dF'}]^2}{d^2} + \left(\frac{\langle c_1\left(E^{-1}_{K, N}(\xi)\right), [\widetilde{dF'}] \rangle}{d} \right)
\end{equation} 
for $\xi \in \underline{\mbox{Spin}}^{\mbox{\scriptsize{c}}}(Y,K)$.  
We also abuse notation and sometimes write $q_K$ for $q_K \circ E_{K,N} = Q_K(1; \cdot)$, which makes sense for any boundary-torsion element of $\spc{W'_N}$ for any $N$.  
In light of the following, we sometimes write a relative \sst\ structure $\xi$ in the form $[G_K(\xi), q_K(\xi)]$.

\begin{prop}
\label{thm:2.8}
The quantity $q_K(\xi)$ is independent of $N$, for $\xi \in \underline{\mbox{Spin}}^{\mbox{\scriptsize{c}}}(Y,K)$, and the map $\xi \mapsto \big(G_K(\xi), q_K(\xi)\big)$ is injective.
\end{prop}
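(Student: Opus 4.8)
The plan is to establish both claims by reducing to the explicit formulas for $q_K$ provided by Proposition~\ref{thm:2.7} and the definition in Equation~\ref{eq:7}, combined with the equivariance of $E_{K,N}$ from Proposition~\ref{thm:2.6}. For the $N$-independence, I would argue as follows. Fix $\xi \in \rspc{Y,K}$ and two integers $N, N'$. By Proposition~\ref{thm:2.7}, the self-intersection $[\widetilde{dF'}]^2/d^2$ computed in $W'_N$ equals $-\kappa - N$, and similarly $-\kappa - N'$ in $W'_{N'}$; so the ``quadratic coefficient'' part of $q_K$ does depend on $N$, and the linear term $\langle c_1(E^{-1}_{K,N}(\xi)), [\widetilde{dF'}]\rangle/d$ must compensate. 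The key point is to track how $E^{-1}_{K,N}(\xi)$ changes with $N$: by the defining property of $E_{K,N}$ (the paragraph before Proposition~\ref{thm:2.6}), the relative structure $\xi$ satisfies $c_1(\xi) = \phi^*(c_1(E^{-1}_{K,N}(\xi))) - \mathrm{PD}[\la_N - \mu]$ where $\la_N = N\mu + \la$. Writing this for $N$ and $N'$ and subtracting, I get that $\phi^*(c_1(E^{-1}_{K,N}(\xi)) - c_1(E^{-1}_{K,N'}(\xi))) = \mathrm{PD}[\la_N - \la_{N'}] = (N - N')\mathrm{PD}[\mu]$ in $H^2(Y,K)$. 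Under the identifications $\phi^*$ (which match $[\widetilde{dF'}]$ in $W'_N$ with $d[F']$, hence with $d[K]$-type classes via the $(Y,K)$-picture), evaluating the difference of linear terms against $[\widetilde{dF'}]$ picks up exactly $(N-N')\langle \mathrm{PD}[\mu], [\widetilde{dF}] \rangle / d$. Since $[\widetilde{dF}]$ has $d$ copies of the core $F$ whose boundary is $-dK$, and $\mu$ intersects the Seifert surface for $dK$ appropriately, this pairing is $-d$ (up to orientation), giving a change of $+(N - N')$ in the linear term, which precisely cancels the change $-(N - N')$ coming from the quadratic term. Hence $q_K(\xi)$ is the same whether computed via $W'_N$ or $W'_{N'}$.

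For the injectivity of $\xi \mapsto (G_K(\xi), q_K(\xi))$, I would fix $N$ special (or any convenient $N$) and suppose $G_K(\xi_1) = G_K(\xi_2)$ and $q_K(\xi_1) = q_K(\xi_2)$. By the remarks in Section~2.3, the fibers of $G_K$ are the orbits of the $\mathbb{Z}\cdot\mathrm{PD}[\mu]$-action, so $\xi_2 = \xi_1 + m\,\mathrm{PD}[\mu]$ for some $m \in \mathbb{Z}$. Then $c_1(\xi_2) = c_1(\xi_1) + 2m\,\mathrm{PD}[\mu]$, so via $E^{-1}_{K,N}$ (which is $H^2(W'_N)$-equivariant through $\phi^*$) the structures $E^{-1}_{K,N}(\xi_1)$ and $E^{-1}_{K,N}(\xi_2)$ differ by the class $\phi^*{}^{-1}$ of $2m\,\mathrm{PD}[\mu]$, i.e.\ by $2m$ times a generator of the $\mathrm{PD}[F']$-direction (using $j^*\mathrm{PD}[\la_N - \mu] = \mathrm{PD}[K]$ and that $\mathrm{PD}[\mu]$ generates $\ker j^*$, as in the proof of Proposition~\ref{thm:2.6}). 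Now $q_K(\xi) = Q_K(x; \m{s}_0)$ evaluated at the appropriate $x$, and $Q_K$ is a genuinely quadratic (not linear) function of $x$ precisely when $[\widetilde{dF'}]^2 \ne 0$, which holds for special $K$ by Proposition~\ref{thm:2.7} (there the square is $-d^2N \ne 0$ for $N \ne 0$). A strictly quadratic function takes each value at most twice, so I cannot immediately conclude $m = 0$ from equality of one value; instead I should use that $q_K(\xi_1) = q_K(\xi_2)$ together with the relation $Q_K(x+1;\m{s}_0) - Q_K(x;\m{s}_0)$ being linear and strictly monotonic in $x$ (since the leading coefficient is nonzero), forcing the two $x$-values to be symmetric about the vertex; then a parity/integrality argument, or choosing $N$ large enough that the relevant lattice of admissible shifts is coarse, rules out $m \ne 0$. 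Alternatively — and more cleanly — I would invoke the $N$-independence just proved to pass to a value of $N$ for which the argument is transparent, or note that $q_K$ together with $G_K$ records the full Chern class data $c_1(E^{-1}_{K,N}(\xi))$ modulo the kernel of evaluation, which by the injectivity of $H^1(K) \hookrightarrow H^2(Y,K)$ onto $\ker j^*$ pins down $\xi$ uniquely.

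The main obstacle I anticipate is the injectivity half, specifically handling the fact that $Q_K$ is quadratic rather than linear: equality of a single value of a quadratic function does not force the arguments to coincide, so the argument genuinely needs the extra rigidity that $x$ ranges over (a coset of) the integers and that the two candidate structures both arise as $E^{-1}_{K,N}$ of honest relative \sst\ structures differing by an \emph{even} multiple $2m\,\mathrm{PD}[\mu]$ of the meridian class. Pinning down that the symmetric-about-the-vertex alternative is excluded — equivalently, that the vertex $x_*$ is never half-integral in a way that would allow a nontrivial symmetry — is the delicate point, and I expect it to follow from the explicit form of $\langle c_1(\m{s}_0), [\widetilde{dF'}]\rangle/d$ modulo $2$ forced by boundary-torsion and the structure of $\m{t}_N$, or else to be sidestepped entirely by the remark that $q_K$ is really just repackaging the linear functional $c_1(E^{-1}_{K,N}(\xi)) \mapsto \langle \cdot, [\widetilde{dF'}]\rangle$, which is injective on the $\mathrm{PD}[\mu]$-orbit once $[\widetilde{dF'}]^2 \ne 0$.
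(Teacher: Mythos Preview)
Your argument for $N$-independence is a legitimate alternative to the paper's: the paper works concretely with similar triangles in diagrams from the same family and applies the Chern class formula \eqref{eq:6} to periodic domains representing $[\widetilde{dF'}]$ in $W'_N$ versus $W'_{N+1}$, whereas you work purely with the cohomological characterization of $E_{K,N}$ from Proposition~\ref{thm:2.6}. Your route is cleaner in spirit but glosses over one point: the class $[\widetilde{dF'}]$ lives in $H_2(W'_N)$, a group that changes with $N$, so ``the change in the linear term'' is not literally a difference of two numbers until you pull both evaluations back to $H_2(Y,K)$ via $\phi_*^{-1}$ and check that the resulting class (and its pairing with $\mathrm{PD}[\mu]$, $\mathrm{PD}[\la_N]$) behaves as you claim. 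This can be done, but the paper's Heegaard-diagram computation sidesteps the issue by comparing concrete periodic domains whose boundaries differ only by $-d\de_g$.

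Your treatment of injectivity, however, contains a genuine confusion. You identify $(\phi^*)^{-1}(\mathrm{PD}[\mu])$ with (a multiple of) $\mathrm{PD}[F']|_{W'_N}$, and from this conclude that shifting $\xi$ along its $G_K$-fiber moves $E^{-1}_{K,N}(\xi)$ in the $x$-direction of $Q_K(x;\m{s}_0)$, hence that $q_K$ varies quadratically. But these two cohomology classes are \emph{not} proportional: they pair differently with $[\widetilde{dF'}]$ (the former gives $d$, the latter gives $[\widetilde{dF'}]^2/d = -d(\kappa+N)$), and only $\mathrm{PD}[F']|_{W'_N}$ vanishes on $H_2(Y_N)$. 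Shifting $\xi$ by $m\,\mathrm{PD}[\mu]$ changes only the linear term of \eqref{eq:7}, by $\tfrac{2m}{d}\langle \mathrm{PD}[\mu], \phi_*^{-1}[\widetilde{dF'}]\rangle = 2m$. Thus $q_K(\xi + m\,\mathrm{PD}[\mu]) = q_K(\xi) + 2m$, and injectivity is immediate---this is exactly the paper's one-line proof. All of your subsequent discussion of vertices, parity, and choosing $N$ large is addressing a nonexistent obstacle.
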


We defer the proof to Section 3.

We would hope that the above makes the set of relative \sst\ structures corresponding to $\m{S}_{N\infty}(\m{t}_0)$ independent of $N$.  This is not the case, but when $K$ is special we can say something that will be of use to us later.

\begin{prop}
\label{thm:2.9}
Fix some oriented knot $K$ and some $\mu$-torsion $\m{t}_0 \in \spc{Y_0}.$ If $K$ is special, we have 
$$ q_K\big(\m{s}_{K+}(\m{t}_N)\big) = -\frac{\langle c_1(\m{t}_0), [\widehat{dS}] \rangle}{d} $$
for any $\m{t}_N$ in $\m{S}_N(\m{t}_0)$, where $[\widehat{dS}]$ is any Seifert surface for $dK$ capped off in $Y_0$; and
$$ E_{K, N}\circ \m{s}_{K+}\big(\{\m{S}_N(\m{t}_0)\}\big) = \big\{ [\m{s}_{K+}(\m{t}_N)|_Y - i\mathrm{PD}[K],   -\frac{\langle c_1(\m{t}_0), [\widehat{dS}] \rangle}{d}] \big| 0 \leq i < d \big\}. $$
\end{prop}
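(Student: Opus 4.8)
The plan is to reduce the two identities to an intersection‑form computation on a capped‑off cobordism, using the maximiser property built into the definition of $\m{s}_{K+}$. Since $K$ is special, $\kappa=0$, so Proposition~\ref{thm:2.7} gives $[\widetilde{dF'}]^2=-d^2N$ and that $\mu$ has infinite order in $H_1(Y_0)$. Fixing any $\m{s}_0\in\spc{W'_N}$ restricting to $\m{t}_N$, the function $x\mapsto Q_K(x;\m{s}_0)$ is then a downward‑opening parabola with leading coefficient $-N$; with $x_*$ its vertex and $x_0=\lfloor x_*\rfloor$ (so $\m{s}_{K+}(\m{t}_N)=\m{s}_0+x_0\mbox{PD}[F']|_{W'_N}$), one computes directly
$$ q_K\big(\m{s}_{K+}(\m{t}_N)\big)=Q_K(1;\m{s}_{K+}(\m{t}_N))=Q_K(x_0+1;\m{s}_0)-Q_K(x_0;\m{s}_0)=2N(x_*-x_0)-N\in[-N,N), $$
and in fact $q_K(\m{s}_{K+}(\m{t}_N))$ is the unique member of the arithmetic progression $\{\,Q_K(k+1;\m{s}_0)-Q_K(k;\m{s}_0):k\in\mathbb{Z}\,\}$ (common difference $-2N$) lying in $[-N,N)$. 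So part~(a) becomes a matter of pinning down this value; note also that $\m{S}_N(\m{t}_0)$ is automatically empty unless $-\langle c_1(\m{t}_0),[\widehat{dS}]\rangle/d\in[-N,N)$, which is harmless since $\m{S}_N(\m{t}_0)$ grows with $N$.

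To identify the value, I would first note that $\m{s}_{K+}(\m{t}_N)|_Y\in\m{S}_\infty(\m{t}_0)$: indeed $\overline{\m{s}_{K+}(\m{t}_N)}\in\m{S}_{N\infty}(\m{t}_0)$ because it restricts to $\m{t}_N\in\m{S}_N(\m{t}_0)$ on $Y_N$, and by Proposition~\ref{thm:2.4} (applicable here, since $N\mu+\lambda$ is non‑special for $N\ne 0$) $\m{S}_\infty(\m{t}_0)$ equals the set of $Y$‑restrictions of \sst\ structures on $W'_0$ restricting to $\m{t}_0$. Choosing such a $\m{s}'\in\spc{W'_0}$, gluing it to $\overline{\m{s}_{K+}(\m{t}_N)}$ along $Y$, and capping off the remaining boundary (a lens space $L(N,1)$ together with $S^1\times S^2$ summands) by the canonical \sst\ structures appearing in the definition of $\m{S}_N(\m{t}_0)$, one obtains $\m{s}_Z\in\spc{Z}$ on a cobordism $Z$ from $Y_0$ to $Y_N$ restricting to $\m{t}_0$ and $\m{t}_N$ on the two ends. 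Inside $Z$, the image of the capped Seifert surface $[\widehat{dS}]$ and the image of $[\widetilde{dF'}]$ (from the $W_N$‑piece) differ by $d$ times an explicit sphere class supplied by the $L(N,1)$‑cap, a sphere of square $\pm N$ that pairs with $c_1(\m{s}_Z)$ to give $\pm N$ by the normalisation $\langle c_1(\m{s}),V\rangle=N$ of the canonical structure on $L(N,1)$. Rewriting the $[\widetilde{dF'}]$‑pairings through Propositions~\ref{thm:2.5}--\ref{thm:2.6} in terms of $c_1$ of relative \sst\ structures paired with the Seifert class $[S]\in H_2(Y,K)$ (using $\langle\mbox{PD}[\mu],[S]\rangle=d$ and $\langle\mbox{PD}[\lambda],[S]\rangle=0$, since $\lambda$ has finite order in $H_1(Y\setminus K)$), and combining with $[\widetilde{dF'}]^2/d^2=-N$, one arrives after a sign/orientation check at $q_K(\m{s}_{K+}(\m{t}_N))=-\langle c_1(\m{t}_0),[\widehat{dS}]\rangle/d$, the bracketing property from the first paragraph being exactly what singles out $\m{s}_{K+}(\m{t}_N)$ among its $\mbox{PD}[F']$‑shifts. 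This also re‑derives the $N$‑independence asserted in Proposition~\ref{thm:2.8}.

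For the second identity I would then argue as follows. By~(a) every element of $E_{K,N}\circ\m{s}_{K+}(\{\m{S}_N(\m{t}_0)\})$ has $q_K$‑value $-\langle c_1(\m{t}_0),[\widehat{dS}]\rangle/d$ and $G_K$‑image $\m{s}_{K+}(\m{t}_N)|_Y$, so by the injectivity of $\xi\mapsto\big(G_K(\xi),q_K(\xi)\big)$ from Proposition~\ref{thm:2.8} it suffices to compute $\{\,\m{s}_{K+}(\m{t}_N)|_Y:\m{t}_N\in\m{S}_N(\m{t}_0)\,\}$. This set lies in $\m{S}_\infty(\m{t}_0)$ by the observation above, and by Proposition~\ref{thm:2.4} $\m{S}_\infty(\m{t}_0)$ is a full coset of the cyclic group $\mathbb{Z}\cdot\mbox{PD}[K]\cong\mathbb{Z}/d\mathbb{Z}$ (the structures on $W'_0$ over $\m{t}_0$ form a coset of $\ker(\spc{W'_0}\to\spc{Y_0})$, which is cyclic, generated by $\mbox{PD}[F'_0]$, and $\mbox{PD}[F'_0]$ restricts to $\mbox{PD}[K]$ on $Y$). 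Conversely each $\m{t}_\infty\in\m{S}_\infty(\m{t}_0)$ is attained, since the $c_1^2$‑maximiser on $W'_N$ over $\m{t}_\infty$ restricts on $Y_N$ to an element of $\m{S}_N(\m{t}_0)$ with $Y$‑restriction $\m{t}_\infty$; this gives a bijection $\m{S}_N(\m{t}_0)\to\m{S}_\infty(\m{t}_0)$, whence the image equals $\{\,[\m{s}_{K+}(\m{t}_N)|_Y-i\mbox{PD}[K],\,-\langle c_1(\m{t}_0),[\widehat{dS}]\rangle/d]:0\le i<d\,\}$.

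The main obstacle is the second paragraph: assembling the correct capped‑off cobordism with the correct \sst\ structure (in particular keeping careful track of the canonical choices on the auxiliary $L(N,1)$ and $S^1\times S^2$ summands), identifying precisely the difference class in $H_2(Z)$, and chasing orientation and normalisation conventions so that the $-N$ coming from $[\widetilde{dF'}]^2/d^2$ and the $\pm N$ coming from the lens‑space cap combine into exactly $-\langle c_1(\m{t}_0),[\widehat{dS}]\rangle/d$, rather than its negative or a translate by a multiple of $2N$. The bijection $\m{S}_N(\m{t}_0)\cong\m{S}_\infty(\m{t}_0)$ needed in the last step is a secondary, more routine, point.
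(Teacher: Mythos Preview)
Your overall strategy differs from the paper's: the paper works entirely within an explicit Kirby--calculus/linking--matrix framework (set up in the Appendix), writes $\m{t}_0=\langle\vec{b}^{\,T},c\rangle_{U_0}|_{Y_0}$ and the elements of $\m{S}_N(\m{t}_0)$ in coordinates via Proposition~\ref{thm:A.1}, locates the maximiser $\m{s}_{K+}(\m{t}^i_N)$ by solving a one--variable quadratic, and then reads off $q_K=c-\vec{b}^{\,T}\vec{p}_0=-\langle c_1(\m{t}_0),[\widehat{dS}]\rangle/d$ directly from Lemma~\ref{thm:A.3}. The second identity then falls out of the explicit formula for $\m{s}_{K+}(\m{t}^i_N)$, whose $Y$--restriction is visibly $\langle\vec{b}^{\,T}-2(i-1)\vec{k}^{\,T}\rangle_{U_\infty}|_Y$, i.e.\ a $\mathrm{PD}[K]$--translate. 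Your first paragraph (the $[-N,N)$ bracketing via the parabola) is correct and matches the paper's computation; the difference is in how you try to pin down the value.

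The gap is in your second paragraph. Gluing $W'_0$ (a cobordism from $Y_0$ to $Y$) to $\pm W'_N$ along $Y$ produces a cobordism from $Y_0$ to $Y_N$; there is no lens--space boundary component to cap. The $L(N,1)\#^{g-1}S^1\times S^2$ boundary you have in mind belongs to the Heegaard--triple four--manifold $X_{\alpha\beta\gamma}$, not to $W'_0\cup_Y W_N$; these are different cobordisms and your description conflates them. Consequently the ``sphere of square $\pm N$ pairing to $\pm N$'' you invoke does not sit where you want it, and the chain of identifications relating $[\widetilde{dF'}]$ to $[\widehat{dS}]$ through Propositions~\ref{thm:2.5}--\ref{thm:2.6} is not set up. You yourself flag this as the main obstacle, and it is a genuine one: without a correct model for $Z$ and its Spin$^c$ structure, the argument does not produce the value $c-\vec{b}^{\,T}\vec{p}_0$. (A workable geometric route would instead use the paper's $W_{0N}$ and the identification $U_{0N}\cong U_N\# S$ from Figure~12, which is exactly what the matrix computation encodes.)

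For part~(b) there is a second, smaller gap. Your surjectivity step (``the $c_1^2$--maximiser on $W'_N$ over $\m{t}_\infty$ restricts on $Y_N$ to an element of $\m{S}_N(\m{t}_0)$'') silently assumes that \emph{every} \sst\ structure on $W'_N$ restricting to a given $\m{t}_\infty\in\m{S}_\infty(\m{t}_0)$ lies in $\m{S}_{N\infty}(\m{t}_0)$, and that this maximiser (which is over the $Y$--fibre, not the $Y_N$--fibre) coincides with $\m{s}_{K+}$ of its own $Y_N$--restriction. Neither is automatic: the $Y$--fibre and $Y_N$--fibre in $\spc{W'_N}$ are affine for different subgroups, and $\m{S}_{N\infty}(\m{t}_0)$ is defined through $Y_N$, not $Y$. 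The paper sidesteps this by simply reading off $\m{s}_{K+}(\m{t}^i_N)|_Y$ from the explicit formula.
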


We give the proof in the Appendix.

\section{Families of Standard Heegaard diagrams and Small Triangles} 
In this section, we make a convenient generalization of the standard Heegaard diagrams, and note some facts that will be useful in proving Theorem \ref{thm:5.2}, a twisted version of the large-$N$ surgery formula that is the first step towards the more specific formula of Theorem \ref{thm:6.1}.

Note that the winding region of a standard diagram, as depicted in Figure \myfig{1}, has a fairly rigid form: essentially, the picture is determined by the number of horizontal $\bsa$ strands, the number of turns in $\ga_g$, and the location of $\de_g$ (along with the flanking basepoints).  Given a particular standard diagram, we can then cut out the winding region, and replace it with any other possible winding region that has the same number of $\bsa$ strands.  Such a replacement has little effect on the manifolds $Y_0$ and $Y$ that the diagram represents, or the intersection points of $\mathbb{T}_{\al} \cap \mathbb{T}_{\be}$ or $\mathbb{T}_{\al} \cap \mathbb{T}_{\de}$.  The new diagram will represent $Y_N$ and $W'_N$ for new values of $N$, of course.

Likewise, the small triangles appearing in a standard diagram have a rigid form; specifically, they are determined by their corner $\mathbf{y} \in \mathbb{T}_{\al} \cap \mathbb{T}_{\de}$ and the value of $n_w(\psi) - n_z(\psi)$.

\begin{defi}
\label{thm:3.1}
A \emph{family} of special Heegaard diagrams for $(Y,K)$ is the set of all diagrams gotten from a given one by making this type of replacement.  
\end{defi}

If two small triangles appearing in two Heegaard diagrams in the same family have the same corner $\mathbf{y} \in \mathbb{T}_{\al} \cap \mathbb{T}_{\de}$ and the same value of $n_w(\psi) - n_z(\psi)$, we call them \emph{similar}. 

\vs

\noindent \textit{Proof of Proposition \ref{thm:2.8}.} We show that raising $N$ by 1 doesn't change $q_K(\xi)$.  When $N$ is increased by 1, the quantity $\frac{[\widetilde{dF'}]^2}{d^2}$ decreases by 1 by Proposition \ref{thm:2.7}. 

As for $\frac{\langle c_1\big(E^{-1}_{K, N}(\xi)\big), [\widetilde{dF'}] \rangle}{d}$, we appeal to the Chern class evaluation formula, Equation \ref{eq:6}.  Any lift $[\widetilde{dF'}]$ in $H_2(W'_N)$ has a representative periodic domain in a standard diagram whose boundary is of the form $d(\ga_g - N\de_g) + L$, where $L$ is a linear combination of $\al$ circles and $\ga_1, \ldots, \ga_{g-1}$.  If we take a diagram for $W'_{N+1}$ from the same family, we will have a periodic domain with boundary $d(\ga_g - (N+1)\de_g) + L$ for the same value of $L$, which will also represent some class $[\widetilde{dF'}] \in H_2(W'_{N+1})$.
Likewise, for some $\xi$, we have similar triangles in the two diagrams that respectively represent $E^{-1}_{K, N}(\xi)$ and $E^{-1}_{K, N+1}(\xi)$.  So, we can calculate the difference using the Chern class formula; we find that $\langle c_1\big(E^{-1}_{K, N+1}(\xi)\big), [\widetilde{dF'}] \rangle - \langle c_1\big(E^{-1}_{K, N}(\xi)\big), [\widetilde{dF'}] \rangle = -d$ for such values of $\xi$.  By the characterization of $E_{K,N}(\xi)$ given in Proposition \ref{thm:2.6}, it is then clear that this equation must hold for all $\xi$.  

To see that $\xi$ is determined by $G_K(\xi)$ and $q_K(\xi)$, simply note that $q_K(\xi + i\mbox{PD}[\mu]) = q_K(\xi) + 2i$. \halbox

\vs

Fix a family $\mathcal{F}$ of standard diagrams $\big\{(\Sigma, \bsa, \bsg, \bsd, w, z)\big\}$ for $(Y,K)$.
If we forget about the basepoints, we can still talk about equivalence classes of intersection points in $\mathbb{T}_{\al} \cap \mathbb{T}_{\ga}$: two intersection points are equivalent if and only if, upon adding basepoints, they represent the same \sst\ structure.  Of course, the intersection points in 
$\mathbb{T}_{\al} \cap \mathbb{T}_{\de}$ for any two of these diagrams are the same.  Equivalence classes of points still form an affine set for the action of $H_1(Y_N)$.

Over all the diagrams in $\mathcal{F}$, the total number of intersection points not supported in the winding region is constant, since the components of these points lie in the portions of the diagram that don't change throughout the family.  Hence, the number of equivalence classes that contain points not supported in the winding region is bounded independent of $N$.  

\begin{lemma}
\label{thm:3.2}
There is an integer $\epsilon > 0$ such that if $N$ is sufficiently large, then for all $\m{t} \in \spc{Y_N}$, there is a diagram in $\mathcal{F}$ such that $\m{t}$ is represented by an equivalence class of points which are all supported in the winding region, and such that we have 
$$2|n_w(\psi) - n_z(\psi)| \leq N + \epsilon$$ 
for all small $\psi$ with a corner representing $\m{t}$. 
\end{lemma}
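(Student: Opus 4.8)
The plan is to turn this into a statement about the geometry of winding regions. First note that the generators of $\cfh{Y_N}$ that are \emph{not} supported in the winding region form a single finite set $\mathcal{Z}$, independent of $N$ and of the member of $\mathcal{F}$ chosen, since they live in the part of the diagram that is common to the whole family. So it suffices to establish, for $N$ large: (A) every $\m{t}\in\spc{Y_N}$ is realized by at least one winding-region generator in some member of $\mathcal{F}$ representing $Y_N$; and (B) that member can be chosen so that, in addition, no point of $\mathcal{Z}$ represents $\m{t}$ and $2|n_w(\psi)-n_z(\psi)|\leq N+\epsilon$ for every small $\psi$ with a corner over $\m{t}$. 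I would get both by the two moves available within a family: enlarging the winding region (more turns of $\ga_g$, keeping its net winding, hence keeping $Y_N$ and $W'_N$ fixed), and sliding $\de_g$ — together with the flanking basepoints $w,z$ — to a well-chosen gap between consecutive turns. Neither move changes $Y_N$; enlarging lengthens the range of relative \sst\ structures seen by winding generators with a fixed nearest point, while sliding $\de_g$ translates that range, and also translates the \sst\ structures $\m{s}_w(\mathbf{z})$ for $\mathbf{z}\in\mathcal{Z}$.

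\textbf{Winding geometry and the triangle bound.}
Fix such a diagram, with winding region having $t$ turns and $\de_g$ in the $p$-th gap. For a nearest pair $(\mathbf{x},\mathbf{y})$, $\mathbf{y}\in\mathbb{T}_{\al}\cap\mathbb{T}_{\de}$, with small triangle $\psi\in\pi_2(\mathbf{x},\Theta_{\ga\de},\mathbf{y})$, Equations \ref{eq:4}--\ref{eq:5} and Propositions \ref{thm:2.5}--\ref{thm:2.6} give $E_{K,N}(\psi)=\um{s}_{w,z}(\mathbf{y})+(n_w(\psi)-n_z(\psi))\mathrm{PD}[\mu]$; moreover, as $\mathbf{x}$ ranges over the winding generators with nearest point $\mathbf{y}$, the integer $n_w(\psi)-n_z(\psi)$ runs over an interval of length $t+O(1)$ whose position depends, up to an $O(1)$ error controlled purely by $\mathcal{F}$ (number of $\al$-strands, the fixed contribution of the ``$\la$-route'' part of $\ga_g$, a parity correction), only on $p$. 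Restricting $\m{s}_w(\psi)$ to $Y_N$, the \sst\ structures on $Y_N$ realized by winding generators over $\mathbf{y}$ are exactly the images under the surgery restriction of $\um{s}_{w,z}(\mathbf{y})+k\,\mathrm{PD}[\mu]$ for $k$ in this interval. Choosing $p$ within $O(1)$ of $t/2$ makes the interval symmetric about $0$ up to $O(1)$, so $2|n_w(\psi)-n_z(\psi)|\leq t+O(1)$; since $t=N+O(1)$ with the error bounded in terms of $\mathcal{F}$ alone, this is $\leq N+\epsilon$ for a constant $\epsilon=\epsilon(\mathcal{F})$. This settles (B) for the triangle inequality.

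\textbf{Exhaustion, and the hard part.}
For (A): let $\rho_N$ be the order of $[\mu]$ in $H_1(Y_N)$; a Mayer--Vietoris computation (in the spirit of Proposition \ref{thm:2.7}) shows $\rho_N$ grows linearly in $N$. I would take $t\geq\rho_N$, so that for each fixed nearest point $\mathbf{y}$ the winding generators over $\mathbf{y}$ already fill the entire $\langle\mathrm{PD}[\mu]\rangle$-coset of $\spc{Y_N}$ that they can reach; together with the collection of all nearest points $\mathbf{y}$ (which, once $t$ is large, is all of $\mathbb{T}_{\al}\cap\mathbb{T}_{\de}$) this should make the winding generators collectively realize all of $\spc{Y_N}$. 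For the rest of (B): only finitely many \sst\ structures on $Y_N$ have the form $\m{s}_w(\mathbf{z})$ with $\mathbf{z}\in\mathcal{Z}$, and a bounded change of $p$ moves all of them off any prescribed $\m{t}$ while keeping $p$ within $O(1)$ of $t/2$, so the bound of the previous paragraph survives after enlarging $\epsilon$ by a constant. I expect the exhaustion claim of (A) to be the main obstacle: confirming that for $N$ large the winding region really does make \emph{every} \sst\ structure on $Y_N$ occur, and occur as $\m{s}_w$ of a winding generator admitting the nearest-point/small-triangle structure used above. This should come down to bookkeeping with the sublattice of $\rspc{Y,K}$ generated by the $\um{s}_{w,z}(\mathbf{y})$ together with $\mathrm{PD}[\mu]$ and the generator $\mathrm{PD}[N\mu+\la]$ of $\ker\big(H^2(Y,K)\to H^2(Y_N)\big)$, analyzed through the exact sequence $H^1(Y)\to H^1(K)\to H^2(Y,K)\overset{j^*}{\rightarrow}H^2(Y)$ and the description of $E_{K,N}$ in Proposition \ref{thm:2.6}: the length-$\geq\rho_N$ windows take care of the $\mathrm{PD}[\mu]$-direction, and the full set of nearest points takes care of the complementary directions.
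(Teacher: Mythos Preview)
Your proposal contains a genuine misconception about what moves are available within a family $\mathcal{F}$. You speak of ``enlarging the winding region (more turns of $\ga_g$, keeping its net winding, hence keeping $Y_N$ and $W'_N$ fixed)'' as one of two moves. But in a standard diagram the curve $\ga_g$ is a simple spiral whose homology class is $\be_g + N\de_g$; the ``number of turns'' \emph{is} $N$, and there is no back-and-forth allowed. As the paper states just before the lemma, for a fixed $N$ there are exactly $N+1$ members of $\mathcal{F}$ representing $W'_N$, corresponding to the $N+1$ possible placements of $\de_g$. So the only move you actually have is sliding $\de_g$; you cannot independently take $t\geq\rho_N$.

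This matters because your organization leans on the nonexistent move: you want first to ``exhaust'' $\spc{Y_N}$ by winding generators (your part (A)), and you flag this as the hard step. In fact (A) is unnecessary. The lemma only asks that the equivalence class of $\m{t}$ contain no point outside the winding region; if that class happens to be empty the statement is vacuous. So what is needed is exactly your part (B), and for this the paper's argument is a clean pigeonhole that you almost have but do not quite isolate: moving $\de_g$ one step shifts \emph{which} equivalence class represents $\m{t}$ by $\mathrm{PD}[\mu]$; since $[\mu]$ has order growing linearly in $N$, the $\epsilon$ innermost placements of $\de_g$ give $\epsilon$ \emph{distinct} equivalence classes for $\m{t}$ once $N$ is large. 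The number of equivalence classes containing a point of $\mathcal{Z}$ is at most $|\mathcal{Z}|$, independent of $N$, so taking $\epsilon>|\mathcal{Z}|$ forces one of the innermost placements to work. For such a placement one has $|n_w(\psi)-n_z(\psi)|\le \epsilon + \tfrac{N-\epsilon}{2}$ directly from the geometry, giving the stated bound. Your ``bounded change of $p$'' paragraph is groping toward this, but it is entangled with the spurious (A) and with an $O(1)$-juggling that is not needed once you realize only the sliding move is available.
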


\begin{proof}
We assume as always that $K$ is rationally nullhomologous in $Y$, say of order $d$.  Then, it is not hard to show that there is a constant $c$ such that $\mu$ will be of order $|dN - c|$ in $H_1(Y_N)$. 

There are $N + 1$ members of $\mathcal{F}$ that represent $W'_N$, corresponding to the $N + 1$ placements of $\de_g$ and accompanying placements of the basepoint $w$.  Consider the $\epsilon$ innermost placements of $\de_g$ -- that is, disregard the $\frac{N - \epsilon}{2}$ leftmost and rightmost placements.  We claim that if $N$ is large enough, then amongst these $\epsilon$ different placements, a given \sst\ structure $\m{t}$ will be represented by 
$\epsilon$ distinct equivalence classes of intersection points in $\mathbb{T}_{\al} \cap \mathbb{T}_{\ga}$.  To see this, suppose we have two adjacent placements of $\de_g$, with corresponding basepoints $w_1$ and $w_2$; then if $e$ is the equivalence class of $\m{t}$ with respect to $w_1$, then $e + \mbox{PD}[\mu]$ is the equivalence class with respect to $w_2$.  Then for large enough $N$, $|dN-c| \geq \epsilon$, so that the $\epsilon$ equivalence classes will therefore all be distinct.

Hence, if $\epsilon$ is larger than the number of equivalence classes that contain points not supported in the winding region, then with respect to one of these placements of $\de_g$, $\m{t}$ must be represented by an equivalence class of points which all are supported in the winding region.  Furthermore, for these placements, it is easy to see that 
$$|n_w(\psi) - n_z(\psi)| \leq \epsilon + \frac{N-\epsilon}{2}$$
(since for small triangles, at most one of $n_w(\psi)$ and $n_z(\psi)$ is nonzero),  
from which the claim follows. 
\end{proof}

Call a value of $\epsilon$ valid if it makes Lemma \ref{thm:3.2} hold.  Given $\m{t} \in \spc{Y_N}$, we call a standard diagram $\m{t}$\emph{-proper} if it is of the type described in the statement of Lemma \ref{thm:3.2} with respect to some valid $\epsilon$.

\begin{lemma}
\label{thm:3.3}
For any small triangle $\psi$ in \emph{any} diagram in $\mathcal{F}$, the quantity 
$$|q_K\big(\m{s}_w(\psi)\big) - 2(n_w(\psi) - n_z(\psi))|$$ 
is bounded independent of the particular diagram. In particular, there exists a constant $C_q$ depending only on $\mathcal{F}$ such that 
$$|q_K\big(\m{s}_w(\psi)\big)| \leq C_q + N$$
for any small triangle $\psi$ in a $\m{t}$-proper standard diagram in $\mathcal{F}$ representing $W'_N$, where $\m{t} = \m{s}_w(\psi)|_{Y_N}$.
\end{lemma}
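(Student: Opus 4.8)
The plan is to reduce the whole statement to the displacement formula for $E_{K,N}$, namely Equation \ref{eq:5} together with Proposition \ref{thm:2.6}, combined with the $N$-independence of $q_K$ from Proposition \ref{thm:2.8}. Fix any diagram in $\mathcal{F}$ and any small triangle $\psi$ with corner $\mathbf{x}\in\mathbb{T}_{\al}\cap\mathbb{T}_{\ga}$ and corner $\mathbf{y}\in\mathbb{T}_{\al}\cap\mathbb{T}_{\de}$ (and with $\m{s}_w(\psi)$ boundary-torsion, so that $q_K\big(\m{s}_w(\psi)\big)$ is defined). By Proposition \ref{thm:2.6} and Equation \ref{eq:5},
\[ E_{K,N}\big(\m{s}_w(\psi)\big) \;=\; E_{K,N}(\psi) \;=\; \um{s}_{w,z}(\mathbf{y}) + \big(n_w(\psi) - n_z(\psi)\big)\,\mathrm{PD}[\mu]. \]
Applying $q_K$ in its abused sense $q_K = Q_K(1;\cdot) = q_K\circ E_{K,N}$, and using the identity $q_K(\zeta + i\,\mathrm{PD}[\mu]) = q_K(\zeta) + 2i$ recorded in the proof of Proposition \ref{thm:2.8}, one obtains
\[ q_K\big(\m{s}_w(\psi)\big) - 2\big(n_w(\psi) - n_z(\psi)\big) \;=\; q_K\big(\um{s}_{w,z}(\mathbf{y})\big). \]
So the quantity appearing in the first assertion of the lemma is precisely $q_K$ of the relative \sst\ structure carried by the $\de$-corner of $\psi$.

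It then remains to bound $\big|q_K(\um{s}_{w,z}(\mathbf{y}))\big|$ by a constant depending only on $\mathcal{F}$. Here $\mathbf{y}$ ranges over $\mathbb{T}_{\al}\cap\mathbb{T}_{\de}$, which is a finite set and, moreover, is literally the same for every diagram in $\mathcal{F}$, since a winding-region replacement does not touch these intersection points. For a fixed $\mathbf{y}$, the relative structure $\um{s}_{w,z}(\mathbf{y})\in\rspc{Y,K}$ also does not depend on the member of $\mathcal{F}$: passing from one diagram of the family to another is an isotopy of the underlying doubly-pointed Heegaard diagram for $(Y,K)$, supported away from the $\bsa$- and $\bsd$-intersection points. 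Finally, the function $q_K$ on $\rspc{Y,K}$ is independent of $N$ by Proposition \ref{thm:2.8}. Hence $M := \max\{\,|q_K(\um{s}_{w,z}(\mathbf{y}))| : \mathbf{y}\in\mathbb{T}_{\al}\cap\mathbb{T}_{\de}\,\}$ depends only on $\mathcal{F}$, which establishes the first claim.

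For the ``in particular'' clause, I would fix once and for all a valid value of $\epsilon$ (one exists by Lemma \ref{thm:3.2}, and it depends only on $\mathcal{F}$) and take ``$\m{t}$-proper'' to mean with respect to this $\epsilon$. If $\psi$ is a small triangle in a $\m{t}$-proper diagram representing $W'_N$ with $\m{t} = \m{s}_w(\psi)|_{Y_N}$, then its corner $\mathbf{x}\in\mathbb{T}_{\al}\cap\mathbb{T}_{\ga}$ represents $\m{t}$ on $Y_N$, so Lemma \ref{thm:3.2} gives $2|n_w(\psi) - n_z(\psi)| \le N + \epsilon$; combining this with the displayed identity and the triangle inequality yields $|q_K(\m{s}_w(\psi))| \le M + N + \epsilon$, so one may take $C_q = M + \epsilon$. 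The only genuinely delicate ingredient is the diagram-independence of $\um{s}_{w,z}(\mathbf{y})$ within a family — that a winding-region replacement is invisible to the $(Y,K)$-data and to the relative \sst\ structures its $\bsa$-$\bsd$ intersection points represent; everything else is direct bookkeeping with Equation \ref{eq:5} and Propositions \ref{thm:2.6} and \ref{thm:2.8}.
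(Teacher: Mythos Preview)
Your proof is correct and follows essentially the same strategy as the paper. Both arguments rest on the increment $q_K(\xi + \mathrm{PD}[\mu]) = q_K(\xi) + 2$, the $N$-independence of $q_K$ from Proposition~\ref{thm:2.8}, the finiteness of $\mathbb{T}_{\al}\cap\mathbb{T}_{\de}$, and Lemma~\ref{thm:3.2} for the second assertion. The only real difference is one of packaging: the paper establishes incrementally that the quantity is unchanged when one (i) alters $n_w(\psi)-n_z(\psi)$ within a fixed diagram (via the Chern class formula) and (ii) passes to a similar triangle after adding a turn to $\ga_g$ (via Proposition~\ref{thm:2.8}), concluding that it ``depends only on $\mathbf{y}$''; you instead invoke Equation~\ref{eq:5} and Proposition~\ref{thm:2.6} to write down the closed-form identity
\[
q_K\big(\m{s}_w(\psi)\big) - 2\big(n_w(\psi)-n_z(\psi)\big) \;=\; q_K\big(\um{s}_{w,z}(\mathbf{y})\big),
\]
which makes the $\mathbf{y}$-dependence manifest. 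Your explicit formula is a modest sharpening --- it names the bound rather than merely asserting its existence --- and your flagging of the one delicate point (that winding-region replacement is, for the $(\bsa,\bsd,w,z)$-data, an isotopy preserving $\um{s}_{w,z}(\mathbf{y})$) is appropriate and correct.
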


\begin{proof}
Fix a point $\mathbf{y} \in \mathbb{T}_{\al} \cap \mathbb{T}_{\de}$, and consider some small triangle $\psi$ with $\mathbf{y}$ as a corner in a diagram in $\mathcal{F}$.  If we choose a different small triangle $\psi'$ in the same diagram, with $\mathbf{y}$ still a corner but such that $n_w(\psi') - n_z(\psi')$ increased by 1, then we can use the Chern class formula to show that $\langle c_1\big(\m{s}_w(\psi)\big), [\widetilde{dF'}] \rangle$ increases by $2d$, so that $q\big(\m{s}_w(\psi)\big)$ is increased by 2.  On the other hand, if we add a turn to $\ga_g$ and consider the similar triangle $\psi''$ to $\psi$ in this diagram, Proposition \ref{thm:2.8} shows that $q_K\big(\m{s}_w(\psi)\big)$ doesn't change.  Thus, $|q_K\big(\m{s}_w(\psi)\big) - 2 (n_w(\psi) - n_z(\psi))|$ depends only on $\mathbf{y}$; since the number of points in $\mathbb{T}_{\al} \cap \mathbb{T}_{\de}$ is fixed, the first claim follows.  

The second claim follows from the first together with Lemma \ref{thm:3.2}. 
\end{proof}

\begin{prop}
\label{thm:3.4}
For a family $\mathcal{F}$ of standard diagrams for $K$, there is a constant $N(\mathcal{F})$ such that the following holds.
Take any $\m{t} \in \spc{Y_N}$, and let $\psi$ be some small triangle with a corner representing $\m{t}$, in a standard $\m{t}$-proper diagram for $W'_N$.  Then if $N \geq N(\mathcal{F})$, we have 
$$\m{s}_w(\psi) = \m{s}_{K+}(\m{t})$$
and
$$\m{s}_z(\psi) = \m{s}_{K-}(\m{t}).$$
\end{prop}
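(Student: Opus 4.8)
The plan is to identify $\m{s}_w(\psi)$ and $\m{s}_z(\psi)$ by pinning down the single number $\langle c_1(\m{s}_w(\psi)),[\widetilde{dF'}]\rangle$, equivalently $q_K(\m{s}_w(\psi))$, inside a window narrow enough to force the conclusion. First I would unwind the definition of $\m{s}_{K\pm}(\m{t})$. A standard diagram contains a special longitude, so $K$ is special and Proposition \ref{thm:2.7} gives $[\widetilde{dF'}]^2/d^2=-N$; hence $Q_K(x;\m{s}_0)=-Nx^2+\tfrac1d\langle c_1(\m{s}_0),[\widetilde{dF'}]\rangle\,x$, with real maximizer $x_*=\tfrac1{2Nd}\langle c_1(\m{s}_0),[\widetilde{dF'}]\rangle$, and a short computation shows that, among the Spin$^{\mathrm c}$ structures on $W'_N$ restricting to $\m{t}$, the structure $\m{s}_{K+}(\m{t})$ is exactly the unique one $\m{s}$ with $0\le\langle c_1(\m{s}),[\widetilde{dF'}]\rangle<2Nd$, i.e.\ with $-N\le q_K(\m{s})<N$, while $\m{s}_{K-}(\m{t})=\m{s}_{K+}(\m{t})+\mathrm{PD}[F']|_{W'_N}$. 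Thus the first assertion of the proposition is equivalent to the sharp estimate $-N\le q_K(\m{s}_w(\psi))<N$, and the second assertion $\m{s}_z(\psi)=\m{s}_{K-}(\m{t})$ then follows from it together with the basepoint-change identity $\m{s}_z(\psi)=\m{s}_w(\psi)+\mathrm{PD}[F']|_{W'_N}$ (an $N$- and diagram-independent topological fact whose sign is read off from $w$ lying to the left of the meridian $\de_g$).

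For the estimate I would start from Lemma \ref{thm:3.3}, which gives $|q_K(\m{s}_w(\psi))|\le C_q+N$ for $\psi$ a small triangle in a $\m{t}$-proper diagram of $\mathcal F$, with $C_q$ depending only on $\mathcal F$. Since $\m{s}_w(\psi)$ and $\m{s}_{K+}(\m{t})$ restrict to the same structure on $Y_N$, they differ by a multiple of $\mathrm{PD}[F']|_{W'_N}$, and along that $\mathbb Z$-family $q_K$ changes in steps of $2N$; so as soon as $N>C_q$ the estimate leaves only three possibilities, namely that $\m{s}_w(\psi)$ equals $\m{s}_{K+}(\m{t})$, $\m{s}_{K+}(\m{t})-\mathrm{PD}[F']|_{W'_N}$, or $\m{s}_{K-}(\m{t})=\m{s}_{K+}(\m{t})+\mathrm{PD}[F']|_{W'_N}$, with the latter two occurring only when $x_*(\m{t})$ lies within $C_q/2N$ of an integer (equivalently, when $q_K(\m{s}_{K+}(\m{t}))$ is within $C_q$ of one of $\pm N$).

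The crux of the proof is excluding these two boundary possibilities, and this is the step I expect to be the main obstacle. One tool is conjugation: a left--right reflection of a $\m{t}$-proper diagram together with reversal of the orientation of $\Sigma$ is a $\bar{\m{t}}$-proper diagram, carrying $\psi$ to a small triangle $\bar\psi$ whose basepoint-$w$ structure is $\overline{\m{s}_z(\psi)}$, and one checks $x_*(\bar{\m{t}})=-x_*(\m{t})$ and $\overline{\m{s}_{K+}(\m{t})}\in\{\m{s}_{K+}(\bar{\m{t}}),\m{s}_{K-}(\bar{\m{t}})\}$; in favorable subcases running the hypothesized off-by-$\mathrm{PD}[F']$ identity through these relations produces an $\m{s}_w(\bar\psi)$ whose $q_K$ is too large in absolute value to be compatible with Lemma \ref{thm:3.3} applied to $\bar\psi$. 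The cases not settled this way must be handled by returning to the geometry of the winding region: in a $\m{t}$-proper diagram a small triangle with a corner representing $\m{t}$ has $n_w(\psi)$ and $n_z(\psi)$ controlled by the position of its $\mathbb T_\al\cap\mathbb T_\ga$ corner relative to $\de_g$ (at most one being nonzero), so that, combining the refined form of Lemma \ref{thm:3.3} (that $q_K(\m{s}_w(\psi))-2(n_w(\psi)-n_z(\psi))$ depends only on the corner's $\mathbb T_\al\cap\mathbb T_\de$ partner $\mathbf y$, and ranges over a finite set of values $c_{\mathbf y}$) with the control on $|n_w(\psi)-n_z(\psi)|$ from the proof of Lemma \ref{thm:3.2}, one should be able to choose $N(\mathcal F)$ large enough relative to $C_q$ and $\epsilon$ to rule out both the upper and lower tails $q_K(\m{s}_w(\psi))\ge N$ and $q_K(\m{s}_w(\psi))<-N$. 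Turning the crude bound $2|n_w(\psi)-n_z(\psi)|\le N+\epsilon$ from $\m{t}$-properness into the strict inequalities needed here — essentially a sharpening of the bookkeeping in the proofs of Lemmas \ref{thm:3.2} and \ref{thm:3.3} — is where the bulk of the work lies.
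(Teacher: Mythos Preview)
Your approach shares the paper's core strategy: first establish $\m{s}_z(\psi)=\m{s}_w(\psi)+\mathrm{PD}[F']|_{W'_N}$, then control $q_K\bigl(\m{s}_w(\psi)\bigr)$ via Lemma~\ref{thm:3.3}. Where you diverge is in the target. You aim for the sharp window $q_K\bigl(\m{s}_w(\psi)\bigr)\in[-N,N)$ that characterizes $\m{s}_{K+}(\m{t})$, correctly observe that the Lemma~\ref{thm:3.3} bound $|q_K|\le N+C_q$ just misses this, and then reach for heavy machinery (conjugation symmetry, refined winding-region bookkeeping) to eliminate the two overshoot possibilities. That last stage is left as the ``bulk of the work,'' and as written it is a sketch rather than an argument.

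The paper sidesteps the difficulty by not asking for the sharp window at all. It reformulates the goal as the pair of comparison inequalities
\[
c_1^2\bigl(\m{s}_w(\psi)\bigr)\ge c_1^2\bigl(\m{s}_w(\psi)-\mathrm{PD}[F']|_{W'_N}\bigr),
\qquad
c_1^2\bigl(\m{s}_z(\psi)\bigr)\ge c_1^2\bigl(\m{s}_z(\psi)+\mathrm{PD}[F']|_{W'_N}\bigr),
\]
which unwind (using the definition of $q_K$ and Proposition~\ref{thm:2.7}) to the single condition $\bigl|q_K\bigl(\m{s}_w(\psi)\bigr)\bigr|\le -2[\widetilde{dF'}]^2/d^2 = 2(\kappa+N)$. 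For large $N$ this is \emph{weaker} than the bound $|q_K|\le N+C_q$ from Lemma~\ref{thm:3.3}, so it follows at once with no further bookkeeping. Since $x\mapsto c_1^2\bigl(\m{s}_w(\psi)+x\,\mathrm{PD}[F']|_{W'_N}\bigr)$ is a downward quadratic, the paper then argues that these two inequalities force the values at $x=0$ and $x=1$ to dominate those at all other integers, identifying $\m{s}_w(\psi)$ and $\m{s}_z(\psi)$ as $\m{s}_{K\pm}(\m{t})$.

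So your conjugation and sharpened-bookkeeping program is an unnecessary detour: once the target is relaxed from the sharp $[-N,N)$ window to the pair of comparison inequalities, Lemma~\ref{thm:3.3} already suffices. Two minor side remarks: the paper proves the basepoint-change identity $\m{s}_z(\psi)=\m{s}_w(\psi)+\mathrm{PD}[F']|_{W'_N}$ via the Chern class formula for triply-periodic domains rather than asserting it as a topological fact; and it keeps $\kappa$ general throughout rather than specializing to $\kappa=0$.
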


\begin{proof}
First, note that $\m{s}_z(\psi) - \m{s}_w(\psi) = \mbox{PD}[F']|_{W'_N}$. To see this, note that the Chern class formula gives that for $\mathcal{P}$ representing $h \in H_2(W'_N)$, $\langle c_1\big(\m{s}_z(\psi)\big) - c_1\big(\m{s}_w(\psi)\big), h \rangle = 2n_w(\mathcal{P}) - 2n_z(\mathcal{P})$.  Thus, we can easily show that $c_1\big(\m{s}_z(\psi)\big) - c_1\big(\m{s}_w(\psi)\big) - 2\mbox{PD}[F']|_{W'_N}$ is trivial as an element of $\mbox{Hom}(H_2(W'_N), \mathbb{Z})$, hence torsion in $H^2(W'_N)$, hence $0$ since $\m{s}_z(\psi)|_{Y_N} =  \m{s}_w(\psi)|_{Y_N}$ and the torsion subgroup of $H^2(W'_N)$ injects into $H^2(Y_N)$.

We claim that if $N$ is large enough, we must have
$$c_1^2\big(\m{s}_w(\psi)\big) \geq c_1^2\big(\m{s}_w(\psi) - \mbox{PD}[F']|_{W'_N}\big)$$
and 
$$c_1^2\big(\m{s}_w(\psi) + \mbox{PD}[F']|_{W'_N}\big) \geq c_1^2\big(\m{s}_w(\psi) + 2\mbox{PD}[F']|_{W'_N}\big)$$
for $\psi$ having a corner representing any $\m{t} \in \spc{Y_N}$.  Assuming this, the fact that $c_1^2\big(\m{s}_w(\psi) + x\mbox{PD}[F']|_{W'_N}\big)$ depends quadratically on $x$ means that this function is greater at $x=0$ and $x=1$ than at any other values of $x$, from which it follows that $\m{s}_w(\psi) = \m{s}_{K+}(\m{t})$ and $\m{s}_z(\psi) = \m{s}_w(\psi) + \mbox{PD}[F']|_{W'_N} = \m{s}_{K-}(\m{t})$.  

To show the claim, note that the two inequalities above are respectively equivalent to
$$ Q_K\big(0; \m{s}_w(\psi_{\m{t}})\big) \geq Q_K\big(-1; \m{s}_w(\psi_{\m{t}})\big)$$ 
and
$$ Q_K\big(1; \m{s}_w(\psi_{\m{t}})\big) \geq Q_K\big(2; \m{s}_w(\psi_{\m{t}})\big).$$
In turn, these can be reduced further, to 
\begin{equation}
\label{eq:8}
q_K\big(\m{s}_w(\psi)\big) - 2\frac{[\widetilde{dF'}]^2}{d^2} \geq 0 
\end{equation}
and
\begin{equation}
\label{eq:9}
q_K\big(\m{s}_w(\psi)\big) + 2\frac{[\widetilde{dF'}]^2}{d^2} \leq 0. 
\end{equation}
 
Lemma \ref{thm:3.3} says that there is a number $C_q$ depending only on $\mathcal{F}$ such that 
$$ -C_q - N \leq q_K\big(\m{s}_w(\psi)\big) \leq C_q + N $$
always holds for all small triangles $\psi$.  By Proposition \ref{thm:2.8}, $2\frac{[\widetilde{dF'}]^2}{d^2}$ will be equal to $-2\kappa - 2N$.  So we have 
$$ q_K\big(\m{s}_w(\psi)\big) - 2\frac{[\widetilde{dF'}]^2}{d^2} \geq 2\kappa - C_q + N, $$
$$ q_K\big(\m{s}_w(\psi)\big) + 2\frac{[\widetilde{dF'}]^2}{d^2} \leq -2\kappa + C_q - N; $$
for large enough $N$, these imply that inequalities (\ref{eq:8}) and (\ref{eq:9}) hold, which proves the claim. \end{proof}

\section{Twisted Coefficients and a Long Exact Sequence}

We wish to prove a twisted analogue of the surgery long exact sequence relating $Y$, $Y_0$, and $Y_N$ when $K$ is a special knot.  To accomplish this, we introduce a system of coefficients adapted to a given standard diagram.

\subsection{Additive functions on polygons}
Suppose that we have a standard Heegaard diagram with translates $\mathcal{H} = (\Sigma, \{\bse{i}\}, w)$ for $W'_N$ with $N>0$, recalling that we denote the tuples $\bsa, \bsb, \bsg$ and $\bsd$ associated to a standard diagram, as well as their translates, by $\bse{i}$ for $i \geq 0$.  Again, we assume that $K$ is special.

To properly relate the twisted Floer homologies of the various manifolds $Y_{\eta^i\eta^j}$, we find it useful to have the following.  Let $C(\mathcal{H})$ be the free abelian group generated by the set $\bigcup_{i\geq 0} \{\eta^i_1, \ldots, \eta^i_g\}$.  Define $L(\mathcal{H})$ to be the quotient of $C(\mathcal{H})$ by the equivalence relation $\sim$, where $\sim$ is generated by $\eta^i_k \sim \eta^{j}_k$ when $i, j \ne 0$ and $k \ne g$;   $\eta^i_g \sim \eta^{j}_g$ when $i, j \ne 0$ and $i \equiv j$ mod 3; and $\ga_g \sim \be_g + N\de_g$.  Informally, we are identifying elements that are \emph{a priori} equivalent in $H_1(\Sigma)$.  

Then, let $K(\mathcal{H})$ be the kernel of the obvious homomorphism from $L(\mathcal{H})$ to $H_1(\Sigma)$.

Denote by $\Delta(\mathcal{H})$ the set of all homotopy classes of polygons in this diagram, i.e., the disjoint union of $\pi_2(\mathbf{x}_1, \ldots, \mathbf{x}_k)$ for $k \geq 2$, over all tuples of points such that $\pi_2(\mathbf{x}_1, \ldots, \mathbf{x}_k)$ makes sense.  We wish to define a function $A_K: \Delta(\mathcal{H}) \rightarrow K(\mathcal{H})$ that is additive under splicing, and has appropriate equivariance properties (which we make precise later).

To begin the construction, we first make the following choices:
\begin{itemize}

\item points $\mathbf{p}_i \in  \mathbb{T}_{\eta^0} \bigcap \mathbb{T}_{\eta^i}$ for each $i$;

\item for each $\mathbf{x} \in \mathbb{T}_{\eta^i} \bigcap \mathbb{T}_{\eta^j}$, an oriented multiarc $q_i(\mathbf{x})$ from $\mathbf{x}$ to $\mathbf{p}_i$ along $\boldsymbol{\eta}^i$ (and similarly a multiarc $q_j(\mathbf{x})$); and

\item multiarcs $m_i$ from $\mathbf{p}_i$ to $\mathbf{p}_0$ along $\bse{0}$, letting $m_0$ be the trivial multiarc.

\end{itemize}

\noindent We choose the multiarcs $q_i(\mathbf{x})$ so that if $\mathbf{x} \in \mathbb{T}_{\eta^i} \bigcap \mathbb{T}_{\eta^j}$ and $\mathbf{x}' \in \mathbb{T}_{\eta^{i'}} \bigcap \mathbb{T}_{\eta^{j'}}$ with $i \equiv i'$ and $j \equiv j'$ mod 3 are corresponding points, then $q_i(\mathbf{x})$ and $q_{i'}(\mathbf{x}')$ are corresponding multiarcs.

For any point $\mathbf{x} \in \mathbb{T}_{\eta^i} \bigcap \mathbb{T}_{\eta^j}$ with $j>i$, define $\ell_0(\mathbf{x}) = q_{j}(\mathbf{x}) - q_{i}(\mathbf{x})   + m_{j} - m_{i}.$
Note that this realizes a closed oriented multiarc in $\Sigma$, and if $\mathbf{x}_1, \ldots, \mathbf{x}_k$ are such that $\mathbf{x}_k \in \mathbb{T}_{\eta^i_k} \bigcap \mathbb{T}_{\eta^i_{k+1}}$ with $i_{k+1} = i_1 < i_2 < \ldots < i_k$, then the sum $\ell_0(\mathbf{x}_1) + \ldots + \ell_0(\mathbf{x}_k) - \ell_0(\mathbf{x}_{k+1})$ is homotopic (within the circles of $\mathcal{H}$) to a multiarc supported along circles in $\boldsymbol{\eta}^{i_n}$ for $n = 1, \ldots, k$. 

Let $L(i, j)$ denote $\ell_0(\Theta_{i,j})$.  It is not hard to see for any point $\mathbf{x} \in \mathbb{T}_{\eta^i} \cap \mathbb{T}_{\eta^j}$ with $j > i > 0$, that $L(i, i+1) + L(i+1, i+2) + \ldots + L(j-1,j) - \ell_0(\mathbf{x})$ is homologous in $H_1(\Sigma)$ to some element $\ell_1(\mathbf{x}) \in C(\mathcal{H})$.  Then, define $\ell(\mathbf{x})$ to be the closed oriented multiarc $\ell_0(\mathbf{x}) - \ell_1(\mathbf{x})$. 

Say that two points $\mathbf{x} \in \mathbb{T}_{\al} \cap \mathbb{T}_{\eta^i}$ and $\mathbf{y} \in \mathbb{T}_{\al} \cap \mathbb{T}_{\eta^{i+k}}$ are \emph{homologous} if \linebreak $\pi_2(\mathbf{x}, \Theta_{i,i+1}, \ldots, \Theta_{i+k-1, i+k}, \mathbf{y})$ is nonempty, and extend this to an equivalence relation on $\bigcup_{i} \mathbb{T}_{\al} \cap \mathbb{T}_{\eta^i}$.  Of course, generators that are \sst -equivalent will be homologous.
For each such homology class $c$, choose a representative point $\mathbf{x}_c \in \mathbb{T}_{\al} \cap \mathbb{T}_{\eta^1}$.  Then, it is also not hard to see for any other point $\mathbf{x} \in \mathbb{T}_{\al} \cap \mathbb{T}_{\eta^i}$ for any $i$ representing $c$, that $\ell_0(\mathbf{x}_c) + L(1,2) + \ldots + L(i-1, i)  - \ell_0(\mathbf{x})$ is homologous in $H_1(\Sigma)$ to some element $\ell_1(\mathbf{x}) \in C(\mathcal{H})$; so let $\ell(\mathbf{x})$ be $\ell_0(\mathbf{x}) - \ell_1(\mathbf{x})$.

The upshot of the above is that for any polygon $\psi \in \pi_2(\mathbf{x}_1, \mathbf{x}_2, \ldots, \mathbf{x}_k, \mathbf{y})$, we have $\partial \psi = \ell(\mathbf{x}_1) + \ldots + \ell(\mathbf{x}_k) - \ell(\mathbf{y}) + Z$ if $\psi$ is not a bigon or $\partial \psi = \ell(\mathbf{y}) - \ell(\mathbf{x}_1) + Z$ if $\psi$ is a bigon,  where $Z \in C(\mathcal{H})$ vanishes in $H_1(\Sigma)$.
So, choose points $p_{i, s}$ on $\eta^i_s$, in a suitably generic position (away from intersections between isotopes), and orient each curve.  Let $A_0(c) = \sum_{i,s} m_{p_{i,s}}(c)\cdot \eta^i_s$ for any closed multiarc $c$, where $m_{p_{i,s}}$ is the oriented intersection number; and let 
$$A_0(\psi) = A_0\left(\partial \psi - \ell(\mathbf{x}_1) - \ldots - \ell(\mathbf{x}_k) + \ell(\mathbf{y})\right)$$
if $\psi$ is not a bigon, and
$$A_0(\psi) = A_0\left(-\partial \psi - \ell(\mathbf{x}_1) + \ell(\mathbf{y})\right)$$
if $\psi$ is a bigon. 
Letting $A_K$ denote the composition of $A_0$ with the map taking $C(\mathcal{H})$ to $L(\mathcal{H})$, the image of $A_K$ actually lies in $K(\mathcal{H})$.  This map will clearly be additive under splicing.  Furthermore, note that $H^1(Y_{0,j})$ naturally embeds into $K(\mathcal{H})$ (via a choice of basepoint in our Heegaard diagram).  With respect to this, the map $A_K$ is also $H^1(Y_{0,j})$-equivariant.

We also define one more function: let $M(\mathcal{H})$ be the $\mathbb{Z}/N\mathbb{Z}$-module freely generated by $\{\eta^{3i}_g | i \geq 1\}$ (i.e., $\de_g$ and all its isotopic translates).  For $\psi \in \Delta(\mathcal{H})$, define $A_M(\psi)$ to be the summands of $A_0(\partial \psi)$ corresponding to these circles.  Again, this is clearly additive under splicing.  

Going forward, we write $M_i$ for $\eta^{3i}_g$; and we write $A(\psi)$ for $A_K(\psi) \oplus A_M(\psi) \in K(\mathcal{H} \oplus M(\mathcal{H})$.

\subsection{Standard diagram coefficients}
Let $R_K = \mathbb{Z}[K(\mathcal{H})]$ and $R_M = \mathbb{Z}[M(\mathcal{H})]$; set $R = R_K \otimes R_M$, which will be equal to the group ring  of $K(\mathcal{H}) \oplus M(\mathcal{H})$.  Of course, $R_K$ and hence $R$ will also be an algebra over $\mathbb{Z}[H^1(Y_{i,j})]$.

We now define the \emph{chain complex with standard diagram coefficients,} $\underline{CF}^+(Y_{i,j}; R)$, to be the group $CF^+(Y_{i,j}) \otimes R$ equipped with the differential given by
$$\underline{\partial}^+\big([\mathbf{x}, i] \otimes r\big) =  \sum_{\mathbf{y} \in \mathbb{T}_{{\eta}^i} \cap \mathbb{T}_{{\eta}^j}} \sum_{\{\phi \in \pi_2(\mathbf{x}, \mathbf{y})|\mu(\phi) = 1\}} 
\#\widehat{\mathcal{M}}(\phi)\cdot [\mathbf{y}, i-n_w(\phi)] \otimes (e^{A(\phi)}\cdot r),$$ 
where as usual we use exponential notation for elements of the group ring.  The fact that $A$ is additive under splicing ensures that this indeed defines a chain complex.  This chain complex is not \emph{a priori} an invariant, but rather depends on the diagram and the function $A$. However, the relationship with the twisted coefficient chain complex should be not difficult to see.

When $i, j \ne 0$, we will be interested in $\underline{CF}^{\leq 0}(Y_{i,j}; \mathbb{Z}[[U]] \otimes R) = CF^{\leq 0}(Y_{i,j}) \otimes \mathbb{Z}[[U]] \otimes R$, where $\mathbb{Z}[[U]]$ denotes a ring of formal power series, and where $H^1(Y_{i,j})$ acts trivially on $\mathbb{Z}[[U]] \otimes R$.  The differential is defined in the same way as above.  It is not hard to find an isomorphism from the trivially-twisted complex $CF^{\leq 0}(Y_{i,j}; \mathbb{Z}[[U]]) \otimes R$ to $\underline{CF}^{\leq 0}(Y_{i,j}; \mathbb{Z}[[U]] \otimes R)$.

For a set of $g$-tuples of circles $\boldsymbol{\eta}^{i_1}, \ldots, \boldsymbol{\eta}^{i_k}$ with $k$ equal to 3 or 4, we define maps 
$$\underline{f}^+_{i_1,\ldots, i_k}: \underline{CF}^+(Y_{i_1,i_2}; R) \bigotimes_{n=2}^{k-1} \underline{CF}^{\leq 0}(Y_{i_n,i_{n+1}}; \mathbb{Z}[[U]] \otimes R) \rightarrow \underline{CF}^+(Y_{0,i_k}; R) $$
if $i_1=0$.  If $i_1 \ne 0$, we also define a similar map, except replacing $\underline{CF}^+(Y_{i_1,i_j}; R)$ with $\underline{CF}^{\leq 0}(Y_{i_1,i_j}; \mathbb{Z}[[U]] \otimes R)$ for $j = 2$ and $j=k$.  
In both cases, the map is given by
$$\begin{array}{llllllllllllllllllll} 
\underline{f}^+_{i_1,\ldots, i_k}\Big(\bigotimes_{n=1}^{k-1} ([\mathbf{x}_n, j_n] \otimes r_n) \Big) = & \mbox{  } & \mbox{  } & \mbox{  } & \mbox{  } & \mbox{  } & \mbox{  } & \mbox{  } & \mbox{  } & \mbox{  } & \mbox{  } & \mbox{  } & \mbox{  } & \mbox{  } & \mbox{  } & \mbox{  } & \mbox{  } & \mbox{  } & \mbox{  } & \mbox{  }\\
\end{array}$$
$$\mbox{             } \sum_{\mathbf{w} \in \mathbb{T}_{{\eta}^{i_1}} \cap \mathbb{T}_{{\eta}^{i_k}}} 
\sum_{\substack{\psi \in \pi_2(\mathbf{x}_1, \ldots, \mathbf{x}_{k-1}, \mathbf{w})\\ \mu(\psi) = k-3}} 
\#\mathcal{M}(\psi)\cdot \big[\mathbf{w}, \sum_{n=1}^{k-1} j_n - n_w(\psi)\big] \otimes \big(e^{A(\psi) }\cdot \prod_{n=1}^{k-1} r_n\big).$$
In general, for rectangles and pentagons, when we refer to a moduli space of rectangles or pentagons, we will mean the moduli space of those rectangles or pentagons that are pseudoholomorphic with respect to a one-parameter family of almost-complex structures on $\Sigma$; in particular, for $k=4$, we take $\#\mathcal{M}(\psi)$ in the above to be a count of such rectangles.   

Given $\m{s} \in \mbox{Spin}^{\mbox{\scriptsize{c}}}(X_{i_1,\ldots,i_k})$, we also have maps $\underline{f}^+_{i_1,\ldots, i_k, \m{s}}$ defined similarly, except counting only those polygons $\psi$ representing $\m{s}$.

\begin{lemma} \label{thm:4.1} We have for each $i \geq 1$ 
$$\underline{f}^+_{i,i+1,i+2,i+3}(\Theta_{i,i+1} \otimes \Theta_{i+1,i+2} \otimes \Theta_{i+2,i+3}) = \Theta_{i,i+3} \otimes r_i,$$
with $r_i \in \mathbb{Z}[[U]] \otimes R$.  There are constants $c \in \mathbb{Z}/N\mathbb{Z}$ and $k_i \in K(\mathcal{H})$, such that the $U^0$ coefficient of $r_i$ is $e^{k_i + cM_j}$ if $i$ equals $3j-2$ or $3j-1$, and $e^{k_i}\cdot \sum_{n=0}^{N-1} e^{nM_j + (c - n)M_{j+1}}$ if $i = 3j$.
\end{lemma}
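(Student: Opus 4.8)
The plan is to analyze the triangle/rectangle count $\underline{f}^+_{i,i+1,i+2,i+3}(\Theta_{i,i+1}\otimes\Theta_{i+1,i+2}\otimes\Theta_{i+2,i+3})$ by a standard small-rectangle argument. First I would observe that the $\boldsymbol\eta^j$-tuples here (for $i,i+1,i+2,i+3$ each $\ne 0$) are all small isotopic translates of each other, except that when the index equals $3j$ the curve in slot $g$ is a $\delta$-translate (a meridian $M_j$) rather than a $\beta$- or $\gamma$-translate. So outside slot $g$ everything is a ``nearby Lagrangian'' picture, and the triangle counts localize near the canonical intersection points $\Theta$; in slot $g$, the curves $\beta_g$, $\gamma_g$, $\delta_g$ have slopes $0$, $N$, $\infty$ and so there the picture is the genuine $L(N,1)$-type winding-region picture. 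The first step is therefore to reduce to computing, for each value of $i \bmod 3$, the count in the slot-$g$ punctured torus separately from the count in the ``trivial'' slots $1,\dots,g-1$. In the trivial slots, the only rigid holomorphic rectangle connecting the three top generators to $\Theta_{i,i+3}$ is the constant (degenerate) one, so those slots contribute $1$ and contribute nothing to $A$ beyond a fixed element $k_i \in K(\mathcal H)$ coming from the basepoint/boundary bookkeeping (the $\ell(\mathbf x)$ correction terms and the fixed multiarcs $m_i$). This is where I would cite the additivity of $A$ under splicing and its $H^1$-equivariance.

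Next, the slot-$g$ computation. When none of the three slot-$g$ curves is $\delta_g$ — i.e. when $i = 3j-2$ or $i = 3j-1$, so the three curves are $\beta_g$-type, $\gamma_g$-type, and one more (some translate), none a meridian — this is again a small-translate situation and the rectangle count there is $1$, with its domain contributing a fixed group-ring element which, combined with $k_i$ from the other slots, gives $e^{k_i + cM_j}$; the $cM_j$ term arises because one of the intermediate triple points lies on a $\gamma_g \sim \beta_g + NM_j$ edge, and the ``$c$'' is the $M(\mathcal H)$-component $A_M$ of that fixed small triangle's domain, which depends only on the winding data and not on $i$ (hence a single constant $c \in \mathbb Z/N\mathbb Z$). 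When $i = 3j$, the first curve in slot $g$ is $\delta_g = M_j$ and the last is $M_{j+1}$, and now the relevant holomorphic rectangles in the winding region of $L(N,1)$ come in a one-parameter family indexed by which of the $N$ ``teeth'' of the spiral the rectangle wraps around; this is exactly the computation appearing in the proof of the integer surgeries exact sequence in \cite{OSIS}, and it yields $\sum_{n=0}^{N-1} e^{nM_j + (c-n)M_{j+1}}$ after identifying the domain of the $n$-th rectangle, again with the same winding constant $c$. So the plan is: (i) verify the degenerate-rectangle count is $1$ in the $g-1$ trivial slots and in the two non-meridian slot-$g$ cases; (ii) do the $N$-fold winding-region count in the $i=3j$ case following \cite{OSIS}; (iii) read off the $A$-values of the resulting domains and package them as claimed.

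The main obstacle I expect is step (iii): pinning down the exact group-ring exponents $A(\psi)$ for the rigid rectangles, and in particular proving that the constant $c \in \mathbb Z/N\mathbb Z$ is genuinely $i$-independent and that $k_i \in K(\mathcal H)$ is the same for all three representatives within a block. This requires carefully tracking the $\ell(\mathbf x)$ correction terms built in Subsection 4.1 — the multiarcs $q_i(\mathbf x)$, $m_i$, and the homological corrections $\ell_1(\mathbf x)$ — through the associativity splicing, and using that corresponding points in $\boldsymbol\eta^i$ and $\boldsymbol\eta^{i'}$ (with $i \equiv i' \bmod 3$) were chosen with corresponding multiarcs, so that the $A$-values of similar triangles agree. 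Once that bookkeeping is set up, the identification of $r_i$'s $U^0$-coefficient is a matter of matching domains; higher powers of $U$ (coming from rectangles with $n_w(\psi) > 0$) are absorbed into the unspecified remainder of $r_i \in \mathbb Z[[U]]\otimes R$ and need not be computed.
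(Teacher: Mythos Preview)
Your overall strategy --- split by $i \bmod 3$, identify the $n_w=0$ rectangles, and absorb everything else into higher powers of $U$ --- matches the paper. But your identification of the slot-$g$ configuration is wrong, and this breaks the argument you give for the first two cases.

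Recall $\bse{3m+1}=\bsb_{(m)}$, $\bse{3m+2}=\bsg_{(m)}$, $\bse{3m+3}=\bsd_{(m)}$. So for $i=3j-2$ the four slot-$g$ curves are $\be_g,\ga_g,\de_g,\be_g'$; for $i=3j-1$ they are $\ga_g,\de_g,\be_g',\ga_g'$; for $i=3j$ they are $\de_g,\be_g',\ga_g',\de_g'$. In \emph{every} case a meridian $\de_g$-translate (namely $M_j$) is present; what distinguishes $i=3j$ is that \emph{two} meridian translates $M_j,M_{j+1}$ appear as the first and last curves. So your claim that for $i=3j-2,3j-1$ ``none of the slot-$g$ curves is $\de_g$'' and that slot~$g$ is then a ``small-translate situation'' is false: $\be_g,\ga_g,\de_g$ have slopes $0,N,\infty$ and are not close to each other, so there is no degenerate-rectangle argument available. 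Consequently your explanation for the origin of the $cM_j$ term (coming from the relation $\ga_g\sim\be_g+NM_j$) is also off; the $A_M$-contribution comes directly from the portion of $\partial\psi_0$ running along the actual curve $M_j$.

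The paper's argument for those two cases is instead a periodic-domain count: since the first and last tuples are translates of each other ($\bsb$- or $\bsg$-type), one checks from the $\bse{i}\bse{i+1}\bse{i+2}\bse{i+3}$ periodic domains that there is a \emph{unique} Maslov-index~$-1$ quadrilateral through the $\Theta$'s with $n_w=0$, and that its $A_M$-value is a fixed multiple $c$ of $M_j$ depending only on the position of $\Theta_{\be\ga}$ in the winding region (hence $c$ is independent of $i$). For $i=3j$ the point is that the first and last curves are the isotopic meridians $M_j,M_{j+1}$, so the $\bse{i}\bse{i+3}$-periodic domain is an annulus between them; the $N$ rectangles $\psi_0,\dots,\psi_{N-1}$ differ exactly by multiples of this domain, which gives $A_M(\psi_n)=nM_j+(c-n)M_{j+1}$ and leaves $A_K(\psi_n)$ unchanged. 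Your step (ii) citing the winding-region count is fine in spirit, but you need to redo step (i) with the correct curve configuration and replace the small-translate argument by this periodic-domain analysis.
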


\begin{proof} We have three cases to look at, according to the value of $i$ mod 3.  In each case, every holomorphic quadrilateral passing through $\Theta_{i,i+1}$, $\Theta_{i+1,i+2}$, and $\Theta_{i+2,i+3}$ that the map counts has last corner $\Theta_{i,i+3}$, for Maslov index reasons.  

First, consider the case where $i$ equals $3j - 2$ or $3j-1$. Examining periodic domains, we see that precisely one of these quadrilaterals, say $\psi_0$, will have zero multiplicity at the base point $w$. 
Thinking of $\underline{f}^+_{i,i+1,i+2,i+3}(\Theta_{i,i+1} \otimes \Theta_{i+1,i+2} \otimes \Theta_{i+2,i+3})$
as a sum of terms corresponding to each homotopy class of quadrilateral, the term corresponding to $\psi_0$ will be $\Theta_{i,i+3} \otimes e^{A_K(\psi_0)} \otimes e^{A_M(\psi_0)}$, and all the other nonzero summands will be $U^n \cdot \Theta_{i,i+3} \otimes r$ with $n \geq 1$ and $r \in R$.  We immediately see that $A_M(\psi_0)$ is a multiple of $M_{j}$.  It is also not hard to see that this multiple really only depends on the position of the component of $\Theta_{\be\ga}$ in the torus portion of a standard diagram; in particular, this multiple should be the same for all such $i$.

When $i=3j$, things work slightly differently. If we arrange our diagram appropriately, there will now be $N$ holomorphic rectangles with Maslov index -1 and zero multiplicity at $w$ which don't cancel; these can be labelled as $\psi_0, \ldots, \psi_{N-1}$ so that $A_M(\psi_n) = nM_j + (c-n)M_{j+1}$, as these triangles only differ by $\bse{i}\bse{i+3}$-periodic domains.  The rest of the calculation proceeds as before; in particular, $A_K(\psi_n)$ is independent of $n$.  The result follows. \end{proof}

We define chain maps $\underline{f}^+_{(i)}: \underline{CF}^+(Y_{0,i}; R) \rightarrow \underline{CF}^+(Y_{0,i+1}; R)$ by
$$\underline{f}^+_{(i)}([\mathbf{x},j] \otimes r) = 
\underline{f}^+_{0,i,i+1}\big(([\mathbf{x},j] \otimes r) \otimes \Theta_{\eta^i\eta^{i+1}}\big). $$
That these are indeed chain maps follows from the usual untwisted arguments, together with the fact that the quantity $A$ used in the definition of $\underline{f}^+_{0,i,i+1}$ is additive under splicing.  When $i \equiv 0, 2$ mod 3, there are also maps $\underline{f}^+_{(i), \m{s}}$ for each $\m{s}$ in $\spc{W_0}$ if $i \equiv 0$ or $\spc{W'_N}$ if $i \equiv 2$; these only count triangles which represent $\m{s}$ (identifying the appropriate fillings of $X_{0,i,i+1}$ with $W_0$ or $W'_N$).

For each $i > 0$, we have a map $\underline{H}_i: \underline{CF}^+(Y_{0,i}; R) \rightarrow \underline{CF}^+(Y_{0,i+2}; R)$ by
$$ \underline{H}_i([\mathbf{x},j] \otimes r) = \underline{f}^+_{0, i, i+1, i+2}\big(
([\mathbf{x},j] \otimes r) \otimes \Theta_{i,i+1} \otimes \Theta_{i+1,i+2}\big),$$
which is also a chain map.

Furthermore, there are chain maps $\underline{g}_{i}: \underline{CF}^+(Y_{0,i}; R) \rightarrow \underline{CF}^+(Y_{0,i+3}; R)$ given by
$$\underline{g}_{i}([\mathbf{x},j] \otimes r) = \underline{f}^+_{0,i, i+3}\big(([\mathbf{x},j]\otimes r) \otimes (\Theta_{i,i+3} \otimes r_i)\big),$$
with $r_i$ as furnished by Lemma \ref{thm:4.1}.

We need a number of minor results to establish the long exact sequence, as well as for later; we break them up into the next two Propositions.  Henceforth, we write $i \equiv j$ to mean that $i$ and $j$ are equivalent mod 3.  
We say that two generators $x= [\mathbf{x}, j] \otimes e^a$ and $x' = [\mathbf{x}', j'] \otimes e^{a'}$ are \emph{connected by a disk} if there exists $\phi \in \pi_2(\mathbf{x}, \mathbf{x}')$ such that $n_w(\phi) = j - j'$ and $A(\phi) = a' - a$.  There is a similar notion of $\mathbf{x}'$ being connected to $\mathbf{x}$ and some of the $\Theta_{i,j}$ by a polygon.  If it is clear from context which $\Theta_{i,j}$ we mean, we simply say that $\mathbf{x}$ and $\mathbf{x}'$ are \emph{connected by a polygon}.

\begin{prop} \label{thm:4.2}  For $i\geq 1$, there are subcomplexes $C_i$ of $\underline{CF}^+(Y_{0,i}; R)$ and projection maps $\pi_i: \underline{CF}^+(Y_{0,i}; R) \rightarrow C_i$ such that the following hold.

\vspace{3pt}

a) The maps $\pi_i$ are chain maps.

\vspace{3pt}

b) If $i \equiv 0$, then $C_i \cong \underline{CF}^+(Y_{0,i}) \otimes \mathbb{Z}[\mathbb{Z}] \otimes \mathbb{Z}[\mathbb{Z}/N\mathbb{Z}] = \bigoplus_{l,n} C_i(l, n)$, where the sum is over $l \in \mathbb{Z}$ and $n \in \mathbb{Z}/N\mathbb{Z}$, and $C_i(l, n) \cong \underline{CF}^+(Y_{0,i})$.

\vspace{3pt}

c) If $i \equiv 1$, then $C_i \cong \underline{CF}^+(Y_{0,i})$. 

\vspace{3pt}

d) If $i \equiv 2$, then $C_i \cong \underline{CF}^+(Y_{0,i}) \otimes \mathbb{Z}[\mathbb{Z}] = \bigoplus_{l} C_i(l)$, where the sum is over $l \in \mathbb{Z}$, and $C_i(l) \cong \underline{CF}^+(Y_{0,i})$.

\vspace{3pt}

e) If $i \equiv 0$, then $\pi_{i+1} \circ \underline{f}^+_{(i)} \circ \pi_i = \pi_{i+1} \circ \underline{f}^+_{(i)}$, and $\pi_{i+2} \circ \underline{H}^+_{i} \circ \pi_i = \pi_{i+2} \circ \underline{H}^+_{i}$.

\vspace{3pt}

f) If $i\equiv 1$, then $\pi_{i+1} \circ \underline{f}^+_{(i)} = \underline{f}^+_{(i)} \circ \pi_i$. 
 
\vspace{3pt}

g) If $i\equiv 1, 2$, then $\pi_{i+3} \circ \underline{g}_{i}$ takes $C_i$ isomorphically to $C_{i+3}$.
\end{prop}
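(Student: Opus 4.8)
The plan rests on the observation that the differential $\underline{\partial}^+$ on $\underline{CF}^+(Y_{0,i}; R)$ --- and similarly the maps $\underline{f}^+_{(i)}$, $\underline{H}_i$, $\underline{g}_i$ --- never mixes two generators $[\mathbf{x},j]\otimes e^a$, $[\mathbf{x}',j']\otimes e^{a'}$ that are not \emph{connected by a disk} (respectively, by a polygon through the relevant top generators). Because $A$ is additive under splicing, being connected by a disk is an equivalence relation on the generators of $\underline{CF}^+(Y_{0,i}; R)$ which is preserved by $\underline{\partial}^+$; hence $\underline{CF}^+(Y_{0,i}; R)$ splits as the direct sum of the subcomplexes spanned by the individual classes, and the span of any union of classes is a direct-summand subcomplex whose projection is a chain map. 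I would define $C_i$ to be such a union, which makes part (a) immediate, with $\pi_i$ the evident projection.

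Concretely, I would take $C_i$ to be the span of the equivalence classes of the reference generators $[\mathbf{x}_c,0]\otimes 1$ built in the construction of $A_K$, one for each homology class $c$ of $\mathbb{T}_{\al}\cap\mathbb{T}_{\eta^i}$. Identifying $C_i$ then amounts to two computations: describing the set of homology classes $c$, which restricted to a fixed $i$ are exactly the \sst\ structures of $Y_{0,i}$ glued together across the $\eta^i$'s; and computing the coefficients these classes meet, i.e.\ the image of $A$ on disks $\phi\in\pi_2(\mathbf{x},\mathbf{y})$ with $\mathbf{x},\mathbf{y}\in\mathbb{T}_{\al}\cap\mathbb{T}_{\eta^i}$, which by the usual dictionary equals the image of $A$ on $\bsa\bse{i}$-periodic domains --- a direct summand of $K(\mathcal{H})\oplus M(\mathcal{H})$ isomorphic to $H^1(Y_{0,i})$. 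Putting these together shows that the class of each reference generator is $\underline{CF}^+$ of the corresponding \sst\ structure of $Y_{0,i}$, and $C_i$ is a sum of these; the residue of $i$ mod $3$ controls the extra factor via the periodic-domain bookkeeping of Section 2. When $i\equiv 1$ (so $Y_{0,i}=Y_0$), the $\bsa\bsb$-periodic domains --- including the one extra periodic domain present because $\la$ is special --- already exhaust $H^1(Y_0)$, leaving nothing over and giving $C_i\cong\underline{CF}^+(Y_0)$, part (c). When $i\equiv 2$ (so $Y_{0,i}=Y_N$), exactly one free direction is left over, the one measured by that extra $\bsa\bsb$-periodic domain (equivalently by the relation $\ga_g\sim\be_g+N\de_g$), giving the factor $\mathbb{Z}[\mathbb{Z}]$ of part (d). When $i\equiv 0$ (so $Y_{0,i}=Y$), one picks up in addition the $\mathbb{Z}/N\mathbb{Z}$ inside $M(\mathcal{H})$ generated by the $\de$-translate $\eta^i_g$, since disks now cross that circle, giving the factor $\mathbb{Z}[\mathbb{Z}]\otimes\mathbb{Z}[\mathbb{Z}/N\mathbb{Z}]$ of part (b). These identifications respect the relative $\mathbb{Z}$-grading and the $U$-action since $A$ does not change $n_w$.

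For parts (e) and (f) I would trace coefficients through the triangle map $\underline{f}^+_{(i)}$ (and, when $i\equiv 0$, the quadrilateral map $\underline{H}_i$), whose $\bse{i}\bse{i+1}$- and $\bse{i+1}\bse{i+2}$-slots are filled only by the fixed top generators. Using the nearest-point / small-polygon analysis of Sections 2 and 3, every reference generator of the source goes, up to higher $U$-order terms, to a reference generator of the target, shifted in its coefficient by $A$ of the counted polygon. When $i\equiv 1$ this shift keeps $C_i$ inside $C_{i+1}$ and $\ker\pi_i$ inside $\ker\pi_{i+1}$, which is exactly $\pi_{i+1}\circ\underline{f}^+_{(i)}=\underline{f}^+_{(i)}\circ\pi_i$. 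When $i\equiv 0$, the $\mathbb{Z}/N\mathbb{Z}$-factor of $C_i$ obstructs the stronger conclusion, but $\underline{f}^+_{(i)}$ and $\underline{H}_i$ still send $\ker\pi_i$ into $\ker\pi_{i+1}$ and into $\ker\pi_{i+2}$ respectively, which is the asserted $\pi_{i+1}\circ\underline{f}^+_{(i)}\circ\pi_i=\pi_{i+1}\circ\underline{f}^+_{(i)}$ and $\pi_{i+2}\circ\underline{H}_i\circ\pi_i=\pi_{i+2}\circ\underline{H}_i$. For part (g), $\underline{g}_i$ is the triangle map whose $\bse{i}\bse{i+3}$-input is $\Theta_{i,i+3}\otimes r_i$; since $\eta^i$ and $\eta^{i+3}$ are isotopic $g$-tuples, $Y_{\eta^i\eta^{i+3}}$ is a connected sum of copies of $S^1\times S^2$, and the small-triangle count gives a bijection from the reference generators of $C_i$ to those of $C_{i+3}$. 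Multiplying by the explicit unit $e^{k_i+cM_{\lceil i/3\rceil}}$ of Lemma \ref{thm:4.1} and applying $\pi_{i+3}$ to absorb the higher-order terms upgrades this bijection to an isomorphism of chain complexes $C_i\to C_{i+3}$.

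The step I expect to be the real work is the identification in the second paragraph: pinning down exactly which direct summand of $K(\mathcal{H})\oplus M(\mathcal{H})$ is the image of $A$ on $\bsa\bse{i}$-disks --- hence which extra group-ring factor appears --- requires carefully matching the correction terms $\ell(\mathbf{x})$ against the periodic-domain structure of standard diagrams in each residue class of $i$ mod $3$, and in particular checking that no $\mathbb{Z}/N\mathbb{Z}$-factor sneaks in when $i\equiv 1,2$. A secondary delicate point is part (g): verifying that $\pi_{i+3}\circ\underline{g}_i$ is an honest isomorphism rather than merely a quasi-isomorphism, which uses the sharp form of Lemma \ref{thm:4.1} together with the uniqueness of the smallest holomorphic representative among the relevant handleslide triangles.
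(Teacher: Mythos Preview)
Your overall framework matches the paper: $\underline{CF}^+(Y_{0,i};R)$ does split as a direct sum over ``connected by a disk'' classes, each isomorphic to $\underline{CF}^+(Y_{0,i},\m{s})$ for some \sst\ structure, and $C_i$ is to be a union of such classes with $\pi_i$ the projection. But your recipe --- take the classes of the $[\mathbf{x}_c,0]\otimes 1$ --- yields only $\underline{CF}^+(Y_{0,i})$ for every residue and does not produce the extra tensor factors in (b) and (d). The leftover directions you describe in $(K(\mathcal{H})\oplus M(\mathcal{H}))/A(\text{periodic domains})$ parametrize \emph{distinct} equivalence classes rather than extra room inside one; to realize $\underline{CF}^+(Y_N)\otimes\mathbb{Z}[\mathbb{Z}]$ for $i\equiv 2$ one must take the union of classes at shifts $e^{lP}$ over all $l\in\mathbb{Z}$ (with $P$ generating $K(\mathcal{H})/K_0(\mathcal{H})$), and additionally over $e^{nM_j}$, $n\in\mathbb{Z}/N\mathbb{Z}$, for $i\equiv 0$. (Your stated mechanism for the $\mathbb{Z}/N\mathbb{Z}$ is also backwards: $\bsa\bsd$-periodic domains never involve $\de_g$, so $A_M$ vanishes on bigons in $Y_{\al\de}$; the $M_j$-direction is a leftover, not a value attained by $A$ on disks.) The paper makes all this explicit via a reference function $z_0$, defines the single class $C^0_i(s)$ at shift $s$, and sets $C_i=\bigoplus C^0_i(s_j+lP+nM_j)$ with the ranges of $l$ and $n$ depending on $i\bmod 3$.

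There is a related gap in (g). The base shift cannot be chosen independently for each $i$: the small-triangle summand $\widetilde{\underline{g}}_i$ of $\underline{g}_i$ multiplies coefficients by the $U^0$-term $e^{k_i+cM_j}$ of $r_i$, so $C_i$ sitting at shift $s$ is carried to shift $s+k_i+cM_j$, which must coincide with the shift defining $C_{i+3}$. The paper arranges this by taking $s_j=c\sum_{l<j}M_l$, so the accumulated $cM_j$ is built into $s_{j+1}$; a uniform ``coefficient $1$'' choice misaligns $C_i$ with $C_{i+3}$, and applying $\pi_{i+3}$ does not repair that. Finally, $\pi_{i+3}$ does not ``absorb higher-order terms'': the paper passes from the small-triangle isomorphism $\widetilde{\underline{g}}_i\colon C_i\to C_{i+3}$ to the full $\pi_{i+3}\circ\underline{g}_i$ by an area-filtration argument, not by projection.
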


\begin{proof} Recall the discussion from the end of Section 2.1, from which we cull the following.
The $\bsa\bsg$- and $\bsa\bsd$-periodic domains will each correspond to a subgroup of $K(\mathcal{H})$ (the same for each); 
call this $K_0(\mathcal{H})$.  For periodic domains $\mathcal{P}$, $A_K(\mathcal{P})$ is the image of $\partial \mathcal{P}$ in $K(\mathcal{H})$; thus for these periodic domains, $A_K(\mathcal{P}) \in K_0(\mathcal{H})$.  There is also some periodic domain $P$, whose boundary contains $\be_g$ with multiplicity $d$, such that $K(\mathcal{H}) = K_0(\mathcal{H}) \oplus \mathbb{Z}P$, and any element of $K(\mathcal{H})$ corresponds to a $\bsa\bsb$-periodic domain.  For any of these periodic domains, $A_M(\mathcal{P}) = 0$.  

Therefore, the value of $A_M$ on a bigon is determined by its endpoints; the same is true of $A_K$ up to $K_0$ if $i \equiv 0,1$.  So, there is a function $z_0$ from intersection points in our diagram to $K(\mathcal{H}) \oplus M(\mathcal{H})$, such that $[\mathbf{x}, j] \otimes e^a$ and $[\mathbf{x}', j'] \otimes e^{a'}$ are connected by a disk if and only if $z_0(\mathbf{x}') - z_0(\mathbf{x}) - (a' -a)$ is in the image of the $\bsa\bse{i}$-periodic domains in $K(\mathcal{H}) \oplus 0$ for appropriate $i$.   So, for any fixed element $s$ of $K(\mathcal{H}) \oplus M(\mathcal{H})$, let $C^0_i(s)$ be the subgroup of $\underline{CF}^+(Y_{0,i}; R)$ generated over $\mathbb{Z}$ by 
elements of the form $[\mathbf{x}, j] \otimes e^{z_0(\mathbf{x}) + s+ k}$, for $k \in K(\mathcal{H})$ if $i \equiv 0$, and for $k \in K_0(\mathcal{H})$ otherwise.  Then this group is a subcomplex, isomorphic to $\underline{CF}^+(Y_{0,i})$, and in fact $\underline{CF}^+(Y_{0,i}; R)$ splits as a direct sum of subcomplexes of this form (for varying $s$).

Let $s_j = c\cdot\sum_{l = 1}^{j-1} M_j$, where $c$ is as given by Lemma \ref{thm:4.1}.  
If $i = 3j-2$, let $C_i = C^0_i(s_j)$.  If $i = 3j-1$, let $C_i = \bigoplus_{l \in \mathbb{Z}} C^0_i(s_j + l P)$; denote the summands by $C_i(l)$.  If $i=3j$, let $C_i = \bigoplus_{\substack{l \in \mathbb{Z}\\n \in \mathbb{Z}/N\mathbb{Z}}} C^0_i(s_j + l P + n M_j)$; denote the summands by $C_i(l, n)$.  Let $\pi_i$ be the projection from the full complexes down to these subcomplexes.  It is clear that these maps are chain maps, and that these groups satisfy the isomorphisms of claims b), c), and d).

Let us examine the maps $\underline{f}^+_{(i)}$ and $\underline{H}_{i}$.  If $i =3j$, for any $m \in M(\mathcal{H})$, the image of $C_i  \otimes e^m$ under the former map will lie in $\bigoplus_{n=0}^{N-1} C_{i+1} \otimes e^{m + nM_j}$, since the counted triangles may have boundaries traversing $M_j$, but not any other translates of $\de_g$.  In particular, $\pi_{i+1} \circ \underline{f}^+_{(i)}$ will only be nontrivial on elements of $C_i$.  The image of $C_i \otimes e^m$ under the latter map will likewise lie in $\bigoplus_{n=0}^{N-1} C_{i+2} \otimes e^{m + nM_j}$, so we can say the same for $\pi_{i+1} \circ \underline{H}_{i}$, showing claim e).  

If $i \equiv 1$, then the image of $C_i \otimes e^m$ under $\underline{f}^+_{(i)}$ will lie in $C_{i+1} \otimes e^m$, since none of the triangles this map counts will traverse any of the $\de_g$ translates.  This gives f).  

For each point $\mathbf{x} \in \mathbb{T}_{\al} \cap \mathbb{T}_{\eta^i}$, there is a canonical nearest point $\mathbf{x}' \in \mathbb{T}_{\al} \cap \mathbb{T}_{\eta^{i+3}}$ and small triangle $\psi(\mathbf{x}) \in \pi_2(\mathbf{x}, \Theta_{i,i+3}, \mathbf{x}')$ that admits a single holomorphic representative.  Let $\widetilde{\underline{g}}_{i}$ be the summand of $\underline{g}_{i}$ which counts only this triangle.  If the $U^0$ coefficient of $r_i$ is $r_i^0$, then $\widetilde{\underline{g}}_{i}$ will take $[\mathbf{x},j] \otimes r_{\mathbf{x}}$ to $[\mathbf{x}',j] \otimes r_i^0\cdot r_{\mathbf{x}}$, since the points which measure $A_M(\psi(\mathbf{x}))$ are arranged to lie away from the small triangles.  Therefore, when $i$ equals $3j-2$ or $3j-1$, for each $m \in M(\mathcal{H})$ the map $\widetilde{\underline{g}}_{i}$ gives an isomorphism from $C_i \otimes e^m$ to $C_{i+3} \otimes e^m$, in light of Lemma \ref{thm:4.1} and the definition of $s_j$. In particular $\widetilde{\underline{g}}_{i}$ and hence $\pi_{i+3} \circ \widetilde{\underline{g}}_{i}$ take $C_i$ to $C_{i+3}$ isomorphically. By a standard area-filtration argument, it follows that the same is true for $\pi_{i+3} \circ \underline{g}_{i}$, giving the last assertion.  \end{proof}

\begin{prop} \label{thm:4.3} If $i\equiv 0$, then $\underline{f}^+_{(i), \m{s}} = \underline{f}^+_{(i), \m{s}}|_{C_i(l, n^i_0(\m{s}))}$, for some function $n^i_0: \spc{W_0} \rightarrow \mathbb{Z}/N\mathbb{Z}$. If $\m{s}$ and $\m{s}'$ both have the same restriction to $Y$, this function has the property that $n^i_0(\m{s}) = n^i_0(\m{s}')$ if and only $\m{s}|_{Y_0}$ and $\m{s}'|_{Y_0}$ belong to the same $\mathrm{PD}[N\mu]$-orbit in $\spc{Y_0}$.

If $i \equiv 2$, the image of $C_i(l)$ under $\underline{f}^+_{(i), \m{s}}$ lies in $C_{i+1}(l^i_N(l, \m{s}), n^i_N(\m{s}))$ for some functions $l^i_N: \mathbb{Z} \times \spc{W'_N} \rightarrow \mathbb{Z}$ and $n^i_N: \spc{W'_N} \rightarrow \mathbb{Z}/N\mathbb{Z}$. If $\m{s}$ and $\m{s}'$ both have the same restriction to $Y$, $n^i_N$ has the property that $n^i_N(\m{s}) = n^i_N(\m{s}')$ if and only $\m{s}|_{Y_0}$ and $\m{s}'|_{Y_0}$ belong to the same $\mbox{PD}[N\mu]$-orbit in $\spc{Y_N}$. The function $l^i_N$ satisfies $l^i_N(l + k, \m{s} + m\mathrm{PD}[\widetilde{dF}]) = 
l^i_N(l, \m{s}) + k + m.$
\end{prop}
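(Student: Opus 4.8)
The plan is to reduce the statement to bookkeeping of, for each holomorphic triangle $\psi\in\pi_2(\mathbf{x},\Theta_{\eta^i\eta^{i+1}},\mathbf{y})$ counted by $\underline{f}^+_{(i),\m{s}}$, the data $n_w(\psi)$, $A_K(\psi)\bmod K_0(\mathcal{H})$, $A_M(\psi)$, and $\m{s}_w(\psi)$, together with the bookkeeping function $z_0$ from the proof of Proposition~\ref{thm:4.2} recording which $C^0$-summand a generator sits in. The structural inputs are: $\underline{f}^+_{(i),\m{s}}$ is $R$-linear, hence $\mathbb{Z}[K(\mathcal{H})\oplus M(\mathcal{H})]$-linear; the $C^0$-decompositions and the periodic-domain classification of the proof of Proposition~\ref{thm:4.2}, which says in particular that two triangles with the same three corners differ by a triply periodic domain $\mathcal{P}$ of $X_{0,i,i+1}$ with $A_M(\mathcal{P})=0$, with $A_K(\mathcal{P})\in K_0(\mathcal{H})$ unless $\mathcal{P}$ has a component along the direction spanned by the capped core (in which case its class modulo $K_0(\mathcal{H})$ is a multiple of the distinguished domain $P$, via the relation $\ga_g\sim\be_g+N\de_g$), and with $\m{s}_w(\psi')-\m{s}_w(\psi)$ Poincar\'e dual to $\mathcal{P}$; and finally the Chern-class evaluation formula $(\ref{eq:6})$ together with Propositions~\ref{thm:2.5}, \ref{thm:2.6}, \ref{thm:2.7}, which tie the $A$- and $n_w$-data of a triangle to the \sst\ structure it represents.

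For $i\equiv 0$, write $i=3j$; then $X_{0,i,i+1}$ fills to $W_0$, the last corner is $\Theta_{\de\be}$, and $C_{i+1}$ is a single summand. Near such a triangle the only $\de$-translate that can be crossed is $\de_g=\eta^i_g=M_j$, so $A_M(\psi)\in\mathbb{Z}\cdot M_j$; and since every periodic domain has $A_M=0$, the net $M_j$-shift effected by the map between $C^0$-summands, namely the $M_j$-part of $A(\psi)-\big(z_0(\mathbf{y})-z_0(\mathbf{x})\big)$, is insensitive to the periodic-domain ambiguity once the corners are fixed. Applying $(\ref{eq:6})$ to the periodic domains and to a difference of two triangles, exactly as in the proofs of Propositions~\ref{thm:2.5} and \ref{thm:3.4}, shows this $M_j$-shift is constant modulo $N$ among triangles representing $\m{s}$, the reduction mod $N$ being forced because $M(\mathcal{H})$ is a $\mathbb{Z}/N\mathbb{Z}$-module; moreover it changes by a fixed amount when $\m{s}$ is altered by the class dual to $F'$, and by a multiple of $N$ when $\m{s}|_{Y_0}$ moves within a $\mathrm{PD}[N\mu]$-orbit. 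Let $n^i_0(\m{s})\in\mathbb{Z}/N\mathbb{Z}$ be this residue. Then any $\psi$ representing $\m{s}$ whose input lies in $C^0_i(s_j+lP+nM_j)$ has output in $C_{i+1}$ only if $n=n^i_0(\m{s})$, which is the first assertion; and the ``if and only if'' property of $n^i_0$ is exactly the content of this Chern-class computation, the nondegeneracy of the ``only if'' direction coming from the fact that shifting $\m{s}|_{Y_0}$ by a single $\mathrm{PD}[\mu]$ forces the output corner to change so that the $M_j$-shift moves by exactly $1$.

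For $i\equiv 2$, write $i=3j-1$; now $X_{0,i,i+1}$ fills to $W'_N$, $\eta^{i+1}$ is a $\bsd$-tuple with $\eta^{i+1}_g=M_j$, and $C_{i+1}=\bigoplus_{l',n'}C^0_{i+1}(s_j+l'P+n'M_j)$ carries both indices. The analysis of the $M_j$-shift is identical, yielding $n^i_N(\m{s})\in\mathbb{Z}/N\mathbb{Z}$ together with the orbit property for $\m{s}|_{Y_N}$; here one uses $E_{K,N}$ and the $H^2(W'_N)$-equivariance of Proposition~\ref{thm:2.6} to translate $\m{s}\in\spc{W'_N}$ into relative \sst\ structures on $(Y,K)$. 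For the $l$-index: with the corners and $\m{s}$ fixed, the $P$-component of $A(\psi)-\big(z_0(\mathbf{y})-z_0(\mathbf{x})\big)$ modulo $K_0(\mathcal{H})$ is pinned down (two triangles with the same corners differing in this component differ by the triply periodic domain representing the capped core, hence represent different $\m{s}$), so the index shift $l'-l$ is a well-defined function, which by Proposition~\ref{thm:2.6} propagates consistently across all of $\spc{W'_N}$; call the resulting output index $l^i_N(l,\m{s})$. The relation $l^i_N(l+k,\m{s}+m\mathrm{PD}[\widetilde{dF}])=l^i_N(l,\m{s})+k+m$ then falls out of two equivariances. The ``$+k$'' is $\mathbb{Z}[K(\mathcal{H})]$-linearity: multiplying the input by $e^{kP}$ carries $C^0_i(s)$ to $C^0_i(s+kP)$ and commutes with the map. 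The ``$+m$'' comes from the fact that replacing $\m{s}$ by $\m{s}+m\mathrm{PD}[\widetilde{dF}]$ reassigns the counted triangles so as to add $m$ copies of the $\bsa\bsg\bsd$-periodic domain with boundary $d(\ga_g-N\de_g)+L$, whose $A_K$-image is $d\be_g$, hence $P$ modulo $K_0(\mathcal{H})$, by the relation $\ga_g\sim\be_g+N\de_g$, and whose $A_M$-image vanishes; Proposition~\ref{thm:2.7} is invoked to confirm that this domain represents $[\widetilde{dF'}]$ and not a proper multiple of it.

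I expect the main obstacle to be the normalization of the integer constants: checking that the coefficients in the displayed relation for $l^i_N$ are exactly $1$, and likewise that moving $\m{s}|_{Y_0}$ (resp.\ $\m{s}|_{Y_N}$) by a single $\mathrm{PD}[\mu]$ changes $n^i_0$ (resp.\ $n^i_N$) by exactly $1$. This requires fixing mutually compatible identifications of the distinguished domain $P$, the capped-core class $[\widetilde{dF'}]$, the meridian $\mu$, and the $\de_g$-translate $M_j$ inside $K(\mathcal{H})\oplus M(\mathcal{H})$ and in $H^2$, and then running $(\ref{eq:6})$ and the square computation of Proposition~\ref{thm:2.7} with those identifications in place. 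The combinatorics here are essentially those of the untwisted surgery formulas of \cite{OSIS} and \cite{OSRS}; the genuinely new feature is the $\mathbb{Z}/N\mathbb{Z}$-valued bookkeeping carried by $M(\mathcal{H})$, which is precisely why the $n$-indices are well defined only modulo $N$.
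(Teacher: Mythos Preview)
Your proposal is correct and follows the paper's approach: both track how the $A_M$- and $A_K$-values of triangles interact with the $C^0$-summand decomposition from the proof of Proposition~\ref{thm:4.2}. The paper's execution is lighter, though. Where you invoke the Chern class formula~(\ref{eq:6}) and Propositions~\ref{thm:2.5}--\ref{thm:2.7} to show the $M_j$-shift is constant among $\m{s}$-triangles, the paper uses a pure splicing argument: given $x, x' \in C_i$ both landing in $C_{i+1}$ via $\m{s}$-triangles $\psi, \psi'$, splice with connecting disks (which exist because $y, y'$ lie in the single summand $C_{i+1}$ and $\mathbf{x}, \mathbf{x}'$ represent the same $\m{s}|_Y$) to produce two $\m{s}$-triangles between the \emph{same} pair of generators; since such triangles differ by periodic domains with $A_M = 0$ (and, for $i \equiv 2$, with $A_K \in K_0(\mathcal{H})$), the summand-indices of $x, x'$ are forced to agree. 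This sidesteps all Chern-class computation and, in particular, avoids the mismatch that Propositions~\ref{thm:2.5}--\ref{thm:2.7} are formulated for $W'_N$ rather than for the $i\equiv 0$ cobordism $W_0$. For the equivariance claims, the paper again argues directly: a careful choice of $z_0$ makes a $\mathrm{PD}[\mu]$-shift of $\m{s}|_{Y_0}$ move the $M_j$-index by exactly $1$, and the periodic domain for $[\widetilde{dF}]$ represents the class $P$ in $K(\mathcal{H})$, which immediately gives the displayed relation for $l^i_N$. Your normalization worry is thus dispatched by these observations rather than by running~(\ref{eq:6}).
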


\begin{proof} If $i =3j$, suppose that generators $x$ and $x'$ of $C_i$ admit triangles $\psi$ and $\psi'$ representing $\m{s} \in \spc{W_0}$ that connect them to respective generators $y$ and $y'$ of $C_{i+1}$.  Then $y$ and $y'$ are certainly connected by a disk, so $x$ will also be connected to $y'$ by $\psi$ spliced with this disk, which represents $\m{s}$.  Clearly $x$ must be connected to $x'\otimes e^{a_K +a_M}$ by a disk for some $a_K \in K(\mathcal{H})$ and $a_M \in M(\mathcal{H})$, and we can splice this disk with $\psi'$ to get another triangle representing $\m{s}$ connecting $x$ and $y'$.  Since any two triangles connecting $x$ and $y'$ that represent the same \sst\ structure will have the same value of $A_M$, it follows quickly that $a_M = 0$.  Thus, if $x \in C_i(\ell, n)$ and $x'\in C_i(\ell', n')$, then $n = n'$, and we set this value to be $n^i_0(\m{s})$.  

Suppose that $\m{s}$ and $\m{s}'$ both have the same restriction to $Y$ and satisfy $\m{s}'|_{Y_0} - \m{s}|_{Y_0} = k\mbox{PD}[\mu]$.  It is not hard to see that if we choose the function $z_0$ of the last proposition carefully, then we may ensure that for $\psi \in \pi_2(\mathbf{x}, \Theta_{i,i+1}, \mathbf{y})$ and $\psi' \in \pi_2(\mathbf{x}, \Theta_{i,i+1}, \mathbf{y}')$ representing these two \sst\ structures, $A_M(\psi') - A_M(\psi) -(z_0(\mathbf{y}') - z_0(\mathbf{y}))$ equals $kM_j$ plus an element of $K(\mathcal{H})$.  It follows that $n^i_0(\m{s}) = n^i_0(\m{s}')$ if and only if $k$ is a multiple of $N$. 

The corresponding claims when $i \equiv 2$ follows along similar lines; the only real difference is that we now note that for two triangles $\psi, \psi'$ connecting the same two points representing the same \sst\ structure, in addition to $A_M(\psi) = A_M(\psi')$, we also have $A_K(\psi') -A_K(\psi) \in K_0(\mathcal{H})$.  Also, note that $\mbox{PD}[\widetilde{dF}]$ is the Poincar{\'e} dual of a class in $H_2(W'_N)$ whose associated periodic domain represents $P$; from this, the claim about $l^i_N$ is clear. \end{proof}

\subsection{The long exact sequence}
We first prove a long exact sequence in terms of the above, and then we translate it into a more invariant result.  In the following, for any map $\underline{f}$ whose source and target are respectively $\underline{CF}^+(Y_{0,i}; R)$ and $\underline{CF}^+(Y_{0,j}; R)$, we write $\underline{\pi f}$ for $\pi_j \circ \underline{f} \circ \pi_i$, and similarly for induced maps on homology.  The precomposition by $\pi_i$ can be thought of as a restriction of the domain.

\begin{theorem} \label{thm:4.4}  There is a long exact sequence 
$$ H_*(C_1) \overset{\underline{\pi F}^+_{(1)}}{\longrightarrow} H_*(C_2) \overset{\underline{\pi F}^+_{(2)}}{\longrightarrow} H_*(C_3) \overset{\underline{\pi F}^+_{(3)}}{\longrightarrow} H_*(C_4) \rightarrow \ldots;$$
furthermore, there exists a quasi-isomorphism 
$$\psi^+: M(\underline{\pi F}^+_{(2)}|_{C_2}) \rightarrow C_4, $$
where $M$ denotes the mapping cone.
\end{theorem}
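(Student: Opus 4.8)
The plan is to deduce both assertions from the standard homological lemma that underlies all surgery exact triangles in Heegaard Floer theory (cf.\ \cite{OSPA}): if $\{C_i\}_{i\ge 1}$ is a sequence of chain complexes equipped with chain maps $f_i\colon C_i\to C_{i+1}$, chain homotopies $H_i$ witnessing $f_{i+1}\circ f_i\simeq 0$, and such that the ``diagonal'' map $f_{i+2}\circ H_i + H_{i+1}\circ f_i\colon C_i\to C_{i+3}$ is a quasi-isomorphism for every $i$, then the sequence $\cdots\to H_*(C_i)\xrightarrow{(f_i)_*}H_*(C_{i+1})\to\cdots$ is exact and for each $i$ the mapping cone of $f_{i+1}$ is quasi-isomorphic to $C_{i+3}$. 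Taking $f_i=\underline{\pi f}^+_{(i)}$ (a genuine chain map on $C_i$ by Proposition \ref{thm:4.2}(a),(e),(f)) and reading off $i=1$ then yields both the long exact sequence and the quasi-isomorphism $\psi^+$ of the statement; the $3$-periodicity needed to apply the lemma for all $i\ge 1$ is provided by Proposition \ref{thm:4.2}(g), which identifies $C_i$ with $C_{i+3}$ via $\underline{\pi g}_i$. So the work is to exhibit the $H_i$ and to identify the diagonal map, keeping everything $\mathbb{Z}[H^1(Y_0)]$-equivariant.

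First I would set $H_i=\underline{\pi H}_i$. The relation $\underline{\pi f}^+_{(i+1)}\circ\underline{\pi f}^+_{(i)}=\underline{\partial}^+\,\underline{\pi H}_i+\underline{\pi H}_i\,\underline{\partial}^+$ comes from examining the ends of the $1$-dimensional moduli spaces of Maslov index $(-1)$ rectangles in the quadruple $(\Sigma,\bse{0},\bse{i},\bse{i+1},\bse{i+2})$ with two corners pinned at $\Theta_{i,i+1}$ and $\Theta_{i+1,i+2}$. The boundary degenerations are the familiar ones: disk bubbles at the $\mathbf{x}$- and $\mathbf{w}$-corners, which assemble into $\underline{\partial}^+\underline H_i+\underline H_i\underline{\partial}^+$; one long-diagonal splitting giving the triangle pair that computes $\underline f^+_{(i+1)}\circ\underline f^+_{(i)}$; the other long-diagonal splitting off an $\bse{i}\bse{i+1}\bse{i+2}$-triangle through $\Theta_{i,i+2}$; and disk bubbles at the $\Theta$-corners. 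As usual the $\Theta$-corner bubbles contribute nothing (the relevant differential of a top generator vanishes), and the stray-triangle term is killed after applying $\pi_i$ and $\pi_{i+2}$ — here Propositions \ref{thm:4.2}(e),(f) and \ref{thm:4.3} do exactly this bookkeeping, the subcomplexes $C_i$ and the constants $s_j$ having been engineered so that the extra triangle contribution lands outside the image of the relevant projection. Weak admissibility of the standard diagram guarantees all the moduli spaces are finite and the counts well defined, and additivity of $A_K$, $A_M$ under splicing makes the homotopy $R$-linear.

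Next I would identify the diagonal map $\underline{\pi f}^+_{(i+2)}\circ\underline{\pi H}_i+\underline{\pi H}_{i+1}\circ\underline{\pi f}^+_{(i)}$ as $\pm\,\underline{\pi g}_i$ up to chain homotopy. This I would extract from the ends of the $1$-dimensional moduli spaces of Maslov index $(-1)$ \emph{pentagons} in the quintuple $(\Sigma,\bse{0},\bse{i},\bse{i+1},\bse{i+2},\bse{i+3})$ with corners pinned at $\Theta_{i,i+1},\Theta_{i+1,i+2},\Theta_{i+2,i+3}$. Two of the interior-chord degenerations produce $\underline f^+_{(i+2)}\underline H_i$ and $\underline H_{i+1}\underline f^+_{(i)}$; one degeneration splits off precisely the rectangle of Lemma \ref{thm:4.1}, whose output is $\Theta_{i,i+3}\otimes r_i$, glued to an $\bse{0}\bse{i}\bse{i+3}$-triangle, i.e.\ $\underline g_i$; the remaining ends are disk bubbles that assemble into $\underline{\partial}^+(\cdots)+(\cdots)\underline{\partial}^+$, together with $\Theta$-corner bubbles and further stray-triangle terms that vanish or cancel exactly as in the previous paragraph. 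Applying the projections and then invoking Proposition \ref{thm:4.2}(g) — $\underline{\pi g}_i$ carries $C_i$ isomorphically onto $C_{i+3}$ — shows the diagonal map is, up to sign and homotopy, an isomorphism, hence a quasi-isomorphism; this is the second hypothesis of the algebraic lemma.

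The main obstacle, as in every instance of this argument, is controlling the stray degeneration terms in the rectangle- and pentagon-moduli ends: the extra triangle contributions that run through the pairwise manifolds $Y_{\bse{i}\bse{i+2}}$ (connected sums of copies of $S^1\times S^2$) and the $\Theta$-corner disk bubbles. Showing these are trivial — either because a top-generator count is zero, or because after composing with the projections $\pi_i$ they fall into a complementary summand of the splitting into the $C_i(\ell,n)$ — is exactly where the delicate setup of Lemma \ref{thm:4.1} and Propositions \ref{thm:4.2}--\ref{thm:4.3} is needed, and where one must be most careful in the twisted / standard-diagram-coefficient setting, tracking the group-ring exponents $A_K(\psi)$ and $A_M(\psi)$ under splicing so that all homotopies are compatible with the coefficient system. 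Everything else is the verbatim translation of the untwisted surgery exact triangle of \cite{OSPA}.
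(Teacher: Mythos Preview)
Your overall architecture is right and matches the paper: invoke the mapping-cone/exact-triangle lemma (the paper cites \cite{OSBD}, not \cite{OSPA}) once you have nullhomotopies $\underline{\pi H}_i$ and quasi-isomorphic diagonal maps, and extract both conclusions at $i=1$. Two points, however, are not handled correctly.

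First, the stray degeneration in the rectangle count is not killed by the projections; it vanishes outright. The relevant term is $\sum_{\m{s}}\underline{f}^+_{i,i+1,i+2,\m{s}_2}(\Theta_{i,i+1}\otimes\Theta_{i+1,i+2})$, and one must show this is literally zero in the twisted complex. The paper does this by the usual $\psi^\pm_k$ cancellation, but the twisted setting requires an extra step: one checks that for triangles among $\bse{i},\bse{i+1},\bse{i+2}$ with $i\ge 1$, any two homotopy classes with the same corners and the same $n_w$ differ by periodic domains whose boundary is a multiple of $\ga_g-\be_g-N\de_g$, hence maps to $0$ in both $L(\mathcal{H})$ and $M(\mathcal{H})$ (the latter precisely because $\de_g$ has order $N$ in $M(\mathcal{H})$). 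Thus $n_w$ determines both $A_K$ and $A_M$, and the cancelling pairs carry identical twisting coefficients. Your description (``lands outside the image of the relevant projection'') is not the mechanism and would not obviously work.

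Second, and more seriously, your quasi-isomorphism step appeals to Proposition~\ref{thm:4.2}(g), but that statement only covers $i\equiv 1,2$ mod $3$; for $i\equiv 0$ it is simply not asserted, and indeed $\underline{\pi g}_i$ does not straightforwardly give an isomorphism $C_i\to C_{i+3}$ in that case (the $U^0$ coefficient of $r_i$ is a sum of $N$ terms rather than a single monomial, cf.\ Lemma~\ref{thm:4.1}). The paper genuinely needs ``a little bit of finesse'' here: instead of the full pentagon count $\underline{J}$, one introduces a restricted count $\underline{J'}_\ell$ of $\mu=-1$ pentagons with prescribed intersection number with $\de_g$ at the marked point $p^i$, bundles these into a map $\underline{J'}$ depending on the $M_j$-component of the input, and then analyzes the resulting end-count $G$. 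The leading piece $\widetilde{G}$ (canonical small triangles only) is checked to be an isomorphism using the explicit form of $r_i^0$ from Lemma~\ref{thm:4.1}, and an area-filtration argument upgrades this to $G$. Without this refinement the diagonal-map hypothesis of the algebraic lemma is unverified for one residue class out of three, and the argument does not close. (Minor: the one-dimensional moduli you degenerate are of Maslov index $0$ rectangles and pentagons, not index $-1$.)
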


\begin{proof}
We follow the strategy of \cite{OSBD}.  To do this, we will show that there are chain nullhomotopies $\underline{\pi H}_i: C_i \rightarrow C_{i+2}$ of $\underline{\pi f}^+_{(i+1)} \circ \underline{\pi f}^+_{(i)}$, and that the maps $\underline{\pi f}^+_{(i+2)} \circ \underline{\pi H}_i - \underline{\pi H}_{i+1} \circ \underline{\pi f}^+_{(i)}$ are quasi-isomorphisms.

We consider the moduli space of quadrilaterals $\psi \in \pi_2(\mathbf{x}, \Theta_{i,i+1}, \Theta_{i+1,i+2}, \mathbf{w})$ with $\mu(\psi) = 0$.  This space is compact and oriented of dimension 1 (recall our use of the phrase ``moduli space''), so the signed count of its boundaries vanishes.  Noting that the $\Theta$ points are cycles, we see that with appropriate orientation conventions, a twisted count of the ends yields 
$$\underline{\partial}^+_{{\eta}^0{\eta}^{i+2}} \circ \underline{H}_i + \underline{H}_i \circ \underline{\partial}^+_{{\eta}^0{\eta}^{i}} = \underline{f}^+_{(i+1)} \circ \underline{f}^+_{(i)} 
+ \sum_{\m{s}} \underline{f}^+_{0,i,i+2, \m{s}_1}\big(\mbox{ }\cdot\mbox{ }\otimes \underline{f}^+_{i,i+1,i+2, \m{s}_2}(\Theta_{i,i+1} \otimes \Theta_{i+1,i+2})\big),  $$
where $\m{s}_1$ and $\m{s}_2$ are the appropriate restrictions of $\m{s}$.  
It is not hard to see that 
$$\pi_{i+2} \circ \left(\underline{\partial}^+_{{\eta}^0{\eta}^{i+2}} \circ \underline{H}_i + \underline{H}_i \circ \underline{\partial}^+_{{\eta}^0{\eta}^{i}}\right) \circ \pi_i = \underline{\pi \partial}^+_{{\eta}^0{\eta}^{i+2}} \circ \underline{\pi H}_i + \underline{\pi H}_i \circ \underline{\pi \partial}^+_{{\eta}^0{\eta}^{i}}$$
and that 
$$\pi_{i+2} \circ \left(\underline{f}^+_{(i+1)} \circ \underline{f}^+_{(i)}\right) \circ \pi_i = \underline{\pi f}^+_{(i+1)} \circ \underline{\pi f}^+_{(i)}.$$
So as usual, to show that $\underline{\pi H}_i$ is a chain nullhomotopy, it suffices to show that for each $\m{s}_1 \in \spc{X_{0,i,i+2}}$, we have
\begin{equation}
\label{eq:10}
\sum_{  \{ \m{s} \big| \m{s}|_{X_{0,i,i+2}} = \m{s}_1 \}  } \underline{f}^+_{i,i+1,i+2, \m{s}|_{X_{i,i+1,i+2}}}(\Theta_{i,i+1} \otimes \Theta_{i+1,i+2}) = 0.
\end{equation}

To verify this equation, in fact, the untwisted arguments work with few changes.  Suppose first that we have two triangles $\psi$ and $\psi'$ with $n_w(\psi) = n_w(\psi')$ connecting the same three points $\mathbf{x}_{i+j} \in \mathbb{T}_{\eta^{i+j}} \cap \mathbb{T}_{\eta^{i+j+1}}$ for $i \geq 1$ and $j =0,1,2$.  Then $\partial \psi'$ will equal $\partial \psi$ plus a number of doubly-periodic domains plus some triply-periodic domain $\mathcal{P}$, where $\partial \mathcal{P}$ is a multiple of $\ga_g - \be_g - N\de_g$ (up to replacing circles with corresponding translates).  All of these domains go to $0$ in both $L(\mathcal{H})$ and $M(\mathcal{H})$, the latter because $\de_g$ is of order $N$ in $M(\mathcal{H})$; and so $A_K(\psi') - A_K(\psi) = A_M(\psi') - A_M(\psi) = 0$.  Thus, for triangles through any three such points, $n_w(\psi)$ determines $A_K(\psi)$ and $A_M(\psi)$.

Having shown this, the usual arguments show that for all $i \ne 0$ and each $k > 0$, there are two homotopy classes of triangles $\psi^{\pm}_k \in \pi_2(\Theta_{i,i+1}, \Theta_{i+1,i+2}, \Theta_{i,i+2})$ with $\mu(\psi^{\pm}_k) = 0$ and $n_w(\psi^+_k) = n_w(\psi^-_k)$ (both of which equal a quadratic function of $k$, depending on $N$ and the precise position of $\Theta_{\be\ga}$), and each of these admit a single holomorphic representative.  Since $n_w(\psi^+_k) = n_w(\psi^-_k)$, we know that $A_K(\psi^+_k) = A_K(\psi^-_k)$ and $A_M(\psi^+_k) = A_M(\psi^-_k)$; hence, the two corresponding terms in $\underline{f}^+_{i,i+1,i+2, \m{s}}(\Theta_{i,i+1} \otimes \Theta_{i+1,i+2})$ will appear with the same twisting coefficient. Furthermore, orientations can be arranged so that the terms appear with opposite signs.  Likewise, when $i \equiv 2$, any of the other points $\Theta'$ in $\mathbb{T}_{\ga} \cap \mathbb{T}_{\be}$ will have two homotopy classes of triangles $\psi^{\pm}_k(\Theta') \in \pi_2(\Theta_{i,i+1}, \Theta_{i+1,i+2}, \Theta')$ with $\mu(\psi^{\pm}_k(\Theta')) = 0$ and $n_w(\psi^+_k(\Theta')) = n_w(\psi^-_k(\Theta'))$, and we can ensure that these yield cancelling terms as well.  Thus, $\underline{f}^+_{i,i+1,i+2}(\Theta_{i,i+1} \otimes \Theta_{i+1,i+2}) = 0;$ the left hand side of Equation \ref{eq:10} is equal to sum of terms equal to $\underline{f}^+_{i,i+1,i+2}(\Theta_{i,i+1} \otimes \Theta_{i+1,i+2})$, and so this equation holds, proving that the $\underline{\pi H}_i$ are nullhomotopies.

Next, we examine the desired quasi-isomorphisms.  Consider the moduli space of pentagons $\psi \in \pi_2(\mathbf{x}, \Theta_{i,i+1}, \Theta_{i+1,i+2}, \Theta_{i+2,i+3}, \mathbf{w})$ with $\mu(\psi) = 0$.  We count signed boundary components, noting again that the $\Theta$ points are cycles and that $\underline{f}^+_{i,i+1,i+2}(\Theta_{i,i+1} \otimes \Theta_{i+1,i+2}) = 0$.  Ignoring the terms that vanish due to these observations, and orienting appropriately, we have
\begin{flushleft}
$\begin{array}{l}
\underline{f}^+_{(i+2)} \circ \underline{H}_i - \underline{H}_{i+1} \circ \underline{f}^+_{(i)} =
\underline{\partial}^+_{{\eta}^{i+3}} \circ \underline{J} + 
\underline{J} \circ \underline{\partial}^+_{{\eta}^i} +  \\
\end{array}$
\end{flushleft}
$$\sum_{\m{s}} \underline{f}^+_{0,i,i+3, \m{s}_1}
\big(\mbox{ }\cdot\mbox{ }\otimes \underline{f}^+_{i,i+1,i+2,i+3, \m{s}_2}(\Theta_{i,i+1} \otimes \Theta_{i+1,i+2} \otimes \Theta_{i+2,i+3})\big),$$
where $\underline{J}$ counts pentagons $\psi$ with $\mu(\psi) = -1$.  In fact, we can dispense with the \sst\ structures in the second line.

First, consider the case where $i$ is not $0$.  In these cases, it is easy to see that 
the above equation together with 4.2 e) and f) imply that
\begin{flushleft}
$\begin{array}{l}
\underline{\pi f}^+_{(i+2)} \circ \underline{\pi H}_i - \underline{\pi H}_{i+1} \circ \underline{\pi f}^+_{(i)} =
\underline{\pi \partial}^+_{{\eta}^{i+3}} \circ \underline{\pi J} + 
\underline{\pi J} \circ \underline{\pi \partial}^+_{{\eta}^i} +  \\
\end{array}$
\end{flushleft}
$$\underline{\pi f}^+_{0,i,i+3}
\big(\pi_i( \cdot )\otimes \underline{f}^+_{i,i+1,i+2,i+3}(\Theta_{i,i+1} \otimes \Theta_{i+1,i+2} \otimes \Theta_{i+2,i+3})\big).$$
Hence, in these cases, it suffices to show that the term in the second line of this expression is a quasi-isomorphism when considered as a map from $C_i$ to $C_{i+3}$; but this is immediate from Proposition \ref{thm:4.2} g).

We have only to show the analogous result for $i = 3j$.  This requires a little bit of finesse.  

Any element of $C_i$ is of the form $x = [\mathbf{x}, i] \otimes e^{k + s_j + nM_j}$, for some $k \in K(\mathcal{H})$ and $n \in \mathbb{Z}/N\mathbb{Z}$.  Then $\pi_{i+1} \circ \underline{f}^+_{(i)}(x)$ gives a count of triangles $\psi$ originating at $\mathbf{x}$ with $m_{p^i}(\partial \psi) = c - n$ (where $c$ is as in Lemma \ref{thm:4.1}), and $\pi_{i+2} \circ \underline{H}_i(x)$ does the same for rectangles.

Given $\ell \in \mathbb{Z}/N\mathbb{Z}$, let $\underline{J'}_{\ell}$ be a map which counts holomorphic pentagons \linebreak $\psi \in \pi_2(\mathbf{x}, \Theta_{i,i+1}, \Theta_{i+1, i+2}, \Theta_{i+2, i+3}, \mathbf{w})$ with $\mu(\psi) = -1$, but \emph{only} those pentagons for which $m_{p^i}(\partial \psi) = \ell$.  We bundle these into a single map $\underline{J'}$, by having $\underline{J'}(x) = \underline{J'}_{c -n}(x)$ when $x$ is of the form given above.  Then, examining boundary components of moduli spaces of pentagons $\psi$ with $\mu(\psi) = 0$ and $m_{p^i}(\partial \psi) = c_j -n$, we see that 
$$\pi_{i+3} \circ \underline{f}^+_{(i+2)} \circ \pi_{i+2} \circ \underline{H}_i(x) - \pi_{i+3} \circ \underline{H}_{i+1} \circ \pi_{i+1} \circ \underline{f}^+_{(i)}(x) = \underline{\pi \partial}^+_{{\eta}^{i+3}} \circ \underline{J'}(x) + 
\underline{J'} \circ \underline{\pi \partial}^+_{{\eta}^i}(x) + G(x),$$
where   
$$G(x) = \sum_{\mathbf{w} \in \mathbb{T}_{\al} \cap \mathbb{T}_{\eta^{i+3}}}
\sum_{\substack{\psi \in \pi_2(\mathbf{x}, \Theta_{i,i+3}, \mathbf{w})\\
\mu(\psi) =0}}
C(\psi) \cdot \#\mathcal{M}(\psi) \cdot [\mathbf{w}, i - n_w(\psi)] \cdot e^{k + s_j + nM_j + A(\psi)}, $$
for appropriate constants $C(\psi) \in \mathbb{Z}[[U]] \otimes R$.
Precisely, write
$$\underline{f}^+_{i,i+1,i+2,i+3, \m{s}_2}(\Theta_{i,i+1} \otimes \Theta_{i+1,i+2} \otimes \Theta_{i+2,i+3})\big) = \sum_{q \in \mathbb{Z}/N\mathbb{Z}} e^{qM_j} \cdot P(q)
\cdot \Theta_{i,i+3},$$
where $P(q) \in \mathbb{Z}[[U]] \otimes R$ contains no powers of $e^{M_j}$; then if $m_{p^i}(\partial \psi) = a$, $C(\psi)$ should equal $P(c - n - a)$.  Indeed, the coefficients of $P(q)$ all arise from triangles \linebreak $\psi_1 \in \pi_2(\Theta_{i,i+1}, \Theta_{i+1,i+2}, \Theta_{i+2,i+3}, \Theta_{i,i+3})$ for which $m_{p^i}(\partial \psi_1) = q$, and we essentially want to count pairs $(\psi, \psi_1)$ such that $m_{p^i}\left(\partial (\psi * \psi_1) \right) = c - n$.

Let $\widetilde{G}$ be the count of triangles that $G$ performs, with the second sum restricted to canonical small triangles.  If $\psi_0$ is such a small triangle, recall that we have arranged so that $m_{p^i}(\partial \psi_0) = 0$.  Hence, Lemma \ref{thm:4.1} implies that $C(\psi_0)$ will have $U^0$ coefficient equal to  
$e^{k + (c-n)M_j + nM_{j+1}}$ for some $k \in K(\mathcal{H})$ and $n \in \mathbb{Z}/N\mathbb{Z}$.  Therefore, $\widetilde{G}$ is an isomorphism; as usual, an area filtration argument can then be used to show that $G$ is an isomorphism as well, proving the quasi-isomorphism statement for $i \equiv 0$.

Thus, the mapping cone Lemma of \cite{OSBD} finishes the proof. \end{proof}

We now refine Theorem \ref{thm:4.4}.
Recall that the usual cobordism-induced map $\underline{F}^+_{W'_N, \m{s}}: \underline{HF}^+(Y_N, \m{s}|_{Y_N}) \rightarrow \underline{HF}^+(Y, \m{s}|_Y)$, and indeed the groups themselves, are well-defined only up to a sign and the actions of $H^1(Y_N)$ and $H^1(Y)$.  We may fix these so that the following holds.

\begin{theorem} \label{thm:4.5}  For special $K$, there is a long exact sequence 
$$\begin{array}{ll} 
\ldots \overset{\underline{F}^+_0}{\longrightarrow} \underline{HF}^+(Y_0, \m{S}^N_0(\m{t}_0)) &
\overset{\underline{F}^+_N}{\longrightarrow} 
\underline{HF}^+(Y_N, \m{S}_N(\m{t}_0)) \otimes \mathbb{Z}[\mathbb{Z}]\\
& \overset{\underline{F}^+}{\longrightarrow} 
\underline{HF}^+(Y, \m{S}_{\infty}(\m{t}_0)) \otimes \mathbb{Z}[\mathbb{Z}] 
\rightarrow \ldots, \\
\end{array}$$
where each group is taken with totally twisted coefficients.   We have
$$\underline{F}^+ = \sum_{\m{s} \in \m{S}_{N\infty}(\m{t}_0)} \underline{F}^+_{W'_N, \m{s}} \otimes \1$$
where $\underline{F}^+_{W'_N}$ is the usual twisted  coefficient map induced by $W'_N$, appropriately fixed; and so there exists a quasi-isomorphism 
$$\psi^+: M\left(\sum_{\m{s} \in \m{S}_{N\infty}(\m{t}_0)} \underline{f}^+_{\m{s}}\right)
\rightarrow \underline{CF}^+\left(Y_0, \m{S}^N_0(\m{t}_0)\right), $$
where $\underline{f}^+_{\m{s}}$ is the chain map inducing $\underline{F}^+_{\m{s}}$.

Furthermore, we can choose $\m{t}_N \in \m{S}_N(\m{t}_0)$ and $\m{t}_{\infty} \in \m{S}(\m{t}_0)$, and identifications
$$\underline{CF}^+(Y_N, \m{S}_N(\m{t}_0)) \otimes \mathbb{Z}[\mathbb{Z}] \cong \bigoplus_{i\in\mathbb{Z}} \underline{CF}^+(Y_N, \m{t}_N + i\mathrm{PD}[F']|_{Y_N})$$
and
$$\underline{CF}^+(Y, \m{S}_{\infty}(\m{t}_0)) \otimes \mathbb{Z}[\mathbb{Z}] \cong \bigoplus_{i\in\mathbb{Z}} \underline{CF}^+(Y, \m{t}_{\infty} + i\mathrm{PD}[F']|_Y),$$
where we treat each summand as distinct, so that if $\underline{f}^+_{\m{s}}$ takes summand $i$ to summand $j$, then $\underline{f}^+_{\m{s}+k\mathrm{\scriptsize{PD}}[F']|_{W'_N}}$ takes summand $i$ to summand $j + k$.
\end{theorem}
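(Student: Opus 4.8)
The plan is to deduce Theorem \ref{thm:4.5} from Theorem \ref{thm:4.4} by replacing the standard-diagram-coefficient complexes $C_i$ with genuinely invariant twisted Floer complexes and by recognizing the maps $\underline{\pi f}^+_{(i)}$ as $\mathrm{Spin}^c$-refined twisted cobordism maps. Fix $\m{t}_0 \in \spc{Y_0}$. The chain groups and all the maps of Theorem \ref{thm:4.4} split according to the value of $\m{s}_w$ on the $\bsa\bsb$-part of the diagram; I would keep the summand of $C_1, C_4, \dots$ lying over \sst\ structures in $\m{S}^N_0(\m{t}_0)$, of $C_2, C_5, \dots$ over $\m{S}_N(\m{t}_0)$, and of $C_3, C_6, \dots$ over $\m{S}_\infty(\m{t}_0)$. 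These choices are mutually compatible under all the maps because $\m{S}_N(\m{t}_0)$ and $\m{S}_\infty(\m{t}_0)$ were defined in Section 2.5 as restrictions of the single set $\m{S}_{N\infty}(\m{t}_0) \subset \spc{W'_N}$, and Proposition \ref{thm:2.4} supplies the finiteness needed for the relevant sums to be well-behaved.

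Since $Y_{0,i}$ is $Y_0$, $Y_N$, or $Y$ according as $i \equiv 1, 2, 0 \pmod 3$, Proposition \ref{thm:4.2} — together with the decomposition of $\underline{CF}^+(Y_{0,i}; R)$ into the pieces $C^0_i(s)$ noted when the standard-diagram-coefficient complex was introduced — identifies the restricted complexes with $\underline{CF}^+(Y_0, \m{S}^N_0(\m{t}_0))$, $\underline{CF}^+(Y_N, \m{S}_N(\m{t}_0)) \otimes \mathbb{Z}[\mathbb{Z}]$, and $\underline{CF}^+(Y, \m{S}_\infty(\m{t}_0)) \otimes \mathbb{Z}[\mathbb{Z}]$. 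Here the various $\mathbb{Z}[\mathbb{Z}]$ and $\mathbb{Z}[\mathbb{Z}/N\mathbb{Z}]$ factors appearing in Proposition \ref{thm:4.2}(b)--(d) are sorted into this shape using the identification of the extra $\bsa\bsb$-periodic domain $P$ with $F'$ (Proposition \ref{thm:2.7}) and the fact, from Proposition \ref{thm:4.3}, that each cobordism-map summand is supported on a single $\mathbb{Z}/N\mathbb{Z}$-slice. Because the $C_i$ are $3$-periodic up to the isomorphisms of Proposition \ref{thm:4.2}(g), the long exact sequence of Theorem \ref{thm:4.4} then collapses to the displayed exact triangle.

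For the maps: $\underline{f}^+_{(2)}$ counts holomorphic triangles in $X_{\al\ga\de}$ through $\Theta_{\ga\de}$, which after capping off the remaining boundary component (a connected sum of copies of $S^1 \times S^2$) is exactly the recipe defining the totally twisted cobordism map of $W'_N$, and on a block $C^0_i(s)$ the exponent $e^{A(\psi)}$ reduces to its universal twisting coefficient. Breaking $\underline{f}^+_{(2)}$ up over $\m{s} \in \spc{W'_N}$ and invoking Proposition \ref{thm:4.3} — in particular that $\underline{f}^+_{(i),\m{s}}$ carries $C_i(l)$ into the single summand $C_{i+1}\big(l^i_N(l,\m{s})\big)$, with $l^i_N(l+k, \m{s}+m\,\mathrm{PD}[\widetilde{dF}]) = l^i_N(l,\m{s})+k+m$ — one reads off that, under
$$\underline{CF}^+(Y_N, \m{S}_N(\m{t}_0)) \otimes \mathbb{Z}[\mathbb{Z}] \;\cong\; \bigoplus_{i \in \mathbb{Z}} \underline{CF}^+\big(Y_N, \m{t}_N + i\,\mathrm{PD}[F']|_{Y_N}\big)$$
and the analogous identification for $Y$, the sum $\sum_{\m{s} \in \m{S}_{N\infty}(\m{t}_0)} \underline{f}^+_{\m{s}}$ has precisely the claimed index-shift behavior. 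I would then fix the sign and $H^1$-ambiguities in $\underline{F}^+_{W'_N}$ so that each $\underline{f}^+_{\m{s}}$ literally induces $\underline{F}^+_{W'_N,\m{s}}$, name the three restricted maps $\underline{\pi f}^+_{(1)}$, $\underline{\pi f}^+_{(2)}$, $\underline{\pi f}^+_{(3)}$ as $\underline{F}^+_N$, $\underline{F}^+$, $\underline{F}^+_0$, and conclude that the long exact sequence and the mapping-cone quasi-isomorphism $\psi^+$ of Theorem \ref{thm:4.5} are the restrictions to the chosen block of those furnished by Theorem \ref{thm:4.4}.

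The main obstacle I anticipate is the bookkeeping relating the three kinds of coefficient in play — the standard-diagram exponent $e^{A(\psi)}$, the $\mathbb{Z}[\mathbb{Z}]$-grading carried by $e^P$, and the $H_2(W'_N)$-decomposition of the cobordism map — i.e. checking that Proposition \ref{thm:4.3}'s functions $n^i_0$, $n^i_N$, $l^i_N$ assemble exactly into the statement that $\underline{\pi f}^+_{(2)}$ is the $H_2(W'_N)$-refined twisted cobordism map of $W'_N$ with the advertised shift, and that the residual $\mathbb{Z}[\mathbb{Z}/N\mathbb{Z}]$ degrees of freedom are absorbed without remainder. The corresponding but more transparent checks for $\underline{f}^+_{(1)}$ (triangles in $X_{\al\be\ga}$) and $\underline{f}^+_{(3)}$ (triangles in the triple capping to the $0$-surgery cobordism $W_0$) I would record only briefly.
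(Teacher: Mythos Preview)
Your proposal is correct and follows essentially the same route as the paper. The paper's proof is terse: it first passes to a ``large'' exact sequence indexed by the full $\mathbb{Z}\cdot\mathrm{PD}[\mu]$-orbits $\m{S}^*_0(\m{t}_0)$ and $\m{S}^*_N(\m{t}_0)$ (with an $N$-fold copy of the $Y$ group), and then breaks each term into $N$ pieces---the first two by $N\mathrm{PD}[\mu]$-suborbits, the last by the obvious $\mathbb{Z}/N\mathbb{Z}$ factor---checking via Proposition~\ref{thm:4.3} (and Lemma~A.1 in the appendix for $\underline{F}^+_N$) that the maps respect this finer decomposition. Your outline collapses these two steps into one, but the content is the same: Propositions~\ref{thm:4.2} and~\ref{thm:4.3} are the key inputs, and the ``obstacle'' you flag (absorbing the residual $\mathbb{Z}[\mathbb{Z}/N\mathbb{Z}]$ and matching the $l^i_N$, $n^i_N$, $n^i_0$ data to the $H_2(W'_N)$-refinement) is precisely what the paper's second paragraph addresses. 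One small correction: the identification of the extra periodic domain $P$ with $[\widetilde{dF}]$ comes from the discussion in Section~2.1 and the proof of Proposition~\ref{thm:4.2}, not from Proposition~\ref{thm:2.7}.
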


\begin{proof}  For $\m{t}_0 \in \spc{Y_0}$, let $\m{S}^*_0(\m{t}_0) = \m{t}_0 + \mathbb{Z} \cdot\mbox{PD}[\mu]$ and $\m{S}^*_N(\m{t}_0) = \m{S}_N(\m{t}_0) + \mathbb{Z}\cdot\mbox{PD}[\mu]$.
It follows quickly from Theorem \ref{thm:4.4} that there is a long exact sequence of the form 
$$\begin{array}{ll} 
\ldots \overset{\underline{F}^+_0}{\longrightarrow} \underline{HF}^+(Y_0, \m{S}^*_0(\m{t}_0)) &
\overset{\underline{F}^+_N}{\longrightarrow} 
\underline{HF}^+(Y_N, \m{S}^*_N(\m{t}_0)) \otimes \mathbb{Z}[\mathbb{Z}]\\
& \overset{\underline{F}^+}{\longrightarrow} 
\bigoplus^N\underline{HF}^+(Y, \m{S}_{\infty}(\m{t}_0)) \otimes \mathbb{Z}[\mathbb{Z}]  
\rightarrow \ldots, \\
\end{array}$$ 
by identifying each $C_i$ with one of the above groups and noting that each map of the long exact sequence naturally splits along \sst\ structures as given.

All three groups above further decompose into $N$ subgroups: the first two by breaking into sums indexed by the $N$ differenet $\mathbb{Z}\cdot N\mbox{PD}[\mu]$-suborbits of \sst\ structures, and the last in the obvious manner.
So to show that the exact sequence of the statement exists, it suffices to show that the maps in the long exact sequence respect these decompositions (i.e. each map takes each summand of its source to a distinct summand of its target).
It is not difficult to see that $\underline{F}^+_N$ does, by (for example) examining Lemma \ref{thm:A.1}. 
That the other two do follows from the statement about $n^i_0$ and $n^i_N$ in Proposition \ref{thm:4.3}.
 
The identification of $\underline{F}^+_{W'_N}$ and the mapping cone statement are both clear. Finally, the last statement follows from the second paragraph of Proposition \ref{thm:4.3}. \end{proof}

For an integer $\de > 0$, let $\underline{CF}^{\de}$ denote the subcomplex of $\underline{CF}^+$ consisting of elements in the kernel of $U^{\de}$.  In the statements and proofs of all the above, we can go through line by line and systematically replace $\underline{CF}^+$ and $\underline{HF}^+$ with $\underline{CF}^{\de}$ and $\underline{HF}^{\de}$ (interpreting every map with one of these groups as source as having appropriately restricted domain).  It is  straightforward to then go through and check that everything still makes sense and holds true.  We will specifically need the quasi-isomorphism statement, so we state it precisely, and enhance it a bit.

\begin{corollary} 
\label{thm:4.6} 
If $K$ is special, then there is a quasi-isomorphism
$$\psi^{\de}: M\left(\sum_{\m{s} \in \m{S}_{N\infty}(\m{t}_0)} \underline{f}^+_{W'_N}|_{\underline{CF}^{\de}(Y_N, \m{s}|_{Y_N})} \otimes \1\right) \rightarrow \underline{CF}^{\de}(Y_0, \m{S}^N_0(\m{t}_0)). $$
The mapping cone inherits a $U$ action from the summands of its chain group; with respect to this action, the quasi-isomorphisms are $U$-equivariant.
\end{corollary}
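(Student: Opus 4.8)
The plan is to regard Corollary~\ref{thm:4.6} as the $\underline{CF}^{\de}$-truncated, $U$-equivariant refinement of the mapping cone statement of Theorem~\ref{thm:4.5}, and to establish it in two stages: first that the systematic substitution of $\underline{CF}^{\de}$ for $\underline{CF}^+$ described just above the corollary is legitimate, so that a quasi-isomorphism $\psi^{\de}$ exists; then that the resulting map respects the $U$-actions. The observation underlying both stages is that every chain-level map built in Sections~4.1--4.3 is $U$-equivariant: $U$ acts by $[\mathbf{x},j]\otimes r\mapsto[\mathbf{x},j-1]\otimes r$, the polygon-counting maps $\underline{\partial}^+$ and $\underline{f}^+_{i_1,\ldots,i_k}$ (and hence $\underline{f}^+_{(i)}$, $\underline{H}_i$, $\underline{g}_i$, their \sst-refinements, and the auxiliary operators $\underline{J}$, $\underline{J'}$, $\widetilde{\underline{g}}_i$, $G$, $\widetilde{G}$) each shift the integer coordinate by an amount depending only on the polygon counted, and the subcomplexes $C_i$ of Proposition~\ref{thm:4.2} --- generated by elements $[\mathbf{x},j]\otimes e^{a}$ with $j$ ranging over all of $\mathbb{Z}$ --- are $U$-stable, so the projections $\pi_i$ are $U$-equivariant as well.

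First I would use this to see that the substitution is legitimate. Since $\underline{CF}^{\de}=\ker U^{\de}$ is a $U$-submodule, every $U$-equivariant map above carries $\underline{CF}^{\de}$ into $\underline{CF}^{\de}$, so the whole apparatus of Section~4 --- the subcomplexes and their splittings in Propositions~\ref{thm:4.2} and~\ref{thm:4.3}, the nullhomotopies $\underline{\pi H}_i$ of $\underline{\pi f}^+_{(i+1)}\circ\underline{\pi f}^+_{(i)}$, and the pentagon-counting homotopies --- restricts to the truncated complexes. The twisting-coefficient content of the section (the subcomplex decomposition of $\underline{CF}^+(Y_{0,i};R)$, the \sst-splitting of the cobordism maps, and the vanishing identity~(\ref{eq:10})) concerns only which $e^{a}\in R$ occur and is insensitive to the $U$-filtration; the compactness and gluing inputs for the one-dimensional moduli spaces of quadrilaterals and pentagons are the same. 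Hence the proof of Theorem~\ref{thm:4.4}, and with it the passage to Theorem~\ref{thm:4.5}, transcribes verbatim with $\underline{CF}^{\de}$ in place of $\underline{CF}^+$, and the mapping cone lemma of \cite{OSBD} then produces the quasi-isomorphism $\psi^{\de}$ onto $\underline{CF}^{\de}(Y_0,\m{S}^N_0(\m{t}_0))$.

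For $U$-equivariance, note that the cone $M\big(\sum_{\m{s}}\underline{f}^+_{W'_N}|_{\underline{CF}^{\de}}\otimes\1\big)$ is a complex of $\mathbb{Z}[U]$-modules: its chain group is the direct sum of $\underline{CF}^{\de}(Y_N,\m{s}|_{Y_N})\otimes\1$ and $\underline{CF}^{\de}(Y_0,\m{S}^N_0(\m{t}_0))$ with $U$ acting diagonally, and its differential combines the $U$-equivariant differentials of the summands with the $U$-equivariant map $\sum_{\m{s}}\underline{f}^+_{W'_N}\otimes\1$, so it commutes with $U$. The map $\psi^{\de}$ is the explicit chain map the mapping cone lemma of \cite{OSBD} assembles, by composition and summation, from the triangle maps $\underline{f}^+_{(i)}$, the homotopies $\underline{\pi H}_i$, $\underline{J}$, $\underline{J'}$, the projections $\pi_i$, and the isomorphisms of Proposition~\ref{thm:4.2}(g); since each ingredient is $U$-equivariant and only $U$-linear operations are used, $\psi^{\de}$ intertwines the $U$-actions on source and target.

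The main obstacle I anticipate is not conceptual but is one bookkeeping point: the elements $r_i\in\mathbb{Z}[[U]]\otimes R$ of Lemma~\ref{thm:4.1} and the constants $C(\psi)\in\mathbb{Z}[[U]]\otimes R$ appearing in the pentagon argument for $i\equiv 0$ are genuine power series in $U$, so one must check that multiplication by them still yields well-defined, $U$-equivariant maps after restriction to $\ker U^{\de}$, and that the area-filtration arguments identifying $\widetilde{\underline{g}}_i$ and $\widetilde{G}$ as isomorphisms survive. This is straightforward --- on $\ker U^{\de}$ only the first $\de$ coefficients of any such series act nontrivially, so each such multiplication reduces to a finite $U$-equivariant operation --- but it is the place where the ``line by line'' replacement calls for a genuine, if routine, verification rather than mere transcription.
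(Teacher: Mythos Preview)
Your approach is correct and aligns with the paper's. Two remarks. First, a slip: the mapping cone's chain group is $\underline{CF}^{\de}(Y_N,\m{S}_N(\m{t}_0))\oplus\underline{CF}^{\de}(Y,\m{S}_\infty(\m{t}_0))$ (each tensored with $\mathbb{Z}[\mathbb{Z}]$), not involving $Y_0$; you have written the codomain of $\psi^\de$ as a summand of its domain. Second, the paper is more direct about $U$-equivariance: tracing the mapping cone lemma of \cite{OSBD}, it observes that the quasi-isomorphism has the explicit formula $\psi^{\de}(x,y)=\underline{H}_2(x)+\underline{f}^+_0(y)$, and since both $\underline{H}_2$ and $\underline{f}^+_0$ are polygon-counting maps (hence commute with $U$), so does $\psi^\de$. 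The pentagon operators $\underline{J}$, $\underline{J'}$ and the isomorphisms of Proposition~\ref{thm:4.2}(g) that you list enter only in verifying the \emph{hypotheses} of that lemma, not in the formula for $\psi^\de$ itself; your broader claim that every map in Section~4 is $U$-equivariant is correct and subsumes this, but the explicit two-term formula is what makes the paper's proof so brief.
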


\begin{proof} We explain the last statement quickly.  The chain group of the mapping cone is  $\underline{CF}^{\de}(Y_N, \m{S}_N(\m{t}_0)) \oplus \underline{CF}^{\de}(Y, \m{S}_{\infty}(\m{t}_0))$.
Examining the proof of the mapping cone Lemma as given in \cite{OSBD}, we can express $\phi^{\de}$ in terms of this decomposition by
$$\phi^{\de}(x,y) = \underline{H}_2(x) + \underline{f}^+_0(y);$$ 
these maps are $U$-equivariant, and thus so is $\phi^{\de}$. \end{proof}

\section{Twisted Knot Floer Homology}
Knot Floer homology was originally defined in \cite{OSKI} and \cite{RAS} for nullhomologous knots; in \cite{OSRS} the definition is extended to knots that are only rationally nullhomologous.  We recall the construction here, extending it to the case of twisted coefficients.  We then prove analogues of some of the results in \cite{OSRS} in the twisted setting, which relate the knot filtration with the homologies of large $N$ surgeries on the knot.  We write out a large portion of the details in this, even though most of the results here follow in a similar manner to previous results; we do this mainly so that we may be unambiguous when referring to these results afterwards.

\subsection{The knot filtration}  
Take a doubly-pointed Heegard diagram $(\Sigma, \bsa, \bsd, w, z)$ for an oriented, rationally nullhomologous knot $K \subset Y$, which need not be standard. We can form
the usual chain complex $CF^{\circ}(\Sigma, \bsa, \bsd, w)$ (where $\circ$ denotes any of $\widehat{\mbox{  }}, +, -$, or $\infty$), but the extra point endows this with an additional $\mathbb{Z}$ filtration, via the ordering on the fibers of $\underline{\mbox{Spin}}^{\mbox{\scriptsize{c}}}(Y, K)$.  In \cite{OSRS}, it is asserted that the $\mathbb{Z} \oplus \mathbb{Z}$-filtered chain homotopy type of $CF^{\circ}(\Sigma, \bsa, \bsd, w, z, \xi) = CF^{\circ}(Y, K, \xi)$ is an invariant of $Y, K$ and $\xi \in \underline{\mbox{Spin}}^{\mbox{\scriptsize{c}}}(Y, K)$. 

There is an obvious alteration of this construction to get a filtration on the twisted complex.  One still needs an invariance result, but it is easy to see that the filtration ``cares'' only about the generators of the complex, and not about the coefficients appearing next to them, so that invariance comes from the respective invariance results for the twisted three-manifold invariant and the untwisted knot filtration.  

Let us be more precise about this.
Recall the conventional set up for twisted coefficients: one chooses complete sets of paths in the sense of Section 3 of \cite{OSHD}, which (together with the choice of basepoint $w$) yield a surjective additive assignment $h$ from $\pi_2(\mathbf{x}, \mathbf{y})$ (when it is nonempty) to $H^1(Y_{\al\de})$.  Then we take our universal twisted coefficient ring to be $\mathbb{Z}[H^1(Y_{\al\de})]$, with twisting specified by $h$.  

So, if $\xi \in \underline{\mbox{Spin}}^{\mbox{\scriptsize{c}}}(Y, K)$, we let $\m{T}(\xi)$ be the set of $[\mathbf{x}, i, j] \in (\mathbb{T}_{\al} \cap \mathbb{T}_{\de}) \times \mathbb{Z} \times \mathbb{Z}$ such that 
\begin{equation}
\label{eq:11}
\underline{\m{s}}_{w,z}(\mathbf{x}) - (i - j)\mbox{PD}[\mu] = \xi.
\end{equation}
Then, for any $\mathbb{Z}[H^1(Y_{\al\de})]$-module $M$, define $\underline{CFK}^{\infty}(\Sigma, \bsa, \bsd, w, z, \xi; M)$ to be the abelian group $CFK^{\infty}(\Sigma, \bsa, \bsd, w, z, \xi) \otimes M$, where $CFK^{\infty}(\Sigma, \bsa, \bsd, w, z, \xi)$ is the free abelian group generated by $[\mathbf{x}, i, j] \in \m{T}(\xi)$, with differential 
$$\underline{\partial}^{\infty}([\mathbf{x}, i, j] \otimes m) = 
\sum_{\mathbf{y} \in \mathbb{T}_{{\al}} \cap \mathbb{T}_{{\de}}} \sum_{\{\phi \in \pi_2(\mathbf{x}, \mathbf{y})|\mu(\phi) = 1\}} \#\widehat{\mathcal{M}}(\phi)\cdot [\mathbf{y}, i-n_w(\phi), j-n_z(\phi)] \otimes e^{h(\phi)}\cdot m,$$
which is a finite sum if the Heegaard diagram $(\Sigma, \bsa, \bsd, w)$ is strongly-$G_K(\xi)$ admissable.  The differential takes $\underline{CFK}^{\infty}(\Sigma, \bsa, \bsd, w, z, \xi; M)$ to itself in light of Equation \ref{eq:4}, and the usual arguments show that $(\underline{\partial}^{\infty})^2 = 0$.  If we don't specify $M$, we take $M$ to be $\mathbb{Z}[H^1(Y_{\al\de})]$.

This group is naturally $\mathbb{Z} \oplus \mathbb{Z}$-graded, by declaring $[\mathbf{x}, i, j] \otimes m$ to be in grading $(i,j)$, for $m \in M$; this makes $\underline{CFK}^{\infty}(\Sigma, \bsa, \bsd, w, z, \xi; M)$ into a $\mathbb{Z} \oplus \mathbb{Z}$-filtered chain complex.
    
\begin{theorem}
\label{thm:5.1}  The $\mathbb{Z} \oplus \mathbb{Z}$-filtered chain homotopy type of the $\mathbb{Z}[U] \otimes \mathbb{Z}[H^1(Y_{\al\de})]$-module $\underline{CFK}^{\infty}(\Sigma, \bsa, \bsd, w, z, \xi; M) = \underline{CFK}^{\infty}(Y, K, \xi; M)$ is an invariant of $Y$, $K$, and $\xi$.
\end{theorem}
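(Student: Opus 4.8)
The plan is to bootstrap from two invariance statements that are already available: the invariance of the untwisted $\mathbb{Z}\oplus\mathbb{Z}$-filtered chain homotopy type of $CFK^{\infty}(Y,K,\xi)$ established in \cite{OSRS}, and the invariance of the totally twisted three-manifold complex $\underline{CF}^{\circ}(Y_{\al\de})$ of \cite{OSHD} and \cite{OSPA}. The organizing observation, as indicated just above the statement, is that the $\mathbb{Z}\oplus\mathbb{Z}$-filtration on $\underline{CFK}^{\infty}(Y,K,\xi;M)$ records only the pair $(i,j)$ attached to a generator $[\mathbf{x},i,j]\otimes m$ and is blind to the module element $m\in M$; so the filtration is ``inherited'' from the untwisted complex, while the chain homotopy equivalences realizing invariance are obtained from the untwisted ones by decorating them with the usual twisting exponentials. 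Note that the differential already preserves $\m{T}(\xi)$ by Equation \ref{eq:4}, which is the only place the knot-theoretic data enters beyond the ordinary twisted setup.

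Concretely, I would argue as follows. Any two doubly-pointed, strongly-$G_K(\xi)$-admissible Heegaard diagrams for $(Y,K)$, together with the auxiliary data (a complete set of paths giving the additive assignment $h$, and a generic path of almost-complex structures), are connected by a finite sequence of the standard moves: isotopies of the $\bsa$ or $\bsd$ curves missing $w$ and $z$; handleslides among the $\bsa$ (resp.\ $\bsd$) curves; index one/two stabilizations away from the basepoints; changes of the complete set of paths; and changes of the path of almost-complex structures. In the untwisted setting each such move induces a filtered chain homotopy equivalence built from continuation maps or polygon counts (a count $\#\widehat{\mathcal M}(\phi)$ for isotopies and stabilizations, a triangle count $\#\mathcal M(\psi)$ of the form $f(-\otimes\Theta)$ for handleslides, the tautological isomorphism for stabilization, and the change-of-coefficient-system isomorphism for a change of paths). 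For each, I would write down the twisted analogue exactly as the twisted differential is formed from the untwisted one: replace every $\#\widehat{\mathcal M}(\phi)$ by $\#\widehat{\mathcal M}(\phi)\cdot e^{h(\phi)}$ (and the triangle counts by their analogues relative to the additive assignment on the relevant Heegaard multi-diagram), tensored over the appropriate identification of group rings. Additivity of $h$ under juxtaposition of domains guarantees, verbatim as in the three-manifold case, that these are chain maps and that the nullhomotopies $H$ with $\partial H+H\partial=\mathrm{id}-(\text{composite})$ carry over; the module factor $M$ is simply carried along.

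It then remains to check that each of these twisted maps is \emph{filtered} and, where needed, a filtered chain homotopy equivalence. For an isotopy- or stabilization-induced map and a generator $[\mathbf{x},i,j]\otimes m$, every contributing domain $\phi$ has $n_w(\phi),n_z(\phi)\ge 0$, so the image lies in filtration level $\le(i,j)$, and the homotopies respect the filtration for the same reason; for a handleslide, the triangle map restricted to the top generator $\Theta$ splits as a filtration-preserving ``nearest point'' isomorphism plus strictly filtration-decreasing corrections, so the standard algebraic lemma (a filtered chain map that is an isomorphism on the associated graded is a filtered chain homotopy equivalence) applies. Crucially, inserting the factor $e^{h(\phi)}$ changes neither $n_w(\phi)$, nor $n_z(\phi)$, nor the set of holomorphic representatives, so the filtration bookkeeping is identical to the untwisted one. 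Composing the equivalences along a sequence of moves, and using the change-of-coefficient-system isomorphisms (which are filtered, being the identity on generators) to reconcile the various identifications of $H^1(Y_{\al\de})$, yields the asserted filtered chain homotopy equivalence between the complexes attached to any two admissible choices.

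The only non-formal point — hence the main obstacle — is the filtered-equivalence claim for handleslides: one must confirm that in the relevant Heegaard triple the top intersection point $\Theta$ behaves as in \cite{OSRS}, i.e.\ that the lowest-area triangles out of $\Theta$ compute the associated-graded isomorphism while all others strictly drop the $(i,j)$-filtration, and that admissibility can be arranged for the triple so the twisted polygon counts are finite. Since attaching twisting coefficients affects neither $n_w$, $n_z$, nor the moduli spaces themselves, this reduces immediately to the corresponding untwisted statement, after which the theorem follows.
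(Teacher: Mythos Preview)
Your proposal is correct and follows essentially the same approach as the paper: the paper's proof consists of a single sentence referring to the preceding remarks (that the filtration depends only on generators, not on the twisting coefficients) and declaring the result a routine adaptation of the invariance arguments from \cite{OSKI}. Your writeup is in fact a more detailed fleshing-out of exactly that adaptation, spelling out the moves, the twisted analogues of the chain maps and homotopies, and the reason the filtration bookkeeping is unaffected by the $e^{h(\phi)}$ factors.
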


\begin{proof}  Considering the above remarks, this is a routine adaptation of the invariance arguments from \cite{OSKI}. \end{proof}

We abbreviate $\underline{CFK}^{\infty}(Y, K, \xi; M)$ to $\underline{C}_{\xi}(Y, K; M)$.  If $S$ is a subset of $\mathbb{Z} \oplus \mathbb{Z}$ such that $(i', j') \in S$ whenever $(i,j) \in S$ with $i' \leq i, j' \leq j$ -- or if $S$ is the complement of one such region in another -- then we have a subgroup  $\underline{C}_{\xi}(S)(Y, K; M)$ generated by elements of the form $[\mathbf{x}, i, j] \otimes m$ with $(i,j) \in S$, which is naturally a subquotient chain complex of $\underline{C}_{\xi}(Y, K; M)$.  Such a chain complex will also be an invariant of $Y$, $K$, and $\xi$.  Of particular interest is the set $S = \{(i,j)| i \geq 0 \mbox{ or } j \geq 0\}$; for this $S$, we write $\underline{C}^+_{\xi}(Y, K; M)$.  If we only wish to calculate $\underline{C}^+_{\xi}(Y, K; M)$, we need only have a weakly admissable Heegaard diagram to have a well-defined differential.

\subsection{Relationship with large $N$ surgeries} 
Note that for $\xi \in \underline{\mbox{Spin}}^{\mbox{\scriptsize{c}}}(Y,K)$, if we take the quotient of $\underline{C}^+_{\xi}(Y, K)$ by the subgroup of elements of the form $[\mathbf{x}, i, j] \otimes e^{\ell}$ with $i<0$, we are left with a chain complex naturally isomorphic to  $\underline{CF}^+\big(Y, E^{-1}_{K, N}(\xi)|_Y\big)$; essentially, we forget the filtration.  Let $v_{\xi, K}$ be the quotient map, taking $[\mathbf{x}, i, j] \otimes e^{\ell}$ to $[\mathbf{x}, i] \otimes e^{\ell}$.  

Switching the basepoints $w$ and $z$, we have a doubly-pointed Heegaard diagram representing $K$ with the reverse orientation, which we denote $-K$.  Since $\um{s}_{w,z}(\mathbf{x}) = \um{s}_{z,w}(\mathbf{x})$, the map  $s:\underline{C}^+_{\xi}(Y, K) \rightarrow \underline{C}^+_{\xi}(Y, -K)$ that takes $[\mathbf{x}, i,j]$ to $[\mathbf{x}, j,i]$ is an isomorphism of chain complexes -- of filtered chain complexes, even, although \emph{not} with the usual filtration on $\underline{C}^+_{\xi}(Y, -K)$.   Define $h_{\xi, K}: \underline{C}^+_{\xi}(Y, K) \rightarrow \underline{CF}^+\big(Y, E^{r^{-1}}_{-K, N}(\xi)|_Y\big)$ be equal to $v_{\xi, -K} \circ s$.  Of course, all of this works with an arbitrary coefficient module as well.  We henceforth drop $K$ from the notation and write $v_{\xi}$ and $h_{\xi}$.

Now, fix a knot $K \subset Y$, and a family $\mathcal{F}$ of diagrams for $K$.  
Suppose $N$ is large enough so that there is a $\m{t}$-proper diagram in $\mathcal{F}$ for each $\m{t} \in \spc{Y_N}$; in such a $\m{t}$-proper diagram, choose a small triangle $\psi_{\m{t}}$ with a corner representing $\m{t}$, for each $\m{t}$. 
Recall that for all $N$ such that $N\mu + \la$ is not special, there are canonical identifications of the $\bsa\boldsymbol{\ga}-$periodic domains in the Heegaard triple for $W'_N$ with the $\bsa\boldsymbol{\de}-$periodic domains; and that  we have a canonical identification of $H^1(Y)$ and $H^1(Y_N)$, via the images of $H_2(Y)$ and $H_2(Y_N)$ in $H_2(W'_N)$.  
Any triangle representing $E_{K,N}\big(\m{s}_w(\psi_{\m{t}})\big)$ can be written as $\psi = \psi_{\m{t}} + \phi_{\al\ga} + \phi_{\al\de} + \phi_{\ga\de}$.  We define $h'(\psi)$ to be $h(\phi_{\al\ga}) + h(\phi_{\al\de})$, where $h$ denotes the additive assignments used to define $\underline{CF}^+(Y_N, \m{t})$ and $\underline{CF}^+(Y, \m{s}_w(\psi_{\m{t}})|_Y)$, and both $h(\phi_{\al\ga})$ and $h(\phi_{\al\de})$ are considered as elements of $H^1(Y)$.  

We now can define a map $\Psi^+_{\m{t}, N}: \underline{CF}^+(Y_N, \m{t}; M) \rightarrow \underline{C}^+_{\Xi(\m{t})}(Y, K; M)$ by
$$\begin{array}{lllllllllllllllllllll}
\Psi^+_{\m{t}, N}([\mathbf{x}, i] \otimes m) = & \mbox{ } & \mbox{ } & \mbox{ } & \mbox{ } & \mbox{ } & \mbox{ } & \mbox{ } & \mbox{ } & \mbox{ } & \mbox{ } & \mbox{ } & \mbox{ } & \mbox{ } & \mbox{ } & \mbox{ } & \mbox{ } & \mbox{ } & \mbox{ } & \mbox{ } & \mbox{ } \\
\end{array}$$
$$\sum_{\mathbf{w} \in \mathbb{T}_{{\al}} \cap \mathbb{T}_{{\de}}}
\sum_{\left\{
\begin{scriptsize}
\begin{array}{l|l}
\psi \in \pi_2(\mathbf{x}, \Theta_{\ga\de}, \mathbf{w}) & \mu(\psi) = 0\\
 & \m{s}_w(\psi) =  \m{s}_+(\m{t}) \\
\end{array} \end{scriptsize}\right\}} \#\widehat{\mathcal{M}}(\psi)\cdot [\mathbf{y}, i-n_w(\psi), i-n_z(\psi)] \otimes e^{h'(\psi)}\cdot m,$$
where $M$ can be considered to be a module over both $\mathbb{Z}[H^1(Y)]$ and $\mathbb{Z}[H^1(Y_N)]$.

\if 0

$$\begin{array}{llc}
\underline{CF}^+(Y_N, \m{t};M) &
\overset{\underline{f}^+_{W'_N, \m{s}_{K+}(\m{t})}}{\longrightarrow} &
\underline{CF}^+(Y, \m{s}_{K+}(\m{t})|_{Y};M) \\
\Psi^+_{\m{t}, N} \downarrow &
 &
\downarrow  \\
\underline{C}^+_{\xi(\m{t})}(Y, K; M)&
\overset{v_{\xi(\m{t})}}{\longrightarrow} &
\underline{CF}^+(Y, \m{s}_{K+}(\m{t})|_{Y};M)\\
\end{array}$$
\fi

\begin{theorem}
\label{thm:5.2}  Fix a family of doubly-pointed standard Heegaard triples for $K$, and a $\mathbb{Z}[H^1(Y)]$-module $M$.  Write $\xi(\m{t})$ for $E_{K,N}\big(\m{s}_{K+}(\m{t})\big)$. If $N$ is sufficiently large, then there are $\m{t}$-proper diagrams inducing commutative squares
$$\begin{CD}
\underline{CF}^+(Y_N, \m{t};M) @>{\underline{f}^+_{W'_N, \m{s}_{K+}(\m{t})}}>> \underline{CF}^+(Y, \m{s}_{K+}(\m{t})|_{Y};M)\\
@V{\Psi^+_{\m{t}, N}}VV         @VVV \\
\underline{C}^+_{\xi(\m{t})}(Y, K; M) @>v_{\xi(\m{t})}>> \underline{CF}^+(Y, \m{s}_{K+}(\m{t})|_{Y};M)
\end{CD}$$
and
$$\begin{CD}
\underline{CF}^+(Y_N, \m{t};M) @>{\underline{f}^+_{W'_N, \m{s}_{K-}(\m{t})}}>> \underline{CF}^+(Y, \m{s}_{K-}(\m{t})|_{Y};M)\\
@V{\Psi^+_{\m{t}, N}}VV         @VVV \\
\underline{C}^+_{\xi(\m{t})}(Y, K; M) @>h_{\xi(\m{t})}>> \underline{CF}^+(Y, \m{s}_{K-}(\m{t})|_{Y};M).
\end{CD}$$
In each square, the vertical maps are isomorphisms of relatively $\mathbb{Z}$-graded complexes over $\mathbb{Z}[U] \otimes \mathbb{Z}[H^1(Y)]$; the righthand maps are each multiplication by elements of $H^1(Y)$, which depend on how precisely the maps $\underline{f}^+_{W'_N}$ are fixed.
\end{theorem}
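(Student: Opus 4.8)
The plan is to adapt the proof of the large-$N$ surgery formula of \cite{OSRS} to twisted coefficients, feeding in the geometric setup of Sections~2 and~3. Fix a family $\mathcal{F}$ of standard diagrams for $K$ and take $N\geq N(\mathcal{F})$ large enough that every $\m{t}\in\spc{Y_N}$ admits a $\m{t}$-proper diagram. In such a diagram every generator of $\underline{CF}^+(Y_N,\m{t};M)$ comes from a point $\mathbf{x}\in\mathbb{T}_{\al}\cap\mathbb{T}_{\ga}$ supported in the winding region, each such $\mathbf{x}$ has a unique nearest point $\mathbf{y}(\mathbf{x})\in\mathbb{T}_{\al}\cap\mathbb{T}_{\de}$, and there is a unique small triangle $\psi_{\mathbf{x}}\in\pi_2(\mathbf{x},\Theta_{\ga\de},\mathbf{y}(\mathbf{x}))$ admitting a single holomorphic representative; by Proposition~\ref{thm:3.4}, $\m{s}_w(\psi_{\mathbf{x}})=\m{s}_{K+}(\m{t})$ and $\m{s}_z(\psi_{\mathbf{x}})=\m{s}_{K-}(\m{t})$. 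Since $\Psi^+_{\m{t},N}$ differs from the already-well-defined cobordism map $\underline{f}^+_{W'_N,\m{s}_{K+}(\m{t})}$ only by recording the extra index $j=i-n_z(\psi)$, finiteness of the sums defining $\Psi^+_{\m{t},N}$ is inherited from that of the cobordism map; one also invokes Equation~\ref{eq:4} to see that its image really lands in $\underline{C}^+_{\xi(\m{t})}(Y,K;M)$, i.e. that the $(i,j)$ gradings are compatible with $\underline{\partial}^\infty$.

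Next I would verify that $\Psi^+_{\m{t},N}$ is a chain map, by the usual count of ends of the $1$-dimensional moduli spaces $\mathcal{M}(\psi)$ with $\psi\in\pi_2(\mathbf{x},\Theta_{\ga\de},\mathbf{w})$, $\mu(\psi)=1$, $\m{s}_w(\psi)=\m{s}_{K+}(\m{t})$: one family of ends reproduces $\Psi^+_{\m{t},N}$ composed with the differential of $\underline{CF}^+(Y_N,\m{t};M)$, the other reproduces $\underline{\partial}^\infty\circ\Psi^+_{\m{t},N}$, while the would-be third term drops out because $\Theta_{\ga\de}$ is a cycle. The only twisted-coefficient bookkeeping is that $h'$ is additive under splicing an $\al\ga$- or $\al\de$-disk onto a triangle, which is immediate from additivity of $h$. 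Relative $\mathbb{Z}$-grading and $\mathbb{Z}[U]\otimes\mathbb{Z}[H^1(Y)]$-linearity then follow formally: triangle counting shifts relative gradings by an amount pinned down by the fixed \sst\ structure $\m{s}_{K+}(\m{t})$; $U$ shifts $i$, hence $i-n_w(\psi)$ and $i-n_z(\psi)$, uniformly; and splicing with $\al\de$-periodic domains, which under the canonical identification realise $H^1(Y)\cong H^1(Y_N)$, acts on the coefficient by $h'$.

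Then I would show that $\Psi^+_{\m{t},N}$ is an isomorphism of chain complexes using an area (energy) filtration. The lowest-area term of $\Psi^+_{\m{t},N}$ counts only the canonical small triangles $\psi_{\mathbf{x}}$, hence sends $[\mathbf{x},i]\otimes m$ to $\pm\,[\mathbf{y}(\mathbf{x}),\,i-n_w(\psi_{\mathbf{x}}),\,i-n_z(\psi_{\mathbf{x}})]\otimes e^{h'(\psi_{\mathbf{x}})}m$. The combinatorial heart of the matter is that, for a $\m{t}$-proper diagram and $N$ large, this rule is a bijection from the generating set $\{[\mathbf{x},i]:i\geq 0\}$ of $\underline{CF}^+(Y_N,\m{t};M)$ onto that of $\underline{C}^+_{\xi(\m{t})}(Y,K;M)$; it is proved exactly as in \cite{OSRS}, the key point being that at most one of $n_w(\psi_{\mathbf{x}})$ and $n_z(\psi_{\mathbf{x}})$ is ever nonzero, so that as $\mathbf{x}$ and $i$ vary the pairs $(i-n_w(\psi_{\mathbf{x}}),i-n_z(\psi_{\mathbf{x}}))$ sweep out precisely $\{(i,j):[\mathbf{y},i,j]\in\m{T}(\xi(\m{t})),\ i\geq 0\text{ or }j\geq 0\}$ for each fixed $\mathbf{y}$. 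A standard filtration argument then upgrades this leading-order isomorphism of free modules to an isomorphism of the full chain complexes. I expect this bijection---matching the winding-region combinatorics of $Y_N$ with the knot-filtration indexing of $\underline{C}^+_{\xi(\m{t})}$---to be the main obstacle, though it is really bookkeeping once one has pinned down which triangles survive for $N$ large.

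Finally, the two squares. By construction $v_{\xi(\m{t})}$ sends $[\mathbf{y},i,j]\otimes m$ to $[\mathbf{y},i]\otimes m$, and $h_{\xi(\m{t})}=v_{\xi(\m{t}),-K}\circ s$ sends $[\mathbf{y},i,j]\otimes m$ to $[\mathbf{y},j]\otimes m$, read in the diagram with basepoint $z$. Hence $v_{\xi(\m{t})}\circ\Psi^+_{\m{t},N}$ and $h_{\xi(\m{t})}\circ\Psi^+_{\m{t},N}$ count exactly the triangles counted by $\underline{f}^+_{W'_N,\m{s}_{K+}(\m{t})}$ and by $\underline{f}^+_{W'_N,\m{s}_{K-}(\m{t})}$ respectively---for the latter one uses that on a $\m{t}$-proper diagram $\m{s}_z(\psi)=\m{s}_{K-}(\m{t})$ if and only if $\m{s}_w(\psi)=\m{s}_{K+}(\m{t})$ by Proposition~\ref{thm:3.4}, together with the fact that the cobordism map may be computed from either basepoint---so the two sides agree on generators and differ only in their twisting coefficients: $\Psi^+_{\m{t},N}$ uses $h'(\psi)=h(\phi_{\al\ga})+h(\phi_{\al\de})$ while $\underline{f}^+_{W'_N}$ uses the additive assignment of the cobordism triple, and the discrepancy is a single class in $H^1(Y)$ depending on the base triangle $\psi_{\m{t}}$ and on the chosen normalisation of $\underline{f}^+_{W'_N}$. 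Absorbing this class into the right-hand vertical map yields commutativity of both squares; this last observation is precisely the claim that the right-hand maps are multiplication by elements of $H^1(Y)$.
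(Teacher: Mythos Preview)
Your proposal is correct and follows essentially the same approach as the paper: use Proposition~\ref{thm:3.4} to identify the \sst\ structures carried by small triangles, show the leading (small-triangle) summand of $\Psi^+_{\m{t},N}$ is a bijection on generators via the observation that at most one of $n_w(\psi_{\mathbf{x}}),\,n_z(\psi_{\mathbf{x}})$ is nonzero, upgrade by the energy-filtration argument, and then read off the squares as counting the same triangles up to a fixed $H^1(Y)$ factor. One small correction: the fact that the image of $\Psi^+_{\m{t},N}$ lands in $\underline{C}^+_{\xi(\m{t})}(Y,K;M)$ uses Equation~\ref{eq:5} (the defining equation of $E_{K,N}$ for triangles) together with Equation~\ref{eq:11}, not Equation~\ref{eq:4}, which pertains to disks and is what underlies compatibility with $\underline{\partial}^\infty$ rather than the target condition.
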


\begin{proof} Let us start with the claim that $\Psi^+_{\m{t}, N}$ is an isomorphism.  It is clear from the definition that it is a chain map.  We also need to check that, as defined, $\Psi^+_{\m{t}, N}$ indeed does take $\underline{CF}^+(Y_N, \m{t}; M)$ to $\underline{C}^+_{\xi(\m{t})}(Y, K; M)$.   For any homotopy class $\psi$ of triangles in $\pi_2(\mathbf{x}, \Theta_{\ga\de}, \mathbf{y})$ representing $\m{s}_{K+}(\m{t})$, 
we have that 
$$ E_{K,N}(\psi) = \underline{\m{s}}_{w,z}(\mathbf{y}) + \big(n_w(\psi) - n_z(\psi)\big)\cdot\mbox{PD}[\mu] = E_{K,N}\big(\m{s}_{K+}(\m{t})\big) = \xi(\m{t}). $$
Then, according to Equation \ref{eq:11}, $[\mathbf{y}, i-n_w(\psi), i-n_z(\psi)] \in \m{T}(\xi(\m{t}))$.  Of course, twisting plays no role here.

For each point $\mathbf{x} \in \mathbb{T}_{\al} \cap \mathbb{T}_{\ga}$ representing $\m{t}$, we have a canonical smallest triangle $\psi_{\mathbf{x}} \in \pi_2(\mathbf{x}, \Theta_{\ga\de}, \mathbf{x}')$ supported in the winding region.  According to Proposition \ref{thm:3.4}, assuming $N$ is sufficiently large, this triangle satisfies $\m{s}_w(\psi_{\mathbf{x}}) = \m{s}_{K+}(\m{t})$ and $\m{s}_z(\psi_{\mathbf{x}}) = \m{s}_{K-}(\m{t})$.

Let $\Psi^+_0$ be the map which takes $[\mathbf{x}, i] \otimes m$ to $[\mathbf{x}', i - n_w(\psi_{\mathbf{x}}), i - n_z(\psi_{\mathbf{x}}) ] \otimes e^{h'(\psi_{\mathbf{x}})} \otimes m$.  This map is an isomorphism, owing to the fact that only one of $n_w(\psi_{\mathbf{x}})$ and $n_z(\psi_{\mathbf{x}})$ is nonzero (indeed, this is why we need to work with $\underline{CFK}\{i \geq 0 \mbox{ or }j\geq 0\}$);  twisting also has to be considered here, but ends up having no effect due to the canonical isomorphism $H^1(Y) \cong H^1(Y_N)$.  Since $\#\mathcal{M}(\psi_{\mathbf{x}}) = 1$, the map $\Psi^+_0$ is also a summand of $\Psi^+_{\m{t}, N}$.  In fact, making the area of the winding region sufficiently small, every other summand of $\Psi^+_{\m{t}, N}$ will be of lower order with respect to the energy filtration, as defined in \cite{OSHD}.  The argument from that paper then applies, showing that $\Psi^+_{\m{t}, N}$ must also be an isomorphism.  Hence, it is clear that the top square commutes, for both the top and bottom horizontal maps count precisely the same triangles, with coefficients differing by a constant factor in $H^1(Y)$.  

Switching the basepoints $w$ and $z$, we can say the same for the bottom square: in the context of this diagram, the map $\Psi^+_{\m{t}, N}$ counts the same triangles, but with respect to the basepoint $z$ these triangles represent $\m{s}_{K-}(\m{t})$ instead, again by Proposition \ref{thm:3.4}.  (Of course, $w$ and $z$ lie in the same component of $\Sigma \setminus \bsa \setminus \bsg$, so that $\m{s}_w(\mathbf{x}) = \m{s}_w(\mathbf{x})$ for $\mathbf{x} \in \mathbb{T}_{\al} \cap \mathbb{T}_{\ga}$.) \end{proof}

\subsection{The K\"unneth Formula} 
Finally, we will need a version of the K\"unneth formula for behavior under connect sums.  Recall that given two relative \sst\ structures $\xi_i \in \underline{\mbox{Spin}}^{\mbox{\scriptsize{c}}}(Y_i, K_i)$ for $i = 1,2$, there is a notion of the connect sum $\xi_1 \# \xi_2 \in \underline{\mbox{Spin}}^{\mbox{\scriptsize{c}}}(Y_1 \# Y_2, K_1 \# K_2)$.  To define this, fill in $\xi_i$ to get a particular vector field on $Y_i$, which is tangent to $K_i$.   Consider points $p_i \in K_i$, and take sufficiently small balls $B_i$ around $p_i$ so that the filled in vector fields are normal to $\partial B_i$ at two points, one ``going in'', the other ``going out''.  Now, remove these balls and glue the complements together along the boundaries so that the vector fields match up; remove a small neighborhood of $K_1 \# K_2$ to get $\xi_1 \# \xi_2$. 

The gluing is equivariant with respect to the inclusion maps on relative second cohomology; e.g., if $a \in H^2(Y_1, K_1)$, and $i^*: H^2(Y_1, K_1) \rightarrow H^2(Y_1 \# Y_2, K_1 \# K_2)$ is induced by inclusion, then $(\xi_1 + a) \# \xi_2 = (\xi_1 \# \xi_2) + i^*(a)$.

Of course, the connect sum of two knots equipped with reference longitudes is canonically equipped with a longitude.  With respect to this, we have the following.

\begin{lemma} \label{thm:5.3} $q_{K_1 \# K_2}(\xi_1 \# \xi_2) = q_{K_1}(\xi_1) + q_{K_2}(\xi_2)$.
\end{lemma}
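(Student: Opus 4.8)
The plan is to reduce the identity to an additivity statement for homological data on the two knot complements, using that the connect sum of knots is assembled by gluing complements along a meridional annulus.

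First, by Proposition~\ref{thm:2.8} each of $q_{K_1}(\xi_1)$, $q_{K_2}(\xi_2)$, and $q_{K_1\#K_2}(\xi_1\#\xi_2)$ is independent of the surgery parameter $N$, so we are free to compute each in whatever cobordism $W'_N$ is convenient. Next we rewrite $q_K$ intrinsically. Using the characterization of $E_{K,N}$ in Proposition~\ref{thm:2.6}, namely $\phi^*\big(c_1(\m{s})\big) = c_1(\xi) + \mathrm{PD}[\la_N-\mu]$ for $\m{s} = E^{-1}_{K,N}(\xi)$, together with the fact that the canonical isomorphism $\phi_*\colon H_2(Y,K)\to H_2(W'_N)$ carries the rational relative Seifert class $[\widehat{S}]_{\mathrm{rel}}\in H_2(Y,K;\mathbb{Q})$ of $K$ to $\tfrac1d[\widetilde{dF'}]$, one obtains
$$ q_K(\xi) \;=\; \frac{[\widetilde{dF'}]^2}{d^2} \;+\; \big\langle\, c_1(\xi) + \mathrm{PD}[\la_N-\mu],\; [\widehat{S}]_{\mathrm{rel}} \,\big\rangle, $$
the second summand being the Kronecker pairing $H^2(Y,K)\otimes H_2(Y,K;\mathbb{Q})\to\mathbb{Q}$. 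Since $\langle c_1(\xi),[\widehat{S}]_{\mathrm{rel}}\rangle$ depends only on the complement of $K$, and since $q_K(\xi+x) = q_K(\xi) + 2\langle x,[\widehat{S}]_{\mathrm{rel}}\rangle$ by $c_1(\xi+x) = c_1(\xi)+2x$, the quantity $c(K) := q_K(\xi) - \langle c_1(\xi),[\widehat{S}]_{\mathrm{rel}}\rangle = \tfrac{[\widetilde{dF'}]^2}{d^2} + \langle\mathrm{PD}[\la_N-\mu],[\widehat{S}]_{\mathrm{rel}}\rangle$ is an invariant of $(Y,K,\la)$ alone.

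It thus suffices to establish the two additivity statements, with matching correction terms: first, $\langle c_1(\xi_1\#\xi_2),[\widehat{S}_{12}]_{\mathrm{rel}}\rangle = \langle c_1(\xi_1),[\widehat{S}_1]_{\mathrm{rel}}\rangle + \langle c_1(\xi_2),[\widehat{S}_2]_{\mathrm{rel}}\rangle + (\text{correction})$, and second, $c(K_1\#K_2) = c(K_1) + c(K_2) - (\text{correction})$. For the first: a rational relative Seifert surface for $K_1\#K_2$ is a boundary-connected sum $S_1\,\natural\, S_2$, so that $[\widehat{S}_{12}]_{\mathrm{rel}}$ decomposes under the Mayer--Vietoris splitting of $H_2(Y_1\#Y_2,K_1\#K_2;\mathbb{Q})$ (the gluing region $(D^3,D^1)$ being acyclic as a pair), and the relative Chern class is additive under the connect-sum gluing — this is precisely the $H^2$-equivariance of the connect-sum operation recorded just before the statement — so the two pairings add up to the desired expression plus the contribution of the joining band, which an explicit evaluation of $c_1(\xi_1\#\xi_2)$ along that band identifies. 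For the second: evaluate $[\widetilde{dF'}]^2/d^2 = -\kappa_K - N$ by Proposition~\ref{thm:2.7}, note $\kappa_{K_1\#K_2} = \kappa_{K_1} + \kappa_{K_2}$ (since $[\la_1\#\la_2] = [\la_1] + [\la_2]$ in $H_1$ of the glued complement while the meridian is shared), and compute $\langle\mathrm{PD}[\la_N-\mu],[\widehat{S}]_{\mathrm{rel}}\rangle$ as an intersection number on the boundary torus; the single shared meridian is what produces the correction.

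I expect the main obstacle to be exactly this bookkeeping of the boundary/band contributions: each of the two additivity statements individually fails by a $\pm1$ term (because the surgery parameter $N$ and the meridian subtraction $-\mu$ cannot be split evenly between the two sides — compare the would-be double-counting of $N$), and the content of the lemma is that these corrections cancel. Everything else — the reduction to large $N$, the Mayer--Vietoris decompositions, and the $H^2$-equivariance of $\#$ — is routine. (Alternatively, both additivity statements can be proved uniformly by evaluating $q$ through the Chern-class formula of Proposition~\ref{thm:2.5} in a connect sum of standard Heegaard triples, where $\widehat{\chi}$, the spider number $\sigma$, $n_w - n_z$, and the relevant periodic domains are manifestly additive; there the same shared-meridian subtlety reappears as the need to choose how the winding regions of the two factors are distributed, and confirms the value of the constant correction.)
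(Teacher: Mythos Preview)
Your primary line of argument has a genuine gap. After rewriting $q_K$ via Proposition~\ref{thm:2.6} and splitting it into $\langle c_1(\xi),[\widehat S]_{\mathrm{rel}}\rangle$ plus a residual $c(K)$, you correctly observe that neither piece is separately additive---the meridian is shared, so the $-\mu$ correction and the surgery parameter each appear once on the connect-sum side and twice on the factor side---and you assert that the two defects cancel. But you never compute them, and in your chosen formulation that cancellation \emph{is} the lemma. Writing ``I expect the main obstacle to be exactly this bookkeeping'' is an acknowledgment that the proof is not complete.

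The paper's argument is much shorter and sidesteps all of this. It works directly with the two terms of Equation~(\ref{eq:7}),
\[
q_K(\xi)\;=\;\frac{[\widetilde{dF'}]^2}{d^2}\;+\;\frac{\big\langle c_1\big(E^{-1}_{K,N}(\xi)\big),\,[\widetilde{dF'}]\big\rangle}{d},
\]
and simply observes that each is additive under connect sum: the first by inspecting an explicit connect-sum Heegaard triple (the periodic domain representing $[\widetilde{dF'}]$ splits, and so does its self-intersection), the second more or less directly from the same diagram. The key point is that by keeping the cobordism-level Chern class $c_1\big(E^{-1}_{K,N}(\xi)\big)$ rather than passing to the relative class $c_1(\xi)$, one never introduces the $\mathrm{PD}[\lambda_N-\mu]$ term, so there is nothing to cancel. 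Your parenthetical alternative at the end---evaluating $q$ through Proposition~\ref{thm:2.5} in a connect sum of standard triples, where $\widehat\chi$, $\sigma$, $n_w-n_z$, and the periodic domains are all manifestly additive---is essentially the paper's route and is what you should pursue; your worry there about how the winding regions are distributed is resolved by $N$-independence (Proposition~\ref{thm:2.8}), which lets you choose parameters so that the connect-sum diagram carries $N=N_1+N_2$.
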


\begin{proof}  In the definition of $q_K$, Equation \ref{eq:7}, both terms add under connect sums.  That the first term adds can be seen by explicit drawing a diagram for the connect sum; that the second term adds is straightforward. \end{proof}

Note that $H^1(Y_1 \# Y_2) \cong H^1(Y_1) \oplus H^1(Y_2)$ canonically.  Thus, if $M_i$ is a module over $\mathbb{Z}[H^1(Y_i)]$ for $i= 1,2$, then $M_1 \otimes_{\mathbb{Z}} M_2$ is a module over $\mathbb{Z}[H^1(Y_1)] \otimes \mathbb{Z}[H^1(Y_2)] \cong \mathbb{Z}[H^1(Y_1\#Y_2)]$.

\begin{theorem}
\label{thm:5.4}  For rationally null-homologous knots $K_1 \subset Y_1$ and $K_2 \subset Y_2$, and relative \sst\ structures $\xi_1 \in \underline{\mbox{Spin}}^{\mbox{\scriptsize{c}}}(Y_1, K_1)$ and $\xi_2 \in \underline{\mbox{Spin}}^{\mbox{\scriptsize{c}}}(Y_2, K_2)$, there is a $\mathbb{Z} \oplus \mathbb{Z}$-filtered chain homotopy equivalence of complexes over $\mathbb{Z}[U] \otimes \mathbb{Z}[H^1(Y_1\#Y_2)]$
$$\underline{CFK}^{\infty}(Y_1, K_1, \xi_1; M_1) \otimes_{\mathbb{Z}[U]} \underline{CFK}^{\infty}(Y_2, K_2, \xi_2; M_2) \rightarrow \underline{CFK}^{\infty}(Y_1 \# Y_2, K_1 \# K_2, \xi_1 \# \xi_2; M_1 \otimes_{\mathbb{Z}} M_2).$$
\end{theorem}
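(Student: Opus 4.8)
The plan is to follow the proof of the K\"unneth formula for (rationally null-homologous) knot Floer homology in \cite{OSKI} and \cite{OSRS}, with the twisting entering only as additional, essentially transparent, bookkeeping.

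First I would choose weakly admissible doubly-pointed Heegaard diagrams $(\Sigma_1, \bsa_1, \bsd_1, w_1, z_1)$ and $(\Sigma_2, \bsa_2, \bsd_2, w_2, z_2)$ for $(Y_1, K_1)$ and $(Y_2, K_2)$, together with complete sets of paths realizing surjective additive assignments $h_1, h_2$ valued in $H^1(Y_1), H^1(Y_2)$. Since $K$ is recovered as $\eta_{\al} \cup \eta_{\de}$ with $\eta_{\al}$ running from $w$ to $z$, forming the connected sum of $\Sigma_1$ and $\Sigma_2$ in small neighborhoods of $z_1$ and $w_2$, keeping all the $\bsa$- and $\bsd$-curves and using $w_1$ and $z_2$ as the two basepoints, produces a weakly admissible doubly-pointed diagram for $(Y_1 \# Y_2, K_1 \# K_2)$ whose generators are the products $\mathbf{x}_1 \times \mathbf{x}_2$. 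I would then record three compatibility facts for this model, each a direct check: (i) under the canonical isomorphism $H^1(Y_1 \# Y_2) \cong H^1(Y_1) \oplus H^1(Y_2)$, the additive assignment of the connect-sum diagram restricts to $h_1 \oplus h_2$ on domains that split as $\phi_1 \# \phi_2$; (ii) using the vector-field description of $\xi_1 \# \xi_2$ recalled just before the statement, together with the computation of $\um{s}_{w,z}$ from Section~2, one has $\um{s}_{w,z}(\mathbf{x}_1 \times \mathbf{x}_2) = \um{s}_{w_1, z_1}(\mathbf{x}_1) \# \um{s}_{w_2, z_2}(\mathbf{x}_2)$; (iii) on a split domain the relevant local multiplicities add, so both the $i$- and the $j$-filtration levels of a generator of the connect-sum complex are the sums of those of the factors.

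With these in hand I would define the candidate equivalence on chain groups by
$$ [\mathbf{x}_1, i_1, j_1] \otimes m_1 \;\otimes\; [\mathbf{x}_2, i_2, j_2] \otimes m_2 \;\longmapsto\; [\mathbf{x}_1 \times \mathbf{x}_2,\, i_1 + i_2,\, j_1 + j_2] \otimes (m_1 \otimes m_2), $$
where tensoring over $\mathbb{Z}[U]$ is exactly what makes this well-defined (the single geometric $U$ on the connect-sum complex pulls back to $U \otimes 1 = 1 \otimes U$). By (i)--(iii) this map is $\mathbb{Z} \oplus \mathbb{Z}$-filtered, $\mathbb{Z}[U]$-linear, and $\mathbb{Z}[H^1(Y_1 \# Y_2)]$-linear. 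To see it is a chain homotopy equivalence I would run the standard neck-stretching argument of \cite{OSHD} and \cite{OSKI}: as the length of the connecting neck tends to infinity, Gromov compactness forces every rigid pseudo-holomorphic disk in $\Sigma_1 \# \Sigma_2$ to break into a pair of disks, one in each $\Sigma_i$; a Maslov-index and area count identifies the surviving configurations; and, by (i), the additive assignment of such a split configuration is the sum of the two factor assignments. This identifies the connect-sum differential with the tensor-product differential up to filtered chain homotopy over $\mathbb{Z}[U] \otimes \mathbb{Z}[H^1(Y_1 \# Y_2)]$, and Theorem~\ref{thm:5.1} then transports the conclusion from this particular model to the stated filtered chain homotopy type.

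The analysis here is not new — it is essentially word-for-word the argument of \cite{OSKI} and \cite{OSHD} — so I do not expect a genuine obstacle. The one place that needs real care is the bookkeeping in (ii)--(iii): checking simultaneously that the map respects the splitting of $\underline{CFK}^{\infty}$ along relative \sst\ structures, i.e. lands in the $\xi_1 \# \xi_2$-summand (via $\um{s}_{w,z}(\mathbf{x}_1 \times \mathbf{x}_2) = \um{s}_{w_1,z_1}(\mathbf{x}_1) \# \um{s}_{w_2,z_2}(\mathbf{x}_2)$ and the equivariance of $\#$ under the inclusion-induced maps on relative second cohomology), and that it respects both of the two $\mathbb{Z}$-filtrations — the latter amounting to verifying that the connect-sum point, which is not one of the two basepoints of the new diagram, contributes the matching local multiplicity $n_{z_1}(\phi_1) = n_{w_2}(\phi_2)$ consistently on the two sides of the equivalence. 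A secondary technical point is arranging weak admissibility of the connect-sum diagram, which as usual can be achieved by winding curves in the summands.
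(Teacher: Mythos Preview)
Your proposal is correct and follows exactly the approach the paper takes: the paper's proof is simply the one-line statement ``This is a routine adaptation of the argument from \cite{OSKI},'' and your sketch is precisely such an adaptation, with the twisting handled as bookkeeping on top of the standard connect-sum/neck-stretching argument. In fact you have written out considerably more detail than the paper itself provides.
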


\begin{proof} This is a routine adaptation of the argument from \cite{OSKI}. \end{proof}

\section{Twisted Surgery Formula}
We now combine the results of the previous two sections, to achieve results akin to Theorem 6.1 of \cite{OSRS}.  

Let $K$ be a special knot, and $\m{t}_0 \in \spc{Y_0}$ an $\mu$-torsion \sst\ structure.Choose some $\m{t}_{\infty} \in \m{S}_{\infty}(\m{t}_0)$, and write $\xi_i$ for $[\m{t}_{\infty} - i\mbox{PD}[K], -\frac{\langle c_1(\m{t}_0), [\widehat{dS}] \rangle}{d} ] \in \rspc{Y, K}$. 

Define the map 
$$\underline{f}^+_{K, \m{t}_0}: \bigoplus_{i \in \mathbb{Z}} \underline{C}^+_{\xi_i}(Y,K) \rightarrow \bigoplus_{i \in \mathbb{Z}} \underline{CF}^+\big(Y, G_K(\xi_i) \big) $$
so that $x \in \underline{C}^+_{\xi_i}(Y,K)$ goes to
$$ v_{\xi_i}(x)  \oplus h_{\xi_i}(x) \in \underline{CF}^+\big(Y, G_K(\xi_i) \big) \oplus \underline{CF}^+\big(Y, G_K(\xi_{i+1}) \big).$$
Even though $G_K(\xi_i) = G_K(\xi_{i+d})$, we treat the corresponding summands as distinct.

\begin{theorem} \label{thm:6.1} There is a quasi-isomorphism from $M(\underline{f}^+_{K, \m{t}_0})$ to $\underline{CF}^+(Y_0, \m{t}_0)$. Furthermore, $M(\underline{f}^+_{K, \m{t}_0})$ admits a relative $\mathbb{Z}$-grading and a $U$-action which the quasi-isomorphism respects.
\end{theorem}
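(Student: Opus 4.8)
The plan is to read this off the twisted large-$N$ surgery mapping cone of Corollary \ref{thm:4.6}, after using Theorem \ref{thm:5.2} to recognize the cobordism maps occurring there as the maps $v$ and $h$, and then to pass to the direct limit over the truncations $\underline{CF}^{\de}$. Throughout, $K$ is special, so Proposition \ref{thm:2.9} is available.

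Fix $\de>0$ and choose $N=N(\de)$ large enough that the conclusions of Proposition \ref{thm:3.4} and Theorem \ref{thm:5.2} hold (for every $\m{t}\in\spc{Y_N}$) on the level of $\underline{CF}^{\de}$, and large enough for the vanishing statement below. Corollary \ref{thm:4.6} then gives a $U$-equivariant quasi-isomorphism from $M\big(\sum_{\m{s}\in\m{S}_{N\infty}(\m{t}_0)}\underline{f}^+_{W'_N}|_{\underline{CF}^{\de}(Y_N,\m{s}|_{Y_N})}\otimes\1\big)$ to $\underline{CF}^{\de}(Y_0,\m{S}^N_0(\m{t}_0))$. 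By the second paragraph of Proposition \ref{thm:4.3} together with the last statement of Theorem \ref{thm:4.5}, both sides split compatibly along the decomposition of $\spc{Y_0}$ into $\mbox{PD}[N\mu]$-orbits, so I restrict everything to the summand $\underline{CF}^{\de}(Y_0,\m{t}_0)$. On the source this keeps only those $\m{s}\in\m{S}_{N\infty}(\m{t}_0)$ whose image under $E_{K,N}$ lies over $\m{t}_0$, and by Proposition \ref{thm:2.9} the relative \sst\ structures $E_{K,N}(\m{s}_{K+}(\m{t}))$ that appear are exactly the classes $\xi_i=[\m{t}_\infty-i\mbox{PD}[K],\,-\langle c_1(\m{t}_0),[\widehat{dS}]\rangle/d]$ of the statement, for a suitable $\m{t}_\infty\in\m{S}_\infty(\m{t}_0)$, on which $q_K$ is constant and $\xi_{i+d}=\xi_i$. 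Theorem \ref{thm:5.2} identifies each surviving $\underline{CF}^{\de}(Y_N,\m{t})$ with $\underline{C}^{\de}_{\xi_i}(Y,K)$ (the kernel of $U^{\de}$ on $\underline{C}^+_{\xi_i}(Y,K)$) via $\Psi^+_{\m{t},N}$, conjugating $\underline{f}^+_{W'_N,\m{s}_{K+}(\m{t})}$ to $v_{\xi_i}$ and $\underline{f}^+_{W'_N,\m{s}_{K-}(\m{t})}$ to $h_{\xi_i}$ up to a factor in $H^1(Y)$ which I absorb into the identifications; since $\m{s}_{K-}(\m{t})=\m{s}_{K+}(\m{t})+\mbox{PD}[F']|_{W'_N}$, the shift rule at the end of Theorem \ref{thm:4.5} places $v_{\xi_i}$ in summand $i$ and $h_{\xi_i}$ in summand $i+1$, consistent with $G_K(\xi_{i+1})=G_K(\xi_i)-\mbox{PD}[K]$. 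This is precisely the combinatorics of $\underline{f}^+_{K,\m{t}_0}$.

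So the restricted source is $M(\underline{f}^{\de}_{K,\m{t}_0})$, the $U^{\de}$-truncation of $M(\underline{f}^+_{K,\m{t}_0})$, \emph{together with} the contributions of \sst\ structures $\m{s}$ on $W'_N$ other than $\m{s}_{K\pm}(\m{t})$. For $N$ large these extra maps $\underline{f}^+_{W'_N,\m{s}}$ are null-homotopic on $\underline{CF}^{\de}$: this is the twisted analogue of the large-surgery vanishing of \cite{OSRS} and \cite{OSIS}, proved by the usual energy/area estimate in a $\m{t}$-proper diagram, Proposition \ref{thm:3.4} guaranteeing that the only small triangles represent $\m{s}_{K+}$ (with respect to $w$) or $\m{s}_{K-}$ (with respect to $z$). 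As homotopic maps have isomorphic mapping cones and a sum of null-homotopic maps is null-homotopic, these summands may be deleted without changing the quasi-isomorphism type, yielding a $U$-equivariant quasi-isomorphism $M(\underline{f}^{\de}_{K,\m{t}_0})\to\underline{CF}^{\de}(Y_0,\m{t}_0)$.

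For the gradings, each summand $\underline{C}^{\de}_{\xi_i}(Y,K)$ and $\underline{CF}^{\de}(Y,G_K(\xi_i))$ carries a relative $\mathbb{Z}$-grading, the maps $v_{\xi_i},h_{\xi_i}$ are $U$-equivariant, and by Theorem \ref{thm:5.2} they become homogeneous once the gradings on consecutive summands are normalized using the degree shift of the cobordism maps $\underline{f}^+_{W'_N,\m{s}_{K\pm}(\m{t})}$ — the two shifts differing by $\tfrac14\big(c_1^2(\m{s}_{K-})-c_1^2(\m{s}_{K+})\big)$, a quantity recorded by $Q_K$ — so $M(\underline{f}^{\de}_{K,\m{t}_0})$ inherits a relative $\mathbb{Z}$-grading respected by the quasi-isomorphism. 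Finally, the complexes $\underline{C}^+_{\xi_i}(Y,K)$ and the maps $v_{\xi_i},h_{\xi_i}$ are intrinsic to $(Y,K)$ and independent of $N$, so the truncations form a directed system, $\underline{CF}^+=\varinjlim_{\de}\underline{CF}^{\de}$, and mapping cones commute with direct limits; passing to the limit over $\de$ (each with its own $N(\de)$) produces the quasi-isomorphism $M(\underline{f}^+_{K,\m{t}_0})\to\underline{CF}^+(Y_0,\m{t}_0)$ together with the relative $\mathbb{Z}$-grading and $U$-action it respects. (The extra rank of $H^1(Y_0)$ over $H^1(Y)$ is carried by the $\mathbb{Z}$-indexed family of summands.) The hard part is the large-$N$ null-homotopy of the off-diagonal cobordism maps invoked in the third paragraph; the remaining work is careful bookkeeping with the constructions of Sections 2, 4, and 5.
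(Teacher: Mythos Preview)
Your overall architecture matches the paper's: Corollary \ref{thm:4.6} supplies a $\de$-truncated mapping cone computing $\underline{CF}^{\de}(Y_0,\m{S}^N_0(\m{t}_0))$, Theorem \ref{thm:5.2} identifies the pieces with $\underline{C}^{\de}_{\xi_i}$ and the maps with $v_{\xi_i},h_{\xi_i}$, and Proposition \ref{thm:2.9} pins down the $\xi_i$. The place where your argument departs from the paper is also where it has a genuine gap.

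\textbf{The vanishing of the ``off-diagonal'' cobordism maps.} You assert that for $\m{s}\neq\m{s}_{K\pm}(\m{t})$ the maps $\underline{f}^+_{W'_N,\m{s}}$ are null-homotopic on $\underline{CF}^{\de}$, citing an energy/area estimate and Proposition \ref{thm:3.4}. But Proposition \ref{thm:3.4} only tells you which \sst\ structure a \emph{small} triangle represents; it says nothing about the (possibly many) holomorphic triangles in other \sst\ structures, and an area filtration does not produce null-homotopies of individual $\underline{f}^+_{W'_N,\m{s}}$. The paper's mechanism is different and is the content of Proposition \ref{thm:6.2}: since $\m{t}_0$ is $\mu$-torsion, all the $\m{t}\in\m{S}_N(\m{t}_0)$ and $\m{s}|_Y$ are torsion (Proposition \ref{thm:2.4}), so the absolute $\mathbb{Q}$-grading and the degree-shift formula $\widetilde{\mathrm{gr}}\big(\underline{f}^+_{\m{s}}(x)\big)-\widetilde{\mathrm{gr}}(x)=\tfrac{c_1^2(\m{s})+1}{4}$ are available. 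One then uses the quadratic function $Q_K(\,\cdot\,;\m{s}_{K\pm}(\m{t}))$ together with Lemma \ref{thm:3.3} and Proposition \ref{thm:2.7} to show that for $\m{s}=\m{s}^k_{\pm}(\m{t})$ with $k\geq 1$ the grading shift is at most $C_q-N$, hence below the finite window in which nonzero elements of $\underline{CF}^{\de}$ can live. This gives $\underline{f}^+_{\m{s}}|_{\underline{CF}^{\de}}=0$ outright, not merely up to homotopy. Your sketch does not supply this grading estimate, and the ingredients you cite do not yield it.

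\textbf{Passing from $\de$ to $+$.} Your direct-limit argument needs the quasi-isomorphisms $\psi^{\de}$ to be compatible as $\de$ varies, but they are built from different values of $N=N(\de)$ (hence different Heegaard triples and different $\Psi^+_{\m{t},N}$), so there is no evident map of directed systems. The paper sidesteps this: once one has $\psi^{(\de)}:M(\underline{f}^{(\de)}_{K,\m{t}_0})\to\underline{CF}^{\de}(Y_0,\m{t}_0)$ for every $\de$, one checks directly on the chain level that the un-truncated $\psi^+$ is injective and surjective on homology. Relatedly, the paper isolates the single structure $\m{t}_0$ inside $\m{S}^N_0(\m{t}_0)$ not by a splitting argument but by taking $N$ so large that the other orbit members have trivial $\underline{HF}^+$. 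With the grading argument of Proposition \ref{thm:6.2} in place, the rest of your outline is essentially the paper's proof.
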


We prove this in a number of steps.  For the first, we set some notation.  

Fix a family $\mathcal{F}$ of standard diagrams for $(Y,K)$.  For the rest of this section, every chain complex we speak of hereafter will be isomorphic to one calculated from an element of $\mathcal{F}$.  

For $\m{t} \in \spc{Y_N}$, we write 
$$ \m{s}^k_+(\m{t}) \equiv \m{s}_{K+}(\m{t}) - k\cdot\mbox{PD}[F']|_{W'_N},\mbox{ }
\m{s}^k_-(\m{t}) \equiv \m{s}_{K-}(\m{t}) + k\cdot\mbox{PD}[F']|_{W'_N}$$
for $k \geq 0$, where $\m{s}_{K\pm}(\m{t}) \in \spc{W'_N}$ are as described in Section 2.5; when $k=0$, we drop the superscript.
We can write 
$$\underline{f}^+ = \sum_{\m{t} \in \m{S}_N} \sum_{k \geq 0} \big(\underline{f}^+_{\m{s}^k_+(\m{t})} + \underline{f}^+_{\m{s}^k_-(\m{t})}\big),$$ 
with notation as in the statement of Theorem \ref{thm:4.5}.  Then define
$$\underline{f}^+_* = \sum_{\m{t} \in \m{S}_N} \big(\underline{f}^+_{\m{s}_{+}(\m{t})} + \underline{f}^+_{\m{s}_{-}(\m{t})}\big),$$ 
i.e. the summands of $\underline{f}^+$ for $k=0$.

Proposition \ref{thm:2.4} says that under the assumption that $\m{t}_0$ is $\mu$-torsion, $\m{S}_{\infty}(\m{t}_0)$ and $\m{S}_N(\m{t}_0)$ consist of torsion structures, so that
$\underline{CF}^+\big(Y_N, \m{S}_N(\m{t}_0)\big)$ and $\underline{CF}^+\big(Y, \m{S}_{\infty}(\m{t}_0)\big)$ come equipped with absolute $\mathbb{Q}$-gradings, which extend to $\underline{CF}^+\big(Y_N, \m{S}_N(\m{t}_0)\big)\otimes \mathbb{Z}[\mathbb{Z}]$ and $\underline{CF}^+\big(Y, \m{S}_{\infty}(\m{t}_0)\big)\otimes \mathbb{Z}[\mathbb{Z}]$.  Furthermore, the usual untwisted grading shift formula still holds, so that if $x$ is a homogenous element of $\underline{CF}^+\big(Y_N, \m{t} \big) \otimes \mathbb{Z}[\mathbb{Z}]$, then
\begin{equation}
\label{eq:12}
\widetilde{\mbox{gr}}\big(\underline{f}^+_{\m{s}}(x)\big) - \widetilde{\mbox{gr}}(x) = \frac{c_1^2(\m{s}) - 2\chi(W'_N) - 3\sigma(W'_N)}{4} = \frac{c_1^2(\m{s}) + 1}{4}
\end{equation}
if $N$ is large enough so that $W'_N$ is negative definite.

\begin{prop} \label{thm:6.2} Fix an integer $\de$.  Then for all sufficiently large $N$, if $U^{\de}x = 0$ for $x$ in $\underline{CF}^+\big(Y_N, \m{S}_N(\m{t}_0)\big) \otimes \mathbb{Z}[\mathbb{Z}]$, then $\underline{f}^+_*(x) = \underline{f}^+(x)$. 
\end{prop}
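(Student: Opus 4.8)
The plan is to run a grading argument: once $N$ is large, every summand $\underline{f}^+_{\m{s}^k_+(\m{t})}$ and $\underline{f}^+_{\m{s}^k_-(\m{t})}$ with $k\ge 1$ shifts the absolute $\mathbb{Q}$-grading downward by so much that it annihilates all of $\ker(U^{\de})$, leaving only the $k=0$ terms, i.e.\ $\underline{f}^+_*$. Since $\m{t}_0$ is $\mu$-torsion, Proposition~\ref{thm:2.4} makes $\m{S}_N(\m{t}_0)$ and $\m{S}_{\infty}(\m{t}_0)$ finite sets of torsion structures, so all the complexes in sight carry absolute $\mathbb{Q}$-gradings and every $\m{s}^k_\pm(\m{t})$ is boundary-torsion; for $N$ large enough that $W'_N$ is negative definite, Equation~\ref{eq:12} gives $\widetilde{\mbox{gr}}\big(\underline{f}^+_{\m{s}}(x)\big)=\widetilde{\mbox{gr}}(x)+\tfrac14\big(c_1^2(\m{s})+1\big)$ for homogeneous $x$. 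I would also record two elementary facts: a homogeneous element of $\ker(U^{\de})$ in $\underline{CF}^+(Y_N,\m{t})\otimes\mathbb{Z}[\mathbb{Z}]$ is a combination of generators $[\mathbf{x},i]$ with $i\le\de-1$, hence has grading at most $2\de-2+\max_{\mathbf{x}}\widetilde{\mbox{gr}}_{Y_N}(\mathbf{x})$ over generators representing $\m{t}$; and any nonzero homogeneous element of $\underline{CF}^+(Y,\m{s}|_Y)\otimes\mathbb{Z}[\mathbb{Z}]$ has grading at least $\min_{\mathbf{y}}\widetilde{\mbox{gr}}_Y(\mathbf{y})$ over $\mathbf{y}\in\mathbb{T}_{\al}\cap\mathbb{T}_{\de}$. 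Because $\mathbb{T}_{\al}\cap\mathbb{T}_{\de}$ is common to all diagrams in the family $\mathcal{F}$ and $\m{s}^k_\pm(\m{t})|_Y$ always lies in the finite, $N$-independent set $\m{S}_{\infty}(\m{t}_0)$, this last quantity is bounded below by a constant $\mathcal{B}=\mathcal{B}(\mathcal{F},\m{t}_0)$.

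The heart of the argument is to make the $Y_N$-side bound effective, uniformly in $N$ and $\m{t}$. In a $\m{t}$-proper diagram with $N\ge N(\mathcal{F})$ (Lemma~\ref{thm:3.2}), every generator $\mathbf{x}$ representing $\m{t}$ is supported in the winding region and has a small triangle $\psi_{\mathbf{x}}\in\pi_2(\mathbf{x},\Theta_{\ga\de},\mathbf{x}')$ with $n_w(\psi_{\mathbf{x}})$ and $n_z(\psi_{\mathbf{x}})$ not both nonzero, and Proposition~\ref{thm:3.4} identifies $\m{s}_w(\psi_{\mathbf{x}})=\m{s}_{K+}(\m{t})$. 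Applying the grading-shift formula on $\underline{CF}^{\infty}$ (where the leading term of the cobordism map sends $[\mathbf{x},0]$ to the nonzero element $[\mathbf{x}',-n_w(\psi_{\mathbf{x}})]$), I get $\widetilde{\mbox{gr}}_{Y_N}(\mathbf{x})=\widetilde{\mbox{gr}}_Y(\mathbf{x}')-2n_w(\psi_{\mathbf{x}})-\tfrac14\big(c_1^2(\m{s}_{K+}(\m{t}))+1\big)\le C_1-\tfrac14\big(c_1^2(\m{s}_{K+}(\m{t}))+1\big)$, where $C_1$ is the largest grading of a generator of $\mathbb{T}_{\al}\cap\mathbb{T}_{\de}$ representing a structure of $\m{S}_{\infty}(\m{t}_0)$ — again independent of $N$ and of $\m{t}$. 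Hence the grading of any homogeneous $x\in\ker(U^{\de})$ in the $\m{t}$-summand is at most $2\de-2+C_1-\tfrac14\big(c_1^2(\m{s}_{K+}(\m{t}))+1\big)$.

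Next I would compute the Chern-class squares. With $\m{s}_{K-}(\m{t})=\m{s}_{K+}(\m{t})+\mbox{PD}[F']|_{W'_N}$, so that $\m{s}^k_+(\m{t})=\m{s}_{K+}(\m{t})-k\,\mbox{PD}[F']|_{W'_N}$ and $\m{s}^k_-(\m{t})=\m{s}_{K+}(\m{t})+(k+1)\,\mbox{PD}[F']|_{W'_N}$, using $\frac{[\widetilde{dF'}]^2}{d^2}=-N$ (Proposition~\ref{thm:2.7}, $K$ special) and $q_K(\m{s}_{K+}(\m{t}))=-c_0$ with $c_0=\frac{\langle c_1(\m{t}_0),[\widehat{dS}]\rangle}{d}$ a fixed rational number (Proposition~\ref{thm:2.9}), the quadratic $Q_K(\cdot;\m{s}_{K+}(\m{t}))$ yields $c_1^2(\m{s}^k_+(\m{t}))=c_1^2(\m{s}_{K+}(\m{t}))-4N(k^2+k)+4c_0k$ and $c_1^2(\m{s}^k_-(\m{t}))=c_1^2(\m{s}_{K+}(\m{t}))-4Nk(k+1)-4c_0(k+1)$; for $k\ge1$ and $N>2|c_0|$ both are $\le c_1^2(\m{s}_{K+}(\m{t}))-4N$. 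Now suppose $\underline{f}^+_{\m{s}^k_\pm(\m{t})}(x)\ne0$ for some $k\ge1$, some $\m{t}$, and some homogeneous $x\in\ker(U^{\de})$ in the $\m{t}$-summand. Combining the two grading bounds with these displays gives
\[
\mathcal{B}\;\le\;\Big(2\de-2+C_1-\tfrac14\big(c_1^2(\m{s}_{K+}(\m{t}))+1\big)\Big)+\tfrac14\big(c_1^2(\m{s}^k_\pm(\m{t}))+1\big)\;\le\;2\de-2+C_1-N,
\]
which fails once $N>2\de-2+C_1-\mathcal{B}$. (The two cases are handled uniformly because $\m{s}_{K-}(\m{t})$ differs from $\m{s}_{K+}(\m{t})$ by the single class $\mbox{PD}[F']|_{W'_N}$ and the $Y_N$-side bound for the $h$-maps uses $n_z$ in place of $n_w$ via the basepoint-switched version of the argument, Theorem~\ref{thm:5.2}.) Thus for all sufficiently large $N$ (exceeding $2\de-2+C_1-\mathcal{B}$ and $2|c_0|$, and large enough for Equation~\ref{eq:12}, Proposition~\ref{thm:3.4} and Lemma~\ref{thm:3.2} to apply) every $k\ge1$ summand vanishes on $\ker(U^{\de})$; since $\m{S}_N(\m{t}_0)$ is finite, $\underline{f}^+-\underline{f}^+_*$ vanishes on $\ker(U^{\de})$, i.e.\ $\underline{f}^+_*(x)=\underline{f}^+(x)$ whenever $U^{\de}x=0$.

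The main obstacle is exactly the uniform upper bound in the second paragraph: $\widetilde{\mbox{gr}}_{Y_N}(\mathbf{x})$ and $c_1^2(\m{s}_{K+}(\m{t}))$ each drift with $N$, and the whole argument hinges on the observation that their combination $\widetilde{\mbox{gr}}_{Y_N}(\mathbf{x})+\tfrac14 c_1^2(\m{s}_{K+}(\m{t}))$ stays bounded above, uniformly in $N$ and $\m{t}$ — which is precisely what Proposition~\ref{thm:3.4} (pinning down $\m{s}_w(\psi_{\mathbf{x}})$) together with the diagram-independence of $\mathbb{T}_{\al}\cap\mathbb{T}_{\de}$ within $\mathcal{F}$ provides. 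A minor bookkeeping point is that, because $n_w(\psi_{\mathbf{x}})$ may be positive, the leading term of the $\underline{CF}^+$-level cobordism map can vanish, so the grading comparison on the $Y_N$ side should be carried out on $\underline{CF}^{\infty}$.
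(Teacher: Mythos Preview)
Your proof is correct and follows the same overall strategy as the paper's: a grading argument showing that the $k\ge 1$ summands shift gradings below the minimum possible grading in the target. The implementation differs in two places worth noting. First, the paper bounds $\widetilde{\mbox{gr}}\big(\underline{f}^+_{\m{s}_\pm(\m{t})}(x)\big)$ directly in $\underline{CF}^+(Y)$, invoking Theorem~\ref{thm:5.2} to guarantee this image is nonzero; you instead bound $\widetilde{\mbox{gr}}_{Y_N}(\mathbf{x})$ itself by passing to $\underline{CF}^\infty$ and reading off the grading shift along the small triangle $\psi_{\mathbf{x}}$, which sidesteps the nonvanishing issue cleanly. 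Second, the paper controls $q_K\big(\m{s}_{K+}(\m{t})\big)$ only via the coarse estimate $|q_K|\le C_q+N$ of Lemma~\ref{thm:3.3}, whereas you use the exact value $q_K\big(\m{s}_{K+}(\m{t})\big)=-c_0$ from Proposition~\ref{thm:2.9} (available since $K$ is special), which makes the Chern-square computation explicit. Your route is slightly more direct and yields sharper constants; the paper's route is a bit more self-contained in that it does not rely on Proposition~\ref{thm:2.9}, whose proof is deferred to the Appendix.
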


\begin{proof}  Choose a particular value of $N$.  Let us find sufficient conditions so that $\underline{f}^+_*(x) = \underline{f}^+(x)$ when $U^{\de}x = 0$, in terms of $N$.

Take $x$ to be a homogenous generator of $\underline{CF}^+(Y_N, \m{t}) \otimes \mathbb{Z}[\mathbb{Z}]$ which satisfies $U^{\de}x = 0$, where $\m{t} \in \m{S}_N(\m{t}_0)$ and the chain complex is constructed from a $\m{t}$-proper diagram.  If $x$ itself is nonzero, then Theorem \ref{thm:5.2} guarantees in particular that  $\underline{f}^+_{\m{s}_+(\m{t})}(x) \ne 0$.  Of course, we also have $U^{\de}\underline{f}^+_{\m{s}_+(\m{t})}(x) = 0$.  Since there are a finite number of intersection points in $\mathbb{T}_{\al} \cap \mathbb{T}_{\de}$, it follows that there are two constants $L^-_{\infty}$ and $L^+_{\infty}$, depending only on ${\de}$ and on the family $\mathcal{F}$, such that
$$L^-_{\infty} \leq \widetilde{\mbox{gr}}\big(\underline{f}^+_{\m{s}_+(\m{t})}(x)\big) \leq L^+_{\infty}.$$
The same can be said of $\underline{f}^+_{\m{s}_-(\m{t})}(x)$.

Using the grading shift formula (\ref{eq:12}) and the definition of $Q_K$, we then have that
$$ \widetilde{\mbox{gr}}\big(\underline{f}^+_{\m{s}^k_+(\m{t})}(x)\big) = \widetilde{\mbox{gr}}\big(\underline{f}^+_{\m{s}_{+}(\m{t})}(x)\big) + Q_K\big(-k; \m{s}_{+}(\m{t})\big)$$
and
$$ \widetilde{\mbox{gr}}\big(\underline{f}^+_{\m{s}^k_-(\m{t})}(x)\big) = \widetilde{\mbox{gr}}\big(\underline{f}^+_{\m{s}_{-}(\m{t})}(x)\big) + Q_K\big(k; \m{s}_{-}(\m{t})\big)$$
for $k \geq 0$.  Then, a sufficient condition for $\underline{f}^+_*(x) = \underline{f}^+(x)$ is that $Q_K\big(-k; \m{s}_{+}(\m{t})\big)$ and $Q_K\big(k; \m{s}_{-}(\m{t})\big)$ are both less than $L^-_{\infty} - L^+_{\infty}$ for $k \ne 0$. 

Let us examine $Q_K\big(-k; \m{s}_{+}(\m{t})\big)$, which we can think of as a quadratic function of $k \in \mathbb{Q}$.  It is not hard to see that
$$ Q_K\big(-k; \m{s}_{+}(\m{t})\big) = k^2\frac{[\widetilde{dF'}]^2}{d^2} + k\Big(\frac{[\widetilde{dF'}]^2}{d^2} - q_K\big(\m{s}_{+}(\m{t})\big)\Big). $$
Note first that if $N>0$, then we have
$$ \frac{-\Big(\frac{[\widetilde{dF'}]^2}{d^2} - q_K\big(\m{s}_{+}(\m{t})\big)\Big)}{2\Big(\frac{[\widetilde{dF'}]^2}{d^2}\Big)} \leq \frac{-2N-C_q}{-2N} = 1 + \frac{-C_q}{-2N},$$
recalling Lemma \ref{thm:3.3}, Proposition \ref{thm:2.7}, and the fact that $\m{s}_{+}(\m{t})$ is represented by a small triangle.
Hence, for any value of $\epsilon > 0$, the value of $k \in \mathbb{Q}$ that maximizes $Q\big(-k; \m{s}_{+}(\m{t})\big)$ will be bounded above by $1 + \epsilon$ if our value of $N$ is large enough.  If our value of $N$ is large enough so that this holds with some $\epsilon < \frac{1}{2}$, we will then have 
$$ Q_K\big(-k; \m{s}_{+}(\m{t})\big) \leq Q_K\big(-1; \m{s}_{+}(\m{t})\big) $$
for all integers $k$ greater than 1.

We also have (for all $N$) that
$$ Q_K\big(-1; \m{s}_{+}(\m{t})\big) = 2\frac{[\widetilde{dF'}]^2}{d^2} - q_K\big(\m{s}_{+}(\m{t})\big) \leq C_q - N,$$
where the inequality again comes from Lemma \ref{thm:3.3} and Proposition \ref{thm:2.7}.  Hence, putting the two previous inequalities together yields
$$ Q_K\big(-k; \m{s}_{+}(\m{t})\big) \leq C_q - N$$
for $k$ an integer greater than 0 if $N$ is large enough.

Let us turn to the other function $Q_K\big(k; \m{s}_{-}(\m{t})\big)$.
In a completely analogous manner, we find that if our $N$ is large enough, then we will have 
$$ Q_K\big(k; \m{s}_{-}(\m{t})\big) \leq Q_K\big(1; \m{s}_{-}(\m{t})\big) $$
for all integers $k$ greater than 1.
To find an upper bound for $Q_K\big(1; \m{s}_{-}(\m{t})\big)$, recall that this is equal to
$$ \frac{c^2_1(\m{s}^1_-(\m{t})\big) - c^2_1(\m{s}_{-}(\m{t})\big)}{4} = \frac{c^2_1(\m{s}^1_-(\m{t})\big) - c^2_1(\m{s}_{+}(\m{t})\big)}{4} - \frac{c^2_1(\m{s}_{-}(\m{t})\big) - c^2_1(\m{s}_{+}(\m{t})\big)}{4} $$
$$\mbox{ } = Q_K\big(2; \m{s}_{+}(\m{t})\big) - Q_K\big(1; \m{s}_{+}(\m{t})\big).$$
The latter is equal to $q_K\big(\m{s}_w(\psi_{\m{t}})\big) + 2\frac{[\widetilde{dF'}]^2}{d^2}$, and we observed in the proof of Proposition \ref{thm:3.4} that
$$ q_K\big(\m{s}_w(\psi_{\m{t}})\big) + 2\frac{[\widetilde{dF'}]^2}{d^2} \leq C_q - N. $$
Hence, 
$$ Q_K\big(k; \m{s}_{-}(\m{t})\big) \leq Q_K\big(1; \m{s}_{-}(\m{t})\big) \leq C_q - N $$
for $k>0$ if $N$ is large enough.

Therefore, if our $N$ is sufficiently large, we will have
$$Q_K\big(-k; \m{s}_{+}(\m{t})\big) < L^-_{\infty} - L^+_{\infty}$$
and 
$$Q_K\big(k; \m{s}_{-}(\m{t})\big) < L^-_{\infty} - L^+_{\infty}$$ 
for $k \ne 0$.  There are a finite number of $\m{t}$ in $\m{S}_N(\m{t}_0)$; hence, for large enough $N$, it follows that $\underline{f}^+_*(x) = \underline{f}^+(x)$ for all $x \in \underline{CF}^+\big(Y_N, \m{S}_N(\m{t}_0)\big) \otimes \mathbb{Z}[\mathbb{Z}]$ that satisfy $U^{\de}x = 0$. \end{proof}

\begin{corollary} \label{thm:6.3} Write $\xi(\m{t})$ for $E_{K,N}\big(\m{s}_{K+}(\m{t})\big)$. 
Choose $\m{t}_N \in \m{S}_N(\m{t}_0)$, and let For all $\de \geq 0$, there is a quasi-isomorphism from
$M(\underline{f}^{(\de)}_{K, N, \m{t}_0})$ to $\underline{CF}^{\de}\big(Y_0, \m{S}^N_0(\m{t}_0)\big)$ are quasi-isomorphic for large enough $N$, 
where 
$$\underline{f}^{(\de)}_{K, N, \m{t}_0}: \bigoplus_{i \in \mathbb{Z}} \underline{C}^{\de}_{\xi(\m{t} + i\mathrm{\scriptsize{PD}}[F']|_{Y_N})}(Y,K) \rightarrow \bigoplus_{i \in \mathbb{Z}} \underline{CF}^{\de}\big(Y, G_K\left(\xi\left(\m{t} + i\mathrm{PD}[F']|_{Y_N})\right)\right) $$ 
is given by taking $x$ in summand $i$ to 
$$v_{\xi(\m{t} + i\mathrm{\scriptsize{PD}}[F']|_{Y_N})}(x) + h_{\xi(\m{t} + i\mathrm{\scriptsize{PD}}[F']|_{Y_N})}(x)$$
in summands $i$ and $i+1$.
\end{corollary}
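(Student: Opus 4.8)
\emph{Proof plan.} The plan is to chain together the three ingredients already in hand — Corollary \ref{thm:4.6}, Proposition \ref{thm:6.2}, and Theorem \ref{thm:5.2} — the entire argument being essentially a bookkeeping exercise in \sst\ structures and summand indices. Start from Corollary \ref{thm:4.6}: for \emph{every} $N$ it provides a $U$-equivariant quasi-isomorphism from $M\big(\sum_{\m{s}\in\m{S}_{N\infty}(\m{t}_0)} \underline{f}^+_{W'_N}|_{\underline{CF}^{\de}(Y_N, \m{s}|_{Y_N})} \otimes \1\big)$ onto $\underline{CF}^{\de}(Y_0, \m{S}^N_0(\m{t}_0))$. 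Using the identifications of Theorem \ref{thm:4.5}, I would rewrite the source and target of the inner map as $\bigoplus_{i\in\mathbb{Z}} \underline{CF}^{\de}(Y_N, \m{t}_N + i\,\mathrm{PD}[F']|_{Y_N})$ and $\bigoplus_{i\in\mathbb{Z}} \underline{CF}^{\de}(Y, \m{t}_{\infty} + i\,\mathrm{PD}[F']|_{Y})$, with the summands kept formally distinct and with the rule that translating a cobordism \sst\ structure by $k\,\mathrm{PD}[F']|_{W'_N}$ shifts the target summand index by $k$.

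Next I would invoke Proposition \ref{thm:6.2}: after enlarging $N$ (the threshold depending only on $\de$ and the chosen family $\mathcal{F}$), every element of the source is annihilated by $U^{\de}$, so on it $\sum_{\m{s}} \underline{f}^+_{W'_N, \m{s}}$ agrees with $\underline{f}^+_* = \sum_{\m{t}\in\m{S}_N(\m{t}_0)}\big(\underline{f}^+_{\m{s}_{K+}(\m{t})} + \underline{f}^+_{\m{s}_{K-}(\m{t})}\big)$ — only the two ``closest'' cobordism maps over each $\m{t}$ survive. Hence the cone above \emph{is} the cone of $\underline{f}^+_*|_{\underline{CF}^{\de}}$.

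It then remains to recognize $\underline{f}^+_*|_{\underline{CF}^{\de}}$ as $\underline{f}^{(\de)}_{K, N, \m{t}_0}$. For each $i$ I would set $\m{t} = \m{t}_N + i\,\mathrm{PD}[F']|_{Y_N}$ and use the $U^{\de}$-torsion part of the isomorphism $\Psi^+_{\m{t}, N}$ of Theorem \ref{thm:5.2} (which restricts automatically, being a $\mathbb{Z}[U]\otimes\mathbb{Z}[H^1(Y)]$-module isomorphism) to replace the $i$-th source summand $\underline{CF}^{\de}(Y_N, \m{t})$ by $\underline{C}^{\de}_{\xi(\m{t})}(Y, K)$, where $\xi(\m{t}) = E_{K, N}\big(\m{s}_{K+}(\m{t})\big)$. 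The two commutative squares of Theorem \ref{thm:5.2} then say exactly that, under $\Psi^+_{\m{t}, N}$, the map $\underline{f}^+_{\m{s}_{K+}(\m{t})}$ becomes $v_{\xi(\m{t})}$ and $\underline{f}^+_{\m{s}_{K-}(\m{t})}$ becomes $h_{\xi(\m{t})}$, each up to multiplication by a unit of $\mathbb{Z}[H^1(Y)]$; since the maps $\underline{f}^+_{W'_N}$ — and hence the entire cone — are only well-defined up to such units and an overall sign, and since $H^1(Y)\cong H^1(Y_N)$ canonically, these discrepancies can be absorbed into the choices of identification without affecting the chain-homotopy type of the cone. For the indices: $\m{s}_{K-}(\m{t}) = \m{s}_{K+}(\m{t}) + \mathrm{PD}[F']|_{W'_N}$, so by the shift rule $\underline{f}^+_{\m{s}_{K-}(\m{t})}$ hits the target summand one higher than $\underline{f}^+_{\m{s}_{K+}(\m{t})}$; normalizing $\m{t}_{\infty}$ (i.e.\ re-indexing the target decomposition) so that $v_{\xi(\m{t})}$ out of summand $i$ lands in summand $i$ forces $h_{\xi(\m{t})}$ to land in summand $i+1$. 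This is consistent with the target of $h_{\xi}$ being $\underline{CF}^{\de}\big(Y, G_K(\xi) - \mathrm{PD}[K]\big)$ once one checks, by unwinding how $\m{s}_{K\pm}$ and $E_{K,N}$ interact with the $\mathrm{PD}[F']$-translation (using the equivariance of $E_{K,N}$ from Proposition \ref{thm:2.6}, together with Proposition \ref{thm:2.9}), that $\xi(\m{t}_N + (i{+}1)\mathrm{PD}[F']|_{Y_N})$ and $\xi(\m{t}_N + i\,\mathrm{PD}[F']|_{Y_N})$ project under $G_K$ to \sst\ structures differing by $\mathrm{PD}[K]$. This exhibits the cone of $\underline{f}^+_*|_{\underline{CF}^{\de}}$ as $M\big(\underline{f}^{(\de)}_{K, N, \m{t}_0}\big)$, and the $U$-action carried by the cone and the relative $\mathbb{Z}$-grading are transported by the quasi-isomorphism, since $\Psi^+_{\m{t}, N}$, $v_{\xi}$ and $h_{\xi}$ are $\mathbb{Z}[U]$-module maps and $v_{\xi}, h_{\xi}$ shift the relative grading in the prescribed way.

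The one genuinely delicate point is the \sst/index bookkeeping of the last paragraph — correctly matching $\xi(\m{t}_N + i\,\mathrm{PD}[F']|_{Y_N})$ with the relative structures $\xi_i$ that appear in Theorem \ref{thm:6.1}, making sure the two cobordism maps out of a fixed summand land in the correct adjacent pair of target summands, and verifying that ``$N$ large'' can be taken uniformly over the finitely many $\m{t} \in \m{S}_N(\m{t}_0)$ while being simultaneously large enough for Theorem \ref{thm:5.2}. None of this is deep, but it is where all the care has to go.
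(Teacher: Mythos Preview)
Your proposal is correct and follows essentially the same route as the paper: choose $N$ large enough for both Proposition~\ref{thm:6.2} and Theorem~\ref{thm:5.2}, sum the commutative squares of Theorem~\ref{thm:5.2} over $\m{t}\in\m{S}_N(\m{t}_0)$ to replace each $\underline{CF}^{\de}(Y_N,\m{t})$ by $\underline{C}^{\de}_{\xi(\m{t})}(Y,K)$ and each pair $\underline{f}^+_{\m{s}_{K\pm}(\m{t})}$ by $v_{\xi(\m{t})}, h_{\xi(\m{t})}$, use Proposition~\ref{thm:6.2} to identify the top row with the full $\underline{f}^+$, and then invoke Corollary~\ref{thm:4.6} together with the last clause of Theorem~\ref{thm:4.5} for the summand-index matching. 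Your write-up is more explicit than the paper's about the unit ambiguities and the $i\mapsto i{+}1$ shift for $h$, but the argument is the same.
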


\begin{proof} This follows from Proposition \ref{thm:6.2} and Corollary \ref{thm:4.6}, via Theorem \ref{thm:5.2}.
More precisely, given $\de$, choose $N$ large enough so that both Proposition \ref{thm:6.2} and Theorem \ref{thm:5.2} hold. In the commutative squares of Theorem \ref{thm:5.2}, we can clearly replace $+$ with $\de$ in all the groups, and we take $M$ to be $\mathbb{Z}[H^1(Y)] \otimes \mathbb{Z}[\mathbb{Z}]$.  Summing over \sst\ structures then gives a commutative square
$$\begin{CD}
\underline{CF}^{\de}\big(Y_N, \m{S}_N(\m{t}_0)\big) \otimes \mathbb{Z}[\mathbb{Z}] @>{\sum \big(\underline{f}^+_{W'_N, \m{s}_{K+}(\m{t})} + \underline{f}^+_{W'_N, \m{s}_{K-}(\m{t})}\big) \otimes \1}>> \underline{CF}^{\de}\big(Y, \m{S}_{\infty}(\m{t}_0)\big) \otimes \mathbb{Z}[\mathbb{Z}]\\
@V{\sum \Psi^+_{\m{t}, N} \otimes \1}VV      @VVV \\
\bigoplus \underline{C}^{\de}_{\xi(\m{t})}(Y, K) \otimes \mathbb{Z}[\mathbb{Z}] @>{\sum \big(v_{\xi(\m{t})} + h_{\xi(\m{t})}\big) \otimes \1}>> \underline{CF}^{\de}\big(Y, \m{S}_{\infty}(\m{t}_0)\big) \otimes \mathbb{Z}[\mathbb{Z}]
\end{CD}$$
where the vertical maps are isomorphisms of chain complexes, and each sum is taken over $\m{t} \in \m{S}_N(\m{t}_0)$.  Proposition \ref{thm:6.2} then says that replacing the upper horizontal map by $\underline{f}^+$ doesn't change the commutativity.  Hence, the mapping cone of the bottom is isomorphic to the mapping cone of $\underline{f}^+|_{\underline{CF}^{\de}\big(Y_N, \m{S}_N(\m{t}_0)\big) \otimes \mathbb{Z}[\mathbb{Z}]}$, which in turn is quasi-isomorphic to $\underline{CF}^{\de}\big(Y_0, \m{S}^N_0(\m{t}_0)\big)$ by Corollary \ref{thm:4.6}.  Finally, comparing the commutative square above with the last claim of Theorem \ref{thm:4.5} establishes the Corollary. \end{proof}

\noindent \textit{Proof of Theorem \ref{thm:6.1}.} When $N$ is large enough $N$, Proposition \ref{thm:2.9} show that $M(\underline{f}^{(\de)}_{K, N, \m{t}_0})$ and $M(\underline{f}^{(\de)}_{K, \m{t}_0})$ are quasi-isomorphic.  So combining this with Corollary \ref{thm:6.3}, we have for each value of $\de$ a quasi-isomorphism 
$$\psi^{(\de)}: M(\underline{f}^{(\de)}_{K, \m{t}_0}) \rightarrow \underline{CF}^{\de}\big(Y_0, \m{S}^N_0(\m{t}_0)\big)$$
for all large $N$.  The latter is just $\underline{CF}^{\de}(Y_0, \m{t}_0)$.  Specifically, since there are only finitely many \sst\ structures for which the homology is non-trivial, we can choose large $N$ so that $\m{S}^N_0(\m{t}_0) \setminus \{\m{t}_0\}$ doesn't contain any of them.
  
We need only replace each $\de$ with a $+$.  But, in fact, this can be done directly: it is straightforward to work on the chain level to show that the corresponding map 
$$\psi^+: M(\underline{f}^+_{K, \m{t}_0}) \rightarrow \underline{CF}^+\big(Y_0, \m{S}^N_0(\m{t}_0)\big)$$
is in fact injective and surjective on homology.  

Each summand of $\bigoplus_{i \in \mathbb{Z}} \underline{C}^+_{\xi_i}(Y,K)$ and $\bigoplus_{i \in \mathbb{Z}} \underline{CF}^+\big(Y, G_K(\xi_i) \big)$ admits a relative $\mathbb{Z}$-grading, as is always the case for twisted coefficient Floer homology: if two generators are connected by a disk, that disk is unique, and the grading difference is the Maslov index of this disk.  We can extend this to a relative $\mathbb{Z}$-grading on the entire mapping cone, by demanding that $\underline{f}^+_{K, \m{t}_0}$ lowers grading by one.  It is now easy to see that $\psi^+$ respects relative $\mathbb{Z}$-gradings, by simply inspecting Maslov indices of polygons in the diagram.   
There is also the $U$-action on the mapping cone induced by that of the summands; $\psi^+$ clearly respects this also, and the action lowers the relative $\mathbb{Z}$-grading by 2. \halbox

\section{Computations}
For the computations we are interested in, we will need to compute the twisted filtered chain complexes for two different families of knots:  the Borromean knots $B_g$ and the $O$-knots $O_{p,q}$.
The Borromean knot $B_1$ is gotten by performing $0$-surgery on two of the components of the Borromean link in $S^3$, thinking of the last component as a knot in $S^1 \times S^2 \# S^1 \times S^2$; the Borromean knot $B_g$ is the $g$-fold connect sum of $B_1$.
The knot $O_{p,q}$ is obtained by performing $-p/q$-surgery along one component of the Hopf link in $S^3$, and thinking of the other component as a knot in $L(p,q)$.   

We make computations for these knots, and then we proceed to our final results.

\subsection{The Borromean knots $B_1$}  
We first want to calculate $\underline{C}^+_{\xi}(\#^2 S^1 \times S^2, B_1)$ for $\xi \in \underline{\mbox{Spin}}^{\mbox{\scriptsize{c}}}(\#^2 S^1 \times S^2, B_1)$. As it will turn out, we need only consider those relative \sst\ structures in $G_{B_1}^{-1}(\m{t}_{B_1})$, where $\m{t}_{B_1}$ is the unique torsion structure on $\#^2 S^1 \times S^2$.  So, let $\xi^0_{B_1}$ be the unique structure in $G_{B_1}^{-1}(\m{t}_{B_1})$ that extends to a torsion \sst\ structure over $T^3$ (which is 0-surgery on $B_1$), and let $\xi^k_{B_1}$ denote $\xi^0_{B_1} + k\mbox{PD}[\mu]$.  

Our calculations are essentially carried out in \cite{JM}, but we want to detail the result.
We adapt the following Lemma from that paper.  Let $R$ denote the group ring $\mathbb{Z}[H^1(\#^2S^1 \times S^2)]$.

\begin{lemma} \label{thm:7.1} For $\xi \in \rspc{\#^2 S^1 \times S^2, B_1}$, we have
$$ \underline{\widehat{HFK}}(\#^2 S^1 \times S^2, B_1, \xi) \cong \left\{ 
\begin{array}{ll}
\Lambda^{1-k}_R\left(H_1(\#^2S^1\times S^2) \otimes_{\mathbb{Z}} R\right), & \xi = \xi^k_{B_1} \\
0, &\mbox{otherwise,} \\
\end{array} \right.$$
where $\Lambda^*\left(H_1(\#^2S^1\times S^2) \otimes_{\mathbb{Z}} R\right)$ is absolutely $\mathbb{Q}$-graded with $\Lambda^{i}\left(H_1(\#^2S^1\times S^2)\right)$ supported in grading level $1-i$.
Also,
$$ \underline{\widehat{HF}}(\#^2 S^1 \times S^2, \m{t}_{B_1}) \cong \mathbb{Z}_{(-2)}.$$
\end{lemma}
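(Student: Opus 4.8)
The plan is to reduce the lemma to the computation in \cite{JM}, where the totally twisted filtered complex of the Borromean knots is worked out as an ingredient for $\underline{HF}^+(\Sigma_g \times S^1)$; the task here is essentially to extract that result and record it in the normalization we need. First I would fix a doubly-pointed Heegaard diagram for $(\#^2 S^1 \times S^2, B_1)$ of the type used by Ozsv{\'a}th and Szab{\'o} to compute the untwisted invariant, in which $\widehat{CFK}$ has exactly four generators and vanishing untwisted differential: one over $\xi^1_{B_1}$, one over $\xi^{-1}_{B_1}$, and two over $\xi^0_{B_1}$, so that in the untwisted setting $\widehat{HFK}(\#^2 S^1 \times S^2, B_1, \xi^k_{B_1}) \cong \Lambda^{1-k}(H_1(\#^2 S^1 \times S^2))$ with the stated grading; in particular their computation shows the two generators over $\xi^0_{B_1}$ lie in a single Maslov grading.

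The heart of the argument is that the \emph{twisted} differential on $\underline{\widehat{CFK}}(\#^2 S^1 \times S^2, B_1)$ still vanishes. By Equation \ref{eq:4}, any disk $\phi$ with $n_w(\phi) = n_z(\phi) = 0$ joins generators over the same relative \sst\ structure, so the twisted $\widehat{CFK}$ splits as a direct sum over $\xi \in \rspc{\#^2 S^1 \times S^2, B_1}$. The summands over $\xi^{\pm 1}_{B_1}$ have a single generator each, hence no differential. The summand over $\xi^0_{B_1}$ has two generators, but in twisted Floer homology the relative Maslov grading between generators joined by such a $\phi$ equals $\mu(\phi)$, which by the previous paragraph is $0$; so no index-$1$ disk is available and the differential is again zero. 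Hence $\underline{\widehat{CFK}}$ is free over $R$ on the four generators, with $\xi^k_{B_1}$-summand of rank $\binom{2}{1-k}$, and carrying the gradings above this is $\Lambda^{1-k}_R\big(H_1(\#^2 S^1 \times S^2)\otimes_{\mathbb{Z}} R\big)$; over relative structures not of the form $\xi^k_{B_1}$ there are no generators at all.

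For $\underline{\widehat{HF}}(\#^2 S^1 \times S^2, \m{t}_{B_1})$ I would use the knot-filtration spectral sequence, whose $E_1$-page is $\bigoplus_k \underline{\widehat{HFK}}(\#^2 S^1 \times S^2, B_1, \xi^k_{B_1}) \cong \Lambda^*_R(R^2)$ and whose $d_1$ is the Koszul differential for the element $(1-T_1, 1-T_2)$, where $T_1, T_2$ are the natural generators of $R = \mathbb{Z}[H^1(\#^2 S^1 \times S^2)]$. Since $(1-T_1, 1-T_2)$ is a regular sequence, the homology collapses to $R/(1-T_1, 1-T_2) \cong \mathbb{Z}$ after $d_1$, so $\underline{\widehat{HF}}(\#^2 S^1 \times S^2, \m{t}_{B_1}) \cong \mathbb{Z}$, in the single grading recorded, namely $\mathbb{Z}_{(-2)}$. (Equivalently one obtains the group directly from the connected-sum K\"unneth formula together with $\underline{\widehat{HF}}(S^1 \times S^2) \cong \mathbb{Z}$; either way we are only reproducing a known input for the later sections.)

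The one point that genuinely needs care is the vanishing of the twisted differential over $\xi^0_{B_1}$ --- that is, verifying the two generators there share a Maslov grading, so that no nonzero twisted differential can slip in even though one could a priori appear (unlike over untwisted coefficients, ``twisted differential zero'' does not follow automatically from ``untwisted differential zero''). But this is read off from Ozsv{\'a}th--Szab{\'o}'s untwisted computation of $\widehat{HFK}(\#^2 S^1 \times S^2, B_1)$ and leaves nothing further to check here; the rest is the formal passage to twisted coefficients, exactly as in \cite{JM}.
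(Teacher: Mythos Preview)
Your argument is correct and reaches the same conclusion, but via a different path. The paper's proof is essentially a citation: it defers to Section~9 of \cite{JM}, where the twisted $\underline{\widehat{HFK}}$ of the Borromean knot is extracted from a twisted analogue of the skein long exact sequence for $\widehat{HFK}$, together with the known untwisted answer. You instead argue directly from an explicit four-generator Heegaard diagram, using a Maslov-grading obstruction to rule out any twisted differential over $\xi^0_{B_1}$, and then either a Koszul computation or the K\"unneth formula for $\underline{\widehat{HF}}$.

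What each approach buys: the long-exact-sequence route of \cite{JM} is more systematic and generalizes cleanly to all $B_g$. Your route is more elementary and in fact anticipates the paper's own discussion immediately following the Lemma, where exactly this four-generator diagram (Figure~4) is exhibited and the gradings $1,0,0,-1$ are read off to analyze the full filtered complex; you have effectively folded that step into the proof of the Lemma itself. One point worth making explicit: your assertion that the two generators over $\xi^0_{B_1}$ lie in a single Maslov grading relies on the untwisted $\widehat{HFK}$ already having total rank four --- so that in a four-generator diagram the untwisted $\widehat{CFK}$ differential must vanish identically, which is what pins down the individual gradings. You do cite this, but it is the logical hinge of your vanishing argument and deserves to be stated as such rather than absorbed into ``their computation shows''.
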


\begin{proof} Specifically, these are results of Section 9 of \cite{JM}.  Essentially, the assertions all follow from the observation that a long exact sequence analogous to the one for untwisted $\widehat{HFK}$ holds for $\underline{\widehat{HFK}}$, along with prior results regarding the untwisted analogues. \end{proof}

In Figure \myfig{4}, we exhibit an explicit weakly admissable Heegaard diagram for $(\#^2 S^1 \times S^2, B_1)$.  It is not hard to verify that there are four intersection points in our diagram that represent $\m{t}_{B_1}$, which we will denote $\mathbf{y}_1 =\{p_1, p_2,r\}, \mathbf{y}_2=\{p_1, q_2,r\}, \mathbf{y}_3=\{q_1, p_2,r\}$, and $\mathbf{y}_4=\{q_1, q_2,r\}$.  
There are four other generators representing different \sst\ structures, but it is not difficult to adjust this diagram to get a different weakly admissable diagram with these structures not represented. Hence, $\underline{C}^+_{\xi}(\#^2 S^1 \times S^2, B_1)$ vanishes for $\xi$ not equal to $\xi^k_{B_1}$ for some $k$.  
Of course, since $\m{t}_{B_1}$ is torsion, this diagram is also $\m{t}_{B_1}$-strongly admissable, and so going forth, we might as well just look at $\underline{C}_{\xi^k_{B_1}}(\#^2 S^1 \times S^2, B_1) = \underline{CFK}^{\infty}(\#^2 S^1 \times S^2, B_1, \xi^k_{B_1})$, instead of the quotient version.

\begin{figure}[t!]
\label{fig:4}
\centering \includegraphics[scale=.45]{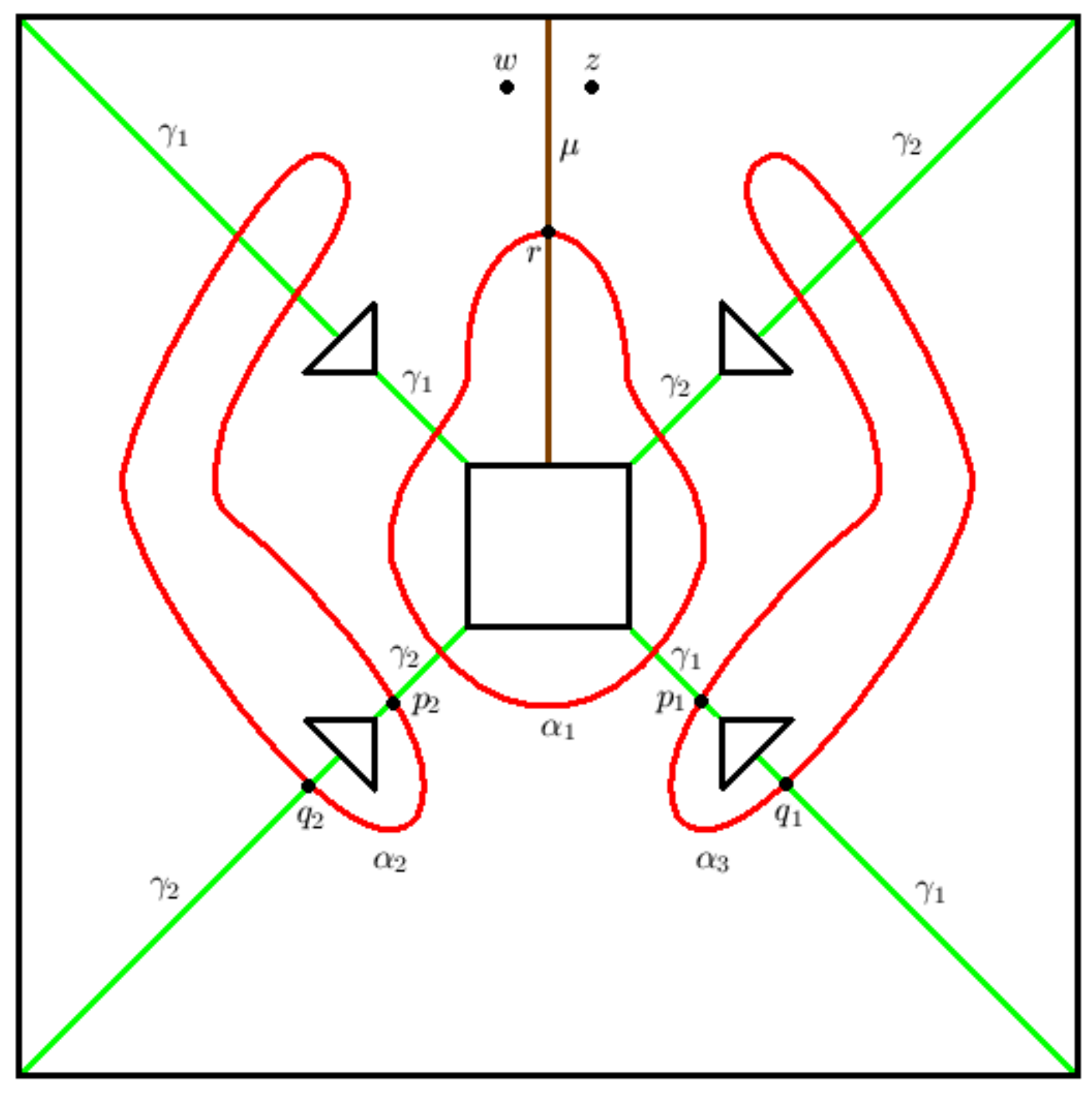}
\caption{An admissable doubly-pointed Heegaard diagram for $(\#^2 S^1 \times S^2, B_1)$.  The outer square and inner square are feet of a 1-handle, the upper left and lower right triangles are feet of another and the upper right and lower left triangles are feet of a third.  The points $p_1, q_1, p_2, q_2$ and $r$ are components of the generators $\mathbf{y}_1, \mathbf{y}_2, \mathbf{y}_3$, and $\mathbf{y}_4$.  }
\end{figure}

From the Lemma along with a quick examination of the diagram, it is clear that $\mathbf{y}_1, \mathbf{y}_2, \mathbf{y}_3$, and $\mathbf{y}_4$ have respective absolute $\mathbb{Q}$-gradings of $1, 0, 0$ and $-1$; and they therefore represent relative \sst\ structures $\xi^1_{B_1}, \xi^0_{B_1}, \xi^0_{B_1}$ and $\xi^{-1}_{B_1}$, respectively.  (In fact, the untwisted chain complex $\widehat{CF}(\#^2S^1\times S^2, \m{t}_{B_1})$ is generated over $\Lambda^*\left(H_1(\#^2S^1 \times S^2)\right)$ by $\mathbf{y}_1$.)  Given this, we can write
$$\underline{C}_{\xi^k_{B_1}}(\#^2 S^1 \times S^2, B_1) \cong \bigoplus_{(i,j) \in \mathbb{Z} \oplus \mathbb{Z}} \underline{C}^{i,j}_{\xi^k_{B_1}}(\#^2 S^1 \times S^2, B_1)$$
as a $\mathbb{Z} \oplus \mathbb{Z}$-graded group, with summands given by
$$ \underline{C}^{i,j}_{\xi^k_{B_1}}(\#^2 S^1 \times S^2, B_1) \cong \Lambda^{1+i-j-k}_R\left(H_1(\#^2S^1\times S^2) \otimes_{\mathbb{Z}} R\right).$$
Precisely, the summand $\underline{C}^{i,i-k+1}_{\xi^k_{B_1}}(\#^2 S^1 \times S^2, B_1)$
is a free module over $R$ generated by $[\mathbf{y}_1, i, i-k+1]$, lying in absolute $\mathbb{Q}$-grading $2i+1$; and the other summands are described analogously.

Now, note that we have a $\mathbb{Z}$-filtration of $\underline{C}_{\xi^k_{B_1}}(\#^2 S^1 \times S^2, B_1)$, with filtration level $\ell$ the direct sum of those $\underline{C}^{i,j}_{\xi^k_{B_1}}(\#^2 S^1 \times S^2, B_1)$ with $i+j \leq \ell$.  This gives rise to a Leray spectral sequence, which has $E^1$ term comprised of the group $\underline{\widehat{HFK}}(\#^2 S^1 \times S^2, B_1, \xi^k_{B_1} + (i-j)\mbox{PD}[\mu])$ in level $(i,j)$, and $E^{\infty}$ term the associated graded group of $\underline{HF}^{\infty}\big(\#^2 S^1 \times S^2, G_{B_1}(\xi^k_{B_1})\big)$.  

We can therefore easily see that $d_0$, the differential on the $E^0$ term of the sequence (or equivalently the portion of the differential that doesn't lower filtration level), must vanish, for otherwise the rank of $\underline{\widehat{HFK}}(\#^2 S^1 \times S^2, B_1)$ would be too small.  Then, observe that the $\mathbb{Q}$-grading on all of $\underline{C}^{i,j}_{\m{t}_k}(\#^2 S^1 \times S^2, B_1)$ is equal to $i+j+k$; thus, simply for grading reasons, the differentials $d_{\ell}$ for $\ell \geq 2$ all must vanish.  Thus, the only non-trivial differential in the spectral sequence is $d_1$.  
In particular, the differential on the filtered chain complex $\underline{C}_{\xi^k_{B_1}}(\#^2 S^1 \times S^2, B_1)$ can be written as $V + H$, where $V$ takes $\underline{C}^{i,j}_{\xi^k_{B_1}}(\#^2 S^1 \times S^2, B_1)$ to $\underline{C}^{i,j-1}_{\xi^k_{B_1}}(\#^2 S^1 \times S^2, B_1)$ and $H$ takes $\underline{C}^{i,j}_{\xi^k_{B_1}}(\#^2 S^1 \times S^2, B_1)$ to $\underline{C}^{i-1,j}_{\xi^k_{B_1}}(\#^2 S^1 \times S^2, B_1)$.

Let us look at $V$.  To do this, observe that there is also a spectral sequence from the induced filtration on $\underline{\widehat{CFK}}(\#^2 S^1 \times S^2, B_1, \xi^k_{B_1})$ which converges to the associated graded group of $\underline{\widehat{HF}}(\#^2 S^1 \times S^2, \m{t}_{B_1})$.  (The group $\underline{\widehat{CFK}}(\#^2 S^1 \times S^2, B_1, \xi^k_{B_1})$ can be viewed as the $i=0$ column of $\underline{C}_{\xi^k_{B_1}}(\#^2 S^1 \times S^2, B_1)$, or with any of the columns after minor adjustments.)  Of course, this spectral sequence also collapses after $E^1$.  The last claim of Lemma \ref{thm:7.1}, then, dictates that $V$ should be injective on 
$\underline{C}^{i,i-k+1}_{\xi^k_{B_1}}(\#^2 S^1 \times S^2, B_1)$, and that the image of $V|_{\underline{C}^{i,i-k+1}_{\xi^k_{B_1}}(\#^2 S^1 \times S^2, B_1)}$ should be exactly the kernel of $V|_{\underline{C}^{i,i-k}_{\xi^k_{B_1}}(\#^2 S^1 \times S^2, B_1)}$, for otherwise there would be elements of the wrong absolute grading living past $E^1$ in this spectral sequence. 
The fact that the $E^{\infty}$ term is rank one over $\mathbb{Z}$, supported entirely in grading $-2$, also implies that  $V|_{\underline{C}^{i,i-k}_{\xi^k_{B_1}}(\#^2 S^1 \times S^2, B_1)}$ has cokernel $\mathbb{Z}$ in $\underline{C}^{i,i-k-1}_{\xi^k_{B_1}}(\#^2 S^1 \times S^2, B_1)$.

It is clear that the remarks in the last paragraph apply equally to $H$, aside from changes in the notation.  To be precise, we wrap everything up in the following proposition.

\begin{prop} \label{thm:7.2} Let $\xi^0_{B_1} \in  \rspc{\#^2 S^1 \times S^2, B_1}$  be the relative \sst\ structure that induces torsion ones on $\#^2 S^1 \times S^2$ and $T^3$, and let $\xi^k_{B_1} = \xi^0_{B_1} + k\mathrm{PD}[\mu]$, where $\mu$ is the oriented meridian of $B_1$.  Then, $q_{B_1}(\xi^k_{B_1}) = 2k$.

Write $\Lambda^*$ for $\Lambda^*_R\left(H_1(\#^2S^1\times S^2) \otimes_{\mathbb{Z}} R \right)$.  We also have an isomorphism of $\mathbb{Z} \oplus \mathbb{Z}$-filtered modules
$$\underline{C}_{\xi}(\#^2 S^1 \times S^2, B_1) \cong 
\left\{ \begin{array}{ll}
\bigoplus_{i \in \mathbb{Z}} \Lambda^*\{i, i-k\}, & \xi = \xi^k_{B_1} \\
0, & \mbox{otherwise,} \\ 
\end{array} \right. $$
where $\Lambda^i$ is supported in filtration level $(0,1-i)$, and the curly braces denote the filtration level shift.

Under this identification, the differential on $\underline{C}_{\xi^k_{B_1}}(\#^2 S^1 \times S^2, B_1)$ is given by a map $V+H$, where $V$ and $H$ lower filtration level by $(0,1)$ and $(1,0)$ respectively, and act in the following manner.  There are exact sequences of $R$-modules
$$ 0 \rightarrow \Lambda^{0} \overset{V}{\rightarrow} \Lambda^{1} \overset{V}{\rightarrow} \Lambda^{2} \rightarrow \mathbb{Z} \rightarrow 0 $$
and
$$ 0 \rightarrow \Lambda^{2} \overset{H}{\rightarrow} \Lambda^{1} \overset{H}{\rightarrow} \Lambda^{0} \rightarrow \mathbb{Z} \rightarrow 0 $$
such that $V|_{\Lambda^{2}}$ and $H|_{\Lambda^{0}}$ are both zero maps.
\end{prop}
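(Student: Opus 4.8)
The plan is to assemble the proposition from the discussion immediately preceding it, supplying the handful of remaining verifications. First I would pin down the filtered-module identification. The vanishing of $\underline{C}^+_{\xi}(\#^2 S^1\times S^2, B_1)$ for $\xi$ not of the form $\xi^k_{B_1}$ comes from the diagram of Figure \myfig{4}: it carries eight generators, exactly four of which ($\mathbf{y}_1,\dots,\mathbf{y}_4$) represent $\m{t}_{B_1}$, and a small isotopy of the attaching curves restores weak admissibility while eliminating the other four, leaving a diagram in which only relative structures over $\m{t}_{B_1}$ have nonzero complex. For $\xi=\xi^k_{B_1}$ I would feed the computation of $\underline{\widehat{HFK}}$ from Lemma \ref{thm:7.1} --- whose total rank over $R$ is $2^2=4$, matching the four generators over $\m{t}_{B_1}$ --- into the spectral sequence of the $\mathbb{Z}$-filtration on $\underline{C}_{\xi^k_{B_1}}$ to force the identification $\underline{C}^{i,j}_{\xi^k_{B_1}}\cong\Lambda^{1+i-j-k}_R\big(H_1(\#^2 S^1\times S^2)\otimes_{\mathbb{Z}}R\big)$ in the stated filtration levels, and use the computed absolute $\mathbb{Q}$-gradings $1,0,0,-1$ of $\mathbf{y}_1,\dots,\mathbf{y}_4$ to place $\underline{C}^{i,j}_{\xi^k_{B_1}}$ in grading $i+j+k$.

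Next I would argue that the differential has the asserted form $V+H$. Its components count disks $\phi$ with $\mu(\phi)=1$ and $n_w(\phi),n_z(\phi)\ge 0$, so no component raises either coordinate of the $\mathbb{Z}\oplus\mathbb{Z}$ filtration, while each lowers the absolute grading by one; since $\underline{C}^{i,j}_{\xi^k_{B_1}}$ lies in grading $i+j+k$, a component out of level $(i,j)$ must land in a level $(i',j')$ with $i'\le i$, $j'\le j$ and $i'+j'=i+j-1$, hence $(i',j')\in\{(i-1,j),(i,j-1)\}$; call these components $H$ and $V$. The three pieces $V^2$, $VH+HV$, $H^2$ of $\underline{\partial}^2=0$ occupy the three distinct filtration bidegrees $(i,j-2)$, $(i-1,j-1)$, $(i-2,j)$ and so vanish separately, giving $V^2=H^2=0$. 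The vanishing of $V$ on the $\Lambda^2$-summands and of $H$ on the $\Lambda^0$-summands is then automatic, since $\Lambda^m_R(H_1\otimes R)=0$ for $m<0$ and for $m>2$ (as $H_1(\#^2 S^1\times S^2)$ has rank two), so those targets are zero.

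For the two exact sequences I would proceed column by column. Restricting $V$ to a fixed $i$-column identifies that column with $\underline{\widehat{CFK}}(\#^2 S^1\times S^2,B_1,\xi^k_{B_1})$ (after the minor adjustments identifying the columns), whose homology --- forgetting the remaining $z$-filtration --- is $\underline{\widehat{HF}}(\#^2 S^1\times S^2,\m{t}_{B_1})\cong\mathbb{Z}$ by Lemma \ref{thm:7.1}; that this $\mathbb{Z}$ lies at the $\Lambda^2$-end, i.e.\ that $V\colon\Lambda^0\to\Lambda^1$ is injective, I would read off from the standard compatibility of $\underline{HF}^\infty$, $\underline{\widehat{HF}}$ and the filtration as in \cite{OSKI}, \cite{OSRS}, a kernel of $V$ on $\Lambda^0$ being incompatible with the known (rank-one over $\mathbb{Z}[U,U^{-1}]$) totally twisted $\underline{HF}^\infty(\#^2 S^1\times S^2,\m{t}_{B_1})$. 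This yields $0\to\Lambda^0\xrightarrow{V}\Lambda^1\xrightarrow{V}\Lambda^2\to\mathbb{Z}\to 0$, and the sequence for $H$ then follows by applying the basepoint-swapping isomorphism $s$ of Section 5.2, which interchanges the roles of $i$ and $j$. Finally, for $q_{B_1}(\xi^k_{B_1})=2k$ it suffices --- by the proof of Proposition \ref{thm:2.8}, which gives $q_{B_1}(\xi+\mbox{PD}[\mu])=q_{B_1}(\xi)+2$ --- to show $q_{B_1}(\xi^0_{B_1})=0$; since $B_1$ is special we have $\kappa=0$ and $d=1$, so by Propositions \ref{thm:2.7} and \ref{thm:2.8} the ($N$-independent) quantity $q_{B_1}$ may be evaluated on the zero-surgery cobordism, where $[\widetilde{dF'}]^2=0$, giving $q_{B_1}(\xi^0_{B_1})=\pm\langle c_1(\m{t}_0),[\widehat{dS}]\rangle$ for $\m{t}_0\in\spc{T^3}$ the filling of $\xi^0_{B_1}$, which is torsion by the definition of $\xi^0_{B_1}$, so the pairing vanishes. (Alternatively this is immediate from Proposition \ref{thm:2.9} with $Y_0=T^3$ and $\m{t}_0$ torsion, once one checks that the relative structure appearing there is $\xi^0_{B_1}$.)

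The step I expect to be the main obstacle is the index bookkeeping that simultaneously tracks the absolute $\mathbb{Q}$-grading, the $(i,j)$ filtration levels, the exterior-power indexing, and the normalization of $\xi^0_{B_1}$ --- in particular, verifying that the rank-one homology coming from $\underline{\widehat{HF}}$ and from $\underline{HF}^\infty$ lands in exactly the filtration slots needed to upgrade ``the homology is $\mathbb{Z}$'' to the precise four-term exact sequences, and that the $q$-normalization is consistent with the grading. Everything else is a routine transcription of the computation of \cite{JM} and of the $CFK^\infty$ formalism into the twisted setting.
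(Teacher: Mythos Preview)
Your proposal is correct and follows essentially the same route as the paper: identify the chain groups from the explicit diagram and Lemma \ref{thm:7.1}, use the absolute grading to see that the differential has only $(0,1)$- and $(1,0)$-components, and then read the exact sequences off from the column homology $\underline{\widehat{HF}}(\#^2 S^1\times S^2,\m{t}_{B_1})\cong\mathbb{Z}$.

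The one place where you take a detour is in locating the surviving $\mathbb{Z}$ at the $\Lambda^2$-end of the column. You appeal to ``standard compatibility'' with $\underline{HF}^\infty$; this is plausible but vague, and unnecessary. The paper's argument is simpler and uses information you already have: Lemma \ref{thm:7.1} records not just that $\underline{\widehat{HF}}\cong\mathbb{Z}$ but also its absolute grading, and you have already placed $\underline{C}^{i,j}_{\xi^k_{B_1}}$ in grading $i+j+k$. Comparing, the surviving class must sit in the grading occupied by $\Lambda^2$, which forces injectivity of $V$ on $\Lambda^0$, exactness at $\Lambda^1$, and $\operatorname{coker}(V\colon\Lambda^1\to\Lambda^2)\cong\mathbb{Z}$ in one stroke. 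Your $q_{B_1}(\xi^0_{B_1})=0$ argument via Proposition \ref{thm:2.9} is fine; the ``evaluate on the zero-surgery cobordism'' phrasing is a little loose since $E_{K,N}$ is set up for nonspecial framings, but the Proposition \ref{thm:2.9} route you note as an alternative is exactly what is needed.
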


\begin{proof} That $q_{B_1}(\xi^0_{B_1}) = 0$ is clear, and thus $q_{B_1}(\xi^k_{B_1}) = 2k$ holds for all $k$.  The sequences are not explicitly mentioned above, but they are clear from the discussion.  
\end{proof}

Denote by $C_k$ the particular chain complex $\underline{C}_{\xi}(\#^2 S^1 \times S^2, B_1)$ given above, with $\xi = \xi^k_{B_1}$.

Consider the chain complex $C'_0$, for the structure $\xi^0_{B_1}$, obtained when we switch the basepoints $w$ and $z$.  Obviously, $C'_0$ is chain homotopy equivalent to $C_0$; the algebraic structure of $C'_0$ is clearly equivalent to that of $C_k$, except with filtration levels switched.  So, for example, in filtration level $(0,1)$, $C_0$ has the group generated by $[\mathbf{y}_1,0,1]$, while $C'_0$ has the group generated by $[\mathbf{y}_4,0,1]$; in filtration level $(1,0)$, $C_0$ has the group generated by $[\mathbf{y}_4,1,0]$, while $C'_0$ has the group generated by $[\mathbf{y}_1,1,0]$.  By a minor abuse of notation, it makes sense to call the portion of $C'_0$ in level $(i,j)$ by the same name $\Lambda^{1+j-i}$ as the portion of $C_0$ in level $(j,i)$.  We also retain the names of the components of the differential, although now $V$ lowers filtration level by $(1,0)$ instead of by $(0,1)$, and vice-versa for $H$. 

Let $\varphi:C_0 \rightarrow C'_0$ be a filtered chain homotopy equivalence.  The restriction of $\varphi$ to an individual filtration level is easily seen to depend not on the precise filtration level, but only on the groups $\Lambda^i$ and $\Lambda^{2-i}$ lying in the filtration level in each of the complexes.  So let $*: \Lambda^i \rightarrow \Lambda^{2-i}$ be the appropriate restriction of $\varphi$.  We have in particular the commutative diagram
$$\begin{array}{lclclclclcl}
0 &
\longrightarrow &
\Lambda^0 &
\overset{V}{\longrightarrow} &
\Lambda^1 &
\overset{V}{\longrightarrow} &
\Lambda^2 &
\longrightarrow &
\mathbb{Z} &
\longrightarrow &
0 \\
 &
 &
* \downarrow &
 &
* \downarrow &
 & 
* \downarrow &
 &
\downarrow &
 &  \\
0 &
\longrightarrow &
\Lambda^2 &
\overset{H}{\longrightarrow} &
\Lambda^1 &
\overset{H}{\longrightarrow} &
\Lambda^0 &
\longrightarrow &
\mathbb{Z} &
\longrightarrow &
0\\
\end{array}$$
where the rightmost horizontal arrow is an isomorphism, and the rows are the exact sequences of Proposition \ref{thm:7.2}.

For general $k$, we have similar remarks, except that they relate $C_k$ with a chain complex $C'_{-k}$ for the structure $\xi^{-k}_{B_1}$.  At the point in the discussion where they are relevant, we can use precisely the same maps $*$ as before.

\subsection{$O$-knots}  For the $O$-knot, of course, non-trivial twisting doesn't occur, as the ambient manifolds are rational homology spheres.  Nonetheless, we take a close look at them, since we want to carefully write down what relative \sst\ structures the generators lie in.   

Let $K=O_{p,q}$; we restrict to the case $p, q > 0$.
In Figure \myfig{5}, we depict a standard doubly-pointed Heegaard triple for $K$, equipped with the $0$-framing as the longitude $\la$ (i.e., surgery along this longitude is the same as surgering the Hopf link in $S^3$ with coefficents $-\frac{p}{q}$ and $0$).  Hence, $Y_{\al\de}$ is $L(p,q)$; and, for ease of computation, we choose $\ga$ so that $Y_{\al\ga}$ is surgery on $K$ with framing $\mu + \la$.  We write $W'_K$ for the cobordism $X_{\al\ga\de}$ filled in by $B^4$ along $Y_{\ga\de}$.  This cobordism can also be described as the orientation reversal of the cobordism obtained by attaching a 1-framed 2-handle to $K$.  
We fix orientations for the circles, and label the points of $\mathbb{T}_{\al} \cap \mathbb{T}_{\de}$ by $\mathbf{x}_K(n), n = 1, \ldots, p$, as shown.

\begin{figure}[t!]
\label{fig:5}
\centering \includegraphics[scale=.70]{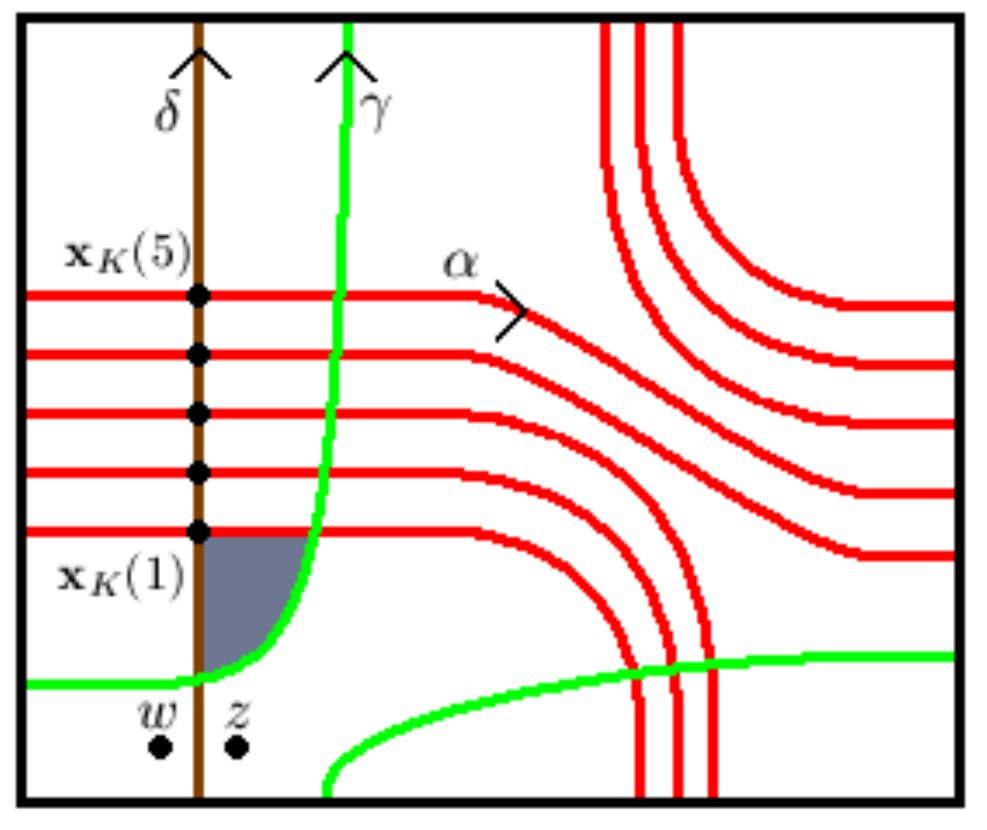}
\caption{A standard Heegaard diagram for $W'_K$, where $K=O_{5,3}$.  The marked points are $\mathbf{x}_K(n)$ for $n=1, \ldots, 5$.  The domain of the triangle $\psi_n$ is shaded in.}
\end{figure}

Let $\mathcal{P}$ be the periodic domain such that $\partial \mathcal{P} = -\al + p\ga - (p+q)\de$ and $n_w(\mathcal{P}) = 0$.  The placement of basepoints specifies an orientation of $K$; with respect to this, the class $[\widetilde{dF'}]$ which generates $H_2(W'_K)$ corresponds to this domain.

For each $n$, we have a small triangle $\psi_n \in \pi_2(\mathbf{y}, \Theta_{\ga\de}, \mathbf{x}_K(n))$ through one of the points $\mathbf{y} \in \mathbb{T}_{\al} \cap \mathbb{T}_{\ga}$, which has multiplicity 0 at both basepoints.  We calculate using Proposition \ref{thm:2.3} that 
$$\langle c_1(\underline{\m{s}}_{w,z}(\mathbf{x}_K(n)), \phi_*^{-1}([\widetilde{dF'}]) \rangle = 2p - 2n + 1,$$
recalling that we have an isomorphism $\phi_*: H_2(Y,K) \rightarrow H_2(W'_K)$.
If $[\mu] \in H_1(L(p,q) \setminus K)$ is the homology class of an oriented meridian of $K$, then $\langle \mbox{PD}[\mu], \phi_*^{-1}([\widetilde{dF'}]) \rangle = p$, thinking of $\mbox{PD}[\mu]$ as an element of $H^2(L(p,q), K)$.   Thus, 
$$\langle c_1\big(\underline{\m{s}}_{w,z}(\mathbf{x}_K(n)) + m\mbox{PD}[\mu]\big), \phi_*^{-1}([\widetilde{dF'}]) \rangle = 2(m+1)p - 2n + 1.$$

We can also identify $\m{s}_w(\psi_n)$ using the Chern class evaluation formula (\ref{eq:12}); we get that
$$\langle c_1(\m{s}_w(\psi_n)), [\widetilde{dF'}] \rangle = 2p + q -2n + 1.$$
Furthermore, the initial defining Equation \ref{eq:5} of $E_{K,N}$ shows that $E_{K,1}\big(\m{s}_w(\psi_n)\big) = \um{s}_{w,z}\big(\mathbf{x}_K(n)\big)$; then Proposition \ref{thm:2.6} shows that 
$$E_{K,1}\big(\m{s}_w(\psi_n) + m{\phi^*}^{-1}(\mbox{PD}[\mu])\big) = \um{s}_{w,z}\big(\mathbf{x}_K(n)\big) + m\mbox{PD}[\mu].$$

With respect to the orientation on $O_{p,q}$, it is not difficult to see that $\kappa = \frac{q}{p}$, so that $\frac{[\widetilde{dF'}]^2}{d^2} = -\frac{q}{p} - 1$.  Hence, 
$$q_K\big(\um{s}_{w,z}\big(\mathbf{x}_K(n)\big) + m\mbox{PD}[\mu] \big) = -\frac{q}{p} - 1 + \frac{2p + q -2n + 1 + 2pm}{p} = \frac{p - 2n + 1}{p} + 2m.$$ 

In addition, observe that $\um{s}_{w,z}\big(\mathbf{x}_K(1)\big) - \um{s}_{w,z}\big(\mathbf{x}_K(q+1)\big) = \mbox{PD}[\la]$.  From this, it is easy to extract that 
$$q_K(\xi + \mbox{PD}[\la]) = q_K(\xi) + \frac{2q}{p}.$$

Each of the intersection points in $\mathbb{T}_{\al} \cap \mathbb{T}_{\ga}$ lies in a different absolute \sst\ structure; hence we infer that all differentials in the complex vanish.  So, it is straightforward to manipulate the above to get the following summation.

\begin{prop} \label{thm:7.3} Take $p, q > 0$, and let $K = O_{p,q}$, equipped with the $0$-framed longitude $\la$ (as described above), with oriented meridian $\mu$.
Then for each $r \in \mathbb{Z}$, there is precisely one relative \sst\ structure $\xi^r_{p,q}$ with $q_K(\xi^r_{p,q}) = \frac{2r - p - 1}{p}$, and \rspc{L(p,q), K} is composed of precisely these structures.  (So, denoting $G_K(\xi^r_{p,q})$ by $\m{t}^r_{p,q}$, we have that $\m{t}^{r+p}_{p,q} = \m{t}^{r}_{p,q}$, and of course 
$\xi^r_{p,q} =  \big[\m{t}^r_{p,q}, \frac{2r - p - 1}{p}\big].$)
Furthermore, we have
$$\underline{C}_{\xi^r_{p,q}}(L(p,q), K) \cong \mathbb{Z}[U, U^{-1}],$$ 
generated over $\mathbb{Z}$ by a single generator in grading $\left(i, i - \lfloor \frac{p-r}{p} \rfloor \right)$ for each integer $i$; and 
$$ \xi^r_{p,q} + \mathrm{PD}[\la] = \xi^{r+q}_{p,q}.$$

\subsection{The full filtered complex}  
Now, we describe the chain complex we are ultimately interested in, that of $B_g \#_{\ell =1}^n O_{p_{\ell}, q_{\ell}}$.  Denote this knot by $K$, and let $Y$ be the ambient manifold for $K$.  

Denote $\xi^j_{B_1} \#^g_{i=2} \xi^0_{B_1}$ by $\xi^j_{B_g}$; and let $\xi(j; r_1, \ldots, r_n) = \xi^j_{B_g} \#_{\ell =1}^n \xi^{r_{\ell}}_{p_{\ell}, q_{\ell}}$.  Write $\m{t}(r_1, \ldots, r_n)$ for $G_K\big(\xi(j; r_1, \ldots, r_n)\big).$ Also, let 
$$\eta\big(\xi(j; r_1, \ldots, r_n)\big) = j - \sum_{\ell = 1}^n \lfloor \frac{p_{\ell} - r_{\ell}}{p_{\ell}} \rfloor.$$  
\end{prop}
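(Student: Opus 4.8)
The plan is to obtain the proposition by pure bookkeeping from the three computations carried out in the paragraphs immediately preceding it, together with the structural facts about $q_K$ from Section 2. The ingredients I would use are: the diagram of Figure 5, which has exactly $p$ generators $\mathbf{x}_K(n)$, $n=1,\dots,p$, one for each \sst\ structure of $L(p,q)$; the formula $q_K\big(\um{s}_{w,z}(\mathbf{x}_K(n)) + m\,\mathrm{PD}[\mu]\big) = \frac{p-2n+1}{p} + 2m$; the formula $q_K(\xi + \mathrm{PD}[\la]) = q_K(\xi) + \tfrac{2q}{p}$; the identity $q_K(\xi + i\,\mathrm{PD}[\mu]) = q_K(\xi) + 2i$ and the injectivity of $\xi \mapsto (G_K(\xi), q_K(\xi))$ from Proposition \ref{thm:2.8}; and the description (Equation \ref{eq:11}) of the generators of $\underline{C}_\xi(L(p,q),K)$, where twisting plays no role since $L(p,q)$ is a rational homology sphere.

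First I would check that the classes $\um{s}_{w,z}(\mathbf{x}_K(n)) + m\,\mathrm{PD}[\mu]$ with $1\le n\le p$ and $m\in\mathbb{Z}$ exhaust $\rspc{L(p,q),K}$: the $p$ classes $\um{s}_{w,z}(\mathbf{x}_K(n))$ have pairwise distinct images under $G_K$, hence meet every fibre, and each fibre is a single $\mathrm{PD}[\mu]$-orbit. The displayed $q_K$-formula shows these classes are themselves pairwise distinct, and a short congruence computation shows that the set of their $q_K$-values is exactly $\{\tfrac{2r-p-1}{p}:r\in\mathbb{Z}\}$; this produces the unique $\xi^r_{p,q}$ with the prescribed value. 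Periodicity then follows from $q_K(\xi^{r+p}_{p,q})=q_K(\xi^r_{p,q})+2=q_K(\xi^r_{p,q}+\mathrm{PD}[\mu])$, which forces $\xi^{r+p}_{p,q}=\xi^r_{p,q}+\mathrm{PD}[\mu]$ and hence $\m{t}^{r+p}_{p,q}=\m{t}^r_{p,q}$; the bracket notation $\xi^r_{p,q}=\big[\m{t}^r_{p,q},\tfrac{2r-p-1}{p}\big]$ is then just the convention of Section 2, and $\xi^r_{p,q}+\mathrm{PD}[\la]=\xi^{r+q}_{p,q}$ is immediate from the $\la$-shift formula plus injectivity.

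For the complex, I would read off from Equation \ref{eq:11} that $\underline{C}_{\xi^r_{p,q}}(L(p,q),K)$ is generated by those $[\mathbf{x}_K(n),i,j]$ with $\um{s}_{w,z}(\mathbf{x}_K(n))-(i-j)\mathrm{PD}[\mu]=\xi^r_{p,q}$; substituting the $q_K$-formula shows that exactly one index $n=n_r$ occurs, namely the representative of $1-r$ mod $p$ in $\{1,\dots,p\}$, and that the admissible pairs are precisely $\big(i,\,i-\lfloor\tfrac{p-r}{p}\rfloor\big)$, $i\in\mathbb{Z}$ — the only real arithmetic being the identification of $\tfrac{r-1+n_r}{p}$, the unique integer in the relevant residue class, with $\lceil r/p\rceil=1-\lfloor\tfrac{p-r}{p}\rfloor$. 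Since $HF^\infty(L(p,q),\m{t})\cong\mathbb{Z}[U,U^{-1}]$ for every $\m{t}$ and the diagram has exactly $p$ generators, the differential of the diagram vanishes identically, hence so does $\underline{\partial}^\infty$ on the knot complex, giving $\underline{C}_{\xi^r_{p,q}}(L(p,q),K)\cong\mathbb{Z}[U,U^{-1}]$ with $U$ lowering $(i,j)$ by $(1,1)$.

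The main obstacle is essentially that there is none: everything is forced by the explicit data of Figure 5, and the only points that need care are keeping the geometric index $n$ and the $q_K$-index $r$ synchronized, getting the floor/ceiling arithmetic right, and the routine observation that ``one generator per \sst\ structure'' kills the differential on the two-variable knot complex and not merely on $\underline{HF}^+$ of the ambient three-manifold. Finally, the closing definitions of $\xi^j_{B_g}$, $\m{t}(r_1,\dots,r_n)$ and $\eta$ require no proof at all: combining Proposition \ref{thm:7.2} with what has just been established, together with Lemma \ref{thm:5.3} and the K\"unneth formula (Theorem \ref{thm:5.4}), identifies $\underline{C}_{\xi(j;r)}(Y,K)$ with the $\mathbb{Z}[U]$-tensor product of the factor complexes, with total filtration shift given by $\eta$.
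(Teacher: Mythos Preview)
Your proposal is correct and follows essentially the same approach as the paper: the paper itself simply states that ``it is straightforward to manipulate the above to get the following summation,'' and you have carried out precisely that bookkeeping from the preceding computations (the $q_K$-formula for $\um{s}_{w,z}(\mathbf{x}_K(n))+m\,\mathrm{PD}[\mu]$, the $\la$-shift formula, and the observation that the differential vanishes). Your floor/ceiling arithmetic for the filtration shift is correct, and your justification for the vanishing differential (one generator per \sst\ structure, matching the known rank of $HF^\infty$) is equivalent to the paper's remark that the intersection points lie in distinct \sst\ structures.
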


\begin{prop} \label{thm:7.4} Take $Y$ and $K$ as above.  Let $R = \mathbb{Z}[H^1(\#^{2g}S^1 \times S^2)]$, and write $\Lambda^*$ for $\Lambda^*_R\left(H_1(\#^{2g}S^1\times S^2) \otimes_{\mathbb{Z}} R \right)$.
If $\xi = \xi(j; r_1, \ldots, r_n)$, then there is an isomorphism of $\mathbb{Z} \oplus \mathbb{Z}$-filtered modules
$$\underline{C}_{\xi}(Y, K) \cong \bigoplus_{i \in \mathbb{Z}} \Lambda^*\{i, i-\eta(\xi)\} $$
where $\Lambda^i$ is supported in filtration level $(0, g-i)$, and the curly braces denote the filtration level shift. If $\xi$ is not of the above form, then $\underline{C}_{\xi}(Y, K)$ is trivial. 
Under this identification, the differential on $\underline{C}_{\xi}(Y, K)$ is given by a map $V+H$, where $V$ and $H$ lower filtration level by $(1,0)$ and $(0,1)$ respectively, and act in the following manner.  There is a map $*: \Lambda^i \rightarrow \Lambda^{2g-i}$ (depending only on $g$ and $i$, not on $Y$ and $K$) and a commutative  diagram of $R$-modules with exact rows,
$$\begin{array}{lclclclclclclcl}
0 &
\longrightarrow &
\Lambda^{0} &
\overset{V}{\longrightarrow} &
\Lambda^{1} &
\overset{V}{\longrightarrow} &
\ldots &
\overset{V}{\longrightarrow} &
\Lambda^{2g-1} &
\overset{V}{\longrightarrow} &
\Lambda^{2g} &
\longrightarrow &
\mathbb{Z} &
\longrightarrow &
0 \\
 &
 &
* \downarrow &
 &
* \downarrow &
 &
 &
 & 
* \downarrow &
 &
* \downarrow &
 &
\downarrow &
 & \\
0 &
\longrightarrow &
\Lambda^{2g} &
\overset{H}{\longrightarrow} &
\Lambda^{2g-1} &
\overset{H}{\longrightarrow} &
\ldots &
\overset{H}{\longrightarrow} &
\Lambda^{1} &
\overset{H}{\longrightarrow} &
\Lambda^{0} &
\longrightarrow &
\mathbb{Z} &
\longrightarrow &
0 \\
\end{array}$$
such that the rightmost horizontal arrow is an isomorphism, and $V|_{\Lambda^{2g}}$ and $H|_{\Lambda^{0}}$ are both zero maps.
\end{prop}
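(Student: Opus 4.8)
The strategy is to reduce Proposition~\ref{thm:7.4} to the single-summand computations of Propositions~\ref{thm:7.2} and~\ref{thm:7.3} via the connect-sum formula, Theorem~\ref{thm:5.4}. First I would record that the connect sum operation on relative \sst\ structures identifies $\prod_i \rspc{Y_i,K_i}$ with $\rspc{Y,K}$, and that under it every meridian $\mu_i$ maps to the meridian $\mu$ of $K$; hence $\xi^{j_1}_{B_1}\#\cdots\#\xi^{j_g}_{B_1}\#_\ell\xi^{r_\ell}_{p_\ell,q_\ell}$ depends only on $j=\sum_i j_i$ and coincides with $\xi(j;r_1,\dots,r_n)$. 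Iterating Theorem~\ref{thm:5.4}, and using Lemma~\ref{thm:5.3} together with Proposition~\ref{thm:2.8} to match relative \sst\ structures across the equivalence, gives a $\mathbb{Z}\oplus\mathbb{Z}$-filtered chain homotopy equivalence over $\mathbb{Z}[U]\otimes R$ between $\underline{C}_{\xi(j;r)}(Y,K)$ and the tensor product over $\mathbb{Z}[U]$ of $\underline{C}_{\xi^j_{B_1}}(\#^2S^1\times S^2,B_1)$, of $g-1$ copies of $\underline{C}_{\xi^0_{B_1}}(\#^2S^1\times S^2,B_1)$, and of the complexes $\underline{C}_{\xi^{r_\ell}_{p_\ell,q_\ell}}(L(p_\ell,q_\ell),O_{p_\ell,q_\ell})$ for $\ell=1,\dots,n$. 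Since, by Proposition~\ref{thm:7.2}, $\underline{C}_{\xi'}(\#^2S^1\times S^2,B_1)$ vanishes unless $\xi'=\xi^k_{B_1}$ for some $k$, the same identity shows $\underline{C}_\xi(Y,K)$ is trivial for all $\xi$ not of the stated form, yielding the last sentence of the Proposition.

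Next I would eliminate the $O$-knot factors. By Proposition~\ref{thm:7.3} each $\underline{C}_{\xi^{r_\ell}_{p_\ell,q_\ell}}$ is $\mathbb{Z}[U,U^{-1}]$ with vanishing differential and one generator in each bifiltration level $\bigl(i,\,i-\lfloor (p_\ell-r_\ell)/p_\ell\rfloor\bigr)$, hence free over $\mathbb{Z}[U,U^{-1}]$, so tensoring any complex over $\mathbb{Z}[U]$ with it merely shifts the $\mathbb{Z}\oplus\mathbb{Z}$-filtration. Normalizing generators by $U$-equivariance, a direct bookkeeping of these shifts (together with the shift coming from replacing $\xi^0_{B_1}$ by $\xi^j_{B_1}$ in the first summand, via Proposition~\ref{thm:7.2}) produces exactly the offset $\{i,\,i-\eta(\xi)\}$ with $\eta(\xi)=j-\sum_\ell\lfloor (p_\ell-r_\ell)/p_\ell\rfloor$. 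This reduces the whole problem to identifying the purely Borromean tensor product $C:=\bigotimes_{i=1}^g\underline{C}_{\xi^0_{B_1}}(\#^2S^1\times S^2,B_1)$ over $\mathbb{Z}[U]$, after which the Proposition follows by re-applying the shift.

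The heart of the argument is to compute $C$ from Proposition~\ref{thm:7.2}. Writing $R_1=\mathbb{Z}[H^1(\#^2S^1\times S^2)]$ for the coefficient ring of a single Borromean summand, the underlying $R$-module of $C$ is $\bigoplus_{i\in\mathbb{Z}}\bigl(\bigotimes_{\mathbb{Z}}^g\Lambda^*_{R_1}(H_1(\#^2S^1\times S^2)\otimes R_1)\bigr)\{i,i\}$, which I would identify with $\bigoplus_i\Lambda^*_R(H_1(\#^{2g}S^1\times S^2)\otimes R)\{i,i\}$ using $H_1(\#^{2g}S^1\times S^2)=\bigoplus^gH_1(\#^2S^1\times S^2)$, $R=\bigotimes_{\mathbb{Z}}^gR_1$, and $\Lambda^*(A\oplus B)=\Lambda^*(A)\otimes\Lambda^*(B)$; under this identification the degree-$m$ piece $\Lambda^m$ lies in bifiltration level $(0,g-m)$, as claimed. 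The differential on a tensor product of complexes is the signed sum of the factor differentials, each equal to $V+H$ with the two summands lowering the two filtration coordinates separately; collecting the total differential by filtration direction yields a decomposition $V+H$ on $C$ (this interchanges the labels $V$ and $H$ relative to Proposition~\ref{thm:7.2}, which is purely a matter of naming). Now the exact sequence of $R_1$-modules in Proposition~\ref{thm:7.2} exhibits $(\Lambda^*_{R_1},V)$ as a length-two free resolution of $\mathbb{Z}$ over $R_1$ (indeed a Koszul complex); since all terms are free abelian groups, the $\mathbb{Z}$-tensor product of the $g$ such resolutions is again exact, i.e.\ a free resolution of $\mathbb{Z}$ over $R$, and this is precisely the asserted sequence $0\to\Lambda^0\to\cdots\to\Lambda^{2g}\to\mathbb{Z}\to 0$; the same argument with $H$ in place of $V$ gives the other sequence. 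Finally I would take $*:\Lambda^i\to\Lambda^{2g-i}$ to be the $g$-fold tensor product of the maps $*:\Lambda^i_{R_1}\to\Lambda^{2-i}_{R_1}$ from the discussion following Proposition~\ref{thm:7.2} (the $O$-knot factors contributing only the identity), so that $*$ depends only on $g$ and $i$; the commutativity $*V=H*$ and the fact that the rightmost arrow is an isomorphism then follow termwise from the $g=1$ case.

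The main obstacle I anticipate is bookkeeping rather than anything conceptual: keeping the two filtration coordinates, the $\Lambda^\bullet$-indexing, and the $V/H$ labeling consistent through the iterated K\"unneth identification and the filtration shift introduced by the $O$-knot factors — in particular verifying that this shift comes out to the stated $\eta(\xi)$ — and confirming that exactness of the $\Lambda^\bullet$-complexes is preserved under the tensor product, which is exactly where the $\mathbb{Z}$-freeness (equivalently, the Koszul nature) of the resolutions is used. All of the genuinely geometric input is already contained in Propositions~\ref{thm:7.2} and~\ref{thm:7.3}, so no further holomorphic-curve analysis is needed.
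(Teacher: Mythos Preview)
Your proposal is correct and follows essentially the same route as the paper: reduce via the K\"unneth formula (Theorem~\ref{thm:5.4}) to the Borromean factors, observe that the $O$-knot factors contribute only a filtration shift producing $\eta(\xi)$, and then assemble the $g$-fold tensor product of the $g=1$ data from Proposition~\ref{thm:7.2}. The paper's proof is terse and omits exactly the details you supply.

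Two small points of comparison. For the exactness of the rows, the paper says ``induction on $g$ with diagram chasing, using that the exterior algebras are free $R$-modules''; your Koszul/tensor-product-of-resolutions argument is the same idea packaged more cleanly, and the hypothesis you isolate ($\mathbb{Z}$-freeness of the terms, so that tensoring preserves exactness) is precisely what the paper means by ``free $R$-modules''. For the map $*$, the paper obtains it for $B_g$ directly as a component of the filtered chain homotopy equivalence $C_0\to C_0'$ coming from switching basepoints, whereas you build it as the $g$-fold tensor product of the $g=1$ maps $*$; via K\"unneth these coincide (the basepoint switch on the connect sum corresponds to the tensor product of the factor switches), so the two descriptions agree. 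Your remark that the identification $\prod_i\rspc{Y_i,K_i}\to\rspc{Y,K}$ is many-to-one (depending only on $\sum j_i$) is exactly right and is what makes the notation $\xi(j;r_1,\dots,r_n)$ well-defined.
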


\begin{proof}  First, consider the case where $n=0$, that is, when we are looking at one of the Borromean knots $B_g$.  The K\"unneth formula then shows that the groups of our chain complex are as described, and that the differential still is the sum of a horizontal and vertical differential.  
The claimed exactness of the rows of the diagram can be verified by an induction argument on $g$.  This involves some diagram chasing, and utilizes the fact that the exterior algebras are free $R$-modules.  We omit the details.  

The maps $*$ are components of a filtered chain homotopy equivalence between a chain complex of the form described and a chain complex obtained by switching basepoints, similar to those described after the statement of Proposition \ref{thm:7.2}.  The discussion there carries over to show the claims stated here.

When $n \ne 0$, the computation for $O$-knots shows that the only affect that these have on the complex is to shift gradings.   \end{proof}

Now, take $\m{t}_0 \in \spc{Y_0}$, and let us assume that $\m{t}_0$ is $\mu$-torsion.  Suppose that $\m{t}_0$ extends over $W_0$ to a \sst\ structure $\m{s} \in \spc{W_0}$ for which 
$\m{s}|_Y = \m{t}(r_1, \ldots, r_n)$.  

We have $q_K(\m{s}) = -\frac{\langle c_1(\m{t}_0), [\widehat{dS}] \rangle}{d}$.  Therefore, the relative \sst\ structure $\xi_i$ used in Theorem \ref{thm:6.1} will be given by
$\left[\m{s}|_Y - i\mbox{PD}[K], -\frac{\langle c_1(\m{t}_0), [\widehat{dS}] \rangle}{d}\right]$.  It is easy to see that we can also write $\xi_i$ as $\xi(j_i; r_1 - iq_1, \ldots, r_n - iq_n)$ for some $j_i$; this value must satisfy the equation
$$ 2j_i + \sum_{\ell = 1}^n \frac{2(r_{\ell} - iq_{\ell}) - p_{\ell} - 1}{p_{\ell}} = -\frac{\langle c_1(\m{t}_0), [\widehat{dS}] \rangle}{d}.$$
Hence,
$$ j_i = -\frac{\langle c_1(\m{t}_0), [\widehat{dS}] \rangle}{2d} + \sum_{\ell = 1}^n \frac{1 - p_{\ell}}{2p_{\ell}} + \frac{p_{\ell} - (r_{\ell} - iq_{\ell})}{p_{\ell}}.$$

Let $\eta(i) = \eta(\xi_i)$. We compute
$$\begin{array}{lcl}
 \eta(i) & = & -\frac{\langle c_1(\m{t}_0), [\widehat{dS}] \rangle}{2d} + \sum_{\ell = 1}^n \frac{1 - p_{\ell}}{2p_{\ell}} + \frac{p_{\ell} - (r_{\ell} - iq_{\ell})}{p_{\ell}} - \left\lfloor \frac{p_{\ell} - (r_{\ell} - iq_{\ell})}{p_{\ell}} \right\rfloor \\
 & = & -\frac{\langle c_1(\m{t}_0), [\widehat{dS}] \rangle}{2d} + \sum_{\ell = 1}^n \frac{1 - p_{\ell}}{2p_{\ell}} + \left\{\frac{iq_{\ell} - r_{\ell}}{p_{\ell}}\right\},
\end{array} $$
where the curly braces denote the fractional part, $\{x\} = x - \lfloor x \rfloor$.

\section{Proof of Main Theorem}
We now only have to compute the homology of the complex of Proposition \ref{thm:7.4}.

For simplicity, let us work abstractly.  So, for a nonnegative integer $g$, let us suppose that we have

\begin{itemize}

\item $2g + 1$ $R$-modules $M_0, M_1, \ldots, M_{2g}$ over a ring $R$;

\item maps $V: M_i \rightarrow M_{i-1}$ and $H: M_i \rightarrow M_{i+1}$ for $0 \leq i \leq 2g$, where $M_{-1}$ and $M_{2g+1}$ are taken to be trivial; and

\item a map $*: M_i \rightarrow M_{2g-i}$ for $0 \leq i \leq 2g$.

\end{itemize}

\noindent (So, $R$ plays the role of $\mathbb{Z}[H^1(\#^{2g} S^1 \times S^2)]$ and $M_i$ plays the role of $\Lambda^{2g-i}$ as in the previous section; we find some convenience in the switch of indices in the latter.)  We want these to satisfy 

\begin{itemize}

\item $VH + HV = 0$;

\item there is a commutative diagram
\begin{equation}
\label{eq:13}
\begin{array}{lclclclclclclcl}
0 &
\longrightarrow &
M_{2g} &
\overset{V}{\longrightarrow} &
M_{2g-1} &
\overset{V}{\longrightarrow} &
\ldots &
\overset{V}{\longrightarrow} &
M_0 &
\longrightarrow &
\mathbb{Z} &
\longrightarrow &
0 \\
 & &
* \downarrow &
 &
* \downarrow &
 & 
 &
 &
* \downarrow &
 &
\downarrow & & \\
0 &
\longrightarrow &
M_0 &
\overset{H}{\longrightarrow} &
M_1 &
\overset{H}{\longrightarrow} &
\ldots &
\overset{H}{\longrightarrow} &
M_{2g} &
\longrightarrow &
\mathbb{Z} &
\longrightarrow &
0 \\
\end{array} 
\end{equation}
with exact rows, where the rightmost horizontal arrow is an isomorphism of $R$-modules.

\end{itemize}

Next, suppose that we are given a periodic string of integers $\ldots, \eta(0), \eta(1), \ldots, \eta(d-1), \eta(d) = \eta(0), \ldots$.
Corresponding to each integer $p$, we have two $\mathbb{Z} \oplus \mathbb{Z}$-graded groups, 
$$L^p = \bigoplus_{0 \leq m \leq 2g, i \in \mathbb{Z}} L^p_m(i), \mbox{ }\mbox{ }R^p = \bigoplus_{0 \leq m \leq 2g, i \in \mathbb{Z}} R^p_m(i),$$
with summands given as follows.  Given $i, p$ and $m$, let $j = i - \eta(p) - g + m$.
If $\mbox{max}\{i, j\} \geq 0$, then $L^p_m(i) \cong M_m$ is concentrated in grading level $(i,j)$; otherwise, $L^p_m(i)$ is trivial.  If $i \geq 0$, then $R^p_m(i) \cong M_m$ is concentrated in grading level $(i,j)$; otherwise, $R^p_m(i)$ is trivial.  There is an obvious quotient map $q_p: L^p \rightarrow R^p$.  There is also a $U$-action on each $L^p$ and $R^p$, taking a summand in grading level $(i,j)$ to the one in level $(i-1, j-1)$, by either an isomorphism or the zero map.

We refer to $L^p \oplus R^p$ as \emph{page $p$}.  Extending the metaphor, $L^p$ and $R^p$ will be referred to as the \emph{left side} and \emph{right side} of page $p$, and the sum $\mathcal{B} = \bigoplus_{p \in \mathbb{Z}} L^p \oplus R^p$ will be referred to as the \emph{book}. 

If $x \in L^p_m(i)$, define maps $V^{L}_p$ and $H^L_p$ from $L^p$ to itself by 
$$V^{L}_p(x) = Vx \in L^p_{m-1}(i) \mbox{, }\mbox{ } H^{L}_p(x) = Hx \in L^p_{m+1}(i-1),$$
taking $Vx$ or $Hx$ to be zero if the target summand is trivial.  Define $V^{R}_p$ and $H^{R}_p$ similarly, except replacing all $L$'s with $R$'s.  
Then, define $$\la_p = V^{L}_p + H^{L}_p \mbox{, }\mbox{ }\rho_p = V^{R}_p + H^{R}_p.$$
Also, define $S_p: L^p \rightarrow R^{p+1}$ by 
$$S_p(x) = *x \in R^{p+1}_{2g - m}(i - \eta(p) -g + m).$$
An illustration of the geography of all this algebra is shown in Figure \myfig{6}; in terms of this, $S_p$ takes column $i$ on page $p$ to row $i$ on page $p+1$.

\begin{figure}[p]
\label{fig:6}
\centering  \includegraphics[scale=.60]{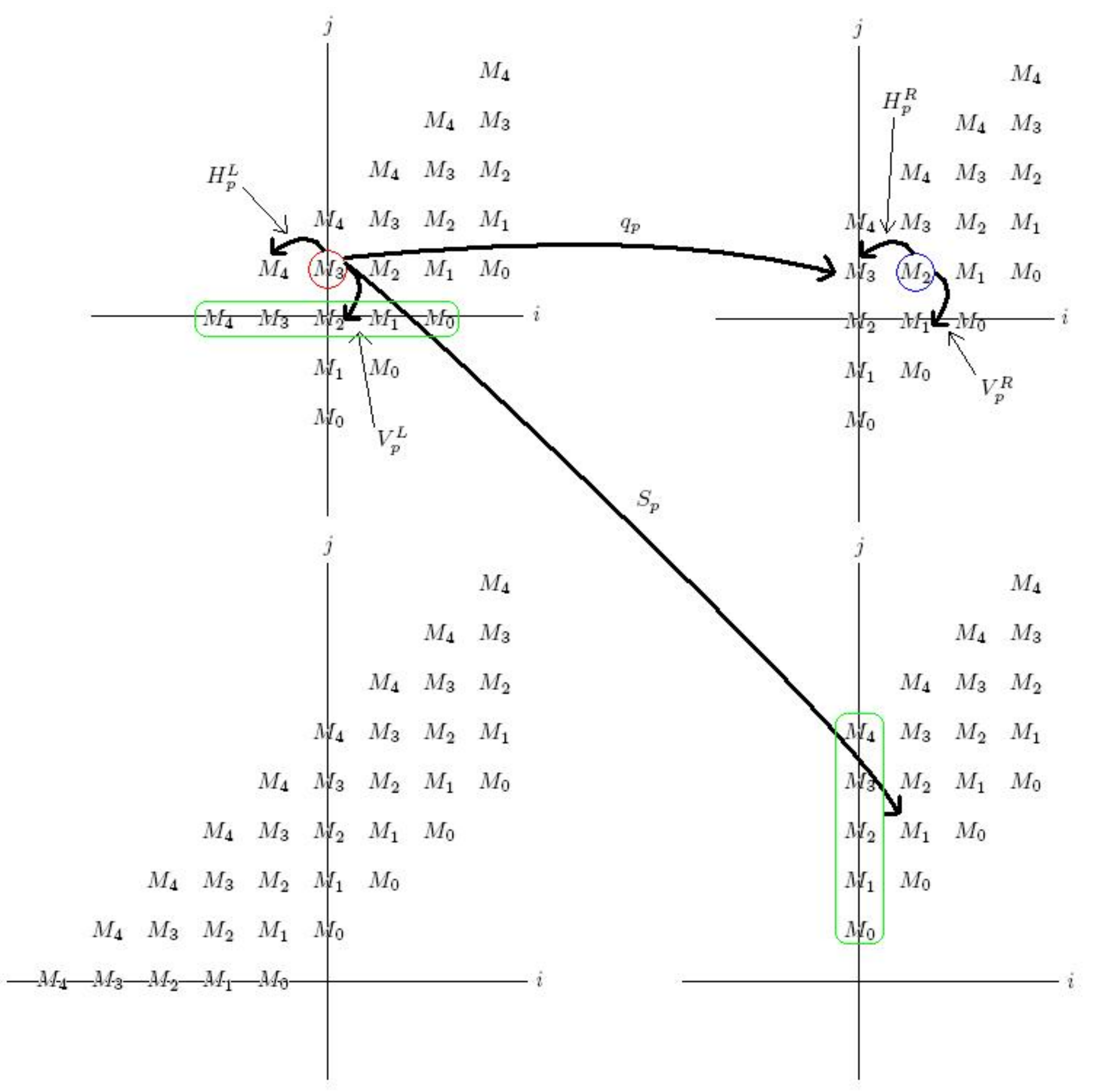}
\caption{The top two grids represent groups $L^p$ (left) and $R^p$ (right); the bottom two grids represent $L^{p+1}$ and $R^{p+1}$.  Here, $g=2$, $\eta(p) = 0$ and $\eta(p+1) = -3$.  (So $\eta(p)$ measures where the middle group $M_g$ intersects the $i$-axis.)  The red-circled group lies in page-grading $(0,1)$, and it is denoted by $L^p_3(0)$; the blue-circled group lies in page-grading $(1,1)$, and is denoted by $R^p_2(1)$.  The arrows show where each map takes $L^p_3(0)$ or $R^p_2(1)$; note that these two groups lie in the same level, since the targets of $q_p$ and $H^R_p$ coincide.  Of course, $\la_p = V^L_p + H^L_p$ and $\rho_p = V^R_p + H^R_p$; the maps $\la_p$ and $\rho_p$ always take a diagonal with slope -1 to the next such diagonal to the left.  The map $S_p$ takes the green box in the upper left grid to the green box in the lower right grid, with the page-grading level containing $M_i$ in the top going to the page-grading level containing $M_{4-i}$ in the bottom.}
\end{figure}

It is easy to see that under our assumptions about $V, H$ and $*$, we have 
\begin{equation}
\label{eq:14}
q_p \circ \la_p = \rho_p \circ q_p
\end{equation}
and
\begin{equation}
\label{eq:15}
S_p \circ \la_p = \rho_{p+1} \circ S_p.
\end{equation}

Using the above, we turn a book $\mathcal{B}$ into a chain complex, with differential $D: \mathcal{B} \rightarrow \mathcal{B}$
given by 
$$D = \sum_{p\in\mathbb{Z}} \la_p + \rho_p + q_p + S_p.$$

\vs

\bline{Convention:} For simplicity, when working with elements in a book, we will ignore signs (i.e., we ``work mod 2''); adding these in to make the arguments work precisely is straightforward.

\vs

\begin{lemma} \label{thm:8.1} The map $D$ is indeed a differential; the maps $\la_p$ and $\rho_p$ are as well, and hence the maps $q_p: L^p \rightarrow R^p$ and $S_p: L^p \rightarrow R^{p+1}$ are chain maps with respect to these differentials. 
\end{lemma}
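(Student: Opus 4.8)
The plan is to prove the identity $D^2=0$ by splitting $D$ according to where it sends the two halves of each page and collecting the resulting cross-terms, which will fall into telescoping families that cancel. I would begin with the assertion that each $\lambda_p$ and each $\rho_p$ is a differential. Writing $\lambda_p=V^L_p+H^L_p$, we get $\lambda_p^2=(V^L_p)^2+(V^L_pH^L_p+H^L_pV^L_p)+(H^L_p)^2$: the first summand vanishes because two consecutive copies of $V$ in the top row of diagram \eqref{eq:13} compose to zero (that row is exact, so the image of one $V$ lies in the kernel of the next), the third summand vanishes for the same reason in the bottom row, and the middle summand vanishes by the hypothesis $VH+HV=0$. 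The one point worth spelling out is the interaction with the truncations: $L^p$ is the quotient of its untruncated analogue by the span of the summands with $i<0$ and $j<0$, and this span is closed under $V^L_p$ (which fixes $i$ and lowers $j$) and under $H^L_p$ (which lowers $i$ and fixes $j$), hence is a genuine subcomplex, so $L^p$ inherits $\lambda_p^2=0$. The subcomplex spanned by the summands with $i<0$ handles $R^p$ and $\rho_p$ in the same way.

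Next I would dispatch the chain-map assertions for $q_p$ and $S_p$. For $q_p$ this is essentially built in: $R^p$ is the quotient of $L^p$ by the summands with $i<0$ — which, as just noted, form a subcomplex — and $q_p$ is the quotient projection, so $q_p\circ\lambda_p=\rho_p\circ q_p$; this is exactly \eqref{eq:14}. For $S_p$ the content is \eqref{eq:15}, namely $S_p\circ\lambda_p=\rho_{p+1}\circ S_p$. Applied to $x\in L^p_m(i)$, both sides unwind to the comparisons $*(Vx)=H(*x)$ and $*(Hx)=V(*x)$; the first is the commutativity of the squares of \eqref{eq:13}, and the second is the companion relation satisfied by $*$ (coming, in the geometric application of Proposition \ref{thm:7.4}, from the fact that $*$ is the leading term of a filtered chain homotopy equivalence exchanging the two filtration directions, cf. the discussion after Proposition \ref{thm:7.2}). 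One checks along the way that the truncation patterns of $L^p$ and $R^{p+1}$ line up so that exactly the same terms are killed on both sides. Since \eqref{eq:14} and \eqref{eq:15} are recorded just before the statement of the Lemma, here I would simply invoke them.

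Finally I would put $D^2=0$ together. On a right side $R^p$ the only component of $D$ is $\rho_p$, so $D^2|_{R^p}=\rho_p^2=0$. On a left side $L^p$, $D$ has the three components $\lambda_p$ (into $L^p$), $q_p$ (into $R^p$) and $S_p$ (into $R^{p+1}$), and each of those targets is then hit only by the matching $\rho$, so
$$D^2|_{L^p}=\lambda_p^2+\big(q_p\lambda_p+\rho_pq_p\big)+\big(S_p\lambda_p+\rho_{p+1}S_p\big),$$
and all three bracketed groups vanish by the previous two paragraphs (working mod $2$ as the stated convention allows; inserting the usual mapping-cone signs turns the two mixed groups into genuine cancellations rather than repetitions). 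The chain-map claims for $q_p$ and $S_p$ are then, as remarked, precisely \eqref{eq:14} and \eqref{eq:15} once $\lambda_p$ and $\rho_p$ are known to square to zero.

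I expect no structural difficulty here; the entire argument is formal once \eqref{eq:14} and \eqref{eq:15} are available. The only thing requiring genuine care is the bookkeeping around the truncations in the first two steps — confirming that in every relevant composition the summands that vanish for trivial support reasons are matched on both sides — together with the standard nuisance of arranging the signs so that the mixed terms come out opposite.
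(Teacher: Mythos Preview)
Your proposal is correct and follows essentially the same approach as the paper. The paper's proof is terser: it simply notes that $\la_p^2=0$ and $\rho_p^2=0$ follow from $VH+HV=0$ (implicitly using $V^2=0$ and $H^2=0$ from exactness of the rows of \eqref{eq:13}, which you make explicit), then observes that $D^2=0$ reduces to $\la_p^2=0$ together with $\rho_p\circ(\rho_p+q_p+S_{p-1})+q_p\circ\la_p+S_{p-1}\circ\la_{p-1}=0$, which follows from \eqref{eq:14} and \eqref{eq:15}. You organize the same computation by source ($D^2|_{L^p}$ and $D^2|_{R^p}$) rather than by target, but the content is identical; your added care about the truncations and about where $V^2=0$, $H^2=0$ come from is welcome but not strictly required.
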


\begin{proof} That $\la_p^2 = 0$ and $\rho_p^2 = 0$ are straightforward consequences of the assumption that $VH + HV = 0$.  The map $D$ will be a differential if $\la_p^2 = 0$ and $\rho_p \circ(\rho_p + q_p + S_{p-1}) + q_p \circ \la_p + S_{p-1} \circ \la_{p-1} = 0$.  Equations \ref{eq:14} and \ref{eq:15} show that the latter does hold; they also show that $q_p$ and $S_p$ are chain maps. \end{proof}

We endow $\mathcal{B}$ with a $\mathbb{Z}$-grading as follows.  Let $f(p)$ be defined by 
$$f(0) = g- \eta(0) + 1, \mbox{ }\mbox{ }\mbox{ }f(p+1) = f(p) + \eta(p) + \eta(p+1).$$
Then we set the summand of $L^p$ lying in $\mathbb{Z} \oplus \mathbb{Z}$-grading $(i,j)$ to lie in grading level $i+j+f(p)$, and the summand of $R^p$ lying in grading $(i,j)$ to lie in grading level $i+j+f(p)+1$.  We henceforth refer to the $\mathbb{Z} \oplus \mathbb{Z}$-grading as the \emph{page grading}.  It is easy to see that the differential $D$ lowers the $\mathbb{Z}$-grading by 1.  Notice that if $R^p_m(i)$ lies in $\mathbb{Z}$-grading $s$, then $m$ has the same parity as $s$; and if $L^p_m(i)$ lies in grading $s$, then $m$ and $s$ have different parities.

\subsection{The homologies of $L^p$ and $R^p$} 
The differentials $\la_p$ on $L^p$ and $\rho_p$ on $R^p$ also respect the grading we have introduced.  So, we now carry out the computation of $H_s(L^p, \la_p)$ and $H_s(R^p, \rho_p)$, which we hereafter write as $H_s(L^p)$ and $H_s(R^p)$.  A level $s$ in $L^p$ is considered trivial if the intersection of grading level $s$ with $L^p$ contains only trivial summands, and similarly for $R^p$.

We start with a preliminary Lemma.

\begin{lemma} \label{thm:8.2} For $x \in M_i$ with $i \geq 1$, $Vx \in \mathrm{Im }VH \cap M_{i-1}$ if and only if $x \in \mathrm{Im }V + \mathrm{Im }H$; and $\mathrm{Im }VH \cap M_{0}$ is trivial.  For $x \in M_i$ with $i \leq 2g-1$, $Hx \in \mathrm{Im }VH \cap M_{i+1}$ if and only if $x \in \mathrm{Im }V + \mathrm{Im }H$; and $\mathrm{Im }VH \cap M_{2g}$ is trivial.
\end{lemma}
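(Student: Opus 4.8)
The plan is to distill from the hypotheses the handful of elementary facts needed and then run a short diagram chase for each of the four assertions. From the exactness of the two rows of (\ref{eq:13}) — in particular that each is a complex — one gets $V\circ V=0$, $H\circ H=0$, together with the identities $\ker(V|_{M_i})=\mathrm{Im}(V\colon M_{i+1}\to M_i)$ valid for all $i\ge 1$ (the case $i=2g$ being the injectivity of $V$ on $M_{2g}$) and, dually, $\ker(H|_{M_i})=\mathrm{Im}(H\colon M_{i-1}\to M_i)$ valid for all $i\le 2g-1$ (the case $i=0$ being the injectivity of $H$ on $M_0$); here I use the standing convention $M_{-1}=M_{2g+1}=0$, so that in addition $V$ vanishes on $M_0$ and $H$ vanishes on $M_{2g}$. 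The final ingredient is the anticommutation relation $VH+HV=0$. I would note that the two halves of the Lemma are interchanged by the formal symmetry $V\leftrightarrow H$, $M_i\leftrightarrow M_{2g-i}$ — the symmetry of (\ref{eq:13}) implemented by the maps $*$, under which the hypotheses are invariant — so it would be enough to prove one of them; but since each argument is only a couple of lines I would simply write both out.

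For the first assertion, fix $x\in M_i$ with $i\ge 1$. If $x=Vy+Hz$ then $Vx=V^2y+VHz=VHz\in\mathrm{Im}(VH)\cap M_{i-1}$. Conversely, if $Vx=VHw$ then $V(x-Hw)=0$, so $x-Hw$ lies in $\ker(V|_{M_i})=\mathrm{Im}(V\colon M_{i+1}\to M_i)$, hence $x-Hw=Vy$ and $x\in\mathrm{Im}\,V+\mathrm{Im}\,H$. For the boundary statement, $\mathrm{Im}(VH)\cap M_0=VH(M_0)$, and $VH|_{M_0}=-HV|_{M_0}=-H\circ 0=0$ because $V$ kills $M_0$; thus $\mathrm{Im}(VH)\cap M_0=0$.

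The second assertion is the mirror image. Fix $x\in M_i$ with $i\le 2g-1$. If $x=Vy+Hz$ then $Hx=HVy+H^2z=HVy=-VHy\in\mathrm{Im}(VH)\cap M_{i+1}$, using $VH+HV=0$. Conversely, if $Hx=VHw=-HVw$ then $H(x+Vw)=0$, so $x+Vw\in\ker(H|_{M_i})=\mathrm{Im}(H\colon M_{i-1}\to M_i)$, whence $x\in\mathrm{Im}\,V+\mathrm{Im}\,H$; and $\mathrm{Im}(VH)\cap M_{2g}=VH(M_{2g})=0$ since $H$ vanishes on $M_{2g}$.

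There is no genuine obstacle here — the statement is a pure diagram chase — and the only point requiring care is the bookkeeping at the two ends of the rows: the exactness identities $\ker V=\mathrm{Im}\,V$ and $\ker H=\mathrm{Im}\,H$ must be invoked only in the index ranges where they hold, which is precisely the range ($i\ge 1$, respectively $i\le 2g-1$) appearing in the hypotheses of the Lemma, while the two "triviality" claims are handled instead by the vanishing of $V$ on $M_0$ and of $H$ on $M_{2g}$ together with the anticommutation relation.
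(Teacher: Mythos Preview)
Your proof is correct and follows essentially the same approach as the paper: both use $V^2=0$ together with $\ker V=\mathrm{Im}\,V$ to run the short diagram chase for the main equivalence, and both dispatch the boundary cases via the vanishing of $V$ on $M_0$ (resp.\ $H$ on $M_{2g}$) combined with $VH+HV=0$. Your version is slightly more explicit about signs and about where each ingredient comes from, whereas the paper works modulo signs by its stated convention and appeals to symmetry for the second half; these are purely presentational differences.
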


\begin{proof} We prove the first pair of statements; the second pair is similar.  Suppose $x \in M_i$ with $i \geq 1$, and $Vx = VHy$ for some $y \in M_{i-1}$.  Then $x + Hy \in \mbox{Ker }V = \mbox{Im }V$, so $x + Hy = Vz$ for some $z \in M_i$.  Hence, $x = Hy + Vz \in \mbox{Im }V + \mbox{Im }H$.  Conversely, if $x \in M_i$ with $i \geq 1$, and $x = Hy + Vz$, then $Vx = VHy + VVz = VHy \in \mbox{Im }VH$. 

If $x \in M_0$, then $VHx = HVx = H0 = 0$.  Since $VH$ takes $M_i$ to $M_i$, this shows that the image of $VH$ in $M_0$ is trivial.\end{proof}

The following computation is essentially that of $\underline{HF}^+(\#^{2g} S^1 \times S^2)$ in the torsion \sst\ structure, but we go through it carefully.

\begin{prop} \label{thm:8.3}  There is an isomorphism 
$$H_s(R^p) \cong 
\left\{ 
\begin{array}{ll}
\mathbb{Z} & \mbox{if }s\mbox{ is even and a non-trival level in }R^p\\
0 & \mbox{otherwise.} 
\end{array} \right.$$
\end{prop}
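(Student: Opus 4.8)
The plan is to compute $H_*(R^p,\rho_p)$ by filtering $R^p$ according to the column index $i$, i.e. by setting $\mathcal{F}_k R^p=\bigoplus_{0\le m\le 2g,\ 0\le i\le k}R^p_m(i)$. Since $V^R_p$ preserves $i$ while $H^R_p$ lowers it by one, the differential $\rho_p=V^R_p+H^R_p$ preserves this filtration; and within any fixed $\mathbb{Z}$-grading the filtration is finite (only $i$ with $0\le i$ and $2i\le n$ occur, where $n=2i+m$ is the value that is constant on a given grading level), so the associated spectral sequence converges. The first step is to read off the $E_1$-page. The associated graded differential on the $i$-th column is exactly $V$, so the $i$-th column is the complex $0\to M_{2g}(i)\xrightarrow{V}M_{2g-1}(i)\xrightarrow{V}\cdots\xrightarrow{V}M_0(i)\to 0$. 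Deleting the trailing $\mathbb{Z}$ from the exact top row of diagram $(\ref{eq:13})$ shows this complex has homology a single copy of $\mathbb{Z}$, concentrated at $M_0(i)$. Hence $E_1\cong\bigoplus_{i\ge 0}\mathbb{Z}$, with the $i$-th summand sitting in the $\mathbb{Z}$-grading occupied by $R^p_0(i)$, which is even since the parity of $R^p_m(i)$ equals the parity of $m$.

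Next I would show that all the differentials $d_r$, $r\ge 1$, vanish. This is purely a grading count: each $d_r$ lowers the $\mathbb{Z}$-grading by $1$, exactly as $\rho_p$ does, while $E_r^i\subseteq E_1^i$ is supported in the single $\mathbb{Z}$-grading of $R^p_0(i)$, and $d_r\colon E_r^i\to E_r^{i-r}$ would have to land in the grading of $R^p_0(i-r)$, which differs from that of $R^p_0(i)$ by $2r$ and so never by $1$. Therefore $E_\infty=E_1$, and because $E_\infty$ has at most one nonzero ($\cong\mathbb{Z}$) summand in each $\mathbb{Z}$-grading, there is no extension problem: $H_s(R^p)\cong\mathbb{Z}$ when $s$ is the grading of some $R^p_0(i)$ with $i\ge 0$, and $0$ otherwise.

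The last step is to match this list of gradings with the statement of the proposition. Since $n=2i+m\equiv m\equiv s\pmod 2$, a grading level is even exactly when its value $n$ is even; and the grading-$s$ part of $R^p$ is nontrivial precisely when there is a valid triple with $2i+m=n$, $0\le m\le 2g$, $i\ge 0$, i.e. precisely when $n\ge 0$. For such an even, nontrivial level we have $n=2i$ with $i=n/2\ge 0$, so $R^p_0(i)$ is one of the contributing summands and $H_s(R^p)\cong\mathbb{Z}$; for an even trivial level the chain group in grading $s$ is zero, and for an odd level there is no $E_\infty$ contribution, so $H_s(R^p)=0$. This gives exactly the asserted formula.

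I do not expect a serious obstacle: this is essentially the known computation of $\underline{HF}^+(\#^{2g}S^1\times S^2)$ in its torsion \sst\ structure, repackaged. The only point requiring care is the bookkeeping that ties together the three gradings in play — the page grading $(i,j)$, the auxiliary integer $n=2i+m$, and the $\mathbb{Z}$-grading shifted by $f(p)$ — so as to confirm both the parity claim and the identification of "nontrivial level" with "$n\ge 0$". An alternative to the spectral sequence, if one wishes to avoid it, is a direct induction on the number of columns using exactness of the rows of $(\ref{eq:13})$; but the filtration argument seems cleanest, and Lemma \ref{thm:8.2} is not even needed for this particular proposition.
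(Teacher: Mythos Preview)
Your argument is correct and is genuinely different from the paper's. The paper carries out a direct, bare-hands computation: in a fixed even level it writes an arbitrary element as $x=x_0+x_2+\cdots+x_{2r}$ with $x_{2m}\in R^p_{2m}(i_0-m)$, determines explicitly the conditions for $x$ to be a cycle and a boundary, and shows that the homology class is completely determined by the image of $x_0$ in $M_0/\mathrm{Im}\,V\cong\mathbb{Z}$; the odd case is handled similarly using Lemma~\ref{thm:8.2}. Your column-filtration spectral sequence bypasses all of this: the $E_1$-page is read off from the $V$-exactness in (\ref{eq:13}) alone, and the parity argument kills all higher differentials at once. This is cleaner and, as you note, does not require Lemma~\ref{thm:8.2}.

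What your approach does not give for free is an explicit description of the generator of $H_s(R^p)\cong\mathbb{Z}$. The paper's hands-on argument shows that a homology class is determined by the $x_0$-component (respectively by $x_{2g}$ after the symmetric argument), and this is exactly what is needed in Propositions~\ref{thm:8.4} and~\ref{thm:8.6} to analyze the induced maps ${q_p}_*$ and ${S_p}_*$. If you want to carry your method forward to those propositions, you would need to supplement it with an identification of the edge homomorphism of your spectral sequence with the projection to $M_0/\mathrm{Im}\,V$; this is routine, but it is precisely the content the paper extracts directly.
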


\begin{proof} We continue to ignore signs; we also suppress also sub- and superscripts of $V$ and $H$.

In any non-trivial even level of $R^p$, write an arbitrary element as $x=x_0 + x_2 + \ldots + x_{2r}$, where for some $i_0 \geq 0$, we have $x_{2m} \in R^p_{2m}(i_0 - m)$ for $0 \leq m \leq r$, with $r = \mbox{min}\{i_0, g\}$.    Likewise, in any non-trivial odd level of $R^p$, write an arbitrary element as $x=x_1 + x_3 + \ldots + x_{2r+1}$, where for some $i_0 \geq 0$, $x_{2m + 1} \in R^p_{2m+1}(i_0 - m)$ for $0 \leq m \leq r$, with $r = \mbox{min}\{i_0, g-1\}$.

Let us first compute $H_s(R^p)$ for $s$ a non-trivial even level.  To start, let us 
suppose that $x=x_0 + \ldots + x_{2r}$ is a boundary in an even level of $R^p$.  This is equivalent to the existence of $y_1, y_3, \ldots, y_{2r-1}$ and $y_{2r+1}$ (the latter of which we take to be $0$ if $r=g$) which satisfy:  
$$Vy_1 = x_0,$$
$$Vy_{2i+1} = x_{2i} + Hy_{2i-1} \mbox{ for } 0 < i \leq r,$$
where we have arranged the equations suggestively.

Let us find under what conditions the above system has a solution.  Clearly, $x_0$ must be in the image of $V$ to solve the first equation.  Next, suppose that we are given $y_1$ and $x_0 = Vy_1$.  Then the equation $Vy_3 = x_2 + Hy_1$ will have a solution $y_3$ if and only if $Vx_2 + VHy_1 = 0$.  But $VHy_1 = HVy_1 = Hx_0$; hence, the first two equations will be solvable when $x_0$ is in the image of $V$ and $Vx_2 = Hx_0$, and this statement of course does not depend on what $y_1$ is.

Going forward, the same argument shows that given $x_0, \ldots, x_{2r}$, existence of a simultaneous solution to all the above equations is equivalent to satisfying the conditions
$$x_0 \in \mbox{Im }V$$
and 
$$Vx_{2i + 2} = Hx_{2i} \mbox{ for }0 \leq i < r;$$
that is, these are necessary and sufficient conditions for $x$ to be a boundary.  

On the other hand, $x$ is a cycle if and only if it satisfies at least the second condition above.  We claim that for any given $x_0$, we can find $x_2, x_4, \ldots, x_{2r}$ such that $x$ will then be a cycle.  Given this, we can show that $H_s(R^p) \cong \mathbb{Z}$ as follows.  If $x$ and $x'$ are two cycles with $x_0 = x'_0$, then it is clear that, writing $y$ for $x + x'$, that $y_0 = 0$ and $Vy_{2i + 2} = Hy_{2i}$ for $0 \leq i < r$, so that $y$ is a boundary and hence $x$ and $x'$ are homologous.  So the homology class of any cycle $x$ is determined by $x_0$.  By our claim, $x_0$ can take on any value in $M_0$, so we have a homomorphism from $M_0$ to $H_s(R^p)$, and the kernel must clearly be the image of $V$.  Thus, $H_s(R^p)$ will be isomorphic to $M_0/ \mbox{Im}(V:M_1 \rightarrow M_0) \cong \mathbb{Z}$.

To see the claim, note first that since $Vx_0 = 0$, it follows that $HVx_0 = VHx_0 = 0$.  Hence $Hx_0$ is in the image of $V$ by the exactness of the top sequence of (\ref{eq:13}), guaranteeing the existence of some $x_2$ satisfying $Hx_0 = Vx_2$.  Next, $VHx_2 = HVx_2 = HHx_0 = 0$; so $Hx_2$ is in the image of $V$, and write $Hx_2 = Vx_4$.  We continue this process until we have constructed $x_{2r}$.  So, we have shown the claim.

For the case where $s$ is a non-trivial odd level, a similar argument to the one used above shows that $x=x_1 + x_3 + \ldots + x_{2r+1}$ is a boundary if and only if
$$x_1 \in \mbox{Im }V + \mbox{Im }H$$
and
$$Vx_{2i+1} = Hx_{2i-1} \mbox{ for }0 < i \leq r.$$
However, note that the first condition is equivalent to $Vx_1 = 0$, as a consequence of Lemma \ref{thm:8.2}.   
Note also that if $r = g-1$, the condition that $Vx_{2g-1} = Hx_{2g-3}$ implies that $HVx_{2g-1} = 0 = VHx_{2g-1}$, which implies that $Hx_{2g-1} = 0$ since $V$ is injective on $M_{2g}$; therefore, these conditions are precisely those under which $x$ is a cycle.  Hence $H_s(R^p)$ is trivial. \end{proof}

For the computation of $H_s(L^p)$, we make some more definitions.

A level $s$ in $L^p$ is called a \emph{stradler} for $L^p$ if the summands of level $s$ lie in page-gradings $(i,j)$ with $i+j \leq -2$, and there is both a non-trivial summand of $s$ in some page-grading $(i,j)$ with $i<0$ and 
a non-trivial summand of $s$ in some page-grading $(i,j)$ with $j<0$.  If $0 < k < 2g$, a level $s$ for $L^p$ is called a \emph{$k$-corner} for $L^p$ if there are two non-trivial summands of $s$, $L^p_{k-1}(0)$ and $L^p_{k+1}(-1)$, which respectively lie in page-gradings $(0, -1)$ and $(-1,0)$. 

Also, define $f_q(p) = f(p) - \eta(p) - g$.  Then it is easy to check that level $s$ on page $L^p$ will have non-trivial summands with $i \geq 0$ if $s \geq f_q(p)$; that this level will have non-trivial summands with $j \geq 0$ if $s \geq f_q(p+1)$; and if both of these hold, that the level will be a stradler or a $k$-corner if $s < f(p)-1$ or $s=f(p)-1$, respectively.

\begin{prop} \label{thm:8.4} If $s$ is \emph{not} a $k$-corner for some $k$, then there is an isomorphism  
$$H_s(L^p) \cong
\left\{ 
\begin{array}{ll}
\mathbb{Z} \oplus \mathbb{Z} & \mbox{if }s\mbox{ is an odd stradler} \\
\mathbb{Z} & \mbox{if }s\mbox{ is any other odd non-trivial level} \\
0 & \mbox{otherwise;} 
\end{array} \right.$$
The first case occurs for odd $s$ satisfying $s \geq f_q(p)$, $s \geq f_q(p+1)$, and $s < f(p)-1$. In this case, both $H_{s-1}(R^p)$ and $H_{s-1}(R^{p+1})$ will be isomorphic to $\mathbb{Z}$; and the maps ${q_p}_*$ and ${S_p}_*$ will be surjective maps whose kernels have trivial intersection.   Among levels $s$ which are not in the first case, the levels in the second case occur precisely for odd $s$ which  satisfy at least one of $s \geq f_q(p)$ and $s \geq f_q(p+1)$.  In the second and third cases, $H_{s-1}(R^p) \cong \mathbb{Z}$ and ${q_p}_*$ is an isomorphism precisely if $s$ is odd and $s \geq f_q(p)$, and otherwise both the group and the map are trivial; and $H_{s-1}(R^{p+1}) \cong \mathbb{Z}$ and ${S_p}_*$ is an isomorphism precisely if $s$ is odd and $s \geq f_q(p+1)$, and otherwise both the group and the map are trivial.  
\end{prop}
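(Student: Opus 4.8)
The plan is to compute $H_s(L^p)$ by the same kind of explicit bookkeeping used in the proof of Proposition \ref{thm:8.3}, but now being careful about the boundary behaviour of the complex $L^p$, which is cut off by the condition $\mathrm{max}\{i,j\}\geq 0$ rather than just $i\geq 0$. First I would set up notation exactly as in the previous proof: in a non-trivial odd level $s$ of $L^p$ write a general element as $x = x_{2a+1} + x_{2a+3} + \cdots + x_{2b+1}$ where the indices run over exactly those $m$ for which $L^p_m(i)$ is non-trivial (with $i$ determined by $m$, $s$, $p$ and $\eta(p)$ via $i+j+f(p)=s$, $j=i-\eta(p)-g+m$), and similarly for even levels. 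The key point is that the range of non-trivial summands depends on whether the level sits entirely in the region $i\geq 0$, entirely in $j\geq 0$, or straddles both half-planes — and this is precisely the trichotomy encoded by the inequalities $s\geq f_q(p)$, $s\geq f_q(p+1)$, $s<f(p)-1$ recorded just before the statement. So the proof naturally breaks into cases according to which of these hold.

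Next I would carry out the cycle/boundary analysis in each case. When the level lies entirely in the $i\geq 0$ region (equivalently $s\geq f_q(p)$ but $s<f_q(p+1)$), $L^p$ in that level is literally a shifted copy of the corresponding level of $R^p$, so Proposition \ref{thm:8.3} gives $H_s(L^p)\cong\mathbb{Z}$ for odd $s$, zero otherwise, and $q_p$ is an isomorphism there; symmetrically, when the level lies entirely in the $j\geq 0$ region, the same computation applied to the $H$-differential (the second row of \eqref{eq:13}) gives $H_s(L^p)\cong\mathbb{Z}$ for odd $s$ and shows $S_p$ is an isomorphism. The genuinely new case is the stradler case, where the level has non-trivial summands in both half-planes but the two ``ends'' of the string $x_{2a+1},\ldots,x_{2b+1}$ are each truncated (one end missing the $V$-target, the other missing the $H$-target). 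Here I would run the solvability-of-the-linear-system argument from Proposition \ref{thm:8.3} from both ends simultaneously: a cycle is determined, up to boundaries, by its two ``extreme'' components, one governed by $\mathrm{Im}\,V$ (as in the even-level computation of $R^p$) and one by $\mathrm{Im}\,H$; Lemma \ref{thm:8.2} is exactly what is needed to identify the boundary conditions at each truncated end. This yields $H_s(L^p)\cong\mathbb{Z}\oplus\mathbb{Z}$ for odd stradlers, and one checks the two generators are detected respectively by $q_p$ (via the $i\geq 0$ end) and by $S_p$ (via the $j\geq 0$ end), so that ${q_p}_*$ and ${S_p}_*$ are each surjective onto their respective $\mathbb{Z}$'s and their kernels intersect trivially — giving the compatibility statements in the first case. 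For even $s$ in any of these cases the argument of Proposition \ref{thm:8.3} (appropriately truncated, using the triviality of $\mathrm{Im}\,VH$ at the extreme $M_0$ or $M_{2g}$ end) shows the homology vanishes.

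Finally I would record that the hypothesis ``$s$ is not a $k$-corner'' is used precisely to exclude the level $s=f(p)-1$, where $L^p$ contains the two summands $L^p_{k-1}(0)$ and $L^p_{k+1}(-1)$ in page-gradings $(0,-1)$ and $(-1,0)$; in that level the truncation happens at both ends in a way that does not match either of the clean patterns above, and the computation of $H_{f(p)-1}(L^p)$ there is exactly the content of Definition \ref{def:8.5} / the groups $\Omega^g(k)$, so it is legitimately postponed. The compatibility claims about $H_{s-1}(R^p)$, $H_{s-1}(R^{p+1})$ and the maps ${q_p}_*$, ${S_p}_*$ then follow by combining Proposition \ref{thm:8.3} (which gives $H_{s-1}(R^p)\cong\mathbb{Z}$ exactly when $s-1$ is even and a non-trivial level of $R^p$, i.e.\ $s$ odd and $s\geq f_q(p)$) with the identification, in each case, of which end of the string on $L^p$ maps isomorphically under $q_p$ or $S_p$.

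The main obstacle I anticipate is the stradler case: keeping track of the two independent homology classes, correctly matching the boundary conditions at the two truncated ends to $\mathrm{Im}\,V$ and $\mathrm{Im}\,H$ respectively via Lemma \ref{thm:8.2}, and verifying the ``kernels have trivial intersection'' assertion — this requires being genuinely careful about indexing and about which generator of $H_s(L^p)$ is seen by $q_p$ versus $S_p$, whereas the other cases are essentially verbatim repetitions of Proposition \ref{thm:8.3}. A secondary nuisance is simply translating the inequalities $s\geq f_q(p)$, $s\geq f_q(p+1)$, $s<f(p)-1$ into precise statements about which summands $L^p_m(i)$ are non-trivial, and checking that the stated ``first case occurs for odd $s$ with $s\geq f_q(p)$, $s\geq f_q(p+1)$, $s<f(p)-1$'' is exactly the stradler condition restricted to odd grading.
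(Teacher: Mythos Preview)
Your overall plan is sound and matches the paper's strategy in the non-stradler cases (literal reduction to Proposition~\ref{thm:8.3} or its symmetric version). But in the stradler case you are working harder than necessary, and in a way that risks blurring the line between this proposition and the next one.

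The key observation you are missing is that in a non-corner stradler level the chain complex \emph{genuinely splits} as a direct sum of two disjoint subcomplexes. Concretely, for odd $s$ the non-trivial summands are $L^p_{2m}(i_0-m)$ for $m$ in a set of the form $[0,i_0]\cup[s',g]$ with $i_0\le s'-2$; the condition $i+j\le -2$ in the definition of a stradler is exactly what forces this gap of at least one index. Since $V$ and $H$ only move $m$ by $\pm 1$, the two intervals do not interact at all, and each half is computed independently by the argument of Proposition~\ref{thm:8.3} (one half verbatim, the other by the $V\leftrightarrow H$ symmetry). This gives $\mathbb{Z}\oplus\mathbb{Z}$ immediately, with the two summands visibly detected by $q_p$ (the $i\ge 0$ half, via $x_0\in M_0/\mathrm{Im}\,V$) and $S_p$ (the $j\ge 0$ half, via $x_{2g}\in M_{2g}/\mathrm{Im}\,H$, using that $*$ induces the isomorphism $M_0/\mathrm{Im}\,V\cong M_{2g}/\mathrm{Im}\,H$ from~\eqref{eq:13}).

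Your proposed ``run the linear system from both ends simultaneously using Lemma~\ref{thm:8.2}'' is precisely the technique the paper reserves for Proposition~\ref{thm:8.6}, the $k$-corner case, where the two halves \emph{do} meet (at page-gradings $(0,-1)$ and $(-1,0)$) and the interaction produces the extra $\Omega^g(k)$ contribution. If you run that argument in the non-corner stradler case without noticing the gap, you either rediscover the splitting implicitly or risk getting a spurious $\Omega^g(k)$ term. Also, a minor indexing slip: for odd $s$ in $L^p$ the summands have even index $m$ (parity is opposite to that in $R^p$), so your general element should be $x_0+x_2+\cdots$ rather than $x_{2a+1}+\cdots$.
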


\begin{proof} We again work ignoring signs, and write $V$ and $H$ without sub- and superscripts.  

Suppose the level $s$ is not a stradler.  Then, we end up with the same calculation as in the previous Proposition except with opposite parity (since we are on the left page now): $H_s(L^p)$ is $\mathbb{Z}$ is $s$ is non-trivial and odd, and $0$ otherwise.  To see this, note that if $s$ is not a stradler, then the page-gradings $(i,j)$ of the non-trivial summands of $s$ either all satisfy $i \geq 0$ or all satisfy $j \geq 0$.  In the former case, the calculation is identical to that of Proposition \ref{thm:8.3}.  The latter case immediately follows by a symmetrical argument.  More precisely, this follows from the arguments of Proposition \ref{thm:8.3} when notation changes are made throughout -- mostly switching $V$'s and $H$'s and replacing terms identified with $M_i$ with terms identified with $M_{2g - i}$.

Now, suppose the level $s$ is a stradler that is not a $k$-corner.  We assume that $s$ is odd.  Then the non-trivial summands of in level $s$ will be $L^p_{2m}(i_0 - m)$ for $m \in [0, i_0] \cup[s,g]$, where $i_0$ and $s$ are some values in $\{0, 1, \ldots, g\}$ that satisfy $i_0 \leq s - 2 $ (this follows from the fact that $i+j \leq -2$).  It is not hard to see that, in fact, level $s$ of the chain complex splits as the direct sum of two separate complexes, one corresponding to the $0 \leq m \leq i_0$ summands, and one corresponding to the $s \leq m \leq g$ summands.  Furthermore, the computations of the homologies of these complexes proceeds just as before, yielding $\mathbb{Z}$.  Putting them together, $H_s(L^p)$ is $\mathbb{Z} \oplus \mathbb{Z}$ if $s$ is an odd (non-corner) stradler.  
Similar remarks apply to the case when $s$ is an even level, and we get that $H_s(L^p)$ is trivial in this case.

The statements about which of the three cases a level $s$ falls into follow from the discussion above.  The statements about when $H_{s-1}(R^p) \cong \mathbb{Z}$ or $H_{s-1}(R^{p+1}) \cong \mathbb{Z}$ are straightforward. To see the statements about ${q_p}_*$ and ${S_p}_*$, note that if $[x] \in H_s(L^p)$, 
then ${q_p}_*([x])$ will be determined by the homology class of $x_0 \in M_0/\mbox{Im }V$ if there is such a component; likewise, ${S_p}_*([x])$ will be determined by the homology class of $x_{2g} \in M_{2g}/\mbox{Im }H$.  The statement for ${q_p}_*$ follows straightforwardly from this after examining the constructions; the same can be said about the statement for ${S_p}_*$, after recalling the fact that $*$ induces an isomorphism from $M_0/\mbox{Im }V$ to $M_{2g}/\mbox{Im }H$ (the righthand isomorphism of (\ref{eq:13})).  \end{proof}

Now, we finally define the groups $\Omega^g(k)$ of the introduction.

\begin{defi}
\label{def:8.5}
Let for $0 < k < 2g$, let $\Omega^g(k)$ be the $R$-module 
$\mbox{Im }VH \cap M_k$, and let $\Omega^g(k)$ be trivial for other values of $k$.
Leaving the abstract setting of this section, the above is standing in for the $\mathbb{Z}[H^1(\#^{2g} S^1 \times S^2)]$-module $\Omega^g(k) = \mbox{Im }VH \cap \Lambda^{2g-k}$,
where $V, H$ and $\Lambda^*$ are as in the statement of Proposition \ref{thm:7.4}.
\end{defi}

Note that as subgroups of torsion-free groups, the $\Omega^g(k)$ are all torsion-free as groups.

\begin{prop} \label{thm:8.6} If $s$ is a $k$-corner for some $k$, then there is a short exact sequence
$$ 0 \rightarrow \Omega^g(k) \rightarrow H_s(L^p) \overset{{q_p}_* \oplus {S_p}_*}{\longrightarrow} H_{s-1}(R^p) \oplus H_{s-1}(R^{p+1}) \rightarrow 0.$$
If $s$ is an odd level, then $H_{s-1}(R^p) \cong \mathbb{Z}$ and $H_{s-1}(R^{p+1}) \cong \mathbb{Z}$, and the kernels of the maps ${q_p}_*$ and ${S_p}_*$ have intersection $\Omega^g(k)$.  If $s$ is an even level, then $H_{s-1}(R^p)$ and $H_{s-1}(R^{p+1})$ are both trivial.
\end{prop}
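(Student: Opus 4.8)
\textit{Plan of proof.} The strategy is to carry out the same cycle-and-boundary bookkeeping that proved Propositions \ref{thm:8.3} and \ref{thm:8.4}, but keeping careful track of the single extra generator that couples the two sides of a corner level. First I would pin down the shape of the level-$s$ part of $(L^p,\la_p)$. Writing $\eta(p)=k-g$ (forced by the definition of a $k$-corner) and $s=f(p)-1$, the level-$s$ subgroup $C_s$ of $L^p$ is $\bigoplus_m M_m$ over $m\equiv k-1\pmod 2$, $0\le m\le 2g$, with $M_m$ sitting as $L^p_m(\tfrac{k-1-m}{2})$. Crucially the crossover summand $L^p_k(-1)$ that would join the ``lower half'' $C_s^{\mathrm{lo}}=\bigoplus_{m\le k-1}M_m$ to the ``upper half'' $C_s^{\mathrm{hi}}=\bigoplus_{m\ge k+1}M_m$ is \emph{trivial} --- this is exactly the feature distinguishing a $k$-corner from an ordinary stradler --- so $\la_p$ carries $C_s^{\mathrm{lo}}$ into $C_{s-1}^{\mathrm{lo}}$ and $C_s^{\mathrm{hi}}$ into $C_{s-1}^{\mathrm{hi}}$. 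Hence $C_s$ splits under $\la_p$; the chain map $q_p$ (which kills the summands with $i<0$) restricts to an isomorphism of $C_s^{\mathrm{lo}}$ onto the relevant part of $R^p$, and the chain map $S_p$ (which kills the summands with $i\ge 0$ and reindexes $m\mapsto 2g-m$ via $*$) sends $C_s^{\mathrm{hi}}$ onto the relevant part of $R^{p+1}$. The incoming boundaries from level $s+1$, however, do \emph{not} split: the single generator $L^p_k(0)\cong M_k$ of $C_{s+1}$ has $\la_p$-image $(Vt,Ht)\in M_{k-1}\oplus M_{k+1}\subseteq C_s^{\mathrm{lo}}\oplus C_s^{\mathrm{hi}}$, bridging the two halves.

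Granting this, surjectivity of $(q_p)_*$ onto $H_{s-1}(R^p)$ and of $(S_p)_*$ onto $H_{s-1}(R^{p+1})$ follows as the surjectivity claims in Proposition \ref{thm:8.3}: a cycle representing a class in $H_{s-1}(R^p)$ lives in $C_s^{\mathrm{lo}}$, hence extends to a $\la_p$-cycle of $C_s$ by taking its $C_s^{\mathrm{hi}}$-component to be zero (using that $q_p$ is injective on $C_s^{\mathrm{lo}}$), and $q_p$ of it is the given cycle; likewise for $S_p$ via $C_s^{\mathrm{hi}}$ and the isomorphism $M_0/\mathrm{Im}\,V\cong M_{2g}/\mathrm{Im}\,H$ induced by $*$ (the right-hand arrow of (\ref{eq:13})). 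The parity statements are then automatic: if $s$ is odd then $s-1$ is even and Proposition \ref{thm:8.3} gives $H_{s-1}(R^p)\cong H_{s-1}(R^{p+1})\cong\mathbb{Z}$ (one checks from $0<k<2g$ that $s\ge f_q(p)$ and $s\ge f_q(p+1)$, so these are non-trivial levels); if $s$ is even, $s-1$ is odd and both groups vanish by Proposition \ref{thm:8.3}, so the asserted sequence collapses to $H_s(L^p)\cong\Omega^g(k)$.

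The heart of the matter --- and the step I expect to be the main obstacle --- is the identification of the joint kernel $\ker(q_p)_*\cap\ker(S_p)_*$ with $\Omega^g(k)=\mathrm{Im}\,VH\cap M_k$. Given a $\la_p$-cycle $x=(x^{\mathrm{lo}},x^{\mathrm{hi}})$ whose class dies under $(q_p)_*$, one writes $x^{\mathrm{lo}}$ as a $\rho_p$-boundary in $R^p$, lifts the bounding chain along $q_p$, and subtracts its $\la_p$-image; since $q_p$ is injective on $C_s^{\mathrm{lo}}$ this reduces $x$ to the form $(0,x^{\mathrm{hi}})$. Applying next $(S_p)_*=0$, then subtracting a pure hi-boundary and a bridge boundary $(Vt,Ht)$, one reduces further to $x\simeq(Vt,0)$ for some $t\in M_k$; this is a $\la_p$-cycle precisely because $H(Vt)$ would land in the trivial group $L^p_k(-1)$. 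Thus the joint kernel is the image of the homomorphism $\psi\colon M_k\to H_s(L^p)$, $t\mapsto[(Vt,0)]$. Its kernel is computed using the exact rows of (\ref{eq:13}) together with $VH+HV=0$ (equivalently, via Lemma \ref{thm:8.2}): unwinding when $(Vt,0)$ is an honest boundary, the constraints along the lower and upper one-sided subcomplexes --- whose solvability is governed by the exact $V$- and $H$-sequences exactly as in Proposition \ref{thm:8.3} --- force $VHt=0$, while conversely $VHt=0$ makes $(Vt,0)$ a boundary; hence $\ker\psi=\ker(VH|_{M_k})$ and $\psi(M_k)\cong M_k/\ker(VH|_{M_k})\cong\mathrm{Im}(VH|_{M_k})=\Omega^g(k)$ by Definition \ref{def:8.5}. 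Combining the surjectivity of $(q_p)_*\oplus(S_p)_*$ with this computation of its kernel gives the short exact sequence, and the statement that $\ker(q_p)_*$ and $\ker(S_p)_*$ meet in $\Omega^g(k)$ is merely a restatement in the odd case. As everywhere in Section 8 signs are suppressed and reinserted by routine bookkeeping; the one genuinely delicate point is checking that the three reduction moves above can always be performed and that no nonzero element of $\Omega^g(k)$ becomes an honest boundary (injectivity of $\Omega^g(k)\hookrightarrow H_s(L^p)$), which is exactly where the bridge-coupling analysis is used in full.
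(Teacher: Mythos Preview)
Your proposal is correct and follows essentially the same route as the paper. Both arguments hinge on the observation that level $s$ of $(L^p,\la_p)$ splits as $C_s^{\mathrm{lo}}\oplus C_s^{\mathrm{hi}}$ for outgoing differentials, while the incoming boundary from level $s+1$ contains a single ``bridge'' summand $L^p_k(0)\cong M_k$ at page-grading $(0,0)$ coupling the two halves. The paper represents the joint kernel of $(q_p)_*\oplus(S_p)_*$ by cycles in $S_{k-1}=\ker V\cap M_{k-1}$ sitting at the top of the lower half, and computes that such an $s_{k-1}$ bounds in $L^p$ iff $Hs_{k-1}=0$; you parametrize instead by $\psi\colon M_k\to H_s(L^p)$, $t\mapsto[(Vt,0)]$, and find $\ker\psi=\ker(VH|_{M_k})$. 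Since $\ker V\cap M_{k-1}=\mathrm{Im}\,V\cap M_{k-1}=V(M_k)$ and $HVt=0\Leftrightarrow VHt=0$, these are the same computation under the reparametrization $s_{k-1}=Vt$. One small inaccuracy in your write-up: the triviality of $L^p_k(-1)$ is not what distinguishes a $k$-corner from an ordinary stradler (it is trivial for stradlers too); the distinguishing feature is rather that the bridge $L^p_k(0)$ at level $s+1$ is \emph{non}-trivial, which is exactly what you exploit.
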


\begin{proof}  This case is similar to the case when $s$ is a stradler, but note that here, the level does not quitesplit as the direct sum of two separate complexes, as the group in page-grading $(0,0)$ maps into both ``sides of the straddle.''   

We work out the case where $s$ is odd in detail; for even $s$, the argument is similar.  For such a level, we can write an arbitrary element of $s$ as $x = x_0 + x_2 + \ldots + x_{2g}$, where 
for some value of $k$ satisfying $0 \leq \frac{k-1}{2} \leq g -1$, we have $x_{2m} \in L^p_{2m}(\frac{k-1}{2} - m)$ for $m$ between $0$ and $g$.  

The criterion for $x$ to be a boundary will clearly be the same as if $s$ were a non-trivial non-stradler; as in the proof of Proposition \ref{thm:8.3}, the criterion is that $x_0, \ldots, x_{2g}$ satisfy 
$$x_0 \in \mbox{Im }V$$
and 
$$Vx_{2i + 2} = Hx_{2i} \mbox{ for }0 \leq i < g.$$
In fact, it is not hard to see that this also implies that $x_{2g} \in \mbox{Im }H.$
On the other hand, inspection shows that the criterion for $x$ to be a cycle will now be just that
$$Vx_{2i + 2} = Hx_{2i} \mbox{ for }0 \leq i < \frac{k-1}{2} \mbox{ and } \frac{k-1}{2} < i < g.$$

We now construct some subgroups of the group of cycles.  First, choose some fixed generator $x^*_0$ of $M_0$.  It is easy to see that $x^*_0$ represents a generator of $M_0 / \mbox{Im }V$.  Then, use the process described in Proposition \ref{thm:8.3} to construct fixed components $x^*_0, x^*_2, \ldots, x^*_{k-1}$ of a cycle, stopping there, where $x^*_{i} \in M_i$.  We call this cycle $X^*_0$; let $S_0$ be the free $R$-module generated by this fixed cycle.  Likewise, construct a fixed element $X^*_{2g}$ by a symmetric process, starting with some generator of $x^*_{2g} \in M_{2g}$, and let $S_{2g}$ be the free $R$-module generated by $X^*_{2g}$.  
Also, let $S_{k-1}$ denote the $R$-module $\mbox{Ker }V \cap M_{k-1}$.

So, $S_0 \oplus S_{k-1} \oplus S_{2g}$ is a submodule of the set of cycles.  We claim that any cycle is homologous to one in $S_0 \oplus S_{k-1} \oplus S_{2g}$.  
To see the claim, let $y = y_0 + \ldots + y_{2g}$ be a cycle, with $y_0 = r_0x^*_0$ and $y_{2g} = r_{2g}x^*_{2g}$ for $r_0, r_{2g} \in R$.  Let $y' = y - r_0X^*_0 - r_{2g}X^*_{2g}$; we show the existence of $s_{k-1} \in S_{k-1}$ such that $y' -s_{k-1}$ is a boundary, which of course suffices for the claim.  

To simplify the following discussion, assume that $k \neq 1, 2g-1$ (modifications when $k=1, 2g-1$ are straightforward).  Start by noting the fact that $y'_0 = 0$.  So $Hy'_0=0$; since $y'$ is a cycle, $Vy'_2 = Hy'_0 = 0 \in \mbox{Im }VH \cap M_1$.  Now, by Lemma \ref{thm:8.2}, this means that in particular, $y'_2 \in \mbox{Im }V + \mbox{Im }H$; if $k>3$, continue by applying Lemma \ref{thm:8.2} again, to conclude that $Vy'_4 = Hy'_2 \in \mbox{Im }VH \cap M_3$.  Continuing this, we see that $Vy'_{2m} \in \mbox{Im }VH \cap M_{2m-1}$ for $1 \leq m \leq \frac{k-1}{2}$.  A symmetric argument shows that $Hy'_{2m} \in \mbox{Im }VH \cap M_{2m+1}$ for $\frac{k+1}{2} \leq m \leq g-1$. 

In particular, $Vy'_{k-1} \in \mbox{Im }VH \cap M_{k-2}$, and so applying Lemma \ref{thm:8.2} twice, $Hy'_{k-1} \in \mbox{Im }VH \cap M_{k}$; likewise, $Vy'_{k+1} \in \mbox{Im }VH \cap M_{k}$.  Therefore, $Hy'_{k-1} + Vy'_{k+1} \in \mbox{Im }HV$, or in otherwords, $Hy'_{k-1} + Vy'_{k+1} = Hs_{k-1}$ for some $s_{k-1} \in S_{k-1}$.  Now, it is easy to check the boundary conditions to show that $y' - s_{k-1}$ is a boundary (since $Vs_{k-1}$ of course equals 0).    

It follows that $H_s(L^p)$ is isomorphic to the quotient of $S_0 \oplus S_{k-1} \oplus S_{2g}$ by the submodule of boundaries it contains.  Now, it is easy to see that a cycle of the form $0 \oplus s_{k-1} \oplus 0$ is a boundary if and only if $Hs_{k-1} = 0$; so there is a subgroup of $H_s(L^p)$ that is isomorphic to $\frac{\mbox{\scriptsize{Ker }}V \cap M_{k-1}}{\mbox{\scriptsize{Ker }}V \cap \mbox{\scriptsize{Ker }}H \cap M_{k-1}}$.  This module is isomorphic to the image of $H$ restricted to $\mbox{Ker }V \cap M_{k-1} = \mbox{Im }V \cap M_{k-1}$, which in turn is isomorphic to $\mbox{Im }VH \cap M_{k} = \Omega^g(k)$.  

On the other hand, the maps ${q_p}_*$ and ${S_p}_*$ clearly take the cycle $r_0X^*_0 + r_{2g}X^*_{2g} + s_{k-1}$ to, respectively, $r_0[X^*_0] \in H_{s-1}(R^p)$ and $r_{2g}[X^*_{2g}] \in H_{s-1}(R^{p+1})$ Both of the groups are easily checked to be $\mathbb{Z}$, generated respectively by $[X^*_0]$ and  $[X^*_{2g}]$.  Therefore, ${q_p}_* \oplus {S_p}_*$ is surjective.  The kernel of this map is composed of cycles of the form $r_0X^*_0 + r_{2g}X^*_{2g} + s_{k-1}$ where $r_0x^*_0 \in \mbox{Im }V$ and $r_{2g}x^*_{2g} \in \mbox{Im }H$.  But the argument used to show that $y'$ is homologous to an element of $S_{k-1}$ can be altered slightly, to work when $y'$ is instead taken to be any element with $y'_0 \in \mbox{Im }V$ and $y'_{2g} \in \mbox{Im }H$.  So, the kernel of ${q_p}_* \oplus {S_p}_*$ is contained in the submodule $\Omega^g(k)$, and the opposite inclusion is obvious.  \end{proof}

\subsection{Calculation of $H_*(\mathcal{B})$} 
We now give a relatively simple description of $H_*(\mathcal{B})$, and then we proceed to prove the formula that computes it.

\begin{prop} \label{thm:8.7} Let $H^{(r)}_*(\mathcal{B})$ be the quotient of $\bigoplus_{p \in \mathbb{Z}} H_*(R^p)$ by the subgroup consisting of those $\bigoplus_{p \in \mathbb{Z}} [r_p]$ such that there exist $[\ell_p] \in H_s(L^p)$ satisfying ${q_p}_*([\ell_p]) + {S_{p-1}}_*([\ell_{p-1}]) = [r_p]$.
Also, let $H^{(l)}_*(\mathcal{B})$ be the subgroup of $\bigoplus_{p \in \mathbb{Z}} H_*(L^p)$ consisting of those $\bigoplus_{p \in \mathbb{Z}} [\ell_p]$ for which ${S_p}_*([\ell_{p-1}]) + {q_p}_*([\ell_p]) = 0 \in H_*(R^p)$.
Then there is a short exact sequence 
$$ 0\rightarrow H^{(r)}_*(\mathcal{B}) \rightarrow H_*(\mathcal{B}) \rightarrow H^{(l)}_*(\mathcal{B}) \rightarrow 0.$$
\end{prop}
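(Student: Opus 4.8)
The plan is to exhibit $\mathcal{B}$ as the total complex of a double complex whose two ``rows'' are $\bigoplus_p L^p$ and $\bigoplus_p R^p$, with the horizontal differential given by $\sum_p (q_p + S_{p-1}) : \bigoplus_p L^p \to \bigoplus_p R^p$ and the vertical differentials given by $\sum_p \lambda_p$ and $\sum_p \rho_p$. Concretely, $\mathcal{B} = \mathrm{Cone}\big(\sum_p (q_p + S_{p-1})\big)$ as a chain complex, once one checks (from Lemma \ref{thm:8.1} and Equations \ref{eq:14}, \ref{eq:15}) that $\sum_p(q_p + S_{p-1})$ really is a chain map from $(\bigoplus_p L^p, \sum_p \lambda_p)$ to $(\bigoplus_p R^p, \sum_p \rho_p)$ that drops grading by the right amount. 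Granting this, the standard long exact sequence of a mapping cone reads
$$ \cdots \to H_{s}\Big(\bigoplus_p R^p\Big) \to H_s(\mathcal{B}) \to H_s\Big(\bigoplus_p L^p\Big) \xrightarrow{\ \partial\ } H_{s-1}\Big(\bigoplus_p R^p\Big) \to \cdots, $$
where the connecting map $\partial$ is induced by $\sum_p(q_p + S_{p-1})$, i.e.\ on a class $\bigoplus_p [\ell_p]$ it is $\bigoplus_p \big({q_p}_*([\ell_p]) + {S_{p-1}}_*([\ell_{p-1}])\big)$. Since homology commutes with the direct sum over $p$, we have $H_s(\bigoplus_p R^p) = \bigoplus_p H_s(R^p)$ and likewise for $L$.

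The short exact sequence of the statement is then extracted from this long exact sequence purely formally: $H^{(r)}_*(\mathcal{B})$ is by definition the cokernel of $\partial$, which by exactness is the image of $H_*(\bigoplus_p R^p)$ in $H_*(\mathcal{B})$; and $H^{(l)}_*(\mathcal{B})$ is by definition the kernel of $\partial$, which by exactness is the image of $H_*(\mathcal{B})$ in $H_*(\bigoplus_p L^p)$. Thus the segment
$$ 0 \to \mathrm{coker}\,\partial \to H_*(\mathcal{B}) \to \ker \partial \to 0 $$
of the long exact sequence is exactly the asserted short exact sequence $0 \to H^{(r)}_*(\mathcal{B}) \to H_*(\mathcal{B}) \to H^{(l)}_*(\mathcal{B}) \to 0$. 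One should double-check the grading bookkeeping — that the connecting map shifts the $\mathbb{Z}$-grading by $1$ consistently with the grading conventions fixed just before Section 8.1 (the $+1$ offset between $L^p$ and $R^p$ levels is precisely what makes $q_p$ and $S_p$ grading-preserving in the cone), and that the direct sums over $p$ are exhaustive at each fixed grading $s$ so that no convergence subtleties arise.

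The only genuine content beyond the formal mapping-cone machinery is verifying that $D$ decomposes as claimed and that there are no cross-terms obstructing the cone description: one must confirm $\lambda_p q_p' = \rho_p q_p$-type identities hold so that the bicomplex signs can be arranged, but this is exactly the content of Lemma \ref{thm:8.1} together with (\ref{eq:14}) and (\ref{eq:15}), all already established. So I expect \emph{no} serious obstacle here — the proposition is a reformulation of the mapping-cone long exact sequence — and the write-up should consist of (i) identifying $\mathcal{B}$ with the cone, (ii) invoking the long exact sequence and commuting homology with $\bigoplus_p$, and (iii) reading off that $\ker\partial$ and $\mathrm{coker}\,\partial$ are precisely $H^{(l)}_*(\mathcal{B})$ and $H^{(r)}_*(\mathcal{B})$ as defined. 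The mildly delicate point worth a sentence of care is simply that the definitions of $H^{(l)}$ and $H^{(r)}$ in the proposition statement are stated ``pointwise in $p$'' and one wants to be sure these match $\ker\partial$ and $\mathrm{coker}\,\partial$ on the nose, which they do because $\partial$ acts diagonally-plus-shift in $p$.
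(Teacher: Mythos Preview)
Your approach is correct and is essentially the same as the paper's, just packaged differently. The paper carries out by hand exactly the cycle/boundary analysis that underlies the mapping-cone long exact sequence: it first checks that cycles of the form $\bigoplus_p 0 \oplus r_p$ modulo boundaries give precisely $H^{(r)}_*(\mathcal{B}) = \mathrm{coker}\,\partial$, and then checks that for an arbitrary $\bigoplus_p \ell_p$ the obstruction to extending to a cycle (resp.\ boundary) in $\mathcal{B}$ is exactly the vanishing of $\partial(\bigoplus_p[\ell_p])$ (resp.\ the vanishing of each $[\ell_p]$), which identifies the quotient with $H^{(l)}_*(\mathcal{B}) = \ker\partial$. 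Your invocation of the standard cone long exact sequence is the abstract form of this same computation; the paper's version is more self-contained and avoids having to sort out the grading-shift conventions for the cone, which (as you note) requires a moment of care here because of the $+1$ offset between the $L^p$ and $R^p$ gradings.
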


\begin{proof}  First, consider any element of the form $\bigoplus_{p \in \mathbb{Z}} 0 \oplus r_p$.  This will be a cycle if and only if $\rho_p(r_p) = 0$ for all $p$.  It will be a boundary if there exist $\ell'_p$ and $r'_p$ such that $\la_p(\ell'_p)=0$ and $q_p(\ell'_p) + S_{p-1}(\ell'_{p-1}) + \rho_p(r'_p) = r_p$
for all $p$.  It is straightforward to see that this is equivalent to the existence of $[\ell'_p] \in H_s(L^p)$ satisfying ${q_p}_*([\ell'_p]) + {S_{p-1}}_*([\ell'_{p-1}]) = [r_p]$.  So the subgroup of $H_*(\mathcal{B})$ generated by cycles of this form will be isomorphic to $H^{(r)}_*(\mathcal{B})$.

Next, suppose that we are given arbitrary $\ell = \bigoplus_{p\in \mathbb{Z}} \ell_p \in \bigoplus_{p\in \mathbb{Z}} L^p$, with each $\ell_p$ supported in level $s$.  If $\la_p(\ell_p) = 0$ and ${q_p}_*([\ell_p]) + {S_{p-1}}_*([\ell_{p-1}]) = 0 \in H_s(R^p)$, then there is a cycle of the form $\bigoplus_{p\in \mathbb{Z}} \ell_p \oplus r_p$; for in this case, we have ${q_p}(\ell_p) + {S_{p-1}}(\ell_{p-1}) = \rho_p(r_p)$ for some $r_p \in R^p$ supported in level $s$.  Conversely, it is clear that $\ell$ will need to satisfy these conditions for such values of $r_p$ to exist.  Furthermore, there will exist $r_p$ which make $\bigoplus_{p\in \mathbb{Z}} \ell_p \oplus r_p$ a boundary if and only if $[\ell_p] = 0 \in H_s(L^p)$ for all $p$.  Hence, the quotient of the group of $\ell$ which extend to a cycle by the subgroup of $\ell$ which extend to a boundary is $H^{(l)}_*(\mathcal{B})$.

The claim follows easily. \end{proof}

Now, define a function $F_{\mathcal{B}}: \frac{1}{2}\mathbb{Z} \rightarrow \mathbb{Z}$
by
$$F_{\mathcal{B}}(m) = 
\left\{ \begin{array}{ll}
f_q\left(m + 1\right), & m \in \mathbb{Z} \\
f\left(m+\frac{1}{2}\right), & m \in \frac{1}{2} + \mathbb{Z}.
\end{array} \right.$$
This function can be characterized more directly by 
$$ F_{\mathcal{B}}(m) = G_{\mathcal{B}}(m) + \left\{ \begin{array}{ll}
0, & m \in \mathbb{Z} \\
g, & m \in \frac{1}{2} + \mathbb{Z},
\end{array} \right.$$
where $G_{\mathcal{B}}: \frac{1}{2}\mathbb{Z} \rightarrow \mathbb{Z}$ is characterized by
$$G_{\mathcal{B}}(0) = 1,$$
$$G_{\mathcal{B}}(m) = G_{\mathcal{B}}(m-1) + 2\eta(m) \mbox{ for } m \in \mathbb{Z}, $$
$$G_{\mathcal{B}}\left(m + \frac{1}{2}\right) = \frac{1}{2}\left(G_{\mathcal{B}}(m) + G_{\mathcal{B}}(m+1) \right) \mbox{ for } x \in \mathbb{Z}.$$

Let 
$$ \Omega_s(\mathcal{B}) \cong \bigoplus_{ \{p\in \mathbb{Z} | F_{\mathcal{B}}(p+\frac{1}{2}) = s + 1\} } \Omega^g\left(g + \eta(p)\right),$$
recalling that we take $\Omega^g(k)$ to be $0$ for $k < 1$ and $k > 2g - 1$.  This is easily checked to be the direct sum of all the $\Omega^g(k)$ subgroups of $\bigoplus_{p \in \mathbb{Z}} H_*(L^p)$. Also, recall from the introduction the definition of the well group $\mathbb{H}_*(F)$ of a function $F:\frac{1}{2}\mathbb{Z}\rightarrow\mathbb{Z}$.

\begin{prop} \label{thm:8.8} If $s$ is even, we have $H^{(l)}_s(\mathcal{B}) \cong \Omega_s(\mathcal{B})$.  If $s$ is odd, we have a short exact sequence
$$ 0 \rightarrow \Omega_s(\mathcal{B}) \rightarrow H^{(l)}_s(\mathcal{B}) \rightarrow \mathbb{H}_s(F_{\mathcal{B}}) \rightarrow 0.$$
\end{prop}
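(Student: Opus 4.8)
The plan is to unwind the definition of $H^{(l)}_s(\mathcal{B})$ using the case analysis of the homology groups $H_s(L^p)$ and $H_{s-1}(R^p)$ established in Propositions \ref{thm:8.4} and \ref{thm:8.6}, together with the descriptions of the maps $\{q_p\}_*$ and $\{S_p\}_*$ recorded there. Recall that $H^{(l)}_s(\mathcal{B})$ is the subgroup of $\bigoplus_p H_s(L^p)$ consisting of tuples $\bigoplus_p [\ell_p]$ with ${S_p}_*([\ell_{p-1}]) + {q_p}_*([\ell_p]) = 0$ in $H_{s-1}(R^p)$ for every $p$. First I would dispose of the even case: when $s$ is even, Propositions \ref{thm:8.4} and \ref{thm:8.6} say that $H_{s-1}(R^p) = 0$ for all $p$, so the defining condition is vacuous and $H^{(l)}_s(\mathcal{B}) = \bigoplus_p H_s(L^p)$. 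But also for even $s$, the only nontrivial contributions to $H_s(L^p)$ come from $k$-corners (Proposition \ref{thm:8.4} gives $0$ for odd stradlers and other levels when $s$ is even), and the $k$-corner short exact sequence of Proposition \ref{thm:8.6} degenerates — since both target $R$-homologies vanish — to an isomorphism $H_s(L^p) \cong \Omega^g(k)$ with $k = g + \eta(p)$ (this is the $k$-value forced by the $k$-corner occurring at the level $s = f(p)-1$, matching the index in the definition of $\Omega_s(\mathcal{B})$). Summing over $p$ gives $H^{(l)}_s(\mathcal{B}) \cong \Omega_s(\mathcal{B})$.

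For odd $s$, the heart of the argument is to build the map $H^{(l)}_s(\mathcal{B}) \to \mathbb{H}_s(F_{\mathcal{B}})$ and identify its kernel with $\Omega_s(\mathcal{B})$. The key observation is that for odd $s$ each $H_s(L^p)$ has a ``$\Omega^g$-part'' (present only at a $k$-corner) sitting inside the kernel of $\{q_p\}_* \oplus \{S_p\}_*$, and a complementary ``free part'' detected by the homology classes $[X^*_0] \in H_{s-1}(R^p)$ and $[X^*_{2g}] \in H_{s-1}(R^{p+1})$ — that is, by the pair $({q_p}_*, {S_p}_*)$, each landing in a copy of $\mathbb{Z}$ (when the relevant level is nontrivial, i.e. when $s \ge f_q(p)$ resp. $s \ge f_q(p+1)$). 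So I would: (i) split off $\Omega_s(\mathcal{B})$ as a subgroup of $H^{(l)}_s(\mathcal{B})$, using Proposition \ref{thm:8.6} to see that every element of $\Omega^g(k) \subset H_s(L^p)$ is killed by both ${q_p}_*$ and ${S_p}_*$, hence automatically satisfies the compatibility conditions; (ii) show the quotient $H^{(l)}_s(\mathcal{B})/\Omega_s(\mathcal{B})$ is computed purely from the ``free parts,'' i.e. from the $\mathbb{Z}$-summands $H_{s-1}(R^p) \cong \mathbb{Z}$ indexed by those $p$ with $s$ in a nontrivial range, and the two maps into each such $\mathbb{Z}$ coming from the left pages $L^{p-1}$ and $L^p$.

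The final step is the combinatorial identification of this ``free part'' with the well group $\mathbb{H}_s(F_{\mathcal{B}})$. Here is where the function $F_{\mathcal{B}}$ enters: by the formulas relating $f$, $f_q$, $\eta$ and the characterization of $F_{\mathcal{B}}$ via $G_{\mathcal{B}}$, the condition that level $s$ of $L^p$ is nontrivial with an $i\ge 0$ summand is exactly $s \ge f_q(p) = F_{\mathcal{B}}(p-1)$ — wait, more precisely $s \ge f_q(p+1) = F_{\mathcal{B}}(p)$ and $s \ge f_q(p) = F_{\mathcal{B}}(p-1)$ control the two ``edges'' — and the $k$-corner/stradler dichotomy at $s = f(p) - 1$ versus $s < f(p)-1$ translates into whether $s \ge F_{\mathcal{B}}(p-\tfrac12)$ or not. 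I would match up: a ``well'' $(i,j)_s$ for $F_{\mathcal{B}}$ (with $s < F_{\mathcal{B}}(i)$, $s<F_{\mathcal{B}}(j)$, $s \ge F_{\mathcal{B}}(t)$ for $i+\tfrac12 \le t \le j-\tfrac12$) corresponds to a ``run'' of consecutive pages $p$ over which the relevant $R^p$-levels are nontrivial and the free parts of the $L^p$-homologies pair up along the $q$- and $S$-maps; the two boundary pages of the run are where the well ``walls'' are, i.e. where $F_{\mathcal{B}}$ exceeds $s$, and there the compatibility condition forces the boundary contribution to vanish, cutting out precisely one $\mathbb{Z}$ per well. Carrying this through yields the identification $H^{(l)}_s(\mathcal{B})/\Omega_s(\mathcal{B}) \cong \mathbb{H}_s(F_{\mathcal{B}})$, hence the short exact sequence. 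I expect the main obstacle to be precisely this last bookkeeping step: keeping straight, across a potentially long run of pages, how the two maps ${q_p}_*$ and ${S_{p-1}}_*$ into each $H_{s-1}(R^p) \cong \mathbb{Z}$ interact — in particular verifying that the resulting system of equations over $\bigoplus_p \mathbb{Z}$ has solution space (modulo the obvious boundaries) of rank exactly equal to the number of wells, with the ``walls'' of each well correctly reading off the inequalities defining $W_s(F_{\mathcal{B}})$. The signs (suppressed by the standing convention) also need a quick sanity check here, since cancellation along a run is what produces the well structure rather than a larger or smaller group.
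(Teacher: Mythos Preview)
Your proposal is correct and follows essentially the same strategy as the paper: handle even $s$ trivially, split off the $\Omega_s(\mathcal{B})$ subgroup via Proposition~\ref{thm:8.6}, and then identify the remaining ``free part'' with the well group by a combinatorial argument tracking when $H_{s-1}(R^p)$ is nontrivial and when $H_s(L^p)$ is $\mathbb{Z}$ versus $\mathbb{Z}\oplus\mathbb{Z}$. The only differences are organizational. First, the paper treats the $\Omega_s(\mathcal{B})=0$ case first and adds the $\Omega$-subgroups at the end, whereas you split them off at the start; this is immaterial. Second, and more substantively, the paper formalizes the combinatorics you sketch by building an explicit graph: vertices indexed by $p$ and weighted by $f_q(p)$, edges between $p$ and $p+1$ weighted by $f(p)$; one deletes vertices and edges of weight exceeding $s$, discards infinite components, and counts the finite components. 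The paper then shows this count equals the number of wells of $F_{\mathcal{B}}$ at height $s$ by comparing the well-count for $f_q$ (which gives the number of runs of surviving vertices) to that for $F_{\mathcal{B}}$ (which adds the half-integer ``spikes'' $f(p)$). Your description of ``runs bounded by walls'' is exactly this, but the graph language makes the removal of infinite components and the role of the stradler/non-stradler dichotomy (which governs whether adjacent $\mathbb{Z}$'s get glued) completely transparent, and cleanly separates the two contributions to the well-count. One small slip: your phrase ``solution space (modulo the obvious boundaries)'' is off --- $H^{(l)}_s(\mathcal{B})$ is already a subgroup (a kernel), not a quotient, so there is nothing further to mod out; the rank of this kernel (after quotienting by $\Omega_s$) is what must equal the number of wells.
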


\begin{proof} We work through the case where $s$ is odd.  The case where $s$ is even is straightforward, since ${q_p}_*$ and ${S_p}_*$ will vanish on $H_s(L^p)$ for all $p$ in this case.

We first tackle the case where $\Omega_s(\mathcal{B})$ is trivial.  We return to the general case at the end.  We start by drawing the following diagram:

\if 0

$$\begin{array}{llllllllllll} \vspace{6pt}
 & & \frac{H_s(L^{-1})}{\mbox{\scriptsize{Ker }}{q_{-1}}_*} & \frac{H_s(L^{-1})}{\mbox{\scriptsize{Ker }}{S_{-1}}_*} & &
\frac{H_s(L^{0})}{\mbox{\scriptsize{Ker }}{q_{0}}_*} & \frac{H_s(L^{0})}{\mbox{\scriptsize{Ker }}{S_{0}}_*} & &
\frac{H_s(L^{1})}{\mbox{\scriptsize{Ker }}{q_{1}}_*} & \frac{H_s(L^{1})}{\mbox{\scriptsize{Ker }}{S_{-1}}_*} & & \\ \vspace{4pt}
\ldots & \searrow & {q_{-1}}_* \downarrow & & \underset{{S_{-1}}_*}{\searrow} & {q_{0}}_* \downarrow & & \underset{{S_{0}}_*}{\searrow} &
{q_{1}}_* \downarrow & & \searrow & \ldots \\ 
 & & H_s(R^{-1}) & & &
H_s(R^{0}) & & &
H_s(R^{1}) & & &
\end{array}$$

\fi

\begin{center} \includegraphics[scale=.70]{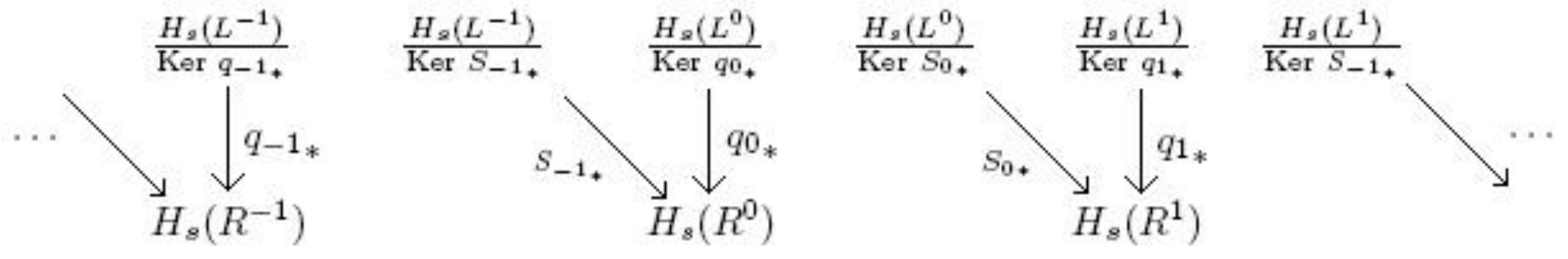}\end{center} 

\noindent Notice that the maps ${q_p}_*$ and ${S_p}_*$ are always surjective, so that every map is an isomorphism.
We think of this as a graph, with maps as edges and groups as vertices.  In these terms, our diagram in its current state has one connected component for each integer $p$, each component with 3 vertices.  
Now, we alter this diagram.
\begin{itemize}

\item If $s < f_q(p)$, then all three groups in the component containing $H_s(R^p)$ are trivial, so remove the component. 

\item After doing this, if anytime we have two consecutive components remaining, it means that $H_s(L^p)$ is either isomorphic to $\mathbb{Z}$ or to $\mathbb{Z} \oplus \mathbb{Z}$.  In the former case, this means that $H_s(L^p)/\mbox{Ker }{q_p}_*$ and $H_s(L^p)/\mbox{Ker }{S_p}_*$ are both canonically isomorphic to $H_s(L^p)$, and hence to each other.  This happens when $s \geq f(p)$ (in addition to $s \geq f_q(p)$ and $s \geq f_q(p+1)$); for such values of $p$, draw an arrow between $\mbox{Im }{q_p}_*$ and $\mbox{Im }{S_p}_*$.  

\item Finally, remove any infinite connected components.  

\end{itemize}
For any element of $H^{(l)}_s(\mathcal{B})$, its summands have to fit into this diagram.  That is, each non-trivial summand of an element of $H^{(l)}_s(\mathcal{B})$ lives in some $H_s(L^p)$, and hence via quotient maps, has two homes on the top half of the diagram; then, if two arrows lead to the same summand on the bottom, the elements in front of these two arrows must map to the same element on bottom.  Indeed, it is easy to see that this is a necessary and sufficient for an element of $\bigoplus_{p \in \mathbb{Z}} H_*(L^p)$ to be in the subgroup $H^{(l)}_s(\mathcal{B})$.  

But it is clear, then, that the value in any summand left over is an element of $\mathbb{Z}$, if we choose a value in one such summand, this determines the values in any of the other summands in the same connected component.  We therefore have one $\mathbb{Z}$ summand for each such component left over.  (The reason that we remove any infinite connected components is that the purported $\mathbb{Z}$ summand for such a component would have only elements with an infinite number of non-zero direct summands.)

Now, let us extract the process for determining the number of components (and hence the number of $\mathbb{Z}$ summands).  Start with an infinite graph with labelled vertices, like so, 
 
\begin{center} \includegraphics[scale=.50]{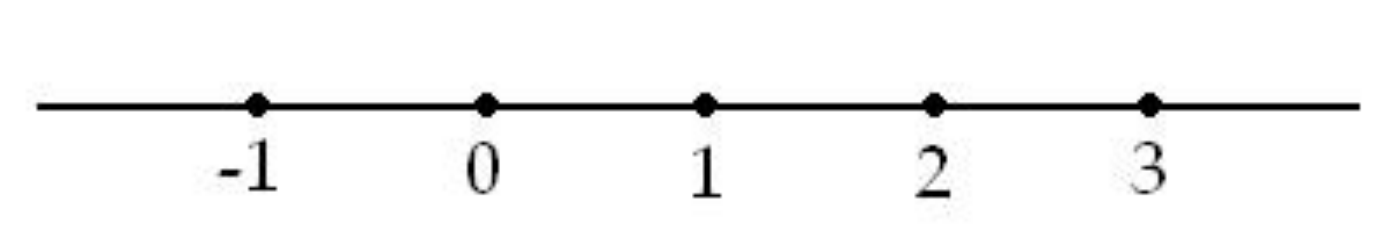}\end{center}

Weight vertex $p$ by $f_q(p)$, and weight the edge between vertices $p$ and $p+1$ by $f(p)$.  Then, for a value of $s$,  we simply remove edges and vertices with weight greater than $s$, and then throw out any infinite components; this is essentially the process described above, and it yields the same number of connected components.

To count these components, note that each finite run of consecutive non-removed \textbf{vertices} corresponds to a disjoint union of finite components of the graph; and the total number of finite components will equal the number of such runs, plus the number of removed edges between two non-removed vertices.  The number of runs is the number of wells of $f_q$ at level $s$, and the number of such edges is the number of values of $p$ such that $f(p) > s \geq \mbox{max}\{f_q(p), f_q(p+1)\}$. 

Finally, we claim that the latter number is the difference between the number of wells of $f_q$ and the number of wells of $F_{\mathcal{B}}$ at level $s$.  To see this, we compare the topography of $f_q$ and $F_{\mathcal{B}}$.  The graph of the latter is formed by shifting the graph of the former to the left by $1$, and then adding points $(p+\frac{1}{2},f(p+1))$ for each integer $p$. We care about those new points that disturb a run of points above height $s$ or a run of points below height $s$.  Note that, examining definitions, either $f(p)$ will be between $f_q(p)$ and $f_q(p+1)$ inclusive, or it will be strictly greater than both of them.  
Any point $(p, f(p))$ with $p$ satisfying the first alternative can be ignored; each point with $p$ satisfying the latter will be pertinent precisely when $f(p) > s$ and $\mbox{max}\{f_q(p), f_q(p+1)\} \leq s$.  It is easy to see that each such point contributes a single well of $F_{\mathcal{B}}$ at level $s$ that is not present for $f_q$; precisely, such a point either splits a well in two or adds a well to the left or right of all previous ones.  

Now, let us look at the case where $\Omega_s(\mathcal{B})$ is non-trivial.  Obviously, this subgroup will inject into $\bigoplus_{p \in \mathbb{Z}} H_*(L^p)$; and for each page $p$ that has a $k$-corner in level $s$, the quotient of $\bigoplus_{p \in \mathbb{Z}} H_*(L^p)$ by $\Omega_s(\mathcal{B})$ will have a $\mathbb{Z} \oplus \mathbb{Z}$ summand on that page.  

The group $\Omega_s(\mathcal{B})$ also injects into $H^{(l)}_s(\mathcal{B})$.
To calculate the quotient of $H^{(l)}_s(\mathcal{B})$ by $\Omega_s(\mathcal{B})$, we pretend, for any page $p$ which has a $k$-corner in level $s$, that $H_s(L^p)$ (rather than $H_s(L^p)/\Omega^g(k)$) actually was equal to $\mathbb{Z} \oplus \mathbb{Z}$, and then to proceed as before.  This is easily seen to work, and gives the same description as above.  \end{proof}

\begin{prop} \label{thm:8.9} We have 
$$H^{(r)}_s(\mathcal{B}) \cong 
\left\{ \begin{array}{ll}
\mathbb{Z} & s \mbox{ is even and }s \geq F_{\mathcal{B}}(p)\mbox{ for all }p \\
0 & \mbox{otherwise.} 
\end{array} \right.$$
\end{prop}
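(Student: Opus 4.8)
\textit{Proof idea.} By the definition in Proposition \ref{thm:8.7}, $H^{(r)}_s(\mathcal{B})$ is the cokernel of the map $\Phi_s\colon \bigoplus_{p\in\mathbb{Z}} H_{s'}(L^p) \to \bigoplus_{p\in\mathbb{Z}} H_s(R^p)$, $(\ell_p)_p \mapsto \big({q_p}_*(\ell_p) + {S_{p-1}}_*(\ell_{p-1})\big)_p$, where $s'$ is the odd level adjacent to $s$ that maps to degree $s$ (the image of $\Phi_s$ is exactly the indicated subgroup). When $s$ is odd, Proposition \ref{thm:8.3} gives $H_s(R^p)=0$ for all $p$, so $H^{(r)}_s(\mathcal{B})=0$; so assume $s$ is even. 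Then by Proposition \ref{thm:8.3} one has $H_s(R^p)\cong\mathbb{Z}$ exactly when level $s$ is non-trivial in $R^p$ — call such $p$ \emph{active} — and $H_s(R^p)=0$ otherwise, and after unwinding the grading conventions $p$ is active precisely when $s\ge F_{\mathcal{B}}(p)$ (i.e. $s$ lies above the relevant $f_q$-value).

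The plan is to read off the image of each block ${q_p}_*\oplus{S_p}_*\colon H_{s'}(L^p)\to H_s(R^p)\oplus H_s(R^{p+1})$ from Propositions \ref{thm:8.4} and \ref{thm:8.6}, in four cases: (i) if both $p$ and $p+1$ are active and the level $s'$ of $L^p$ is neither a stradler nor a $k$-corner, then $H_{s'}(L^p)\cong\mathbb{Z}$ and both ${q_p}_*,{S_p}_*$ are isomorphisms, so the image is the rank-one ``diagonal'' subgroup; (ii) if both are active and $s'$ is a stradler or a $k$-corner for $L^p$, then ${q_p}_*\oplus{S_p}_*$ is onto all of $H_s(R^p)\oplus H_s(R^{p+1})$ — genuine surjectivity, not merely finite index, follows from the explicit generators $X_0^*,X_{2g}^*$ produced in the proofs of Propositions \ref{thm:8.4} and \ref{thm:8.6}, one of which is seen only by ${q_p}_*$ and the other only by ${S_p}_*$ (the $\Omega^g(k)$ in a corner lies in the kernel and contributes only to $H^{(l)}$); (iii) if exactly one of $p,p+1$ is active, the image is that one $\mathbb{Z}$-summand; (iv) if neither is active, the map vanishes.

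I then assemble this combinatorially exactly as in the proof of Proposition \ref{thm:8.8}: the vertices are the active $p$, each carrying a copy $e_p\mathbb{Z}$ of $\mathbb{Z}$; case-(i) maps are edges identifying adjacent vertices up to sign, and case-(ii) and case-(iii) maps ``kill'' the vertices onto which they surject. Passing to the cokernel of $\Phi_s$: along any maximal run of consecutive active vertices the two $L$-maps straddling its ends are single-sided (case (iii)) and kill the two extreme vertices, and these zeros propagate through the run along the diagonal edges; if instead the run contains a stradler or corner, that killer zeros two adjacent vertices and the same propagation kills the whole run. Hence any run that is not all of $\mathbb{Z}$, and any run (even bi-infinite) containing a stradler or corner, contributes nothing to the cokernel. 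The only surviving configuration is a single bi-infinite run with no stradlers or corners, for which the cokernel is $\bigoplus_{p\in\mathbb{Z}}\mathbb{Z}\,/\,\langle e_p\mp e_{p+1}:p\in\mathbb{Z}\rangle\cong\mathbb{Z}$ (and torsion-free, since every relation either kills a basis vector or identifies two). This configuration occurs precisely when every $p$ is active (the integer-point inequalities $s\ge F_{\mathcal{B}}(p)$) and no level of any $L^p$ is a stradler or corner (the half-integer-point inequalities $s\ge F_{\mathcal{B}}(p+\tfrac12)$), that is, when $s\ge F_{\mathcal{B}}(q)$ for all $q\in\tfrac12\mathbb{Z}$; in all other cases the cokernel is $0$. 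This gives the stated formula.

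The main obstacle is the grading-and-parity bookkeeping in the second paragraph: matching ``active'', ``stradler'', and ``$k$-corner'' at the levels in play with the inequalities $s\ge f_q(p)$ and $s\ge f(p)$, tracking the shifts between the gradings on $L^p$, $R^p$ and $\mathcal{B}$, so that the final condition collapses cleanly to $s\ge F_{\mathcal{B}}(q)$ for every $q\in\tfrac12\mathbb{Z}$ — together with the verification of full (not just finite-index) surjectivity in case (ii). Everything else is the formal graph-cokernel computation already rehearsed in the proof of Proposition \ref{thm:8.8}.
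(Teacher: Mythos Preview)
Your approach is essentially the same as the paper's: both proofs reuse the graph/diagram machinery from Proposition~\ref{thm:8.8}, identify $\bigoplus_p H_s(R^p)$ with a string of $\mathbb{Z}$'s indexed by the ``active'' pages, and compute the cokernel of $\Phi_s$ as a graph-theoretic quotient, finding that it is $\mathbb{Z}$ precisely when the graph is a single bi-infinite connected component (equivalently, $s\ge F_{\mathcal{B}}(q)$ for all $q\in\tfrac12\mathbb{Z}$) and zero otherwise. Your four-case breakdown of the image of ${q_p}_*\oplus{S_p}_*$ is a slightly more explicit version of what the paper does by saying ``perform the prescribed alterations'' and ``pretend $H_s(L^p)$ were $\mathbb{Z}\oplus\mathbb{Z}$ at corners''; your propagation-along-runs argument is the dual phrasing of the paper's observation that one can solve for the top string whenever the diagram is disconnected or finite, and that for a single infinite component the obstruction is exactly the alternating sum.
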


\begin{proof}  For odd $s$ the result is clear, so suppose $s$ is even.  Draw the initial diagram that we drew in the proof of Proposition \ref{thm:8.8} (with groups rather than just vertices), and perform the prescribed alterations, except leaving infinite components.  (If there is a $k$-corners on page $p$, just pretend that as before that $H_s(L^p)$ were $\mathbb{Z} \oplus \mathbb{Z}$.)  Each remaining group is isomorphic to $\mathbb{Z}$, and we will think of elements of $\bigoplus_{p \in \mathbb{Z}} H_*(R^p, \rho_p)$ and $\bigoplus_{p \in \mathbb{Z}} H_*(L^p)$ as strings of integers, one for each group remaining in the bottom or top, respectively.  Of course, in either case we want our strings to contain only a finite number of non-zero elements.

Proposition \ref{thm:8.6} gives $H^{(r)}_s(\mathcal{B})$ as a quotient of $\bigoplus_{p \in \mathbb{Z}} H_*(R^p, \rho_p)$.  We want to know which elements die in this quotient.  Combining the statement of Proposition \ref{thm:8.6} and the above, we see that the dying elements correspond to strings on the bottom, such that we can place a string on the top that satisfies the following: if two arrows point to the same group $H_s(R^p, \rho_p)$, the sum of the numbers at the tops of the arrows equals the number at the bottom. 

Now, it is not hard to see that if our diagram has more than one component (or one finite component), then we can do this for any placement of numbers at the bottom of the graph.  In other words, in this case, $H^{(r)}_s(\mathcal{B})$ is trivial.  This will happen if and only if $s < F_{\mathcal{B}}(p)$ for any $p$.

On the other hand, if our diagram is one infinite component, then for any placement of numbers on the bottom, we can make a placement of numbers on the top that satisfies the condition, if and only if the sum of the numbers on the bottom (with alternating signs) is zero.  In other words, in this case, the homology class in $H^{(r)}_s(\mathcal{B})$ only depends on the alternating sum, and so $H^{(r)}_s(\mathcal{B}) \cong \mathbb{Z}$. \end{proof}

\vs

\noindent \textit{Proof of Lemma \ref{thm:1.3}.}  Let $\m{t}_0$ be an $\mu$-torsion \sst\ structure on $Y_0$, which extends over $W_0$ to some $\m{s}$ for which $\m{s}|_Y = \m{t}(r_1, \ldots, r_n)$, in the notation of the end of Section 7.  We can assume that $0 \leq r_{\ell} < p_{\ell}$ for each $\ell$.
In this case, let $A = (Q; r_1, \ldots, r_n) \in \widetilde{\mathcal{MT}_K}$, where $Q$ is chosen to make Equations \ref{eq:1} and \ref{eq:2} hold.  
We want to define $\theta_K:\mathcal{MT}_K \rightarrow \spc{Y_0}$ by setting $\theta_K^{-1}(\m{t}_0)$ to be $[A]$.  In fact, it is not hard to check that every class in $\widetilde{\mathcal{MT}_K}$ can be realized in this manner, starting with some $\mu$-torsion $\m{t}_0$; and that, if we start with $\m{t}'_0$ and end up with $A'$, then $A \sim A'$ if and only if $\m{t}_0 = \m{t}'_0$.  So we can indeed define $\theta_K$ by $\theta_K([A]) = \m{t}_0$.   \halbox

\vs

\noindent \textit{Proof of Theorem \ref{thm:1.4}.} Let $\m{t}_0 = \theta_K([A]) \in \spc{Y_0}$ be $\mu$-torsion.
Choose a representative $A = (Q; r_1, \ldots, r_n) \in \widetilde{\mathcal{MT}_K}$ of $[A]$, and construct a book $\mathcal{B}$ from the values of $\eta(i)$ given at the end of Section 7.  Then $F_A$ as given in Section 1.1 is equal to $F_{\mathcal{B}}$ as constructed from this book.

Note that $H^{(l)}_s(\mathcal{B})$ and $H^{(r)}_s(\mathcal{B})$ will never both be non-trivial for any given value of $s$.  Thus, the short exact sequence of Proposition \ref{thm:8.9} splits.  Observe that $H^{(r)}_s(\mathcal{B})$ is isomorphic to $\mathcal{T}^+_{b_A}$ as a graded abelian group, with $b_A$ given as in the statement.  Hence, we have that the graded group $\underline{HF}^+(Y_0, \m{t}_0)$ sits in a short exact sequence as stated when $\m{t}_0$ is $\mu$-torsion.  It is straightforward to check that this isomorphism respects the $U$ action as well, by examining all of the above.

For the claim about the boundedness of $F_A$, note that this function is the sum of a periodic function with period $d$, and a linear function with slope $S\ell(A) = - \frac{\langle c_1(\m{t}_0), [\widehat{dS}] \rangle}{d}$. Hence, $F_A$ will be bounded precisely when $\m{t}_0$ is torsion. 
 
For \sst\ structures $\m{t}_0$ that are not $\mu$-torsion, we note that $\m{S}_N(\m{t}_0)$ and $\m{S}_{\infty}(\m{t}_0)$ will both consist entirely of non-torsion \sst\ structures by Proposition \ref{thm:2.4}.  The adjunction inequality of \cite{OSPA} works the same in the twisted coefficient setting as it does in the untwisted setting.  Since the Thurston semi-norms on $Y_N$ and $Y$ will both be trivial, the inequality implies that $\underline{HF}^+(Y_N, \m{S}_N(\m{t}_0)) \otimes \mathbb{Z}[T, T^{-1}]$ and $\underline{HF}^+(Y, \m{S}_{\infty}(\m{t}_0)) \otimes \mathbb{Z}[T, T^{-1}]$ will both be trivial. Hence, the long exact sequence shows that $\underline{HF}^+(Y_0, \m{S}^N_0(\m{t}_0))$ will be trivial as well. 

Finally, the statement about the action of $T$ follows from Theorem \ref{thm:4.5}, via the results of this section. \halbox

\vs

\noindent \textit{Proof of Corollary \ref{thm:1.5}.} As we noted, $\Omega^g(k)$ is a torsion-free group.  The well groups and $\mathcal{T}^+$ are clearly both torsion-free as well, and so the same can be said of $\underline{HF}^+(Y_0, \m{t}_0)$ for any $\m{t}_0$.  The well groups are in fact free abelian groups, so we have a splitting of the short exact sequences of Theorem \ref{thm:1.4} as $\mathbb{Z}$-modules, and it is easy to see that such a splitting will respect the $U$-action. \halbox

\section{Proofs of Theorem 1.1 and Corollary 1.2}
Choose a function $\rho: \mathcal{MT}_K \rightarrow \widetilde{\mathcal{MT}_K}$ such that $\left[\rho([A])\right] = [A]$ for $[A] \in \mathcal{MT}_K$.  We start by calculating the number of pairs $([A],x) \in \mathcal{MT}_K \times \mathbb{Z}/d\mathbb{Z}$ that satisfy
\begin{equation}
\label{eq:16}
\epsilon([A]) = E
\end{equation}
and
\begin{equation}
\label{eq:17}
\eta_{\rho([A])}(x) = D
\end{equation}
for a given pair of integers $D,E$, with notation as in Section 1 (and $x$ taken to be an integer between $0$ and $d-1$).  Let $\mathcal{N}_0(D,E)$ denote this number.

\begin{lemma} \label{thm:9.1} The number $\mathcal{N}_0(D,E)$ is independent of the choice of $\rho$.
\end{lemma}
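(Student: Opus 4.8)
The plan is to show that any two lifts $\rho, \rho': \mathcal{MT}_K \to \widetilde{\mathcal{MT}_K}$ yield the same count by exhibiting, for each equivalence class $[A] \in \mathcal{MT}_K$, a bijection between the set of admissible $x$ for $\rho([A])$ and the set of admissible $x$ for $\rho'([A])$. The only freedom in choosing $\rho$ lies in which representative of each $\sim$-class we pick, so it suffices to understand how $\eta_{A}(x)$ changes when $A = (Q; r_1, \ldots, r_n)$ is replaced by an equivalent representative $A' = (Q'; r'_1, \ldots, r'_n)$.

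First I would recall the definition of $\sim$: $A \sim A'$ iff $S\ell(A) = S\ell(A')$ and $r, r'$ descend to the same class in $\left(\bigoplus_\ell \mathbb{Z}/p_\ell\mathbb{Z}\right)/\mathbb{Z}(q_1 \oplus \cdots \oplus q_n)$. The second condition means there is an integer $m$ with $r'_\ell \equiv r_\ell + m q_\ell \pmod{p_\ell}$ for all $\ell$. Since both representatives are taken with $0 \leq r_\ell, r'_\ell < p_\ell$, this gives $r'_\ell = r_\ell + m q_\ell - p_\ell \lfloor (r_\ell + m q_\ell)/p_\ell \rfloor$. The key computation is then to evaluate $\eta_{A'}(x) - \eta_{A}(x)$, where $\eta_A(x) = \sum_\ell \left\{ \frac{q_\ell x - r_\ell}{p_\ell}\right\} - \frac{1}{d}\epsilon(A) + g - 1$. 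Since $\epsilon(A) = g_\Sigma - 1 - \frac{d}{2} S\ell(A)$ depends only on $S\ell(A)$, and $S\ell(A) = S\ell(A')$ by hypothesis, the $\epsilon$ term is unchanged; so $\eta_{A'}(x) - \eta_A(x) = \sum_\ell \left( \left\{\frac{q_\ell x - r'_\ell}{p_\ell}\right\} - \left\{\frac{q_\ell x - r_\ell}{p_\ell}\right\}\right)$. Using $r'_\ell \equiv r_\ell + m q_\ell \pmod{p_\ell}$, the $\ell$-th fractional part becomes $\left\{\frac{q_\ell(x - m) - r_\ell}{p_\ell}\right\}$, so $\eta_{A'}(x) = \eta_A(x - m)$ (the integer shifts coming from the $\lfloor \cdot \rfloor$ corrections in passing between representatives do not affect fractional parts). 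Therefore $\eta_{A'}(x) = \eta_{A}(x - m)$ as functions on $\mathbb{Z}$, and since everything is visibly periodic in $x$ with period $d$ (because $q_\ell/p_\ell$ summed is an integer, recall $d = \mathrm{lcm}(p_\ell)$), the map $x \mapsto x - m$ is a bijection of $\mathbb{Z}/d\mathbb{Z}$ carrying solutions of \myeq{17} for $\rho'$ to solutions of \myeq{17} for $\rho$, while leaving \myeq{16} untouched since $\epsilon$ is constant on $[A]$.

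Summing this bijection over all $[A] \in \mathcal{MT}_K$ shows the total count $\mathcal{N}_0(D,E)$ is the same for $\rho$ and $\rho'$, and since any two lifts differ by such a sequence of representative changes, independence follows. The main obstacle I anticipate is purely bookkeeping: verifying carefully that the integer corrections from reducing $r_\ell + m q_\ell$ modulo $p_\ell$ genuinely cancel out of the fractional-part expression, and confirming that the $d$-periodicity of $\eta_A$ is exactly what makes the shift $x \mapsto x - m$ well-defined on $\mathbb{Z}/d\mathbb{Z}$ — but both are straightforward once the defining relations are unwound.
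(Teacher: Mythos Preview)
Your proposal is correct and follows essentially the same route as the paper: both arguments observe that two equivalent representatives differ by a shift $r'_\ell \equiv r_\ell + m q_\ell \pmod{p_\ell}$, deduce $\eta_{A'}(x) = \eta_A(x-m)$, and use the $d$-periodicity of $\eta_A$ to put solutions in bijection. One small remark: your parenthetical justification of the $d$-periodicity should simply cite $d = \mathrm{lcm}(p_\ell)$ (so that $q_\ell d/p_\ell \in \mathbb{Z}$ for each $\ell$); the condition $\sum_\ell q_\ell/p_\ell \in \mathbb{Z}$ is the specialness hypothesis and is not what gives periodicity here.
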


\begin{proof}  Let $\rho([A]) = A_1 = (Q; r_1, \ldots, r_n)$ and $\rho'([A]) = A_2 = (Q'; r'_1, \ldots, r'_n)$.  
Then there is a unique $i$ with $0 \leq i < d$ such that $r'_{\ell} = r_{\ell} + iq_{\ell}$ for all $\ell$.  Note that for this $i$,
$$\sum_{\ell=1}^n \left\{ \frac{q_{\ell}x - r_{\ell}}{p_{\ell}} \right\} = 
\sum_{\ell=1}^n \left\{ \frac{q_{\ell}(x+i) - r'_{\ell}}{p_{\ell}} \right\}.$$
So $\eta_{A_1}(x) = D$ if and only if $\eta_{A_2}(x+i) = D$.  The functions $\eta_A$ are all periodic with period $d$; hence, the solutions to $\eta_{A_1}(x) = D$ and $\eta_{A_2}(x) = D$ are in 1-1 correspondence.  The total number of solutions therefore does not depend on $\rho$. \end{proof}

The argument of Lemma \ref{thm:9.1} also shows that $\eta_{\rho([A])}(x) = D$ if and only $\eta_{A_x}(0) = D$
for $A_x = (Q_x; r_1 - q_1x, \ldots, r_n - q_nx)$, where $r_{\ell} - q_{\ell}x$ is replaced with the equivalent value mod $p_{\ell}$ between 0 and $p_{\ell}-1$, and $Q_x$ is chosen so that $[A_x] = [A]$.  Therefore, the pairs $([A],x)$ that satisfy Equations \ref{eq:16} and \ref{eq:17} are in 1-1 correspondence with elements $A \in \widetilde{\mathcal{MT}_K}$ that satisfy
$$\epsilon(A) = E$$
and
$$\eta_{A}(0) = D.$$
For $A = (Q; r_1, \ldots, r_n)$ to satisfy these equations means that
\begin{equation}
\label{eq:18}
E = g_{\Sigma} - 1 - \frac{d}{2}\cdot S\ell(A) = d\left(g - 1 + \frac{1}{2}\sum_{\ell=1}^n\left(1 - \frac{1}{p_{\ell}}\right) - Q - \sum_{\ell=1}^n \frac{r_{\ell}}{p_{\ell}}\right)
\end{equation}
and
\begin{equation}
\label{eq:19}
D = \sum_{\ell=1}^n \left\{-\frac{r_{\ell}}{p_{\ell}}\right\} -\frac{E}{d} + g - 1.
\end{equation}
Given values of $r_{\ell}$, the value of $Q$ is then determined by Equation \ref{eq:18}; it is not hard to show that for any values of $r_{\ell}$ that make Equation \ref{eq:19}, the value of $Q$ thus determined will be an integer.  So, the number $\mathcal{N}_0(D,E)$ of solutions $(Q; r_1, \ldots, r_n)$ to Equations \ref{eq:18} and \ref{eq:19} is the same as the number of solutions $(r_1, \ldots, r_n)$ to Equation \ref{eq:19}.

For each $\ell$, the function $r_{\ell} \mapsto \left\{-\frac{r_{\ell}}{p_{\ell}}\right\}$ gives a bijection of the possible values of $r_{\ell}$ with $\left\{\frac{0}{p_{\ell}}, \ldots,\frac{p_{\ell}-1}{p_{\ell}}\right\}$.  So, in summation, we have the following.

\begin{lemma} \label{thm:9.2} The number $\mathcal{N}_0(D,E)$ of solutions to Equations \ref{eq:16} and \ref{eq:17} is the same as the number $\mathcal{N}(D,E)$ of solutions $(i_1, \ldots, i_n)$ to
$$ \sum_{\ell=1}^n \frac{i_{\ell}}{p_{\ell}} = D + \frac{E}{d} + 1 - g$$
with $0 \leq i_{\ell} < p_{\ell}$.
\end{lemma}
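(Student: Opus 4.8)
\textbf{Proof proposal for Lemma \ref{thm:9.2}.}

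The plan is to trace through the reductions that were already set up in the surrounding text and observe that each one is a bijection on solution sets. First I would recall from the discussion preceding the statement that the pairs $([A], x) \in \mathcal{MT}_K \times \mathbb{Z}/d\mathbb{Z}$ satisfying Equations \ref{eq:16} and \ref{eq:17} are in bijection with the elements $A = (Q; r_1, \ldots, r_n) \in \widetilde{\mathcal{MT}_K}$ satisfying $\epsilon(A) = E$ and $\eta_A(0) = D$ — this is exactly the content of the argument using $A_x = (Q_x; r_1 - q_1 x, \ldots, r_n - q_n x)$, combined with Lemma \ref{thm:9.1} to know that the count does not depend on the choice of section $\rho$. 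So it suffices to count solutions $(Q; r_1, \ldots, r_n)$ with $0 \le r_\ell < p_\ell$ to the pair of equations \ref{eq:18} and \ref{eq:19}.

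Next I would argue that $Q$ is redundant: Equation \ref{eq:18} expresses $E$ as $d$ times an expression which is linear in $Q$ with coefficient $-d$, so for any fixed tuple $(r_1, \ldots, r_n)$ there is at most one $Q$ solving \ref{eq:18}, and one must check it is an integer. The integrality is the one small computation: rearranging \ref{eq:18} gives $Q = g - 1 + \tfrac12\sum_\ell(1 - \tfrac1{p_\ell}) - \tfrac{E}{d} - \sum_\ell \tfrac{r_\ell}{p_\ell}$, and substituting the value of $\tfrac{E}{d}$ forced by Equation \ref{eq:19} (namely $\tfrac{E}{d} = \sum_\ell\{-r_\ell/p_\ell\} + g - 1 - D$) shows $Q = D + \tfrac12\sum_\ell(1 - \tfrac1{p_\ell}) - \sum_\ell\tfrac{r_\ell}{p_\ell} - \sum_\ell\{-r_\ell/p_\ell\}$; since $\tfrac{r_\ell}{p_\ell} + \{-r_\ell/p_\ell\} \in \{0, 1\}$ for each $\ell$ (it is $0$ when $r_\ell = 0$ and $1$ otherwise) and $\tfrac12(1 - \tfrac1{p_\ell})$ is cancelled appropriately — actually one checks $\tfrac12(1-\tfrac1{p_\ell}) - \tfrac{r_\ell}{p_\ell} - \{-r_\ell/p_\ell\}$ need not individually be half-integral, so the cleaner route is simply to observe $2Q$ is manifestly an integer and $Q \equiv D \pmod 1$ trivially, hence $Q \in \mathbb{Z}$. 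Thus the count $\mathcal{N}_0(D,E)$ equals the number of tuples $(r_1, \ldots, r_n)$ with $0 \le r_\ell < p_\ell$ solving Equation \ref{eq:19} alone.

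Finally I would perform the change of variables $i_\ell = p_\ell\{-r_\ell/p_\ell\}$, i.e. $i_\ell$ is the residue of $-r_\ell$ mod $p_\ell$ lying in $[0, p_\ell)$. Since $r_\ell \mapsto -r_\ell \bmod p_\ell$ is a bijection of $\{0, 1, \ldots, p_\ell - 1\}$ with itself, $(r_1, \ldots, r_n) \mapsto (i_1, \ldots, i_n)$ is a bijection of the respective solution sets. Under this substitution Equation \ref{eq:19} becomes $D = \sum_\ell \tfrac{i_\ell}{p_\ell} - \tfrac{E}{d} + g - 1$, which rearranges to $\sum_\ell \tfrac{i_\ell}{p_\ell} = D + \tfrac{E}{d} + 1 - g$ — precisely the defining equation for $\mathcal{N}(D,E)$ as in the statement (and as in Section 1). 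Hence $\mathcal{N}_0(D,E) = \mathcal{N}(D,E)$.

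I do not anticipate a serious obstacle here: every step is either a bijection already constructed in the text or an elementary manipulation of fractional parts. The only point requiring genuine (if routine) care is the integrality of $Q$, which I would dispatch with the modular-arithmetic observation above rather than by grinding the rational expression; everything else is bookkeeping with the substitution $i_\ell = -r_\ell \bmod p_\ell$.
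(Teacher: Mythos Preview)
Your proposal is correct and follows essentially the same route as the paper: reduce to counting $(Q;r_1,\ldots,r_n)$ solving Equations \ref{eq:18} and \ref{eq:19}, observe that $Q$ is uniquely determined and integral, then apply the bijection $r_\ell \mapsto p_\ell\{-r_\ell/p_\ell\}$ to convert Equation \ref{eq:19} into the defining equation for $\mathcal{N}(D,E)$. One small caveat: your integrality argument wobbles---the claim that ``$2Q$ is manifestly an integer'' is not actually evident from the expression you wrote, and ``$Q\equiv D\pmod 1$ trivially'' begs the question. You had the right tool in hand already: the observation that $\tfrac{r_\ell}{p_\ell}+\{-\tfrac{r_\ell}{p_\ell}\}\in\{0,1\}$ is exactly what makes $Q$ an integer once the algebra is done carefully (and the paper, for its part, simply asserts this step is ``not hard to show'').
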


\noindent \textit{Proof of Theorem \ref{thm:1.1}.} We first look at the case where $g = 0$.  Fix a value of $i$ between $0$ and $g_{\Sigma} - 2$.  If $[A] \in \mathcal{MT}_K$ satisfies $\epsilon([A]) = i$, then $S\ell([A]) = \frac{2(g_{\Sigma} - 1 - i)}{d}$, which will hence be positive.  
Then, it is clear that every decline that $F_A$ takes between consecutive integers will ``open'' a well, and that all wells opened will eventually be closed.  Precisely, $F_{\rho([A])} = G_{\rho([A])}$ when $g=0$, and it is not hard to see $G_{\rho([A])}$ takes values in the odd integers.  Then for each integer $x$ for which $F_{\rho([A])}(x) < F_{\rho([A])}(x-1)$, we have $\frac{1}{2}\left(F_{\rho([A])}(x-1) - F_{\rho([A])}(x)\right)$ wells, the ones whose left sides are at $x-1$.  

So, the number of wells encountered per period of $F_{\rho([A])}$ will be the sum of \linebreak
$\frac{1}{2}\left(F_{\rho([A])}(x-1) - F_{\rho([A])}(x)\right)$ over those $x$ between $0$ and $d-1$ (or between any two values $d-1$ apart) for which it is positive.  This is equal to 
$$\sum_{x=0}^{d-1} \mbox{max}\left\{0, -\eta_{\rho([A])}(x)\right\} = \sum_{D < 0} -D\cdot \#\{x | \eta_{\rho([A])}(x) = D, 0 \leq x \leq d-1\}.$$
If we do this for all $[A] \in \mathcal{MT}_K$ for which $\epsilon([A]) = i$, the sum of the values we get will be
$$\sum_{D < 0} -D\cdot \mathcal{N}(D,i),$$
which is equivalent to the stated expression for the $g=0$ case.  

For the case of higher genus, we note the effect of adding ``spikes'' that are $g$ tall at half-integers on the number of $\mathbb{Z}$ summands.  This is best done by inspection, but we note the general idea: if $g$ is less than half the difference between $F_{\rho([A])}(x-1)$ and $F_{\rho([A])}(x)$, the spike doesn't change the number of wells initiated between $x$ and $x-1$; if $g$ is greater than half this difference, then the number of wells initiated depends on the difference between $g + \frac{1}{2}\left(F_{\rho([A])}(x-1) + F_{\rho([A])}(x)\right)$ and $F_{\rho([A])}(x)$ (and the parity of $g$).  All told, if $\frac{1}{2}\left(F_{\rho([A])}(x-1) - F_{\rho([A])}(x)\right) = D$, then the spike will yield $\left\lfloor \frac{g-D+1}{2} \right\rfloor$ initiated wells if $|D| \leq g$, and $-D$ initiated wells of $D \leq -g$.  This count agrees with the formula given for $a_i$.  

The remaining terms, for the $\Omega^g(k)$ subgroups, are easy to count; the terms contributed between $x-1$ and $x$ depend precisely on the difference between $g + \frac{1}{2}\left(F_{\rho([A])}(x-1) + F_{\rho([A])}(x)\right)$ and $F_{\rho([A])}(x-1)$, which equals $g + \eta_{\rho([A])}(x)$.  An $\Omega^g(k)$ subgroup will be contributed when $k = g+ \eta_{\rho([A])}(x)$, or equivalently when $\eta_{\rho([A])}(x) = k -g$, and the number of times this occurs is counted by $\mathcal{N}(k-g,i)$. \halbox

\vs

\noindent \textit{Proof of Corollary \ref{thm:1.2}.}  Let $\phi:\Sigma \rightarrow \Sigma$ be a periodic diffeomorphism, whose mapping class is of order $d$.  Then there is a representative $\phi'$ of the mapping class of $\phi$ such that $\phi'^n$ either has only isolated fixed points or is the identity.
Since the mapping torus $M(\phi)$ of $\phi$ depends only on its mapping class, we can assume that $\phi$ is such a diffeomorphism, and we assume that $\phi$ is not the identity.  

In this case, the quotient of $\Sigma$ by the action of $\phi$ is an orbifold $B$, and $M(\phi)$ is a degree 0 Seifert fibered space with this orbifold has its base.  In other words, $M(\phi)$ is realized
by special surgery on some knot $B_g \#_{\ell =1}^n O_{p_{\ell}, q_{\ell}}$; and the base orbifold $B$ of the Seifert fibration will have underlying surface of genus $g$, with one cone point of angle $\frac{2\pi}{p_{\ell}}$ for each $\ell$.  These cone points will correspond to the $n$ points fixed by $\phi^i$ for $0 < i < d$: more precisely, point $\ell$ will be fixed by $\phi^{i\cdot\frac{d}{p_{\ell}}}$ for integers $i$.  
In particular, the number of fixed points of $\phi$ is the number 
of $\ell$ for which $p_{\ell}=d$.  This number is just $\mathcal{N}(-1,1)$; by the Lefschetz fixed point theorem, it also equals $\Lambda(\phi)$.

We start with the case $g=0$.  The claim for $i=0$ is clear from Theorem \ref{thm:1.1}.  For the claims when $i=1$,  
it is easy to see that the wells furnished by the proof of Theorem \ref{thm:1.1} all belong to different \sst\ structures.  Indeed, there will be one for each structure of the form $\theta_K(Q; 0, \ldots, 0, d-1,0, \ldots,0)$ for each $\ell$ with $p_{\ell} = d$, and a little algebra rules out the possibility of any of these being equal to one another.  From this, the other claims follow for $i=1$ (the last coming from detailed observation of Theorem \ref{thm:1.4}).

For the case when $g > 0$, the count of $\mathbb{Z}$ summands is the same as before; but now, for each $\mathbb{Z}$ summand, there is also an $\Omega^g(2g-1)$ subgroup.  Closely examining the proof of Theorem \ref{thm:1.4}, however, it is not hard to see that the $\mathbb{Z}$ summands and $\Omega^g(2g-1)$ subgroups come in pairs each lying in the same relative grading; and in fact, they all come about via natural short exact sequences 
$$ 0 \rightarrow \Omega^g(2g-1) \rightarrow \mathbb{Z}[H^1(\#^{2g}S^1 \times S^2)] \rightarrow \mathbb{Z} \rightarrow 0.$$  
Hence, the identification of the groups still holds up, and the other claims follow as before. \halbox

\vs

We now extend Corollary \ref{thm:1.2} to the third-to-highest level.

\begin{theorem} \label{thm:9.3} Keep notation from Corollary \ref{thm:1.2}, and assume that the mapping class of $\phi$ is not of order 1 or 2.  We have
$$\underline{HF}^+\left(Y_0, [-2]\right) \cong \textstyle R^{\Lambda(\phi^2) + \frac{\Lambda(\phi)^2 - \Lambda(\phi)}{2}}$$
where $\Lambda$ denotes Lefschetz number.  If the mapping class is not of order 4, we have in addition that
the $U$-action is trivial, each summand lies in a different \sst structure, each copy of $R$ is supported in a single relative $\mathbb{Z}$-grading, and $T$ lowers this relative grading by $2d(g_{\Sigma} - 1 - i)$.  
\end{theorem}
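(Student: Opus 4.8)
The plan is to count, for $i=2$, the number of $\mathbb{Z}$-summands and $\Omega^g$-subgroups appearing in Theorem \ref{thm:1.4}, exactly as in the proof of Corollary \ref{thm:1.2}, and then to identify the combined count with the stated Lefschetz-theoretic quantity. First I would use the dictionary from the end of Section 7 and from the proofs of Theorems \ref{thm:1.1} and \ref{thm:1.4}: $\underline{HF}^+(Y_0,[-2])$ is built from the books $\mathcal{B}$ attached to those $[A]\in\mathcal{MT}_K$ with $\epsilon([A])=2$, i.e.\ with $S\ell([A])=\tfrac{2(g_\Sigma-3)}{d}>0$ (here I would note that order $>2$ forces $g_\Sigma\ge 3$ whenever the relevant level is nonempty, so $S\ell>0$ and every well opened is eventually closed, exactly as in the $i=1$ case). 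For $g=0$ the number of $\mathbb{Z}$-summands in level $i=2$ is $a_2=\sum_{D\in\mathbb{Z}}\max\{0,-D\}\cdot\mathcal{N}(D,2)$, and for $g>0$ each such $\mathbb{Z}$ is paired with an $\Omega^g(2g-1)$ through the short exact sequence $0\to\Omega^g(2g-1)\to R\to\mathbb{Z}\to 0$ used in Corollary \ref{thm:1.2}, so as in that proof the module contributed by each well is a single copy of $R$; hence the total rank over $R$ equals $a_2$ provided no spikes create extra wells, which one checks using $S\ell>0$ and $g\le$ (half the jump) fails only for jumps of size $\le 2g$, handled by the same inspection as in Theorem \ref{thm:1.1}. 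So the $R$-rank is $\sum_{D<0}(-D)\,\mathcal{N}(D,2)=\mathcal{N}(-1,2)+2\mathcal{N}(-2,2)+\cdots$, but the periodicity of $\eta_A$ and the bound $0\le i_\ell<p_\ell$ show $\mathcal{N}(D,2)=0$ for $D\le-3$, so the rank is $\mathcal{N}(-1,2)+2\mathcal{N}(-2,2)$.

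Next I would interpret these two counts via fixed points. By Lemma \ref{thm:9.2}, $\mathcal{N}(D,E)$ counts solutions $(i_1,\dots,i_n)$ of $\sum i_\ell/p_\ell = D+E/d+1-g$; for $g=0$, $E=2$ this reads $\sum i_\ell/p_\ell = D+1+2/d$. As in the proof of Corollary \ref{thm:1.2}, a tuple with exactly the entries $i_\ell\in\{0,q_\ell,2q_\ell,\dots\}\bmod p_\ell$ forced to be ``large'' corresponds to a choice of cone points fixed by a power of $\phi$. Concretely, $\mathcal{N}(-1,2)$ counts the single cone points $\ell$ with $i_\ell=p_\ell-2q_\ell\bmod p_\ell$ nonzero in the right way; one identifies this with the number of points fixed by $\phi^2$, which is $\Lambda(\phi^2)$ by the Lefschetz fixed point theorem applied to $\phi^2$ (valid since $\phi$ is periodic, hence $\phi^2$ has isolated fixed points or is the identity, excluded by the order hypothesis). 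And $\mathcal{N}(-2,2)$ counts \emph{unordered pairs} $\{\ell,\ell'\}$ of distinct cone points each fixed by $\phi$ — precisely $\binom{\Lambda(\phi)}{2}=\tfrac{\Lambda(\phi)^2-\Lambda(\phi)}{2}$ such pairs, since each of the $i_\ell$ for a $\phi$-fixed cone point contributes $(p_\ell-q_\ell)/p_\ell$, i.e.\ a ``height one'' jump, and two of them sum to $-2+\text{const}$. Thus the $R$-rank is $\Lambda(\phi^2)+\binom{\Lambda(\phi)}{2}$, giving the displayed formula; the $g>0$ case is identical after the $\Omega^g(2g-1)$/$R$/$\mathbb{Z}$ bookkeeping above.

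For the refined structural statements under the order $\ne 1,2,4$ hypothesis, I would argue as in Corollary \ref{thm:1.2}: the $U$-action is trivial because $F_A$ takes strictly positive slope so the relevant wells all sit over a region where $F_A$ is dipping below level $s$ only transiently, and the corner groups are as in Proposition \ref{thm:8.6} with trivial $U$ there; each copy of $R$ is supported in a single relative $\mathbb{Z}$-grading because the well/corner contributing it sits in one grading level $s$, and the $T$-action shifts by a full period, which by Theorem \ref{thm:4.5} (and the computation $F_A=$ periodic $+$ linear with slope $S\ell(A)=-\langle c_1(\m t_0),[\widehat{dS}]\rangle/d = \tfrac{2(g_\Sigma-1-i)}{d}$) lowers the grading by $d\cdot 2(g_\Sigma-1-i)$; and the summands lie in distinct \sst\ structures because, as in the $i=1$ argument, a little algebra on $\theta_K(Q;\dots)$ rules out coincidences — this is where I need order $\ne 4$, since for order $4$ two distinct fixed-point data can collide onto the same $[A]$.

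The main obstacle I expect is the combinatorial identification $\mathcal{N}(-2,2)=\binom{\Lambda(\phi)}{2}$: unlike the $i=1$ count, where single fixed points correspond directly to cone points with $p_\ell=d$, here one must show that the solutions of $\sum i_\ell/p_\ell = -2+2/d+1$ which actually contribute wells (after accounting for $g$-spikes) are exactly the \emph{unordered pairs} of $d$-fold cone points and nothing else — e.g.\ ruling out a single cone point contributing a ``height two'' jump, which would require $p_\ell\mid 2$ relative to $d$ and is excluded precisely by the order $\ne 2$ hypothesis, and checking that $\phi^2$-fixed-but-not-$\phi$-fixed points (counted by $\Lambda(\phi^2)-\Lambda(\phi)$, sitting inside $\mathcal{N}(-1,2)$ along with the $\Lambda(\phi)$ genuine $\phi$-fixed ones) are not double-counted. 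Carefully disentangling which fixed-point strata land in $\mathcal{N}(-1,2)$ versus $\mathcal{N}(-2,2)$, and verifying the well-opening/closing bookkeeping for $i=2$ is the technical heart; the rest is a routine transcription of the $i=0,1$ arguments.
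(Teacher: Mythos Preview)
Your overall strategy is the same as the paper's: reduce to $g=0$, compute the rank as $\sum_{D<0}(-D)\,\mathcal{N}(D,2)$ via Theorem~\ref{thm:1.1}, and interpret the counts in terms of fixed points of $\phi$ and $\phi^2$. But the combinatorial decomposition you propose is wrong, and the errors happen to cancel so that you land on the correct final formula by accident.

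The mistake is the claim $\mathcal{N}(-2,2)=\binom{\Lambda(\phi)}{2}$. In fact $\mathcal{N}(-2,2)=0$ whenever $d>2$: plugging $D=-2$ into your own equation gives $\sum_\ell i_\ell/p_\ell = -1+2/d$, whose right-hand side is negative, so there are no solutions with $i_\ell\ge 0$. (Your bound ``$\mathcal{N}(D,2)=0$ for $D\le -3$'' is one step too weak.) The entire rank is therefore $\mathcal{N}(-1,2)$, and the $\binom{\Lambda(\phi)}{2}$ contribution must be found \emph{inside} that single count. Solutions to $\sum_\ell i_\ell/p_\ell = 2/d$ split cleanly: exactly one nonzero $i_\ell$ forces $i_\ell/p_\ell=2/d$, i.e.\ $p_\ell\in\{d,d/2\}$, giving $\Lambda(\phi^2)$ solutions; exactly two nonzero entries forces each to equal $1/d$, i.e.\ $p_\ell=p_{\ell'}=d$, giving $\binom{\Lambda(\phi)}{2}$ unordered pairs; three or more nonzero entries would make the sum exceed $2/d$. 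Hence $\mathcal{N}(-1,2)=\Lambda(\phi^2)+\binom{\Lambda(\phi)}{2}$, which is the paper's computation. Note also that your arithmetic is internally inconsistent: if the rank were $\mathcal{N}(-1,2)+2\mathcal{N}(-2,2)$ and your two identifications held, you would get $\Lambda(\phi^2)+2\binom{\Lambda(\phi)}{2}$, not the stated formula---you silently dropped a factor of $2$. The remaining structural claims (trivial $U$-action, distinct \sst\ structures away from order $4$, $T$-grading shift) are handled as you indicate, parallel to Corollary~\ref{thm:1.2}.
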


\begin{proof} This follows along similar lines as Corollary \ref{thm:1.2}.  We again leave out the case where the mapping class of $\phi$ is trivial.  
We also assume that $g=0$; the modifications for higher genus are exactly as above.

The number of fixed points of $\phi^n$ will be the number of $\ell$ for which $\frac{d}{p_{\ell}}$ divides $n$.  Hence, when $n=2$, the Lefschetz number $\Lambda(\phi^2)$ will be $\mathcal{N}(-1,1)$ plus the number of $\ell$ for which $p_{\ell} = \frac{d}{2}$.  

The rank of $\underline{HF}^+(Y_0, [-2])$ will be equal to $\sum_{D < 0} -D\cdot\mathcal{N}(D,2)$, where $\mathcal{N}(D,2)$ is the number of solutions to 
$$\sum_{\ell=1}^n \frac{i_{\ell}}{p_{\ell}} = \frac{2}{d} + D+1$$
for which $0\leq i_{\ell} < p_{\ell}$ for all $\ell$.  If $D<0$ (and $d \ne 1$), the right side will only be positive only if $D=-1$ or if $d=2$ and $D=-2$.  Leaving the latter case aside, we see that for $d \ne 2$, the rank is simply $\mathcal{N}(-1,2)$, and we calculate this to be 
$$\left(\Lambda(\phi^2) - \mathcal{N}(-1,1)\right) + \left(\begin{array}{c}\mathcal{N}(-1,1)\\2 \end{array} \right) = \Lambda(\phi^2) + \frac{\Lambda(\phi)^2 - \Lambda(\phi)}{2}.$$ 

In this case, a little algebra shows that the only time that different wells furnished by the proof of Theorem \ref{thm:1.1} land in the same \sst\ structure is when $\phi$ is isotopic to a diffeomorphism of order 4 with two fixed points, for which $\phi^2$ has an odd number of fixed points.  This case and the case of $\phi$ of order 1 or 2 aside, the other claims follow again. \end{proof}

\section{Sample Calculations}
We give an example.  Let $K = O_{2,1} \# O_{7,3} \# O_{14,1} \subset Y = L(2,1) \# L(7,3) \# L(14,1)$, as depicted in Figure \myfig{7}. Surgery on $K$ with coefficient $-1$ gives the same manifold as $0$-surgery on the (2,7) torus knot.  

\begin{figure}[h]
\label{fig:7}
\centering  \includegraphics[scale=.55]{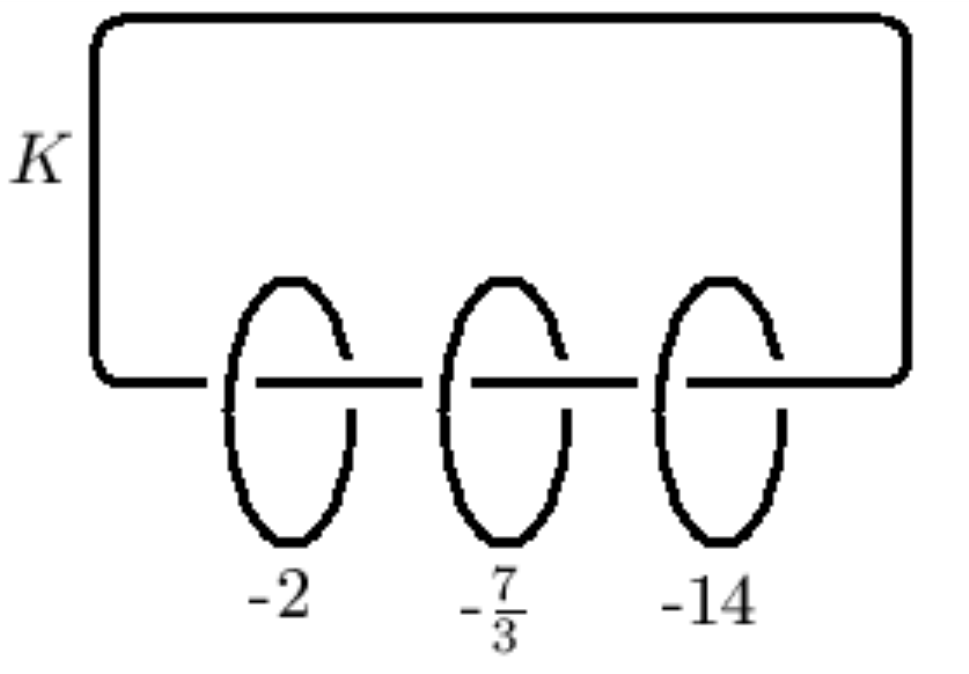}
\caption{The knot $K = O_{2,1} \# O_{7,3} \# O_{14,1}$.}
\end{figure}

So, $(p_1,q_1) = (2,1)$, $(p_2,q_2) = (7,3)$, $(p_3,q_3) = (14,1)$.  
It is easy to see that the set $\mu\mathcal{T}_K$ of $\mu$-torsion structures in $\spc{Y_0}$ consists of $\theta_K([A])$ for where $A = (Q; 0,0,i) \in \widetilde{{AT}_K}$ for $Q \in \mathbb{Z}$ and $i=0,1, \ldots, 13$. 

Let us look at the case of $A_Q = (Q; 0,0,0)$ in depth.  Using the process described in the introduction, we graph the function $F_{A_Q}$ for $Q =-2, -1, 0$.  The knot $K$ is of order 14 in $H_1(Y)$, so these functions will all be the sum of linear functions plus functions that are periodic of period 14.  We show the graphs of each function over two periods in Figure \myfig{8}.  
It is clear that of these three, only the graph of $F_{A_{-1}}$ possesses any wells.  In fact, since $F_{A_Q}$ will be gotten from $F_{A_{-1}}$ by adding a linear function whose slope depends on $Q$, it is easy to see that $F_{A_Q}$ will have no wells for any other value of $Q \in \mathbb{Z}$.  

We can identify the wells in the function for $Q={-1}$: there will be one at each height $1+4n$ for $n \in \mathbb{Z}$, each having trivial $U$-action.  So, there is an absolute lift of the relative $\mathbb{Z}$-grading on $\underline{HF}^+\left(Y_0, \theta_K([A_{-1}]) \right)$, so that
$$\underline{HF}^+\left(Y_0, \theta_K([A_{-1}]) \right) \cong \mathbb{Z}_{(1)} \otimes \mathbb{Z}[T_{(4)}, T_{(4)}^{-1}],$$
where $T_{(4)}$ takes a well to the corresponding well one period to the right and raises $\mathbb{Z}$-grading by $4$. 
We see that $-14\cdot S\ell(A_{-1}) = -4 = \langle c_1(\m{t}_0), [\widehat{dS}] \rangle$.  

The elements $A = (Q; 0,0,i)$ for the other values of $i$ admit a similar analysis. All told, we end up with five elements $\m{t} \in \spc{Y_0}$ for which $\underline{HF}^+(Y_0, \m{t})$ is nontrivial.  We label these as $\m{t}_4, \m{t}_2, \m{t}_0, \m{t}_{-2},$ and $\m{t}_{-4}$, where $\langle c_1(\m{t}_i), [\widehat{dS}] \rangle = i$.  
Let $\mathcal{T}^n_{(s)} \cong \mathbb{Z}[U^{-1}]/U^{-}n\cdot \mathbb{Z}[U^{-1}]$ as $\mathbb{Z}[U]$ modules, graded so that $U^{-i}$ lies in level $s + 2i$ for $0 \leq i < n$ (so that the bottom degree non-trivial elements live in level $s$).
Then, there is a lift of the relative-grading on $\underline{HF}^+(Y_0, \m{t}_i)$ for which
$$\underline{HF}^+(Y_0, \m{t}_i) \cong 
\left\{
\begin{array}{ll}
\mathcal{T}^1_{(-1)} \otimes \mathbb{Z}[T_{(-4)}, T_{(-4)}^{-1}] & i = 4 \\
\mathcal{T}^1_{(-1)} \otimes \mathbb{Z}[T_{(-2)}, T_{(-2)}^{-1}] & i = 2 \\
\left(\mathcal{T}^2_{(-1)} \otimes \mathbb{Z}[T_{(0)}, T_{(0)}^{-1}]\right) \oplus \mathcal{T}^+_{(4)} & i = 0 \\
\mathcal{T}^1_{(-1)} \otimes \mathbb{Z}[T_{(2)}, T_{(2)}^{-1}] & i = -2 \\
\mathcal{T}^1_{(-1)} \otimes \mathbb{Z}[T_{(4)}, T_{(4)}^{-1}] & i = -4, \\
\end{array}
\right. $$
where $U$ lowers the grading by 2, and all the groups have trivial $U$ action except for the one for $\m{t}_0$.  Forgetting about relative $\mathbb{Z}$-gradings, this can be summarized as saying that
$$\underline{HF}^+(Y_0) \cong HF^+(Y_0) \otimes \mathbb{Z}[T, T^{-1}]$$
in light of Proposition \ref{thm:8.1} of \cite{OSAG}.

\subsection{Higher genus}  
Now, let $K_g$ be the connect sum of $K$ with $B_g$ for some $g \geq 0$, as depicted in Figure \myfig{9}. The functions that we need to compute for each value of $A \in \mathcal{MT}_{K_g}$ are the same as the ones for the corresponding value $A \in \mathcal{MT}_{K}$ (as $\mathcal{MT}_{K_g} = \mathcal{MT}_K$ as sets), except with $g$ added at each half-integer value of $x$.

We just look at the case where $A = (-1;0,0,0)$ for varying values of $g$.  Figures \myfig{10} and \myfig{11} shows graphs of $F_A$ for $g=1$ and $g=2$, respectively.
For $Y_0 = Y_0(K_1)$, we have a short exact sequence
$$ 0 \rightarrow \left(\left(\Omega^1(1)\right)^8_{(1)} \oplus \left(\Omega^1(1)\right)_{(3)} \oplus \left(\Omega^1(1)\right)_{(5)}\right) \otimes \mathbb{Z}[T_{(4)}, T^{-1}_{(4)}] $$
$$ \rightarrow \underline{HF}^+\left(Y_0(K_1), \theta_{K_1}([A]) \right) \rightarrow \left((\mathcal{T}^1)^9_{(5)} \oplus \mathcal{T}^1_{(7)} \oplus \mathcal{T}^1_{(9)}\right) \otimes \mathbb{Z}[T_{(4)}, T^{-1}_{(4)}] \rightarrow 0.$$
The $\mathcal{T}^i$ summands are found just as they are before.  As for the $\Omega^g(k)$ subgroups, we have $g=1$, and $\Omega^1(k)$ is non-trivial only when $k=1$.  So, we look for half-integral $x$ between $0$ and $14$ for which $F_A(x) - F_A(x-\frac{1}{2}) = 1$,  and for each such $x$, $F_A(x) - 1$ gives the grading of the summand.  The values of $x$ that we find are $x = \frac{1}{2}, \frac{3}{2},\frac{5}{2},\frac{7}{2},\frac{9}{2},\frac{11}{2},\frac{13}{2},\frac{15}{2},\frac{19}{2},\frac{23}{2}$; we have $F_A(\frac{19}{2}) - 1 = 3$, $F_A(\frac{23}{2})-1= 5$, and $F_A(x)-1= 1$ for the other eight values of $x$.

In the case where $g=2$, we similarly find the sequence
$$ \hspace*{-50pt} 0 \rightarrow \left(\left(\Omega^2(2)\right)^8_{(2)} \oplus \left(\Omega^2(3)\right)_{(3)} \oplus \left(\Omega^2(2)\right)_{(4)}
\oplus \left(\Omega^2(3)\right)_{(5)} \oplus \left(\Omega^2(2)\right)_{(6)} \oplus  \left(\Omega^2(3)\right)_{(7)} \oplus \left(\Omega^2(1)\right)_{(7)}\right) \otimes \mathbb{Z}[T_{(4)}, T^{-1}_{(4)}] $$
$$\rightarrow \underline{HF}^+\left(Y_0(K_2), \theta_{K_2}([A]) \right) \rightarrow 
\left((\mathcal{T}^1)^8_{(5)} \oplus (\mathcal{T}^1)^3_{(7)} \oplus \mathcal{T}^2_{(5)} \oplus (\mathcal{T}^1)^2_{(9)}\right) \otimes \mathbb{Z}[T_{(4)}, T^{-1}_{(4)}] \rightarrow 0.$$ \newpage

\begin{figure}[H]
\label{fig:8}
\centering  \hspace*{-50pt} \includegraphics[scale=.60]{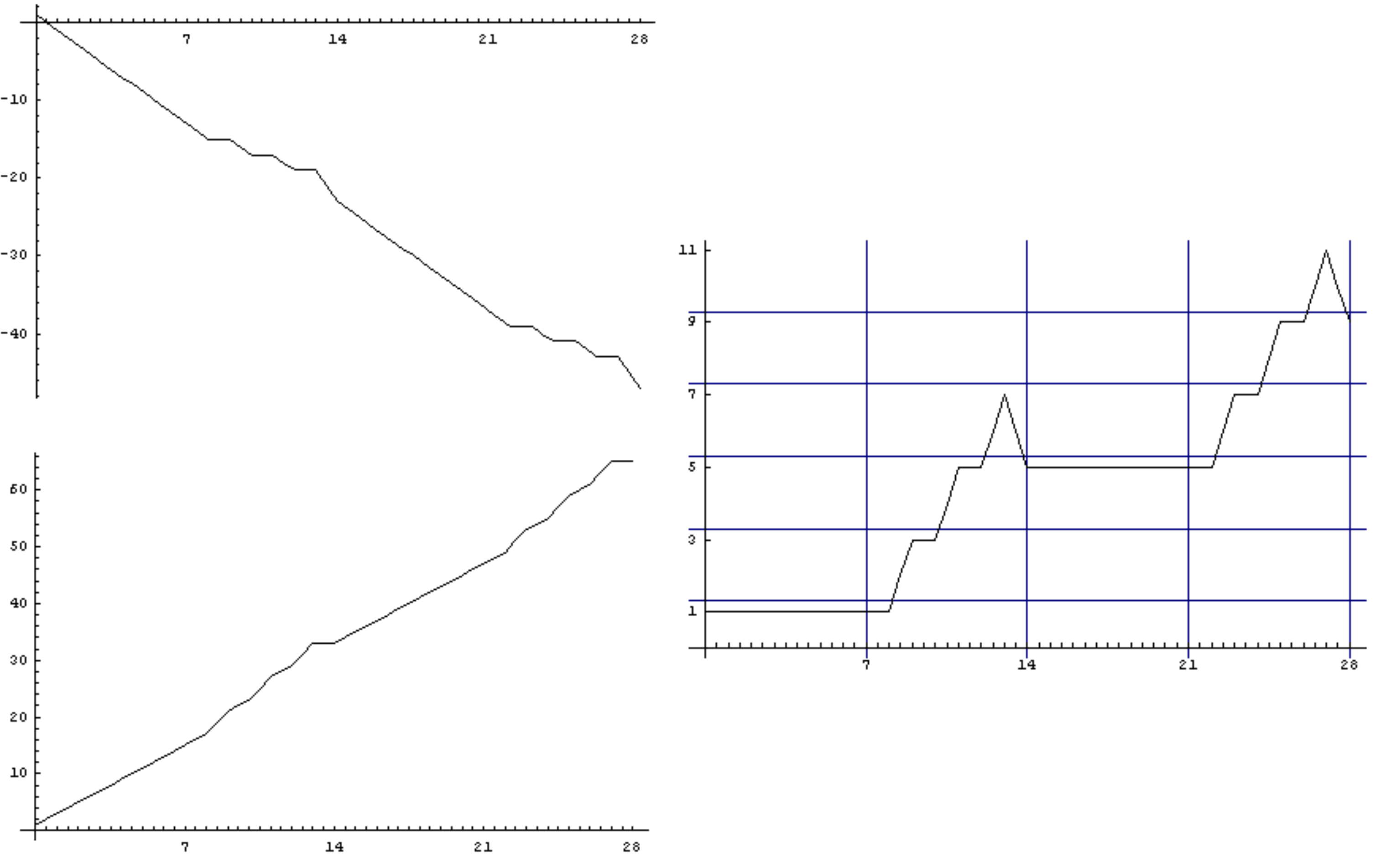}
\caption{The graphs of the functions $F_A$ where $A = (-2;0,0,0)$ (top), $A=(-1;0,0,0)$ (right) and $A=(0;0,0,0)$ (bottom).  Only when $A=(-1;0,0,0)$ does $F_A$ have any wells, which correspond to finite intervals where the graph runs below any of the horizontal lines (which are placed just above odd integers). }
\end{figure}

\begin{figure}[h!]
\label{fig:9}
\centering   \includegraphics[scale=.50]{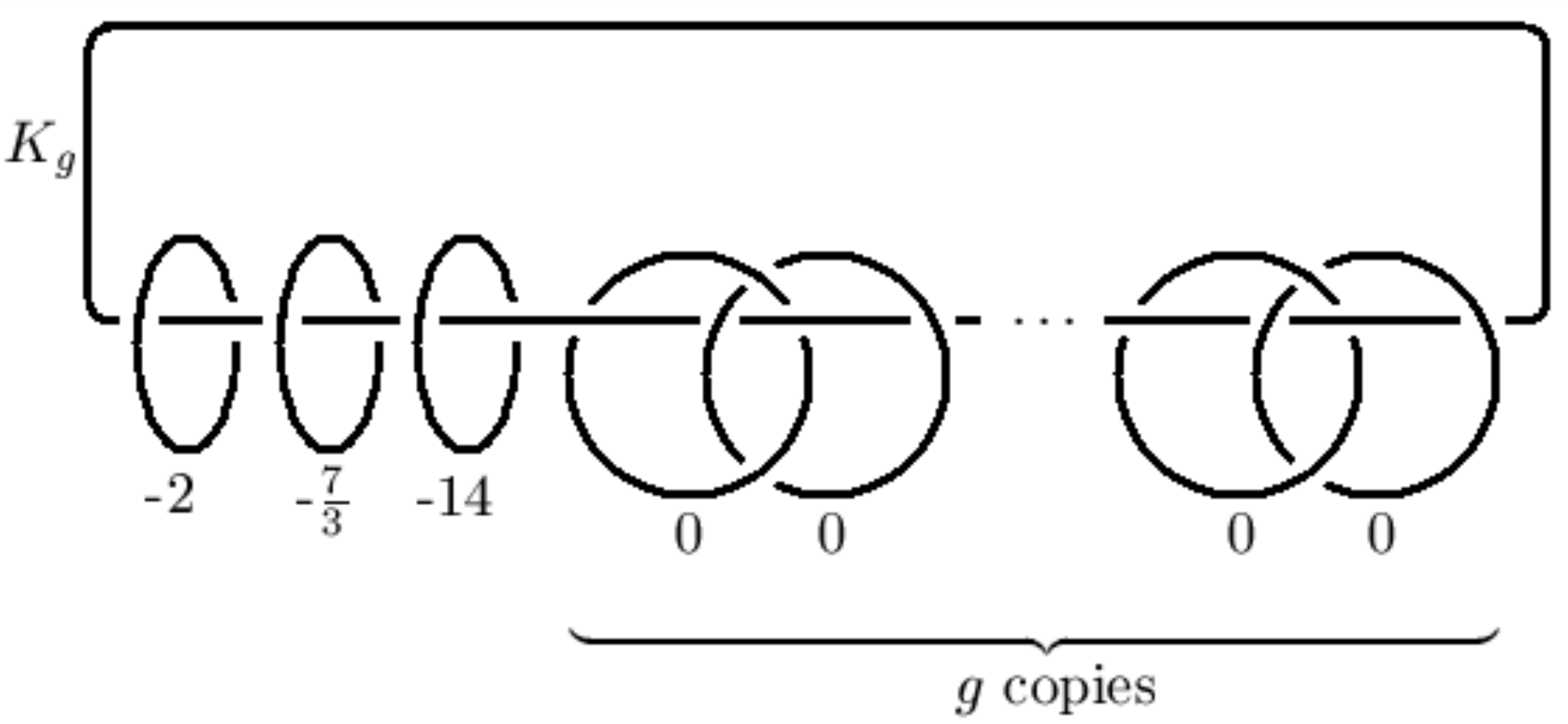}
\caption{The knot $K_g = O_{2,1} \# O_{7,3} \# O_{14,1} \# B_g$.}
\end{figure}

\newpage

\if 0
\underline{HF}^+\left(Y_0(K_1), \theta_{K_1}([A]) \right) \cong \Big((\mathcal{T}^1)^9_{(5)} \oplus \mathcal{T}^1_{(7)} \oplus \mathcal{T}^1_{(9)} \oplus \mbox{ }\mbox{ }\mbox{ }\mbox{ }\mbox{ }\mbox{ }\mbox{ }\mbox{ }\mbox{ }\mbox{ }\mbox{ }\mbox{ }\mbox{ }\mbox{ }\mbox{ }\mbox{ }\mbox{ }\mbox{ }\mbox{ }\mbox{ }\mbox{ }\mbox{ }\mbox{ }\mbox{ }\mbox{ }\mbox{ }\mbox{ }\mbox{ }\mbox{ }\mbox{ }\mbox{ }\mbox{ }\mbox{ }\mbox{ }\mbox{ }\mbox{ }\mbox{ }\mbox{ }\mbox{ }\mbox{ }\mbox{ }\mbox{ }\mbox{ }\mbox{ }\mbox{ }\mbox{ }\mbox{ }\mbox{ }\mbox{ }\mbox{ }\mbox{ }\mbox{ }\mbox{ }\mbox{ }\mbox{ }\mbox{ }\mbox{ }\mbox{ }\mbox{ }\mbox{ }\mbox{ }\mbox{ }\mbox{ }\mbox{ }$$
$$\mbox{ }\mbox{ }\mbox{ }\mbox{ }\mbox{ }\mbox{ }\mbox{ }\mbox{ }\mbox{ }\mbox{ }\mbox{ }\mbox{ } \left(\Omega^1(1)\right)^8_{(1)} \oplus \left(\Omega^1(1)\right)_{(3)} \oplus \left(\Omega^1(1)\right)_{(5)}\Big) \otimes \mathbb{Z}[T_{(4)}, T^{-1}_{(4)}].$$

$$\underline{HF}^+\left(Y_0(K_2), \theta_{K_2}([A]) \right) \cong \Big((\mathcal{T}^1)^8_{(5)} \oplus (\mathcal{T}^1)^3_{(7)} \oplus \mathcal{T}^2_{(5)} \oplus (\mathcal{T}^1)^2_{(9)} \oplus \left(\Omega^2(2)\right)^8_{(2)} \oplus \left(\Omega^2(3)\right)_{(3)} \oplus \left(\Omega^2(2)\right)_{(4)}  \oplus
\mbox{ }\mbox{ }\mbox{ }\mbox{ }\mbox{ }\mbox{ }\mbox{ }\mbox{ }\mbox{ }\mbox{ }\mbox{ }\mbox{ }\mbox{ }\mbox{ }\mbox{ }\mbox{ }\mbox{ }\mbox{ }\mbox{ }\mbox{ }\mbox{ }\mbox{ }\mbox{ }\mbox{ }\mbox{ }\mbox{ }\mbox{ }\mbox{ }\mbox{ }\mbox{ }\mbox{ }\mbox{ }\mbox{ }\mbox{ }\mbox{ }\mbox{ }\mbox{ }$$
$$\mbox{ }\mbox{ }\mbox{ }\mbox{ }\mbox{ }\mbox{ }\mbox{ }\mbox{ }\mbox{ }\mbox{ }\mbox{ }\mbox{ }\mbox{ }\mbox{ }\mbox{ }\mbox{ }\mbox{ }\mbox{ }\mbox{ } \left(\Omega^2(3)\right)_{(5)} \oplus \left(\Omega^2(2)\right)_{(6)} \oplus  \left(\Omega^2(3)\right)_{(7)} \oplus \left(\Omega^2(1)\right)_{(7)}\Big) \otimes \mathbb{Z}[T_{(4)}, T^{-1}_{(4)}].$$

\fi

\begin{figure}[H]
\label{fig:10}
\centering   \includegraphics[scale=.55]{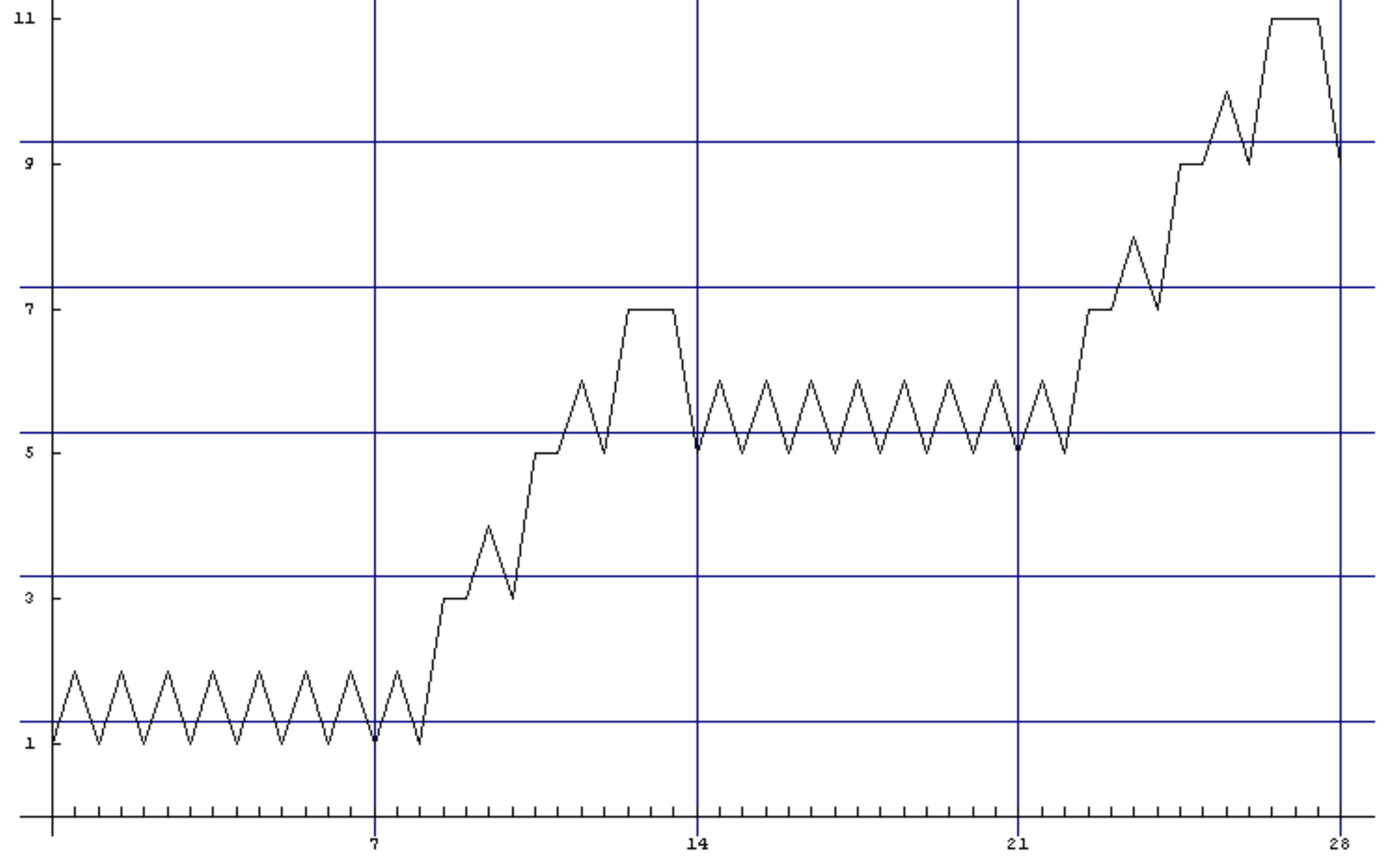}
\caption{The graph of the function $F_A$ when $A=(0;0,0,0) \in \mathcal{MT}_{K_1}$.}
\end{figure}

\begin{figure}[H]
\label{fig:11}
\centering   \includegraphics[scale=.55]{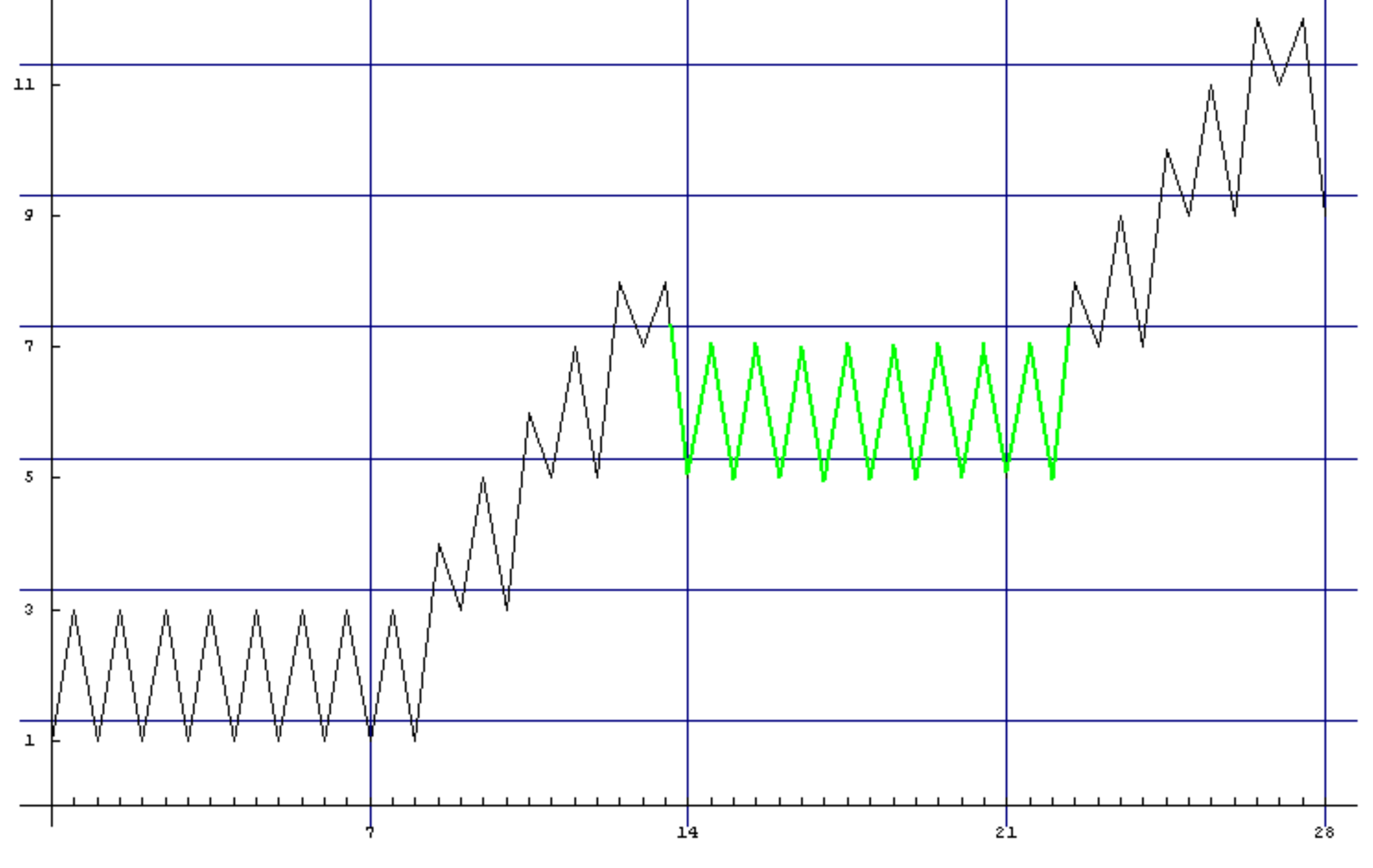}
\caption{The graph of the function $F_A$ when $A=(0;0,0,0) \in \mathcal{MT}_{K_2}$.  We highlight the portion of the graph corresponding to a well $w$ at height $7$, on which $U$ acts non-trivially: $Uw$ is equal to the sum of nine wells at height $5$.}
\end{figure}

\section{Appendix}
We now return to some results from Section 2 whose proofs are fairly straightforward, but somewhat technical.

Take an oriented, rationally nullhomologous knot $K \subset Y$.  We want to shift to a concrete setting to talk about algebraic topology issues more clearly.  So, present $Y$ as surgery on a framed link $L \subset S^3$; this also serves as a Kirby diagram for a 2-handlebody $U_{\infty}$ with boundary $Y$.  Order the components of the link, and assign each an orientation; call the linking matrix $G_{\infty}$.  This specifies a basis for $H_2(U_{\infty})$, with respect to which the intersection form is just $G_{\infty}$.  
To be precise, the basis is given by taking oreinted Seifert surfaces for the link components pushed into the interior of $U_{\infty}$, and then capped off.

We can represent $K$ as an extra component on this link, which comes equipped with orientation; the implicit framing $\lambda$ will be some integer, which we call $I_0$.  Let $K_{+N}$ be $K$ with framing $I_0+N$.  The framed link $L_N = L \cup K_{+N}$ is a Kirby diagram for a handlebody $U_N$ with boundary $Y_N$ (allowing $N=0$, of course).  Again, $H_2(U_N)$ has a canonical ordered basis, with respect to which the intersection form on $U_N$ is equal to the linking matrix $G_N$ of $L_N$.  Written in block form, 
we have 
$$G_N = \left( \begin{array}{cc} G_{\infty} & \vec{k} \\
\vec{k}^{\mbox{ }T} & I_0 + N \\
\end{array} \right),$$ 
where the last row and column correspond to the component $K_{+N}$.  Note that $U_N = U_{\infty} \cup_Y W_N$.

We form another 2-handlebody as follows.  To $L_0$, add a $0$-framed meridian to $K$, and an $N$-framed meridian of the $0$-framed meridian; call this link $L_{0N}$, the represented 2-handlebody $U_{0N}$, and the linking matrix $G_{0N}$.  Ordering and orienting the components of $L_{0N}$, we have
$$G_{0N} = \left( \begin{array}{cccc} G_{\infty} & \vec{k} & \vec{0} & \vec{0} \\
\vec{k}^{\mbox{ }T} & I_0 & 1 & 0 \\
\vec{0} & 1 & 0 & 1\\
\vec{0} & 0 & 1 & N\\
\end{array} \right).$$ 
It is not hard to see that, in fact, the cobordism $X_{\al\be\ga}$ of Section 2.5, when glued to a tubular neighborhood of a sphere with self intersection $N$ along $L(N,1)$, gives a cobordism $W_{0N}$ such that $U_{0N} = U_0 \cup_{Y_0} W_{0N}$.  We define $\m{S}_{0N}(\m{t}_0) \subset \spc{W_{0N}}$ to be those structures whose first Chern class evaluates to $N$ on this sphere.  Then, of course, $\m{S}_N(\m{t}_0)$ can be described as the restrictions of $\m{S}_{0N}(\m{t}_0) \subset \spc{W_{0N}}$ to $Y_N$.

We denote elements of second homology and cohomology of $U_*$ by vectors and covectors respectively (where $U_*$ denotes any of $U_{\infty}, U_N$, and $U_{0N}$), so that evaluation of a cohomology class on a homology class is given by the normal dot product.  We also denote elements of $H^2(U_*, \partial U_*)$ by covectors, so that the Poincar{\'e} dual of $(\vec{a})_{U_*} \in H_2(U_*)$ is just $(\vec{a}^{\mbox{ }T})_{U_*, \partial U_*}$.  We will use the same notation for the corresponding cohomology groups with rational coefficients. In the long exact cohomology sequence for  $(U_*, \partial U_*)$, we have 
\begin{equation}
\label{eq:20}
H^2(U_*, \partial U_*) \rightarrow H^2(U_*) \rightarrow H^2(\partial U_*) \rightarrow 0,
\end{equation} 
with the first map given in terms of our bases by right multiplication by $G_*$.  We often write elements of $H^2(Y_N)$ as $h = (\vec{b}^{\mbox{ }T}, c)_{U_N}|_{Y_N}$, so as to cooperate with the block form expression of $G_N$.  

Since all the handlebodies are simply-connected, the \sst\ structures on each can be identified via the first Chern class with characteristic covectors of the corresponding matrices (i.e., covectors whose $i^{\mbox{\scriptsize{th}}}$ component is congruent mod 2 to the $i^{\mbox{\scriptsize{th}}}$ diagonal entry of the matrix).  We denote the \sst\ structures by $\langle \vec{a}^{\mbox{ }T}\rangle_{U_*}$ so that $c_1(\langle \vec{a}^{\mbox{ }T}\rangle_{U_*}) = (\vec{a}^{\mbox{ }T})_{U_*}$.  (In general, angle brackets will be used to signify that we are talking about a \sst\ structure rather than a cohomology class.)  

The restriction maps $\spc{U_*} \rightarrow \spc{\partial U_*}$, $\spc{U_N} \rightarrow \spc{W_N}$, and $\spc{U_{0N}} \rightarrow \spc{W_{0N}}$ are all surjections, and the restriction maps are all equivariant with respect to the action of $H^2(U_*)$ (acting on the targets via the restriction maps).  It follows that every \sst\ structure on $W_*$ or $Y_*$ can be specified as the restriction of one on some $U_*$, and, identifying $\spc{U_*}$ with $\mbox{Char}(G_{*})$, that we can view $\spc{\partial U_*}$ as $\mbox{Char}(G_*)/2\cdot\mbox{Im} G_*$. (The same can be said with \sst\ structures replaced by elements of second cohomology.)

It is not hard to see that $U_{0N}$ is diffeomorphic to $U_N\# S$, with $S$ denoting either $\mathbb{CP}^2\#\overline{\mathbb{CP}}^2$ or $S^2 \times S^2$; the handleslides necessary to realize this are shown in Figure \myfig{12}.   We can also perform all the same moves if we bracket the framings for all but the two meridians shown in Figure \myfig{12}a, which would depict $W_{0N}$.
We can see a spanning disk for the $N$-framed component in $-Y_0$ in Figure \myfig{12}d, since this component is unlinked from the link of bracketed components; gluing this to the core of the corresponding handle gives a distinguished sphere $V$ embedded in $W_{0N}$.  

Now, we can describe the sets mentioned at the beginning of Section 2.4. 

\begin{prop} \label{thm:A.1} Suppose $\m{t}_0 = \langle \vec{b}^{\mbox{ }T}, c \rangle_{U_0}|_{Y_0} \in \spc{Y_0}$.  Then we have 
$$\m{S}^N_0(\m{t}_0) = \big\{\langle \vec{b}^{\mbox{ }T}, c + 2iN \rangle_{U_0}|_{Y_0} \big| i \in \mathbb{Z} \big\},$$
$$\m{S}_{0N}(\m{t}_0) = \big\{\langle \vec{b}^{\mbox{ }T}, c + 2iN, 2j, N \rangle_{U_{0N}}|_{W_{0N}} \big| i,j \in \mathbb{Z} \big\},$$
$$\m{S}_N(\m{t}_0) = \big\{\langle \vec{b}^{\mbox{ }T}, c + (2i-1)N \rangle_{U_N}|_{Y_N} \big| i \in \mathbb{Z} \big\},$$
$$\m{S}_{N\infty}(\m{t}_0) = \big\{\langle \vec{b}^{\mbox{ }T} + 2i\vec{k}^{\mbox{ }T}, c + 2iI_0 + (2j-1)N \rangle_{U_N}|_{W_N} \big| i,j \in \mathbb{Z} \big\},$$
and
$$\m{S}_{\infty}(\m{t}_0) = \big\{\langle \vec{b}^{\mbox{ }T} + 2i\vec{k}^{\mbox{ }T} \rangle_{U_{\infty}}|_Y \big| i \in \mathbb{Z} \big\}.$$
In particular, $\m{S}_{\infty}(\m{t}_0)$ is independent of $N$.
\end{prop}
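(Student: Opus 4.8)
\textbf{Proof plan for Proposition \ref{thm:A.1}.}

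The plan is to set up a concrete Kirby-calculus bookkeeping, working in the handlebodies $U_0$, $U_N$, $U_{0N}$, and $U_{\infty}$ and tracking \sst\ structures as characteristic covectors modulo the image of the linking matrix, exactly as in the paragraphs preceding the statement. First I would handle $\m{S}^N_0(\m{t}_0)$: by definition this set is $\m{t}_0 + \mathbb{Z}\cdot\mathrm{PD}[N\mu]$, so I need to compute $\mathrm{PD}[N\mu]$ as an element of $H^2(Y_0)$. In the link $L_0 = L \cup K_{+0}$, the meridian $\mu$ of $K$ is dual to the core of the handle attached along $K_{+0}$; in terms of the canonical basis of $H_2(U_0)$ this means $\mathrm{PD}[\mu]$ restricts from the covector $(\vec 0^{\,T},1)_{U_0,\partial U_0}$, whose image in $H^2(U_0)$ under right multiplication by $G_0$ is $2\cdot(\text{last column of }G_0)$ — wait, more precisely $c_1$ shifts by $2x$ when the \sst\ structure shifts by $x$, so adding $N\mathrm{PD}[\mu]$ to $\m{t}_0$ corresponds to adding $2N$ to the last coordinate of the characteristic covector, giving $\langle \vec b^{\,T}, c+2iN\rangle_{U_0}|_{Y_0}$. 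I would make sure the reduction ``mod $2\,\mathrm{Im}\,G_0$'' doesn't collapse these, i.e. that distinct $i$ give distinct structures on $Y_0$ (this is where $\mu$ being non-torsion, Proposition \ref{thm:2.7}, enters, though the statement as written lists all of them as formally distinct anyway).

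Next I would treat $\m{S}_{0N}(\m{t}_0)$. Using the handleslide picture (Figure \myfig{12}) I would identify the distinguished sphere $V$ in $W_{0N}$ with the last basis element, note that $\langle c_1, [V]\rangle = N$ forces the last coordinate of the covector to be $N$, and that the condition of restricting to an element of $\m{S}^N_0(\m{t}_0)$ on $Y_0$ pins down the first block to $(\vec b^{\,T}, c+2iN)$; the remaining coordinate (the one attached to the $0$-framed meridian) is unconstrained, giving the free parameter $2j$. Then $\m{S}_N(\m{t}_0)$ is obtained by restricting $\m{S}_{0N}(\m{t}_0)$ to $Y_N$: here I would use the diffeomorphism $U_{0N}\cong U_N \# S$ and track what the restriction map does to covectors, which produces the shift by $-N$ in the last coordinate (so $c+2iN$ becomes $c+(2i-1)N$ after accounting for how the framings combine). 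For $\m{S}_{N\infty}(\m{t}_0) \subset \spc{W_N}$ and $\m{S}_{\infty}(\m{t}_0)\subset\spc{Y}$, I would run the analogous computation: $W_N = U_N \setminus U_{\infty}$ in the sense that $U_N = U_\infty \cup_Y W_N$, so \sst\ structures on $W_N$ restricting appropriately are parametrized by the characteristic covectors of $G_N$ lying over the right structures, and the $2i\vec k^{\,T}$ term appears because changing the \sst\ structure on $W_N$ by a multiple of $\mathrm{PD}[K]$ (equivalently $\mathrm{PD}[F]$) shifts the first block by $2i$ times the linking vector $\vec k$ while shifting the last coordinate by $2iI_0$. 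Restricting further to $Y$ kills the last coordinate entirely and one is left with $\langle \vec b^{\,T} + 2i\vec k^{\,T}\rangle_{U_\infty}|_Y$, visibly independent of $N$.

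The main obstacle I expect is the careful bookkeeping in the restriction maps between the various handlebodies — in particular getting the ``$-N$'' shifts and the coefficient $2iI_0$ exactly right, since these come from how the framing $I_0+N$ of $K_{+N}$ interacts with the block decomposition of $G_N$ and with the handleslides that identify $U_{0N}$ with $U_N\# S$. Everything else is routine once one commits to the covector description and the identification $\spc{\partial U_*} \cong \mathrm{Char}(G_*)/2\,\mathrm{Im}\,G_*$; the final independence-of-$N$ claim then just reads off the last displayed formula, since neither $\vec b$, $\vec k$, nor the index set depends on $N$. I would also remark that the finiteness claims of Proposition \ref{thm:2.4} are consistent with these descriptions precisely in the $\mu$-torsion vs. non-$\mu$-torsion dichotomy, but that is not needed for this statement.
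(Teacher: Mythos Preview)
Your plan is essentially the same as the paper's: both work entirely in the characteristic-covector model $\mathrm{Char}(G_*)/2\,\mathrm{Im}\,G_*$, handle $\m{S}^N_0$ and $\m{S}_{0N}$ directly from the definitions, obtain $\m{S}_N$ by the change of basis corresponding to the Figure~\myfig{12} handleslides, and then read off $\m{S}_{N\infty}$ and $\m{S}_\infty$ by restriction. The paper makes the handleslide step completely explicit---writing the transformed covector as $(\vec b^{\,T}-2j\vec k^{\,T},\,c-2jI_0+(2i-1)N,\,c+2iN,\,2j)$ and then using $(\vec k^{\,T},I_0)_{U_N}|_{Y_N}=(\vec 0^{\,T},-N)_{U_N}|_{Y_N}$ to simplify---whereas you leave this as the anticipated ``main obstacle''; that is indeed the only place where real care is needed. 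One small slip to watch: by \eqref{eq:21} we have $\mathrm{PD}[F]|_{W_N}=(\vec k^{\,T},\,I_0+N)_{U_N}|_{W_N}$, so shifting by $i\cdot\mathrm{PD}[F]$ moves the last coordinate by $2i(I_0+N)$, not $2iI_0$; the displayed formula for $\m{S}_{N\infty}$ then matches after reindexing $j\mapsto j-i$.
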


\begin{proof}  The first claim is clear.  The second claim follows by looking at the preimage of $\m{S}^N_0(\m{t}_0)$ under the restriction map induced by the inclusion $Y_0 \rightarrow U_0 \rightarrow U_{0N}$.  We must have the last component equal to $N$ since structures in $\m{S}_{0N}(\m{t}_0)$ have specified evaluation on $V$, and it is not hard to see that all rows of the matrix $G_{0N}$ besides the last two vanish in $H^2(W_{0N})$.  

The fourth claim follows from the third similarly, and the fifth follows straightforwardly from the fourth.  To identify the restriction of $\m{S}_{0N}(\m{t}_0)$ to $\spc{Y_N}$, we perform the change of basis corresponding to the moves shown in Figure \myfig{12}.  In terms of the new basis, where the last three components correspond respectively to the $I_0+N$-, \linebreak $I_0$-, and $0$-framed components, $c_1\big(\langle \vec{b}^{\mbox{ }T}, c + 2iN, 2j, N \rangle_{U_{0N}}|_{W_{0N}}\big)$ will be written as $(\vec{b}^{\mbox{ }T} - 2j\vec{k}^{\mbox{ }T}, c -2jI_0 + (2i-1)N, c + 2iN, 2j)$.  The first two components give the restriction of this to $H^2(U_N)$, and the third claim follows straightforwardly from this after noting that $(\vec{k}^{\mbox{ }T}, I_0)_{U_N}|_{Y_N} = (\vec{0}^{\mbox{ }T}, -N)_{U_N}|_{Y_N}$.  \end{proof}

\begin{figure}[h!]
\label{fig:12}
\centering   \includegraphics[scale=.60]{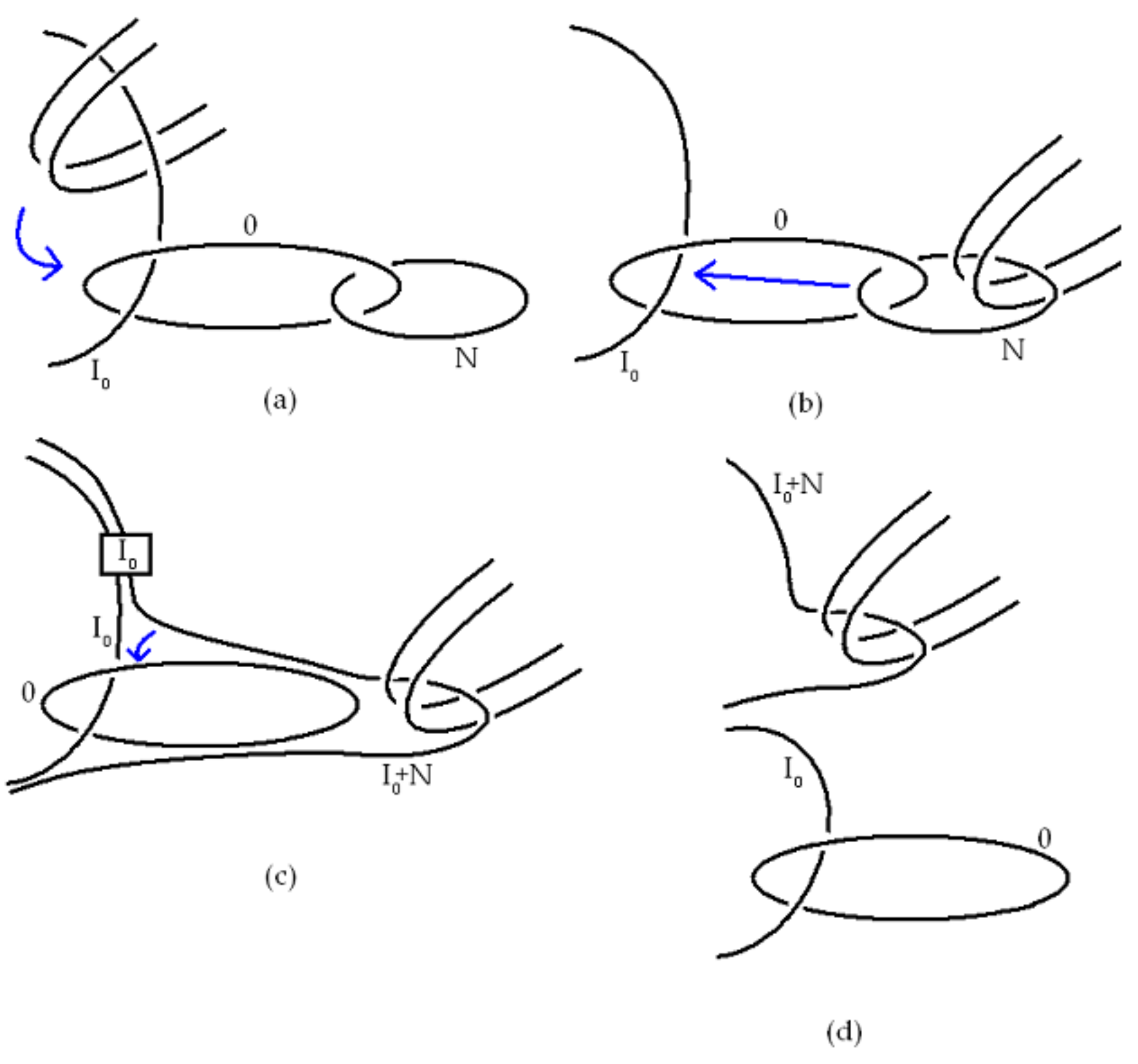}
\caption{We start with the diagram for $X_N$, and then we handleslide all of the components linking the $I_0$-framed knot over the $0$-framed unknot, so that afterward this unknot is the only component that the $I_0$-framed knot links with.  We then handleslide the $N$-framed unknot over the $I_0$-framed knot, turning the unknot into an $I_0 + N$ framed knot of the same type.  Finally, we handle slide the $I_0 + N$-framed knot over the $0$-framed component enough times to unlink it from the $I_0$-framed knot.  We are left with the Kirby diagram for $X_N$ together with a knot with a 0-framed meridian, which presents $X_N \# \mathbb{CP}^2\#\overline{\mathbb{CP}}^2$ or $X_N \# S^2 \times S^2$.   }
\end{figure}

Let us explicitly identify some (co)homology classes.
We may choose a surface $P$ in $U_{\infty}$ with $\partial P = K$ (i.e., a ``pushed-in'' Seifert surface), so that $(\vec{k})_{U_{\infty}, \partial U_{\infty}}$ represents $P$ in $H_2(U_{\infty}, \partial U_{\infty})$.
Let us also choose a Seifert surface $dS$ for $d\cdot K$ in $Y$, where $d$ is the order of $K$ in $H_1(Y)$.  Gluing $-dS$ to $d\cdot P$ yields a class $S_1$ of $H_2(U_{\infty}; \mathbb{Z})$.  Choose some $(\vec{p}_0)_{U_{\infty}} \in H_2(U_{\infty}; \mathbb{Q})$ so that $S_1 = (d\vec{p}_0)_{U_{\infty}}$.  This choice must satisfy $G_{\infty}\vec{p}_0 = \vec{k}$. 

We can also glue $P$ to $F$, recalling that the latter is the core of the 2-handle of $W_N$. The resulting class will be the oriented generator of $H_2(U_N)$ corresponding to the link component $K$, and so it will be written as      
$$\left( \begin{array}{c} \vec{0} \\
1 \\
\end{array} \right)_{U_N} \in H_2(U_N).$$
Now choose $\widetilde{dF}$ to be $d\cdot F$ glued to $dS$; it follows that if $i_*: H_2(W_N) \rightarrow H_2(U_N)$ is induced by inclusion,
$$i_*\left([\widetilde{dF}]\right) = \left( \begin{array}{c} \mbox{-}d\vec{p}_0 \\
d \\
\end{array} \right)_{U_N}.$$

We will also need to identify $\mbox{PD}[F]|_{W_N}$ (we think of $F$ as a generator of $H_2(W_N, Y)$, so its dual technically lives in $H^2(W_N, Y_N)$).  To find this, we take advantage of the commutative diagram 
$$ \begin{array}{ccccc}
H_2(W_N) & \cong & H^2(W_N, \partial W_N) & \overset{j_{W_N}^*}{\longrightarrow} & H^2(W_N) \\
i_* \mbox{ }\downarrow & & & & i^* \mbox{ }\uparrow \\
H_2(U_N) & \cong & H^2(U_N, \partial U_N) & \overset{j_{U_N}^*}{\longrightarrow} & H^2(U_N) \\
\end{array} $$
where the isomorphisms are from Poincar{\'e} duality and the other maps are induced from inclusions. (The claimed commutativity is not quite as obvious as obvious as it seems, but is a reasonable exercise in diagram chasing.)  Given this, it follows that $j^*_{W_N}\left(\mbox{PD}[\widetilde{dF}]\right) = \big(\vec{0}^{\mbox{ }T}, d(I_0 - \vec{p}_0^{\mbox{ }T}G_{\infty}\vec{p}_0 + N)\big)_{U_N}|_{W_N}$, since $j^*_{U_N}$ is represented by multiplication by $G_N$.  It is easy to see that $(d\vec{k}^{\mbox{ }T}, d\vec{p}_0^{\mbox{ }T}G_{\infty}\vec{p}_0)_{U_N}|_{W_N} = 0$, noting that $(d\vec{k}^{\mbox{ }T}, d\vec{p}_0^{\mbox{ }T}G_{\infty}\vec{p}_0)_{U_N} = j^*_{U_N}\left(\mbox{PD}[S_1]\right)$ and that $\mbox{PD}[S_1]$ restricts to the trivial element of $H^2(W_N, Y_N) \cong \mathbb{Z}$.  Hence $j^*_{W_N}\left(\mbox{PD}[\widetilde{dF}]\right)$ can also be written as $\big(d\vec{k}^{\mbox{ }T}, d(I_0 + N)\big)_{U_N}|_{W_N}$, and so at least up to torsion,
\begin{equation}
\label{eq:21}
\mbox{PD}[F]|_{W_N} = (\vec{k}^{\mbox{ }T}, I_0 + N)_{U_N}|_{W_N}. 
\end{equation}
That this actually holds precisely follows from the fact that it restricts to $\mbox{PD}[K]$ in $H^2(Y)$, and the kernel of $H^2(W_N) \rightarrow H^2(Y)$ contains no nontrivial torsion elements.

Now, let us note the following facts.

\begin{lemma} \label{thm:A.2} The term $\kappa$ is equal to $I_0 - \vec{p}_0^{\mbox{ }T}G_{\infty}\vec{p}_0$, and is independent of the particular choice of $\vec{p}_0$.  
The longitude $N\mu + \lambda$ is special if and only if $\kappa = -N$.  The order of $\mu$ in $H_1(Y_0)$ is $d|\kappa |$.  The element $(\vec{b}^{\mbox{ }T}, c)_{U_N}$ of $H^2(U_N)$ restricts to a torsion element of $H^2(Y_N)$ if and only if $(\vec{b}^{\mbox{ }T})_{U_{\infty}}|_Y$ is torsion and either $\kappa \ne -N$ or $c = \vec{b}^{\mbox{ }T}\cdot\vec{p}_0$ for any choice of $\vec{p}_0$.  
\end{lemma}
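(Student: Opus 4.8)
The plan is to reduce the whole lemma to linear algebra over $\mathbb{Q}$ with the three linking matrices $G_\infty$, $G_N$, $G_0$, using throughout the single observation that if $G_\infty\vec{w}=\vec{0}$ then $\vec{w}^{\,T}\vec{k}=\vec{w}^{\,T}G_\infty\vec{p}_0=0$, so that the general rational solution of $G_\infty\vec{y}=\vec{k}$ is $\vec{p}_0+\ker G_\infty$. Well-definedness of $\kappa:=I_0-\vec{p}_0^{\,T}G_\infty\vec{p}_0=I_0-\vec{p}_0^{\,T}\vec{k}$ is then immediate: a second solution $\vec{p}_0'$ differs from $\vec{p}_0$ by an element of $\ker G_\infty$, and the difference of the two expressions is $(\vec{p}_0-\vec{p}_0')^{T}\vec{k}=0$. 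The engine for everything else is the congruence $P^{T}G_NP$ with $P=\begin{pmatrix} I & -\vec{p}_0 \\ 0 & 1\end{pmatrix}$, which is invertible over $\mathbb{Q}$: carrying out the multiplication and using $G_\infty\vec{p}_0=\vec{k}$ (and symmetry of $G_\infty$) kills every off-diagonal block and leaves the block sum $G_\infty\oplus(\kappa+N)$.

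From this congruence, $b_1(Y_N)=\dim_{\mathbb{Q}}\ker G_N=\dim_{\mathbb{Q}}\ker G_\infty+\dim_{\mathbb{Q}}\ker(\kappa+N)$, which is $b_1(Y)+1$ exactly when $\kappa+N=0$; so $N\mu+\lambda$ is special iff $\kappa=-N$. To identify this $\kappa$ with the one defined before Proposition \ref{thm:2.7}, I would compute $H_1(Y\setminus N(K))$ directly from the diagram as the cokernel of $\begin{pmatrix}G_\infty\\ \vec{k}^{\,T}\end{pmatrix}$ acting on the free group on the meridians $e_1,\dots,e_m$ of $L$ together with $[\mu]$, with $[\lambda]=(\vec{k},I_0)$ and $[\mu]=(\vec{0},1)$. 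A slope $p\mu+q\lambda$ is torsion in $H_1(Y\setminus N(K);\mathbb{Q})$ iff $(q\vec{k},\,p+qI_0)$ lies in the rational row span of $(G_\infty\mid\vec{k})$; writing a relation as $(\vec{c}^{\,T}G_\infty,\vec{c}^{\,T}\vec{k})$ and invoking the kernel identity forces $\vec{c}-q\vec{p}_0\in\ker G_\infty$ and then $p/q=\vec{p}_0^{\,T}\vec{k}-I_0=-\kappa$. Since a Dehn filling raises $b_1$ precisely when the filling slope is the one torsion in $H_1(Y\setminus N(K);\mathbb{Q})$, and since by definition that coprime slope is $-p/q$, we get $\kappa=I_0-\vec{p}_0^{\,T}G_\infty\vec{p}_0$ (the special case is $N=0$, $\kappa=0$).

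The order of $\mu$ in $H_1(Y_0)=\mathrm{coker}(G_0)$ is handled the same way: solving $G_0(\vec{y},z)^{T}=t\,e_{m+1}$ over $\mathbb{Z}$, the first block gives $\vec{y}=\vec{w}-z\vec{p}_0$ with $\vec{w}\in\ker_{\mathbb{Q}}G_\infty$, and the second block collapses via the kernel identity to $z\kappa=t$; integrality of $\vec{y}$ is exactly the condition $z\vec{p}_0\in\mathbb{Z}^{m}+\ker_{\mathbb{Q}}G_\infty$, whose minimal positive solution is $d$, because that same condition characterizes the order of $[K]=\vec{k}\bmod\mathrm{Im}\,G_\infty$ in $H_1(Y)$. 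Since $d\vec{p}_0=S_1$ is integral we have $d\kappa=dI_0-S_1^{\,T}\vec{k}\in\mathbb{Z}$, so the achievable positive values of $t$ are precisely the positive multiples of $d|\kappa|$, giving order $d|\kappa|$ (infinite, i.e. $\kappa=0$, in the special case). For the last assertion, $(\vec{b}^{\,T},c)_{U_N}|_{Y_N}$ is torsion iff $(\vec{b},c)\in G_N\mathbb{Q}^{m+1}$; the first block is solvable iff $\vec{b}\in\mathrm{Im}_{\mathbb{Q}}G_\infty$, i.e. iff $(\vec{b}^{\,T})_{U_\infty}|_Y$ is torsion, and then, writing $\vec{b}=G_\infty\vec{a}$, the second block reduces to $\vec{b}^{\,T}\vec{p}_0+z(\kappa+N)=c$, which is solvable for $z$ iff $\kappa\ne -N$ or $c=\vec{b}^{\,T}\vec{p}_0$ (the value $\vec{b}^{\,T}\vec{p}_0$ being independent of $\vec{p}_0$ once $\vec{b}\in\mathrm{Im}_{\mathbb{Q}}G_\infty$, again by the kernel identity).

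Everything is mechanical once the congruence $G_N\cong G_\infty\oplus(\kappa+N)$ is in hand; the one point requiring genuine care is the order computation, where one must recognize that the ``denominator'' governing when $z\vec{p}_0$ becomes integral modulo $\ker G_\infty$ is exactly the order $d$ of $K$ in $H_1(Y)$, and hence that $d\kappa$ is an integer. That, together with consistently keeping the integral-versus-rational distinction straight (all solvability statements live over $\mathbb{Q}$, but torsion and order questions need $\mathbb{Z}$-coefficients), is where I expect the only real bookkeeping effort to lie.
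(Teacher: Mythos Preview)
Your argument is correct and is precisely the ``straightforward matrix algebra, using the long exact sequence (\ref{eq:20})'' that the paper invokes without detail; the block-diagonalization $P^{T}G_NP=G_\infty\oplus(\kappa+N)$ and the kernel identity $\vec{w}\in\ker G_\infty\Rightarrow\vec{w}^{\,T}\vec{k}=0$ are exactly the right organizing devices. The one place where you go slightly beyond what the paper sketches is the explicit identification of $\kappa$ with the slope definition via $H_1(Y\setminus N(K);\mathbb{Q})$, which is needed since the congruence argument alone only handles integer $N$; this is correct and appropriate.
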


\begin{proof} All of these follow from straightforward matrix algebra, using the long exact sequence (\ref{eq:20}).  Note that if $\kappa = -N$, then the quantity $c - \vec{b}^{\mbox{ }T}\cdot\vec{p}_0 = \langle (\vec{b}^{\mbox{ }T}, c)_{U_N}|_{W_N}, [\widetilde{dF}] \rangle$ depends only on $(\vec{b}^{\mbox{ }T}, c)_{U_N}|_{Y_N}$.\end{proof}

\noindent \textit{Proof of Proposition \ref{thm:2.5}.} Straightforward, utilizing Proposition \ref{thm:A.1} and Lemma \ref{thm:A.2}. \halbox  

\vs

We now describe how to compute squares of elements of $H^2(W'_N)$; we first compute $[\widetilde{dF'}]^2$.

\vs

\noindent \textit{Proof of Proposition \ref{thm:2.7}.} We can square elements of $H_2(W_N)$ using the intersection form on $U_N$, since $W_N$ is a submanifold of $U_N$.   Using our identification of $[\widetilde{dF}]$ from above, we compute that $[\widetilde{dF}]^2 = d\big(-d\vec{k}^{\mbox{ }T}\vec{p}_0 + d(I_0 + N)\big) = d^2(\kappa + N)$.  Hence $[\widetilde{dF'}]^2 = -d^2(\kappa + N)$ in $H_2(W'_N)$, due to the reversal of orientation. \halbox

\vs

Next, we compute squares of general boundary-torsion elements of $H^2(W_N)$ and $H^2(W'_N)$.
Denote by $\alpha$ the inclusion of $\mbox{PD}[\widetilde{dF}] \in H^2(W_N, \partial W_N)$ into $H^2(W_N)$, $\alpha = j_{W_N}^*\big(\mbox{PD}[\widetilde{dF}]\big)$. Then of course $\alpha^2 = d^2(\kappa + N)$.  

We will also need to know the evaluation $\langle \alpha, [\widetilde{dF}]\rangle$.  
According to the diagram $\alpha$ equals $i^*\Big(j_{U_N}^*\big(\mbox{PD}[i_*([\widetilde{dF}])]\big)\Big)$.  Thus,  $$\langle \alpha, [\widetilde{dF}]\rangle = \big\langle j_{U_N}^*\big(\mbox{PD}[i_*([\widetilde{dF}])]\big), i_*([\widetilde{dF}]) \big\rangle = \Big((-d\vec{p}_0^{\mbox{ }T}, d)\cdot G_N\Big)\cdot\left( \begin{array}{c} \mbox{-}d\vec{p}_0 \\
d \\
\end{array} \right) = d^2(\kappa + N).$$

By the universal coefficients theorem, we have that 
$$H^2(W_N)/\mbox{Torsion} \cong \mbox{Hom}\big(H_2(W_N), \mathbb{Z}\big) \cong \mbox{Hom}\big(H_2(Y), \mathbb{Z}\big) \oplus \mbox{Hom}\big(\mathbb{Z}\cdot [\widetilde{dF}], \mathbb{Z}\big).$$ 
Recalling the map $j:H^2(W_N; \mathbb{Z}) \rightarrow H^2(\partial W_N; \mathbb{Z}) \rightarrow H^2(\partial W_N;\mathbb{Q})$, if $\phi \in \mbox{Ker }j$ and $\langle \phi, [\widetilde{dF}]\rangle = i$, it then follows that $\phi = \frac{i}{d^2(\kappa + N)} \alpha$ in $H^2(W_N)/\mbox{Torsion}$ since elements of $\mbox{Ker }j$ evaluate trivially on $H_2(Y)$.  So $\phi^2 = \Big(\frac{i}{d^2(\kappa + N)}\Big)^2 d^2(\kappa + N) = \frac{i^2}{d^2(\kappa + N)}$.  

By excision and the long exact sequence of $(U_N, W_N)$, the restriction map $i^*:H^2(U_N) \rightarrow H^2(W_N)$ is surjective, and of course $\langle (\vec{b}^{\mbox{ }T},c)_{U_N}, i_*([\widetilde{dF}]) \rangle = \langle i^*\big((\vec{b}^{\mbox{ }T},c)_{U_N}\big), [\widetilde{dF}] \rangle$.  These facts, together with the above, allow us to compute the square of any element in $H^2(W_N)$.  

Of course, squaring elements of $H^2(W'_N)$ is exactly the same, except that every Poincar{\'e} dual gets a minus sign. So we arrive at the value $-\frac{(c - \vec{b}^{\mbox{ }T}\vec{p}_0)^2}{(\kappa + N)}$ for the square of the class $(\vec{b}^{\mbox{ }T},c)_{U_N}|_{W'_N}$.  Since the evaluation of this class on any lift $[\widetilde{dF'}]$ is equal to $d(c - \vec{b}^{\mbox{ }T}\vec{p}_0)$, we have the following.

\begin{lemma}
\label{thm:A.3}  
The square of $\alpha \in \mbox{Ker }j \subset H^2(W'_N)$ is given by 
$$\alpha^2 = -\frac{\big(\langle \alpha, [\widetilde{dF}] \rangle\big)^2}{d^2(\kappa + N)}$$
(recall that $j$ is the composition of obvious maps $H^2(W'_N; \mathbb{Z}) \rightarrow H^2(\partial W'_N; \mathbb{Z}) \rightarrow H^2(\partial W'_N; \mathbb{Q})$).
\end{lemma}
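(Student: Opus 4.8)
\textbf{Proof plan for Lemma \ref{thm:A.3}.}  The plan is to unwind the definition of the square on $\mbox{Ker }j$ and reduce everything to the one computation that has already been done in the preceding paragraphs, namely the identification of $[\widetilde{dF}]$ inside $H_2(U_N)$ and the value $\langle \alpha, [\widetilde{dF}] \rangle = d^2(\kappa+N)$ for the distinguished class $\alpha = j^*_{W_N}\big(\mbox{PD}[\widetilde{dF}]\big)$.  First I would recall, exactly as in the discussion leading up to the statement, that by the universal coefficients theorem $H^2(W_N)/\mbox{Torsion} \cong \mbox{Hom}\big(H_2(W_N),\mathbb{Z}\big)$, and that $H_2(W_N)$ splits as $H_2(Y) \oplus \mathbb{Z}\cdot[\widetilde{dF}]$, so any class in $\mbox{Ker }j$ — which evaluates trivially on the $H_2(Y)$ summand — is, modulo torsion, a rational multiple of $\alpha$.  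Since $\langle \alpha, [\widetilde{dF}] \rangle = d^2(\kappa+N) \neq 0$ when $W'_N$ is negative definite (or more generally whenever $\kappa + N \neq 0$, which is exactly the non-special case under consideration), a class $\phi \in \mbox{Ker }j$ with $\langle \phi, [\widetilde{dF}] \rangle = i$ must equal $\frac{i}{d^2(\kappa+N)}\alpha$ in $H^2(W_N)/\mbox{Torsion}$.  Then $\phi^2 = \left(\frac{i}{d^2(\kappa+N)}\right)^2 \alpha^2 = \left(\frac{i}{d^2(\kappa+N)}\right)^2 d^2(\kappa+N) = \frac{i^2}{d^2(\kappa+N)}$, using $\alpha^2 = d^2(\kappa+N)$, which was computed from $[\widetilde{dF}]^2 = d^2(\kappa+N)$ via the diagram relating $j^*_{W_N}$, $j^*_{U_N}$, $i_*$, and $i^*$.

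Next I would pass from $W_N$ to $W'_N = -W_N$.  The only change is orientation: the fundamental class flips sign, so every Poincar{\'e} duality isomorphism $H_2 \to H^2(\cdot,\partial\cdot)$ acquires a minus sign, and hence so does the self-intersection pairing.  Concretely, $[\widetilde{dF'}]^2 = -d^2(\kappa+N)$ (this is precisely Proposition \ref{thm:2.7}, and was re-derived just above the statement), while evaluations $\langle \cdot, [\widetilde{dF'}] \rangle$ on cohomology classes are unchanged since they are pairings of a cohomology class against a homology class, not against a fundamental class.  Repeating the previous paragraph's argument verbatim over $W'_N$ with $\alpha \in \mbox{Ker }j \subset H^2(W'_N)$ and $i = \langle \alpha, [\widetilde{dF}] \rangle$, I get $\alpha = \frac{i}{-d^2(\kappa+N)}\,\widetilde{\alpha}$ for the distinguished generator $\widetilde{\alpha}$ of the rank-one lattice $\mbox{Ker }j$ modulo torsion, with $\widetilde{\alpha}^2 = -d^2(\kappa+N)$, so
$$ \alpha^2 = \frac{i^2}{\big(d^2(\kappa+N)\big)^2}\cdot\big(-d^2(\kappa+N)\big) = -\frac{i^2}{d^2(\kappa+N)} = -\frac{\big(\langle \alpha, [\widetilde{dF}]\rangle\big)^2}{d^2(\kappa+N)}, $$
which is the claimed formula.

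The argument is essentially bookkeeping once the two inputs $[\widetilde{dF}]^2 = d^2(\kappa+N)$ and $\langle \alpha, [\widetilde{dF}]\rangle = d^2(\kappa+N)$ are in hand, both of which are established in the paragraphs immediately preceding the statement.  The only point requiring genuine care — the part I expect to be the main obstacle, to the extent there is one — is justifying that the self-intersection of a class in $\mbox{Ker }j$ can legitimately be computed inside $H^2(W_N)/\mbox{Torsion}$ at all, i.e.\ that the rational lift in the definition of $x^2$ for $x \in \mbox{Ker }j$ is compatible with reducing mod torsion and expressing $x$ as a rational multiple of $\alpha$.  This comes down to observing that $\mbox{Ker }j$ is contained in the kernel of $H^2(W_N) \to H^2(Y)$ — whose torsion subgroup is the full torsion of $H^2(W_N)$, by the fact (used repeatedly in the Appendix, cf.\ the remark after Equation \ref{eq:21} and in the proof of Proposition \ref{thm:3.4}) that the torsion subgroup of $H^2(W_N)$ injects into $H^2(Y_N)$ and that $H_2(Y)$ and $H_2(Y_N)$ have the same image in $H_2(W_N)$ — so torsion plays no role in the square, and the rational lift is uniquely determined up to the image of $H^2(W_N,\partial W_N;\mathbb{Q})$, on which the square vanishes.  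I would state this compatibility in one sentence and refer back to those earlier observations rather than re-proving it.
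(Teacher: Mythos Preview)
Your proposal is correct and follows essentially the same route as the paper: the paper's proof is precisely the discussion in the paragraphs immediately preceding the lemma, using universal coefficients to write any $\phi \in \mbox{Ker }j$ as a rational multiple of the distinguished class $\alpha = j^*_{W_N}(\mbox{PD}[\widetilde{dF}])$, computing $\phi^2$ from $\alpha^2 = \langle \alpha,[\widetilde{dF}]\rangle = d^2(\kappa+N)$, and then flipping the sign upon passing to $W'_N$. Your final paragraph on torsion compatibility is more explicit than the paper, which simply takes this for granted.
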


\noindent \textit{Proof of Proposition \ref{thm:2.9}.}  Let $\m{t}_0 = \langle \vec{b}^{\mbox{ }T}, c \rangle_{U_0}|_{Y_0}$, and set $\m{s}^i_N = \langle \vec{b}^{\mbox{ }T}, c + (2i-1)N \rangle_{U_N}|_{W'_N}$ and $\m{t}^i_N = \m{s}^i_N|_{Y_N}$.   

We want to compute $\m{s}_{K+}(\m{t}^i_N)$.  Note that we have 
$$\frac{[\widetilde{dF'}]^2}{d^2} = -N, \mbox{ }\mbox{ }\mbox{ } \frac{\langle c_1(\m{s}^i_N), [\widetilde{dF'}] \rangle}{d} = c - \vec{b}^{\mbox{ }T}\vec{p}_0 + (2i-1)N.$$
Therefore, the function $Q_K(j, \m{s}^i_N) = 0$ when 
$$j = \frac{c - \vec{b}^{\mbox{ }T}\vec{p}_0 + (2i-1)N}{2N}; $$
and so $\m{s}_{K+}(\m{t}^i_N) = \m{s}^i_N + \lfloor j \rfloor \mbox{PD}[F']|_{W'_N}.$

Now, 
$$ \lfloor j \rfloor = i+ \left\lfloor \frac{c - \vec{b}^{\mbox{ }T}\vec{p}_0}{2N} - \frac{1}{2} \right\rfloor,$$
and for large $N$ this will just equal $i-1$. 
So, 
$$\m{s}_{K+}(\m{t}^i_N) = \langle \vec{b}^{\mbox{ }T} - 2(i-1)\vec{k}^{\mbox{ }T}, c - 2(i-1)I_0 + N \rangle_{U_N}|_{W'_N}.$$  
Also, $\mbox{PD}[F']|_{W'_N}$ is represented by $\big(-\vec{k}^{\mbox{ }T}, -(I_0+N)\big)_{U_N}|_{W'_N}$, where we pick up a negative sign from (\ref{eq:21}) since we are taking duals with respect to the orientation of $W'_N$; hence, 
$$\m{s}_{K+}(\m{t}^i_N) + \mbox{PD}[F']|_{W'_N} = \langle \vec{b}^{\mbox{ }T} - 2i\vec{k}^{\mbox{ }T}, c - 2iI_0 - N \rangle_{U_N}|_{W'_N}.$$

Therefore, we can just use Lemma \ref{thm:A.3} to compute 
$$q_K\left(\m{s}_{K+}(\m{t}^i_N)\right) = \frac{c_1^2\left(\m{s}_{K+}(\m{t}^i_N) + \mbox{PD}[F']|_{W'_N}\right) - c_1^2\left(\m{s}_{K+}(\m{t}^i_N)\right)}{4} = c - \vec{b}^{\mbox{ }T}\vec{p}_0.$$
Notice that this value is independent of $\m{t}^i_N$.  In fact, it is not hard to see that there is a Seifert surface $dS$ for $dK$, which can be capped off in $Y_0$ to give a surface $\widehat{dS}$, which will be represented by
$$\left. \left( \begin{array}{c} d\vec{p}_0 \\
-d \\
\end{array} \right)_{U_0}\right|_{Y_0},$$
and so 
$$ \frac{\langle c_1(\m{t}_0), [\widehat{dS}] \rangle}{d} = -(c - \vec{b}^{\mbox{ }T}\vec{p}_0),$$
which completes the proof. \halbox

\end{document}